\theoremstyle{plain}
\newtheorem{lemma}{Lemma}[section]
\newtheorem{proposition}[lemma]{Proposition}
\newtheorem{theorem}[lemma]{Theorem}
\newtheorem{corollary}[lemma]{Corollary}
\theoremstyle{definition}
\newtheorem{definition}[lemma]{Definition}
\theoremstyle{remark}
\newtheorem{remark}[lemma]{Remark}
\newtheorem{example}[lemma]{Example}
\numberwithin{equation}{section}
\newcommand{\R}{\mathbb{R}}
\newcommand{\Q}{\mathbb{Q}}
\newcommand{\N}{\mathbb{N}}
\newcommand{\TV}{\text{\rm Tot.Var.}}
\newcommand{\dist}{\text{\rm dist}}
\newcommand{\supp}{\text{\rm supp}}
\newcommand{\clos}{\text{\rm clos}}
\newcommand{\Id}{\mathbb{I}}
\newcommand{\ind}{\mathbbm{1}}
\newcommand{\A}{\mathfrak{A}}
\newcommand{\inter}{\mathrm{int}}
\newcommand{\Graph}{\textrm{graph}}
\newcommand{\epi}{\mathrm{epi}}
\newcommand{\aff}{\mathrm{aff}}
\newcommand{\conv}{\mathrm{conv}}
\newcommand{\convd}{\mathrm{conv}_\mathrm{d}}
\newcommand{\nfrac}[2]{\genfrac{}{}{0pt}{}{#1}{#2}}
\newcommand{\interr}{\inter_{\mathrm{rel}}}
\def\a{\mathfrak a}
\def\c{\mathfrak c}
\def\b{\mathfrak b}
\def\bD{\mathbf D}
\def\tRd{[0,\infty) \times \R^d}
\def\tpt{\mathtt p^{\bar t}}
\newcommand{\mintheta}{{\bar\theta}}
\newcommand{\minvartheta}{{\bar\vartheta}}
\newcommand{\RR}{[0,+\infty)\times \mathbb R}
\newcommand{\extphi}{\bar\phi}
\newcommand{\extpsi}{\bar\psi}
\newcommand{\extc}{\bar{\mathtt c}}
\newcommand{\disth}{ \mathtt d_{\mathrm H} }
\newcommand{\segment}[1]{\llbracket #1 \rrbracket}	
\def\mhC{\mathcal C}
\def\mhA{\mathcal A}
\def\prj{\mathtt p}
\title[The decomposition of optimal transportation problems with convex cost]{The decomposition of optimal transportation problems with convex cost}
\author{Stefano Bianchini}
\address{SISSA, via Bonomea, 263, IT-34136 Trieste (ITALY)}
\email{bianchin@sissa.it}
\author{Mauro Bardelloni}
\address{SISSA, via Bonomea, 263, IT-34136 Trieste (ITALY)}
\email{mbarde@sissa.it}
\date{\today\text{}}
\begin{document}

\bibliographystyle{plain}


\begin{abstract}
Given a positive l.s.c. convex function $\mathtt c : \R^d \to \R^d$ and an optimal transference plane $\underline{\pi}$ for the transportation problem
\begin{equation*}
\int \mathtt c(x'-x) \pi(dxdx'),
\end{equation*}
we show how the results of \cite{biadan} on the existence of a \emph{Sudakov decomposition} for norm cost $\mathtt c= |\cdot|$ can be extended to this case.

More precisely, we prove that there exists a partition of $\R^d$ into a family of disjoint sets $\{S^h_\a\}_{h,\a}$ together with the projection $\{O^h_\a\}_{h,\a}$ on $\R^d$ of proper extremal faces of $\epi\, \mathtt c$, $h = 0,\dots,d$ and $a \in \A^h \subset \R^{d-h}$, such that
\begin{itemize}
\item $S^h_\a$ is relatively open in its affine span, and has affine dimension $h$;
\item $O^h_\a$ has affine dimension $h$ and is parallel to $S^h_\a$;
\item $\mathcal L^d(\R^d \setminus \cup_{h,\a} S^h_\a) = 0$, and the disintegration of $\mathcal L^d$, $\mathcal L^d = \sum_h \int \xi^h_\a \eta^h(d\a)$, w.r.t. $S^h_\a$ has conditional probabilities $\xi^h_\a \ll \mathcal H^h \llcorner_{S^h_\a}$;
\item the sets $S^h_\a$ are essentially cyclically connected and cannot be further decomposed.
\end{itemize}
The last point is used to prove the existence of an optimal transport map.

The main idea is to recast the problem in $(t,x) \in [0,\infty] \times \R^d$ with an $1$-homogeneous norm $\bar{\mathtt c}(t,x) := t \mathtt c(- \frac{x}{t})$ and to extend the regularity estimates of \cite{biadan} to this case.
\end{abstract}

\thanks{This work is supported by ERC Starting Grant 240385 "ConsLaw".}

\maketitle

\centerline{Preprint SISSA 45/2014/MATE}

\tableofcontents



\section{Introduction}
\label{S_introduction}


Let $\mathtt c : \R^d \to \R$ a non negative convex l.s.c. function with superlinear growth,
and consider the following optimal transportation problem: given $\mu,\nu \in \mathcal P(\R^d)$, find a minimizer $\pi$ of the problem
\begin{equation}
\label{E_original_transp}
\inf \bigg\{ \int_{\R^d\times \R^d} \mathtt c(x'-x)\, \pi({\rm d}x{\rm d}x'), \quad \pi \in \Pi (\mu,\nu ) \bigg\},
\end{equation}
where $\Pi(\mu,\nu)$ is the set of transference plans $\pi \in \mathcal P(\R^d \times \R^d)$ with marginals $\mu,\nu$ respectively. W.l.o.g. we can assume that the above minimum (the transference cost $\mathcal C(\mu,\nu)$) is not $\infty$.

It is well known that in this setting the problem \eqref{E_original_transp} has a solution (\emph{optimal transference plan}). A standard question is if there exists an optimal plan given by a map (\emph{optimal transport map}): this problem is the \emph{Monge transportation problem}, while the existence of an optimal solution to \eqref{E_original_transp} is the \emph{Kantorovich transportation problem}. Simple examples show that if $\mu$ is not a.c. w.r.t. $\mathcal L^d$, then in general the Monge problem has no solution. In the following we will thus assume that $\mu \ll \mathcal L^d$. 

The aim of this paper is to prove a decomposition result from which one deduces the existence of an optimal transport map. The result is actually stronger, showing that for any fixed optimal plan $\bar \pi$ it is possible to give a partition of the space $\R^d$ into sets $S^h_\a$ which are essentially indecomposable (a precise definition will be given in the following): it is standard from this property of the partition to deduce the existence of an optimal map.

In the case of norm cost, there is a large literature on the existence of optimal maps: see for example \cite{conf:optcime,Car:strictly,caffafeldmc,champdepasc:Monge,champdepasc:MongeS,trudiwang}. The original Sudakov strategy has been finally implemented in the norm case in \cite{biadan}. In the case of convex cost, an attempt to use a similar  approach of decomposing the transport problems can be found in \cite{cardpsan:strat}.

In order to state the main result, in addition to the standard family of transference plans $\Pi(\mu,\nu)$ we introduce the notion of \emph{transference plan subjected to a partition}: given $\underline \pi \in \Pi(\mu,\nu)$ and a partition $\{S_\a = \mathtt f^{-1}(\a)\}_{\a \in \A}$ of $\R^d$, with $\mathtt f : \R^d \to \A$ Borel and $\mu(f^{-1}(\A)) = 1$, let $\underline \pi_\a$ be the conditional probabilities of the disintegration of $\underline \pi$ w.r.t. $\{S_\a \times \R^d\}_\a$,
\begin{equation*}
\underline \pi = \int \underline \pi_\a m(d\a), \qquad m := \mathtt f_\sharp \mu.
\end{equation*}
Define the family of probabilities $\underline \nu_\a$ as the second marginal of $\underline \pi_a$ (the first being the conditional probability $\mu_\a$ of $\mu$ when disintegrated on $\{S_\a\}_\a$, $\mu = \int \mu_\a m(d\a)$). Then set
\begin{equation*}
\Pi(\mu,\{\underline \nu_\a\}) := \bigg\{ \pi : \pi = \int \pi_\a m(d\a) \ \text{with} \ \pi_a \in \Pi(\mu_a,\underline \nu_a) \bigg\}.
\end{equation*}
Clearly this is a nonempty subset of $\Pi(\mu,\nu)$.

A second definition is the notion of \emph{cyclically connected sets}. We recall that given a cost $\mathtt c : \R^d \times \R^d \to [0,\infty]$ and a set $\Gamma \subset \{\mathtt c < \infty\}$, the set $S \in \R^d$ is \emph{$\mathtt c$-cyclically connected} if for every couple of point $x,x' \in S$ there are a family $(x_i,y_i) \in \Gamma$, $i=0,\dots,N-1$, such that
\begin{equation*}
\mathtt c(x_{i+1 \mod N},y_i) < \infty \qquad \text{and} \qquad x_0 = x, \ x' = x_j \ \text{for some} \ j \in \{0,\dots,N-1\}.
\end{equation*}
When the cost $\mathtt c$ is clear from the  setting, we will only say \emph{cyclically connected}.

We will need to define the disintegration of the Lebesgue measure on a partition. The formula of the disintegration of a $\sigma$-finite measure $\varpi$ w.r.t. a partition $\{S_\a = \mathtt f^{-1}(\a)\}_\a$ is intended in the following sense: fix a strictly positive function $f$ such that $\varpi' := f \varpi$ is a probability and write
\begin{equation*}
\varpi = f^{-1} \varpi' = \int \big( f^{-1} \varpi'_\a \big) \sigma(d\a),
\end{equation*}
where
\begin{equation*}
\varpi' = \int \varpi'_\a \sigma(d\a), \qquad \sigma = \mathtt f_\sharp \varpi',
\end{equation*}
is the disintegration of $\varpi'$. It clearly depends on the choice of $f$, but not the property of being absolutely continuous as stated below. 

We say that a set $S \subset \R^d$ is \emph{locally affine} if it is open in its affine span $\aff\,S$. If $\{S_\a\}_\a$ is a partition into disjoint locally affine sets, we say that the disintegration is \emph{Lebesgue regular} (or for shortness \emph{regular}) if the disintegration of $\mathcal L^d$ w.r.t. the partition satisfies
\begin{equation*}
\mathcal L^d \llcorner_{\cup_\a S_a} = \int_{\A} \xi_\a \eta(d\a), \qquad \xi_\a \ll \mathcal H^h \llcorner_{S_\a}, \ h = \dim\,S_\a.
\end{equation*}

At this point we are able to state the main result.
%
%
%

\begin{figure}
\centerline{\resizebox{14cm}{7cm}{\input{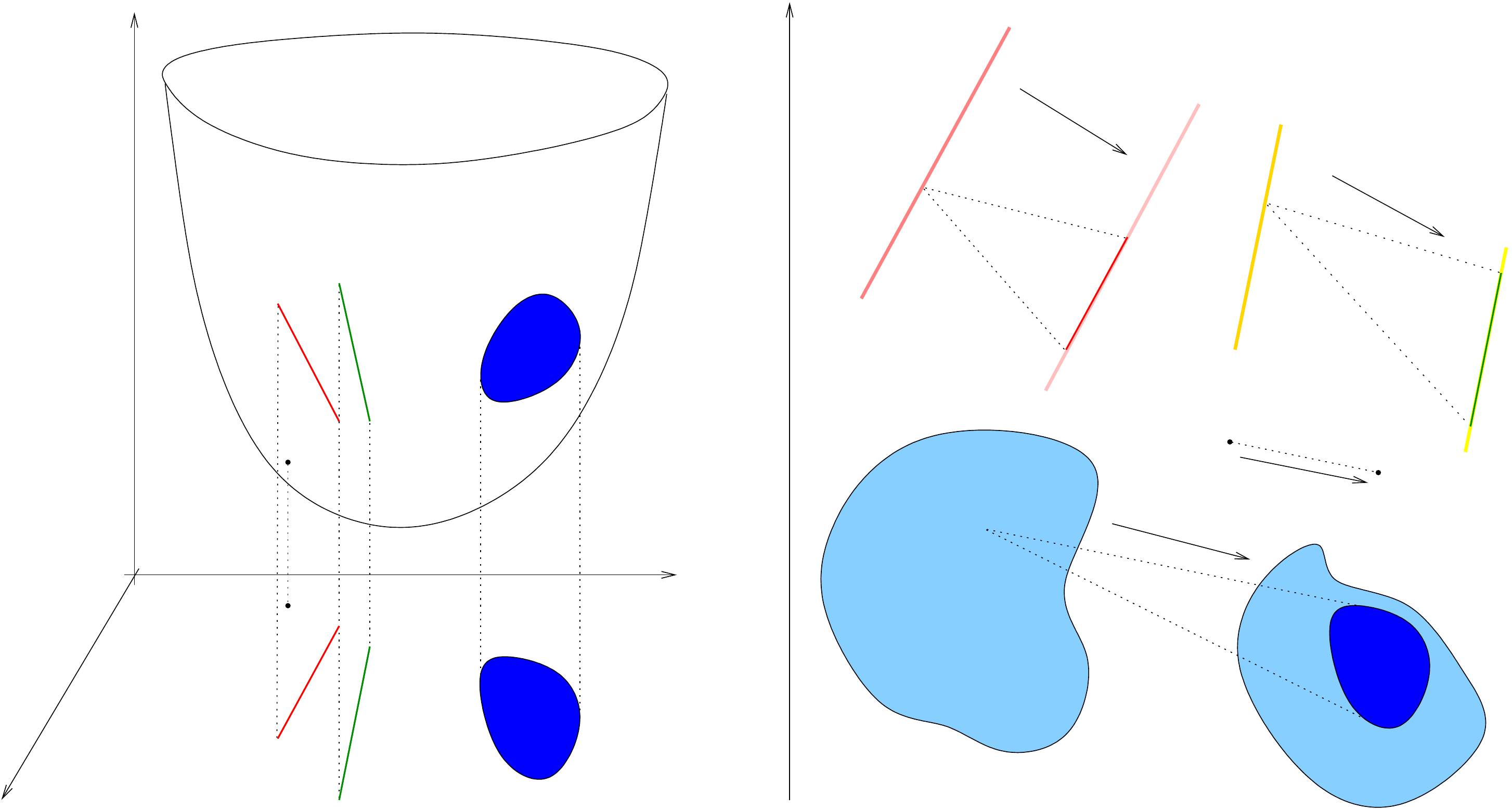_t}}}
\caption{Theorem \ref{T_main_theor} in the plane case, $h=0,1,2$: the projection $O^h_\a$ of the extremal face of $\epi\,\mathtt c$ provides the convex cost $\ind_{O^h_\a}$ used in the decomposed transportation problems $\pi^h_\a \in \mathcal P(Z^h_\a \times \R^d)$.}
\end{figure}

%
%
%

\begin{theorem}
\label{T_main_theor}
Let $\underline \pi \in \Pi(\mu,\nu)$ be an optimal transference plan, with $\mu \ll \mathcal L^d$. Then there exists a family of sets $\{S^h_\a,O^h_\a\}_{\nfrac{h = 0,\dots,d}{\a \in \A^h}}$, $S^h_\a,O^h_\a \subset \R^d$, such that the following holds:
\begin{enumerate}
\item $S^h_\a$ is a locally affine set of dimension $h$;
\item $O^h_\a$ is a $h$-dimensional convex set contained in an affine subspace parallel to $\aff\,S^h_\a$ and given by the projection on $\R^d$ of a proper $h$-dimensional extremal face of $\epi\,\mathtt c$; 
\item $\mathcal L^d(\R^d \setminus \cup_{h,\a} S^h_\a) = 0$;
\item 
the partition is Lebesgue regular;
\item if $\pi \in \Pi(\mu,\{\underline \nu^h_\a\})$ then optimality in \eqref{E_original_transp} is equivalent to 
\begin{equation}
\label{E_equiv_optim}
\sum_h \int \bigg[ \int \ind_{O^h_\a}(x'-x) \pi^h_\a(dxdx') \bigg] m^h(d\a) < \infty,
\end{equation}
where $\pi = \sum_h \int_{\A^h} \pi^h_\a m^h(d\a)$ is the disintegration of $\pi$ w.r.t. the partition $\{S^h_\a \times \R^d\}_{h,\a}$;
\item \label{Point_indecomp_main_th} for every carriage $\Gamma$ of $\pi \in \Pi(\mu,\{\underline \nu^h_\a\})$ there exists a $\mu$-negligible set $N$ such that each $S^h_\a \setminus N$ is $\ind_{O^h_\a}$-cyclically connected.
\end{enumerate}
\end{theorem}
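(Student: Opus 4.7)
Following the plan outlined in the abstract, the idea is to reduce the problem to a 1-homogeneous setting on $\tRd$ and apply an extension of the Sudakov-type decomposition of \cite{biadan}. Introduce the 1-homogeneous convex function
\begin{equation*}
\extc(t,x) := t\, \mathtt c(-x/t), \qquad t > 0,
\end{equation*}
extended by lower semicontinuity to $\{t = 0\}$, and lift the marginals to $\bar\mu := \delta_1 \otimes \mu$ on $\{1\} \times \R^d$ and $\bar\nu := \delta_0 \otimes \nu$ on $\{0\} \times \R^d$. By 1-homogeneity, a transport $(1,x) \mapsto (0,x')$ on $\tRd$ may be scored by $\extc(1, x-x') = \mathtt c(x'-x)$, so every optimal plan $\underline{\pi}$ for \eqref{E_original_transp} lifts to an optimal plan $\underline{\bar\pi}$ for the 1-homogeneous problem on $\tRd$. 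Moreover, the proper $h$-dimensional extremal faces of $\epi\,\mathtt c$ correspond naturally to extremal faces of the cone $\epi\,\extc \subset \R^{d+2}$ whose $\R^d$-projections are precisely the sets $O^h_\a$ appearing in the statement.

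\textbf{Sudakov decomposition in the lifted setting.} The key technical step is to extend the regularity estimates of \cite{biadan} to the 1-homogeneous convex function $\extc$: namely, the sheaf-set geometry, the cone of admissible directions, and the Lebesgue regularity of the disintegration on transport rays. Granted this extension, the lifted optimal plan $\underline{\bar\pi}$ produces a partition of $\tRd$ (modulo $\bar\mu$-null sets) into locally affine sets $\tilde S^{h+1}_\a$ of dimension $h+1$, each parallel to an extremal face $\tilde O^{h+1}_\a$ of $\extc$, with Lebesgue-regular conditional measures and the required cyclic indecomposability. The partition $\{S^h_\a\}$ is then recovered by slicing with the hyperplane $\{t=1\}$: the intersection $\tilde S^{h+1}_\a \cap \{t=1\}$ is relatively open in an $h$-dimensional affine subspace of $\{1\} \times \R^d \simeq \R^d$, parallel to the projection $O^h_\a$ of $\tilde O^{h+1}_\a$ onto $\R^d$. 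Points (1)--(4) of the theorem then transfer, by Fubini in the $t$-direction, from the corresponding statements in the lifted problem.

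\textbf{Optimality and essential cyclic connectedness.} For point (5), the key observation is that $\mathtt c$ is affine on each $O^h_\a$, so for any $\pi^h_\a$ concentrated on $\{x'-x \in O^h_\a\}$ the integral $\int \mathtt c(x'-x)\, \pi^h_\a(dxdx')$ depends only on the marginals $\mu^h_\a$ and $\underline\nu^h_\a$; conversely, whenever the indicator-cost integral in \eqref{E_equiv_optim} is infinite, mass is moved outside the relevant face, which strictly increases the total cost. For point (6), the essential indecomposability of each $S^h_\a$ (i.e.\ the impossibility of splitting it into two $\pi^h_\a$-invariant non-trivial parts modulo a $\mu$-null set) is the slice $\{t = 1\}$ of the cyclic indecomposability obtained for $\tilde S^{h+1}_\a$ in the lifted problem: otherwise one could build a cheaper competitor in the 1-homogeneous problem by exchanging mass across the splitting, contradicting the optimality of $\underline{\bar\pi}$.

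\textbf{Main obstacle.} The hardest step is undoubtedly the extension of the regularity estimates of \cite{biadan} from genuine norms to the asymmetric 1-homogeneous function $\extc$. Its unit ball $\{\extc \leq 1\}$ is unbounded in the $t$-direction and degenerates at $\{t=0\}$, so the geometric arguments of \cite{biadan}, which exploit an essentially bounded symmetric unit ball, must be adapted to this anisotropic cone geometry. In particular one must control the behaviour near the boundary $\{t = 0\}$ in order to ensure that both the Lebesgue regularity of the disintegration and the cyclic indecomposability survive restriction to the slice $\{t = 1\}$, and the sheaf-set area estimates must be reproved in the geometry induced by $\extc$.
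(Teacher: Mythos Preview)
Your high-level strategy is correct and matches the paper: lift to $\tRd$ with the $1$-homogeneous cost $\extc$, obtain a directed locally affine partition there, and slice back at $\{t=1\}$. However, two of your key steps are off.

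\textbf{The argument for Point (6) is wrong.} You write that if some $S^h_\a$ were decomposable one could ``build a cheaper competitor in the $1$-homogeneous problem by exchanging mass across the splitting, contradicting the optimality of $\underline{\bar\pi}$.'' This cannot work: on each element of the partition the relevant cost is $\ind_{C^h_\a}$, which is identically $0$ whenever the transport stays inside the cone. Any rearrangement of mass within $Z^h_\a$ that respects $C^h_\a$ has the same (zero) cost, so there is no cheaper competitor to produce. Indecomposability is \emph{not} a consequence of optimality. The paper obtains it by an entirely different mechanism: for each carriage $\tilde\Gamma$ and dense family $\mathtt W$ one builds a function $\theta_{\mathtt W,\tilde\Gamma}$ via axial-path reachability sets $H_n$, shows that the resulting equivalence relations are closed under countable intersection, selects a \emph{minimal} equivalence relation (Appendix, Theorem~\ref{T_minimal_equival}), and then proves that each level set of positive measure is essentially $\tilde\Gamma$-cyclically connected (Lemma~\ref{L_indecompo_mini}). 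This minimal-equivalence-relation argument is the heart of Point~(6) and is absent from your sketch.

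\textbf{The main obstacle is misidentified.} You locate the difficulty in the unbounded, asymmetric unit ball of $\extc$ and its degeneracy at $\{t=0\}$. The paper explicitly remarks that the unboundedness is harmless under the compact-support assumption on $\mu,\nu$. The genuine obstacle is different: the results of \cite{biadan} give regularity of the disintegration of $\mathcal L^{d+1}$, but $\bar\mu$ lives on the null set $\{t=1\}$, so $\mathcal L^{d+1}$-regularity says nothing directly about the disintegration of $\mathcal H^d\llcorner_{\{t=1\}}$. Your appeal to ``Fubini in the $t$-direction'' does not bridge this gap. The paper's novelty is to exploit the \emph{transversality} of the cones $C^h_\a$ to $\{t=\bar t\}$: one first replaces $\theta$ by its u.s.c.\ envelope $\vartheta$, for which every point has an optimal ray reaching $\{t=0\}$, proves an area estimate along inner rays (Lemma~\ref{L_inner_area_estimate}), and uses this to show both that $\mathcal H^d\llcorner_{\{t=1\}}$-a.e.\ point is regular and that the conditional measures are absolutely continuous on the slices $Z^h_\a\cap\{t=1\}$.

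Finally, note that the construction is not a single application of a Sudakov-type theorem but an \emph{iteration}: a first partition comes from the potential $\extphi$ (independent of $\underline{\bar\pi}$, Theorem~\ref{T_potential_deco}), and then Theorem~\ref{T_step_theorem} is applied at most $d$ times, each step either certifying indecomposability at the top dimension or strictly lowering the dimension of the remaining pieces.
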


We will often refer to the last condition by $\{S^h_\a\}_{h,\a}$ is \emph{($\mu$-) essentially cyclically connected}, i.e. the set of the partitions are cyclically connected up to a ($\mu$-) negligible set. Usually the measure is clear from the context.

Using the fact that $\mathtt c \llcorner_{O^h_\a}$ is affine, a simple computation allows to write
\begin{equation*}
\begin{split}
\int \mathtt c(x'-x) \pi(dxdx') =&~ \sum_h \int_{\A^h} \mathtt c(x'-x) \pi^h_\a m^h(d\a) \crcr
=&~ \sum_h \int_{\A^h} \mathtt c(x'-x) \ind_{O^h_\a}(x'-x) \pi^h_\a m^h(d\a) \crcr  
=&~ \sum_h \int \bigg[ a^h_\a + b^h_\a \cdot \bigg( \int x' \underline \nu^h_\a - \int x \mu^h_\a \bigg) \bigg] m^h(d\a),
\end{split}
\end{equation*}
where $a^h_\a + b^h_\a \cdot x$ is a support plane of the face $O^h_\a$. In particular, for $\underline{\pi}$-a.e. $(x,x')$, the set $O^h_\a$ can be determined from $S^h_\a$ as the projection of the extremal face of $\epi\,\mathtt c$ containing $x'-x$ and whose projection is parallel to $x' - x + \mathrm{span}(S^h_\a - x)$.

Following the analysis of \cite{biadan}, the decomposition $\{S^h_\a,O^h_\a\}_{h,\a}$ will be called \emph{Sudakov decomposition subjected to the plan $\underline \pi$}. Note that the indecomposability of $S^h_\a$ yields a \emph{uniqueness} of the decomposition in the following sense: if $\{S^k_\b,O^k_\b\}_{k,\b}$ is another partition, then by \eqref{E_equiv_optim} one obtains that $O^k_\b \subset O^h_\a$ (resp. $O^h_\a \subset O^k_\b$) on $S^k_\b \cap S^h_\a$ (up to $\mu$-negligible sets), and then Point \eqref{Point_indecomp_main_th} of the above theorem gives that $S^k_\b \subset S^h_\a$ (resp. $S^h_\a \subset S^k_\b$). But then the indecomposability condition for $\{S^h_a,O^h_\a\}_{h,\a}$ (resp. $\{S^h_a,O^h_\a\}_{h,\a}$) is violated.

We remark again that the indecomposability is valid only in the convex set $\Pi(\mu,\{\underline \nu^h_\a\}) \subset \Pi(\mu,\nu)$: in general by changing the plan $\underline{\pi}$ one obtains another decomposition. In the case $\nu \ll \mathcal L^d$, this decomposition is independent on $\underline \pi$: this is proved at the end of Section \ref{S_transl_orig}, Theorem \ref{T_main_theor_ac}.


In order to illustrate the main result, we present some special cases. A common starting point is the existence of a couple of potentials $\phi$, $\psi$ (see \cite[Theorem 1.3]{villa:topics}) such that
\begin{equation*}
\psi(x') - \phi(x) \leq \mathtt c(x'-x) \qquad \textrm{for all }x,x'\in\R^d
\end{equation*}
and
\begin{equation}
\label{E_pi_optimal_couples}
\psi(x') - \phi(x) = \mathtt c(x'-x) \qquad \textrm{for } \pi\text{\rm-a.e. } (x,x') \in \R^d\times\R^d,
\end{equation}
where $\pi$ is an arbitrary optimal transference plan. Assuming for simplicity that $\mu,\nu$ have compact support and observing that $\mathtt c$ is locally Lipschitz, we can take $\phi$, $\psi$ Lipschitz, in particular $\mathcal L^d$-a.e. differentiable. By \eqref{E_pi_optimal_couples} and the assumption $\mu \ll \mathcal L^d$ one obtains that for $\pi$-a.e. $(x,x')$ the gradient $\nabla \phi$ satisfies the inclusion
\begin{equation}
\label{E_subdiff_inclu}
\nabla \phi(x) \in \partial^- \mathtt c(x'-x),
\end{equation}
being $\partial^- \mathtt c$ the subdifferential of the convex function $\mathtt c$.

Assume now $\mathtt c$ strictly convex. Being the proper extremal faces of $\epi\,\mathtt c$ only points, the statement of Theorem \eqref{T_main_theor} gives that the decomposition is trivially $\{\{x\},O_x\}_x$, where $O_x$ is some vector in $\R^d$. In this case for all $p = \nabla \phi(x)$ there exists a unique $q = x'-x$ such that \eqref{E_subdiff_inclu} holds. Then one obtains that $O_x = \{q\}$.

\noindent The second case is when $\mathtt c$ is a norm: in this case the sets $O^h_\a$ become cones $C^h_\a$. This case has been studied in \cite{biadan}: in the next section we will describe this result more deeply, because our approach is based on their result.

\noindent The cases of convex costs with convex constraints or of the form $h(\|x'-x\|)$, with $h : \R^+ \to \R^+$ strictly increasing and $\|\cdot\|$ an arbitrary norm in $\R^2$ are studied in \cite{cardpsan:strat}.

As an application of these reasonings, we show how \eqref{E_subdiff_inclu} can be used in order to construct of an optimal map, i.e. a solution of the Monge transportation problem with convex cost (see \cite{Car1,cardpsan:strat}): indeed, one just minimize among $\pi \in \Pi(\mu,\{\underline \nu^h_\a\})$ the secondary cost $|\cdot|^2/2$ ($|\cdot|$ being the standard Euclidean norm), and by the cyclically connectedness of $S^h_\a$ one obtains a couple of potentials $\{\phi^h_\a,\phi^h_\a\}_{h,\a}$. Since $\mu$, $\nu$ have compact support, then again these potentials are $\mu^h_\a$-a.e. differentiable, and a simple computation shows that $x' - x$ is the unique minimizer of
\begin{equation*}
\frac{|p|^2}{2} - \nabla \phi(x) \cdot p + \ind_{O^h_\a}(p).
\end{equation*}
The fact that this construction is Borel regular w.r.t. $h,\a$ is standard (\cite{biacar:cmono,biacav:mongemetric,biadan,Car1,cardpsan:strat}), and follows by the regularity properties of the map $h,\a \to S^h_\a,O^h_\a$ in appropriate Polish spaces, see the definitions at the beginning of Section \ref{S_directed_locally_affine_partitions}.

\begin{corollary}
\label{C_Monge_sol}
There exists an optimal map $\mathtt T : \R^d \to \R^d$ such that $(\Id,\mathtt T)_\sharp \mu$ is an optimal transference plan belonging to $\pi(\mu,\{\underline \nu^h_\a\})$.
\end{corollary}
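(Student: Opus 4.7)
The plan is to assemble the map piece by piece on the partition $\{S^h_\a\}_{h,\a}$ produced by Theorem \ref{T_main_theor}, using a secondary variational problem on each fiber to select a transport with extra structure, and then to exploit the differentiability of a Kantorovich potential to show that the plan is concentrated on the graph of a Borel map.

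First, I would apply Theorem \ref{T_main_theor} to a fixed optimal $\underline{\pi}$ to obtain the Sudakov decomposition $\{S^h_\a, O^h_\a\}_{h,\a}$, the disintegrations $\mu = \sum_h \int \mu^h_\a\, m^h(d\a)$ and $\underline{\nu} = \sum_h \int \underline{\nu}^h_\a\, m^h(d\a)$, and the convex subclass $\Pi(\mu, \{\underline{\nu}^h_\a\})$. Inside $\Pi(\mu, \{\underline{\nu}^h_\a\})$ I would minimize the secondary Euclidean cost $\int \tfrac{1}{2}|x'-x|^2\,\pi(dxdx')$; the infimum is attained by tightness, and by Point (5) of the theorem the minimizer $\pi^\ast$ is still optimal for \eqref{E_original_transp}. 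Disintegrating $\pi^\ast = \sum_h \int \pi^{h,\ast}_\a m^h(d\a)$, each $\pi^{h,\ast}_\a$ is an optimal plan in $\Pi(\mu^h_\a, \underline{\nu}^h_\a)$ for the cost $\tfrac{1}{2}|\cdot|^2 + \ind_{O^h_\a}(\cdot)$, because any improvement on a positive-measure set of fibers would contradict global optimality.

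Next, I would use the essential cyclic connectedness of $S^h_\a$ (Point \eqref{Point_indecomp_main_th}) applied to a carriage $\Gamma^\ast$ of $\pi^\ast$ contained in $\{(x,x') : x'-x \in O^h_\a\}$ on each fiber. This yields Kantorovich potentials $\phi^h_\a : S^h_\a \to \R$ for the cost $\tfrac{1}{2}|\cdot|^2 + \ind_{O^h_\a}$, i.e.\ $\phi^h_\a(x') - \phi^h_\a(x) \leq \tfrac{1}{2}|x'-x|^2 + \ind_{O^h_\a}(x'-x)$ with equality on $\Gamma^\ast \cap (S^h_\a \times S^h_\a)$. Since each $\phi^h_\a$ is locally Lipschitz on its domain (assuming compact supports, or after a standard localization) it is $\mathcal H^h \llcorner_{S^h_\a}$-a.e.\ differentiable along $\aff\, S^h_\a$; because the partition is Lebesgue regular, $\mu^h_\a \ll \mathcal H^h \llcorner_{S^h_\a}$ and so $\nabla \phi^h_\a(x)$ exists $\mu^h_\a$-a.e. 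A pointwise first-order argument then shows that for $\pi^{h,\ast}_\a$-a.e.\ $(x,x')$ the displacement $p = x'-x$ is a minimizer of
\begin{equation*}
p \mapsto \tfrac{1}{2}|p|^2 - \nabla \phi^h_\a(x) \cdot p + \ind_{O^h_\a}(p);
\end{equation*}
by strict convexity of the Euclidean part, this minimizer is unique, so $x'$ is a function $\mathtt{T}^h_\a(x)$ of $x$ on each fiber.

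Finally I would glue the fibered maps into a global map $\mathtt{T}$ by $\mathtt{T}(x) = \mathtt{T}^h_\a(x) = x + \arg\min\{\tfrac{1}{2}|p|^2 - \nabla \phi^h_\a(x)\cdot p : p \in O^h_\a\}$ where $x \in S^h_\a$. The main obstacle is the Borel measurability of this construction: one must realize $(h,\a) \mapsto (S^h_\a, O^h_\a, \phi^h_\a, \nabla \phi^h_\a)$ as Borel maps into suitable Polish spaces of (locally) affine sets and Lipschitz functions, which is carried out by the standard selection/regularity machinery recalled at the end of the excerpt (see \cite{biacar:cmono,biacav:mongemetric,biadan,Car1,cardpsan:strat}) and the construction in Section~\ref{S_directed_locally_affine_partitions}. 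Once measurability is established, $(\Id, \mathtt{T})_\sharp \mu$ belongs to $\Pi(\mu, \{\underline{\nu}^h_\a\})$ (because its conditionals have the correct second marginals by construction) and is optimal, proving the corollary.
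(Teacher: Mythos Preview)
Your proposal is correct and follows essentially the same route as the paper: minimize the secondary Euclidean cost within $\Pi(\mu,\{\underline{\nu}^h_\a\})$, use the cyclical connectedness of $S^h_\a$ (Point~\eqref{Point_indecomp_main_th}) to produce potentials $\phi^h_\a$ on each fiber, invoke Lebesgue regularity of the partition to get $\mu^h_\a$-a.e.\ differentiability, and conclude that $x'-x$ is the unique minimizer of $\tfrac{1}{2}|p|^2-\nabla\phi^h_\a(x)\cdot p+\ind_{O^h_\a}(p)$, with Borel measurability of the gluing handled by the standard selection machinery cited there. You have simply spelled out a few intermediate steps (existence of the secondary minimizer by tightness, the disintegration argument for fiberwise optimality) that the paper leaves implicit.
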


Note that by varying $\underline{\pi}$ and the secondary cost one obtains infinitely many different optimal maps. An analysis of the regularity properties of the set of maps can be found in \cite{biacav:mongemetric}.


\begin{remark}
\label{R_only_compact}
In the proof we will only consider the case of $\mu$, $\nu$ compactly supported. This assumption avoids some technicalities, and it is fairly easy to recover the general case.

Indeed, let $K_n \nearrow \R^d$ be a countable family of compact sets and consider $\pi_n := \pi \llcorner_{K_n \times K_n}$. Assume that Theorem \ref{T_main_theor} is proved for all $\pi_n$: let $(S^{h,n}_\a,O^{h,n}_\a)$ be the corresponding decomposition. Up to reindexing and regrouping the sets, one can take $S^{h,n}_\a \nearrow S^h_\a$ and since $\dim\,O^{h,n}_\a$ is increasing with $n$, then $O^{h,n} = O^h_\a$ for $n$ sufficiently large. Hence $\{S^h_\a,O^h_\a\}_{h,\a}$ is the desired decomposition.
\end{remark}

\subsection{Description of the approach}
\label{Ss_descr_appro}

The main idea of the proof is to recast the problem in $\R^{d+1}$ with a $1$-homogeneous cost $\bar{\mathtt c}$ and use the strategy developed in \cite{biadan}.

Define
\begin{equation*}
\bar \mu := (1,\Id)_\sharp \mu, \qquad \bar \nu := (0,\Id)_\sharp \nu,
\end{equation*}
and the cost
\begin{equation}
\label{E_def_bar_tt_c}
\bar{\mathtt c}(t,x) := \left\{\begin{array}{ccc} 
t\,  \mathtt c\big(- \frac{x}{t}\big) & t>0,\\
\ind_{(0,0)} & t=0, \\
+\infty &\rm{otherwise,}
\end{array}\right.
\end{equation}
where $(t,x) \in \R^+ \times \R^d$. It is clear that the minimisation problem \eqref{E_original_transp} is equivalent to
\begin{equation}\label{E_new_transp}
\int_{\big( \R^+\times\R^d \big)\times \big( \R^+\times\R^d \big)} \bar{\mathtt c}(t-t', x-x') \, \overline \pi({\rm d}t{\rm d}x{\rm d}t'{\rm d}x'), \qquad \bar\pi\in\Pi(\bar\mu,\bar{\nu}).
\end{equation}
In particular, every optimal plan $\pi$ for the problem \eqref{E_original_transp} selects an optimal $\bar\pi := \big( (1,\Id) \times (0,\Id) \big)_\sharp \pi$ for the problem \eqref{E_new_transp} and viceversa.

The potentials $\extphi$, $\extpsi$ for \eqref{E_new_transp} can be constructed by the Lax formula from the potentials $\phi$, $\psi$ of the problem \eqref{E_original_transp}: define
\begin{equation}
\label{HJ_def_extphi}
\extphi(t,x) := \min_{x'\in\R^d} \big\{ - \psi(x') + \bar{\mathtt c}(t,x-x') \big\}, \qquad t \geq 0
\end{equation}
\begin{equation*}
\extpsi(t,x) := \max_{x'\in\R^d} \big\{ - \phi(x') - \bar{\mathtt c}(1-t,x'-x) \big\}, \qquad t \leq 1.
\end{equation*}
It clearly holds
\begin{equation*}
\extphi(0,x) = \bar \psi(0,x) = -\psi(x) \qquad \text{\rm and} \qquad \extphi(1,x) = \bar \psi(1,x) = -\phi(x), 
\end{equation*}
so that the function $\extphi, \extpsi$ are at $t=0,1$ conjugate forward/backward solutions of the Hamilton-Jacoby equation
\begin{equation}
\label{E_HJ_intro}
\partial_t u + H(\nabla u) = 0,
\end{equation}
with Hamiltonian $H = (\mathtt c)^*$, the Legendre transform of $\mathtt c$. (This is actually the reason for the choice of the minus sign in the definition of \eqref{E_def_bar_tt_c}.)

\begin{figure}
\centering{\resizebox{14cm}{7cm}{\input{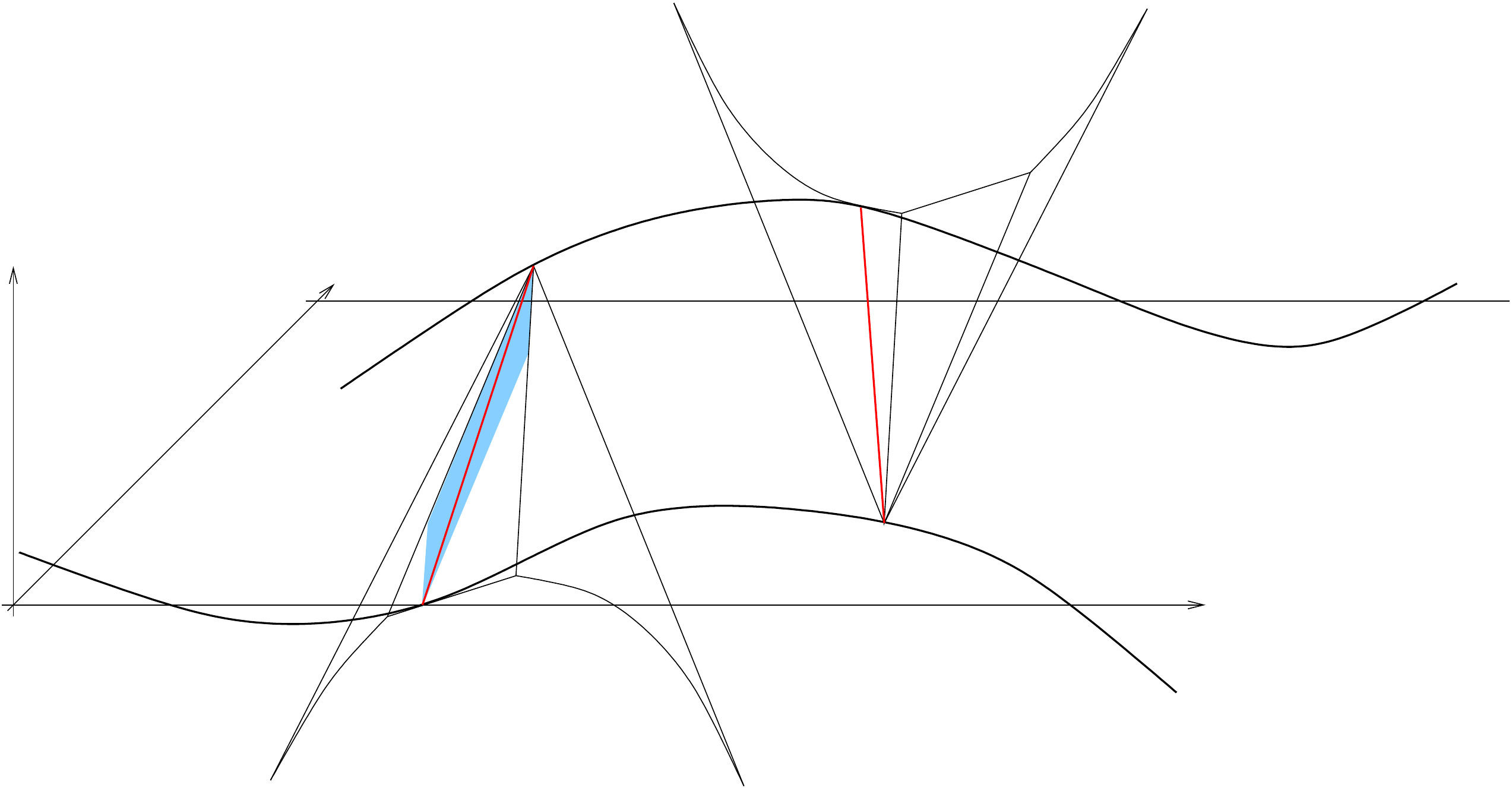_t}}}
\caption{The formulation in $\R^{d+1}$ as a HJ equation: in general in the common region $0 \leq t \leq 1$ it holds $\bar \psi \leq \bar{\phi}$, but in the (red) optimal rays and the depicted region the equality holds.}
\end{figure}

	

By standard properties of solutions to \eqref{E_HJ_intro} one has
\begin{equation*}
\extphi(t,x) - \extphi(t',x') \leq \bar{\mathtt c}(t-t',x-x'),\quad \textrm{ for every }t \geq t' \geq 0,\ x,x'\in\mathbb R^d,
\end{equation*}	 
and for all $\bar \pi$ optimal
\begin{equation*}
\extphi(z) - \extphi(z') = \bar{\mathtt c}(z-z'), \qquad \textrm{for }\overline\pi\textrm{-a.e. } z=(t,x),z'=(t',x') \in \RR^d.
\end{equation*}

Being $\bar{\mathtt c}$ a $1$-homogeneous cost, one can use the same approach of \cite{Dan:PhD} in order to obtain a first \emph{directed locally affine partition} $\{Z ^h_\a,C^h_\a\}_{h,\a}$, where $Z^h_\a$ is a relatively open (in its affine span) set of affine dimension $h+1$, $h \in \{0,\dots,d\}$, and $C^h_\a$ is the projection of an $(h+1)$-dimensional convex extremal face of $\epi\,\bar{\mathtt c}$ (a \emph{cone} due to $1$-homogeneity) given by
\begin{equation*}
C^h_\a = \R^+ \cdot \partial^+ \bar \phi(z), \qquad z = (t,x) \in Z^h_\a.
\end{equation*}
The definition of $\partial^+ \bar \phi$ is the standard formula
\begin{equation*}
\partial ^+  {\extphi}(z) := \Big\{ z'\in \RR^d :  {\extphi}(z')- {\extphi}(z)=\bar{\mathtt c}(z'-z) \Big\}.
\end{equation*}
By the results of \cite{Dan:PhD}, this first decomposition satisfies already many properties stated in Theorem \ref{T_main_theor}:
\begin{enumerate}
\item $Z^h_\a$ is locally affine of dimension $h+1$;
\item $C^h_\a$ is the projection of an extremal cone of $\epi\,\bar{\mathtt c}$ of dimension $h+1$ parallel to $Z^h_\a$;
\item $\mathcal L^{d+1}(\RR^d \setminus \cup_{h,a} Z^h_\a) = 1$;
\item $\bar \pi' \in \Pi(\bar \mu,\bar \nu)$ is optimal iff
\begin{equation*}
\sum_{h = 0}^d \int \bigg[ \int \ind_{C^h_\a}(z-z') (\bar \pi')^h_\a(dzdz') \bigg] m(d\a) < \infty,
\end{equation*}
being $\bar \pi' = \sum_h \int (\bar \pi')^h_\a m(d\a)$ the disintegration of $\bar \pi'$ w.r.t. $\{Z^h_\a \times \R^d\}_{h,\a}$.
\end{enumerate}
We note here that this decomposition is \emph{independent} on $\bar \pi$, because it is only based on the potentials $\bar \phi$, $\bar \psi$. Observe that the choice of the signs in \eqref{HJ_def_extphi} yields that $z$ and $z'$ are exchanged w.r.t. $x,x'$ in \eqref{E_equiv_optim}.

A family of sets $\{Z^h_\a,C^h_\a\}_{h,\a}$ satisfying the first two points above (plus some regularity properties) will be called \emph{directed locally affine partition}; the precise definition can be found in Definition \ref{D_loca_aff_part}, where a Borel dependence w.r.t. $h,\a$ is required.

While the indecomposability stated in Point \eqref{Point_indecomp_main_th} is know not to be true also in the norm cost case, the main problem we face here is that the regularity of the partition is stated in terms of the Lebesgue measure $\mathcal L^{d+1}$, and this has no direct implication on the structure of the disintegration of $\bar \mu$, being the latter supported on $\{t=1\}$. The obstacle that the "norm" $\bar{\mathtt c}$ is unbounded can be easily overtaken due to the assumptions that $\supp\,\bar \mu$ is a compact subset of $\{t=1\}$ and $\supp\,\bar{\nu}$ is a compact subset of $t=0$.\\
The first new result is thus the fact that, due to the transversality of the the cones $C^h_\a$ w.r.t. the plane $\{t=1\}$, $\cup_{h,\a} Z^h_\a \cap \{t=1\}$ is $\mathcal H^d\llcorner_{\{t=1\}}$-conegligible and the disintegration of $\mathcal H^d \llcorner_{\{t=\bar t\}}$ w.r.t. $Z^h_\a$ is \emph{regular} for all $\bar t > 0$, i.e.
\begin{equation*}
\mathcal H^d \llcorner_{\{t = \bar t\}} = \sum_{h = 0}^d \int \xi^h_\a \eta^h(d\a) \qquad \text{with} \qquad \xi^h \ll \mathcal H^h \llcorner_{Z^h_\a \cap \{t = \bar t\}}.
\end{equation*}
Note that since $C^h_\a$ is transversal to $\{t = \bar t\}$ by the definition of $\bar{\mathtt c}$, then $Z^h_\a \cap \{t = \bar t\}$ has affine dimension $h$ (and this is actually the reason for the notation). We thus obtain the first result of the paper, which is a decomposition into a directed locally affine partition which on one hand is \emph{independent} on the optimal transference plan, on the other hand it elements are not indecomposable in the sense of Point \eqref{Point_indecomp_main_th} of Theorem \ref{T_main_theor}.

\begin{theorem}
\label{T_potential_deco}
There exists a directed locally affine partition $\{Z^h_\a,C^h_\a\}_{h,\a}$ such that
\begin{enumerate}
\item \label{Point_1_pote_deco} $\mathcal H^d(\{t=1\} \setminus \cup_{h,\a} Z^h_\a) = 0$;
\item \label{Point_2_pote_deco} the disintegration of $\mathcal H^d \llcorner_{\{t = 1\}}$ w.r.t. the partition $\{Z^h_\a\}_{h,\a}$ is regular; 
\item \label{Point_3_pote_deco} $\bar \pi$ is an optimal plan iff
\begin{equation*}
\sum_h \int \ind_{C^h_\a}(z-z') \pi^h_\a(dzdz') m^h(d\a) < \infty,
\end{equation*}
where $\pi = \sum_h \int \pi^h_\a m^h(d\a)$ is the disintegration of $\pi$ w.r.t. the partition $\{Z^h_\a \times \R^{d+1}\}_{h,\a}$.
\end{enumerate}
\end{theorem}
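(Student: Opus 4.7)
My plan is to invoke the $1$-homogeneous theory of \cite{Dan:PhD} applied to the cost $\bar{\mathtt c}$ and the Lipschitz potential $\extphi$ of \eqref{HJ_def_extphi}, so as to obtain a preliminary directed locally affine partition $\{Z^h_\a,C^h_\a\}_{h,\a}$ of $\tRd$ with cones
\begin{equation*}
C^h_\a \;=\; \R^+ \cdot \bigl(z - \partial^+\extphi(z)\bigr), \qquad z \in Z^h_\a ,
\end{equation*}
which by construction already satisfies items (1)--(4) of the list displayed after the statement. In particular Point (\ref{Point_3_pote_deco}) of the theorem is immediate: any optimal $\bar\pi$ is concentrated on pairs with $z-z'$ lying in a face of $\epi\,\bar{\mathtt c}$, and the indicator-cost reformulation on the disintegration w.r.t.\ $\{Z^h_\a \times \R^{d+1}\}_{h,\a}$ is the standard consequence of $\bar{\mathtt c}$-cyclical monotonicity. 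What remains is the passage from $\mathcal L^{d+1}$-regularity on $\tRd$ to $\mathcal H^d$-regularity on the slice $\{t=1\}$, which is the new analytic content.

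The key geometric step is to establish the transversality of every cone $C^h_\a$ to the hyperplane $\{t=\bar t\}$ for all $\bar t>0$. Indeed $\mathrm{dom}\,\bar{\mathtt c} \subset \{t\ge 0\}$ by \eqref{E_def_bar_tt_c}, and the superlinear growth of $\mathtt c$ forces $\bar{\mathtt c}(0,x)=+\infty$ for $x\ne 0$; hence every proper extremal face of $\epi\,\bar{\mathtt c}$ meets $\{t=0\}$ only at the origin, so $C^h_\a \cap \{t=0\}=\{0\}$. Since $Z^h_\a$ is parallel to $C^h_\a$ and has affine dimension $h+1$, the slice $Z^h_\a\cap\{t=\bar t\}$ is locally affine of dimension exactly $h$ whenever non-empty.

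Exploiting this transversality, I would represent each $Z^h_\a$ locally as a graph $\{(s,\,y+(s-1)w_\a) : s\in I,\ y\in U_\a\}$ over its slice $U_\a=Z^h_\a\cap\{t=1\}$, along the admissible direction $(1,w_\a)\in C^h_\a$. The Hausdorff measure then splits as
\begin{equation*}
\mathcal H^{h+1}\llcorner_{Z^h_\a} \;=\; J(w_\a)\; \mathcal L^1(ds)\otimes \mathcal H^h\llcorner_{U_\a},
\end{equation*}
with $J(w_\a)>0$ locally bounded. Coupling this product structure with the $\mathcal L^{d+1}$-regular disintegration already granted by \cite{Dan:PhD}, and slicing via Fubini along the foliation $\{t=\bar t\}_{\bar t>0}$, one sees that for $\mathcal L^1$-a.e.\ $\bar t>0$ both conclusions of the theorem hold on the slice $\{t=\bar t\}$: the uncovered set is $\mathcal H^d$-negligible and the induced disintegration of $\mathcal H^d\llcorner_{\{t=\bar t\}}$ is Lebesgue regular. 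The final ingredient is to transfer this "a.e.\ $\bar t$" statement to the distinguished value $\bar t=1$, using that the cones $C^h_\a$ (though not the cells $Z^h_\a$) are intrinsically $1$-homogeneous objects, so that the relevant density estimates are properties of the transversal Jacobian $J(w_\a)$ and not of the specific slice.

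The main obstacle I anticipate is precisely this last transfer step. The partition $\{Z^h_\a\}$ is read off from $\extphi$, which is \emph{not} $1$-homogeneous, and so $\{Z^h_\a\}$ is not invariant under the conical rescaling $(t,x)\mapsto(\lambda t,\lambda x)$; one cannot simply push a.e.\ $\bar t$ to every $\bar t$ by scaling. The argument must therefore isolate the intrinsic cone geometry from the potential, and prove an \emph{a priori} cone-regularity estimate that is uniform along each orbit of the rescaling. This is the place where the regularity estimates of \cite{biadan} for the norm case have to be re-derived, in a weighted form, for the $1$-homogeneous convex cost $\bar{\mathtt c}$, and it constitutes the analytic heart of the proof.
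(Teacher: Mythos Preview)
Your overall architecture matches the paper's: obtain the partition from the potential $\extphi$ via \cite{Dan:PhD}, deduce Point~(\ref{Point_3_pote_deco}) from the potential structure, observe transversality of the cones $C^h_\a$ to the slices $\{t=\bar t\}$, and use Fubini on the $\mathcal L^{d+1}$-regularity to get the desired conclusions for $\mathcal H^d$-a.e.\ $\bar t$. You also correctly isolate the real difficulty: upgrading ``a.e.\ $\bar t$'' to the specific value $\bar t=1$.

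The gap is in your proposed mechanism for this transfer. The issue is not one of conical rescaling or homogeneity, and the phrase ``a priori cone-regularity estimate uniform along rescaling orbits'' does not correspond to any usable statement here (as you yourself note, $\extphi$ has no homogeneity, so nothing intrinsic to the partition respects $(t,x)\mapsto(\lambda t,\lambda x)$). The actual tool is an \emph{area estimate along optimal rays of $\extphi$}: for any bounded Borel set $S\subset\{t=\bar t\}$ of backward-regular points and any $\bar t>s>\varepsilon>0$, one can select for each $z\in S$ an \emph{inner} point $\sigma_s(z)\in\interr\bigl(\partial^-\extphi(z)\cap\{t=s\}\bigr)$ so that
\[
\mathcal H^d\bigl(\sigma_s(S)\bigr) \;\ge\; \Bigl(\tfrac{s-\varepsilon}{\bar t-\varepsilon}\Bigr)^{d}\,\mathcal H^d(S).
\]
This is obtained by slicing with $(d-h)$-dimensional planes transversal to a reference subcone $K'\subset C^h_\a$, so that on each slice the inner-ray selection is single-valued, and then applying the one-dimensional cone estimate as in \cite{Car:strictly}. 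With this in hand the transfer is short: pick $\varepsilon>0$ so that (by Fubini) $\mathcal H^d$-a.e.\ point of both $\{t=1+\varepsilon\}$ and $\{t=1-\varepsilon\}$ is regular; the area estimate forces $\mathcal H^d$-a.e.\ point of $\{t=1-\varepsilon\}$ to lie on an inner ray issued from a backward-regular point of $\{t=1+\varepsilon\}$, and since an inner optimal ray joining two regular points consists entirely of regular points, its intersection with $\{t=1\}$ is regular. Letting $\varepsilon\to 0$ gives Point~(\ref{Point_1_pote_deco}).

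For Point~(\ref{Point_2_pote_deco}) the same ray selection does the work, not a generic ``transversal Jacobian'' computation. Once the transversal plane $V_K$ is fixed, the rays $Z^h_\a\cap V_K$ are \emph{parallel} inside each $Z^h_\a$, so the induced map $\mathtt t:Z^h_\a\cap\{t=1+\varepsilon\}\to Z^h_\a\cap\{t=1\}$ is a translation; the two-sided area estimate gives a Jacobian $c(\a)\in\bigl((1-\varepsilon/2)^d,2^d\bigr)$. Pushing forward the known regular disintegration of $\mathcal H^d\llcorner_{\{t=1+\varepsilon\}}$ along $\mathtt t$ and invoking uniqueness of disintegration yields the regular disintegration of $\mathcal H^d\llcorner_{\{t=1\}}$. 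Your product-formula heuristic $\mathcal H^{h+1}\llcorner_{Z^h_\a}=J(w_\a)\,\mathcal L^1\otimes\mathcal H^h\llcorner_{U_\a}$ is morally in the right direction, but without the area estimate it gives no control on the \emph{quotient} measure $\eta^h$, which is where absolute continuity of the conditional probabilities is actually decided.
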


 
Now the technique developed in \cite{biadan} can be applied to each set $Z^h_\a$ with the cost $C^h_\a$ and marginals $\bar \mu^h_\a$ and $\bar \nu^h_\a$. As it is shown in \cite{biadan} and in Example \ref{Ex_2ndmarg} of Section \ref{S_disintegration_locally_affine}, the next steps \emph{depend} on the marginal $\bar \nu^h_\a$, so that one need to fix a transference plan $\underline {\bar \pi}$ in Theorem \ref{T_main_theor}.

For simplicity in this introduction we fix the indexes $h,\a$, while in general in order to obtain a Borel construction one has to consider also the dependence $h,\a \mapsto Z^h_\a,C^h_\a$ in suitable Polish spaces.

In each $Z^h_\a$ the problem becomes thus a transportation problem with marginals $\bar \mu^h_\a$, $\underline{\bar \nu}^h_\a$ and cost $\ind_{C^h_\a}$, where $\bar \mu^h_\a,\underline{\bar \nu}^h_\a$ are the marginals of $\underline{\bar \pi}$ w.r.t. the partition $\{Z^h_\a,C^h_\a\}_{h,\a}$ (the first marginal being independent of $\underline{\bar \pi}$). The analysis of \cite{biadan} yields a decomposition of $Z^h_\a$ into locally affine sets $Z^{\ell}_{\b}$ of affine dimension $\ell + 1$, together with extremal cones $C^{\ell}_\b$ such that $\{Z^{\ell}_\b,C^{\ell}_\b\}_{\ell,\b}$ is a locally affine directed partition of $Z^h_\a$. \\
The main problem is that the \emph{regularity} of the partition refers to the measure $\mathcal H^{h+1} \llcorner_{Z^{h}_\a}$, while we need to disintegrate $\bar \mu^h_\a \ll \mathcal H^h \llcorner_{Z^h_\a \cap \{t = 1\}}$. The novelty is thus that we exploit the transversality of the cone $C^h_\a$ w.r.t. the plane $\{t = \bar t\}$ is order to deduce the regularity of the partition.

The approach is similar to the one used in the decomposition with the potentials above, and we outline below.

\subsubsection{Refined partition with cone costs}
\label{Sss_refi_cone_cost}

To avoid heavy notations, in this section we set $\breve Z = Z^h_\a$, $\breve C = C^h_\a$ and with a slight abuse of notation $\breve{\mu} = \bar \mu^h_\a$, $\breve\nu = \underline{\bar \nu}^h_\a$.

Fix a carriage
\begin{equation*}
\breve \Gamma \subset \big\{ w - w' \in \breve C \big\} \cap \big( \{t = 1\} \times \{t = 0\} \big)
\end{equation*}
of a transport plan $\breve \pi \in \Pi(\breve \mu,\breve \nu)$ of $\ind_{\breve C}$-finite cost, and let $\mathtt w_n$ be countably many points such that
\begin{equation*}
\{\mathtt w_n\}_n \subset \mathtt p_1 \Gamma \subset \clos\{\mathtt w_n\}_n,
\end{equation*}
where $\mathtt p_i$ denotes the projection on the $i$-th component of $(w,w') \in \R^h \times \R^h$, $i=1,2$.

For each $n$ define the set $H_n$ of points which can be reached from $\mathtt w_n$ with an axial path of finite cost,
\begin{equation*}
H_n := \Big\{ w: \exists I \in \N, \big\{ (w_i,w'_i) \big\}_{i = 1}^I \subset \breve \Gamma \ \big( w_1 = \mathtt w_n \ \wedge \ w_{i+1} - w'_i  \in \breve C \big) \Big\},
\end{equation*}
and let the function $\theta'$ be given by
\begin{equation*}
\theta'(w) := \sum_n 3^{-n} \chi_{H_n}(w).
\end{equation*}
Notice that $\theta'$ depends on the set $\Gamma$ and the family $\{\mathtt w_n\}_n$.

The fact that $C \cap \{t = \bar t\}$ is a compact convex set of linear dimension $h$ allows to deduce that the sets $H_n$ are of finite perimeter, more precisely the topological boundary $\partial H_n \cap \{t = \bar t\}$ is $\mathcal H^{h-1}$-locally finite, and that $\theta'$ is SBV in $\R^+ \times \R^h$.

The first novelty of the paper is to observe that we can replace $\theta'$ with two functions which make explicit use of the transversality of $C$: define indeed
\begin{equation}
\label{E_intro_def_theta}
\theta(w) := \sup \Big\{ \theta'(w'), w' \in \mathtt p_2 \Gamma \cap \{w - C\} \Big\}
\end{equation}
and let $\vartheta$ be the u.s.c. envelope of $\theta$. It is fairly easy to verify that $\theta'(w) = \theta'(w') = \theta(w) = \theta(w')$ for $(w,w') \in \breve \Gamma$ (Lemma \ref{L_prop_theta_prim}), and moreover \eqref{E_intro_def_theta} can be seen as a Lax formula for the HJ equation with Lagrangian $\ind_{C}$. \\
Again simple computations imply that $\theta$ is SBV, and moreover being each level set a union of cones it follows that $\partial \{\theta \geq \vartheta\} \cap \{t = \bar t\}$ is of locally finite $\mathcal H^{h-1}$-measure. Hence in each slice $\{t = \bar t\}$, $\vartheta > \theta$ only in $\mathcal H^h$-negligible set, and for $\vartheta$ the Lax formula becomes
\begin{equation*}
\vartheta(w) := \max \Big\{ \vartheta(w'), w' \in \mathtt p_2 \Gamma \cap \{w - C\} \Big\}.
\end{equation*}

We now start the analysis of the decomposition induce by the level sets of $\theta$ or $\vartheta$. The analysis of \cite{biadan} yields that up to a negligible set $N$ there exists a locally affine partition $\{Z^{h'}_\beta,C^{h'}_\beta\}_{h',\beta}$: the main point is the proof is to show  that the set of the so-called \emph{residual points} are $\mathcal H^h$-negligible is each plane $\{t = \bar t\}$ and that the disintegration is $\mathcal H^h \llcorner_{\{t=1\}}$-regular. Since the three functions differ only on a $\breve \mu$-negligible set, we use $\theta$ to construct the partition and $\vartheta$ for the estimate of the residual set and the disintegration: the reason is that if $(w,w') \in \breve \Gamma$ then $\theta(w) = \theta(w')$, relation which is in general false for $\vartheta$ (however they clearly differ on a $\breve \pi$-negligible set, because $\breve \mu \ll \mathcal H^h \llcorner_{\{t=1\}}$).

The strategy we use can be summarized as follows: first prove regularity results for $\vartheta$ and then deduce the same properties for $\theta$ up to a $\mathcal H^h_{\{t = \bar t\}}$-negligible set. We show how this reasoning works in order to prove that optimal rays of $\theta$ can be prolonged for $t > 1$: for $\mathcal H^h \llcorner_{\{t=1\}}$-a.e. $w$ there exist $\varepsilon > 0$ and $w'' \in w + \breve C \cap \{t = 1 + \varepsilon\}$ such that $\theta(w'') = \theta(w)$. This property is known in the case of HJ equations, see for example the analysis in \cite{biaglo:HJ2} (or the reasoning in Section \ref{Ss_back_forw_regul_phi}).

The advantage of having a Lax formula for $\vartheta$ is that for \emph{every} point $w \in \R^+ \times \R^h$ there exists at least one optimal ray connecting $w$ to $t = 0$: the proof follows closely the analysis for the HJ case. Moreover the non-degeneracy of the cone $C$ implies that it is possible to make (several) selections of the initial point $\R^+ \times \R^d \ni w \mapsto w'(w) \in \{t = 0\}$ in such a way along the optimal ray $\segment{w,w'(w)}$ the following \emph{area estimate} holds:
\begin{equation*}
\mathcal H^h(A_t) \geq \bigg( \frac{t}{\underline t} \bigg)^h \mathcal H^h(A_{\underline t}), \qquad A_t = \bigg\{ \bigg( 1 - \frac{t}{\underline t} \bigg) w + \frac{t}{\underline t} w'(w), w \in A_{\underline t} \bigg\},
\end{equation*}
where $A_{\underline t} \subset \{t = \bar t\}$ (see \cite{biaglo:HJ2,Dan:PhD} for an overview of this estimate). In particular by letting $\underline t \searrow \bar t$ one can deduce that $\mathcal H^h \llcorner_{\{t = \bar t\}}$-a.e. point $w$ belongs to a ray starting in $\{t > \bar t\}$. Since $\theta$ differs from $\vartheta$ in a $\mathcal H^h_{\{t = \bar t\}}$-negligible set, one deduce that the same property holds also for optimal rays of $\theta$.

The property that the optimal rays can be prolonged is the key point in order to show that the residual set $N$ is $\mathcal H^h \llcorner_{\{t = \bar t\}}$-negligible for all $\bar t> 0$ and that the disintegration is regular.

The technique to obtain the indecomposability of Point \eqref{Point_indecomp_main_th} is now completely similar to the approach in \cite{biadan}. For every $\breve \Gamma$, $\{\mathtt w_n\}_n$ one construct the function $\theta_{\Gamma,\mathtt w_n}$ and the equivalence relation
\begin{equation*}
E_{\Gamma,\mathtt w_n} := \big\{ \theta_{\Gamma,\mathtt w_n}(w) = \theta_{\Gamma,\mathtt w_n}(w') \big\},
\end{equation*}
then prove that there is a minimal equivalence relation $\bar E$ given again by some function $\bar \theta$, and deduce from the minimality that the sets of \emph{positive} $\breve \mu$-measure are not further decomposable. Since $\breve \mu \ll \mathcal H^h \llcorner_{\{t = 1\}}$, one can prove that Point \eqref{Point_indecomp_main_th} of Theorem \ref{T_main_theor} holds.

We thus obtain the following theorem.

\begin{theorem}
\label{T_step_theorem}
Given a directed locally affine partition $\{Z^h_\a,C^h_\a\}_{h,\a}$ and a transference plan $\underline{\bar \pi} \in \Pi(\bar \mu,\bar \nu)$ such that
\begin{equation}
\label{E_finite_cone_cost}
\bar{\underline{\pi}} = \sum_h \int \bar{\underline{\pi}}^h_\a m^h(d\a), \qquad \int \ind_{C^h_\a}(z-z') \bar{\underline{\pi}}^h_\a(dzdz') < \infty,
\end{equation}
then there exists a directed locally affine partition $\{Z^{h,\ell}_{\a,\b},C^{h,\ell}_{\a,\b}\}_{h,\a,\ell,\b}$ such that
\begin{enumerate}
\item $Z^{h,\ell}_{\a,\b} \subset Z^h_\a$ has affine dimension $\ell+1$ and $C^{h,\ell}_{\a,\b}$ is an $(\ell+1)$-dimensional extremal cone of $C^h_\a$; moreover $\aff\,Z^h_a = \aff(z + C^h_\a)$ for all $z \in Z^h_\a$;
\item $\mathcal H^d(\{t=1\} \setminus \cup_{h,\a,\ell,\b} Z^{h,\ell}_{\a,\b}) = 0$;
\item the disintegration of $\mathcal H^d \llcorner_{\{t = 1\}}$ w.r.t. the partition $\{Z^\ell_\c\}_{\ell,\c}$, $\c = (\a,\b)$, is regular, i.e.
\begin{equation*}
\mathcal H^d \llcorner_{\{t=1\}} = \sum_\ell \int \xi^\ell_\c \eta^\ell(d\c), \qquad \xi^\ell_\c \ll \mathcal H^\ell \llcorner_{Z^\ell_\c \cap \{t=1\}};
\end{equation*}
\item if $\bar \pi \in \Pi(\bar \mu,\{\underline{\bar \nu}^h_\a\})$ with $\underline{\bar \nu}^h_\a = (\mathtt p_2)_\sharp \underline{\bar \pi}^h_\a$, then $\bar \pi$ satisfies \eqref{E_finite_cone_cost} iff
\begin{equation*}
\bar \pi  = \sum_{\ell} \int \bar \pi^{\ell}_{\c} m^{\ell}(d\c), \qquad \int \ind_{C^{\ell}_{\c}}(z-z') \bar \pi^\ell_\c < \infty;
\end{equation*}
\item \label{Point_step_th_indeco} if $\ell = h$, then for every carriage $\Gamma$ of any $\bar \pi \in \Pi(\bar \mu,\{\underline{\bar \nu}^h_\a\})$ there exists a $\bar \mu$-negligible set $N$ such that each $Z^{h,h}_{\a,\b} \setminus N$ is $\ind_{C^{h,h}_{\a,\b}}$-cyclically connected.
\end{enumerate}
\end{theorem}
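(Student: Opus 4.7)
The plan is to work fiberwise: fix $(h,\a)$ and prove the statement inside each $Z^h_\a$ equipped with the cone cost $\ind_{C^h_\a}$, obtaining a subordinate directed locally affine partition $\{Z^{h,\ell}_{\a,\b},C^{h,\ell}_{\a,\b}\}_{\ell,\b}$; the Borel dependence on $(h,\a)$ will be recovered a posteriori from the Borel dependence of all the objects constructed below in the parameters. Inside $Z^h_\a$ one reduces, after a linear change of coordinates, to the setting of the introduction: $\breve Z=Z^h_\a$, $\breve C=C^h_\a$, $\breve\mu=\bar\mu^h_\a$ concentrated on $\{t=1\}$, $\breve\nu=\underline{\bar\nu}^h_\a$ concentrated on $\{t=0\}$, and a carriage $\breve\Gamma\subset\{w-w'\in\breve C\}$ of $\underline{\bar\pi}^h_\a$. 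Pick a countable dense family $\{\mathtt w_n\}_n\subset \mathtt p_1\breve\Gamma$ and define the functions $\theta'$, $\theta$ and its u.s.c.~envelope $\vartheta$ as in \eqref{E_intro_def_theta}.

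Next, I would establish the regularity of the level-set partition. The transversality of $\breve C$ to each slice $\{t=\bar t\}$ guarantees that slices $\breve C\cap\{t=\bar t\}$ are compact convex bodies of dimension $h$: this allows to deduce that $H_n\cap\{t=\bar t\}$ is of locally finite $\mathcal H^{h-1}$-perimeter, hence $\theta'$ is SBV on $\R^+\times\R^h$, and the same holds for $\theta$ and $\vartheta$. In particular $\{\theta<\vartheta\}$ is $\mathcal H^h\llcorner_{\{t=\bar t\}}$-negligible, and $\theta=\theta'=\vartheta$ on $\mathtt p_1\breve\Gamma$ up to $\breve\mu$-negligible sets. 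Using the Lax-type formula for $\vartheta$, every point of $\R^+\times\R^h$ admits an optimal ray back to $\{t=0\}$; by the non-degeneracy of $\breve C$ one obtains the area estimate
\begin{equation*}
\mathcal H^h(A_t)\geq\bigg(\frac{t}{\underline t}\bigg)^{\!h}\mathcal H^h(A_{\underline t})
\end{equation*}
along a Borel selection of initial points, which by letting $\underline t\searrow\bar t$ shows that $\mathcal H^h\llcorner_{\{t=\bar t\}}$-a.e. $w$ belongs to an optimal ray starting in $\{t>\bar t\}$, and thus the same property transfers to $\theta$ up to a $\mathcal H^h\llcorner_{\{t=\bar t\}}$-negligible set.

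With this prolongation property in hand, the construction of the locally affine partition $\{Z^{h,\ell}_{\a,\b},C^{h,\ell}_{\a,\b}\}_{\ell,\b}$ follows the pattern of \cite{biadan}: at each point $w$ where the initial-point selection is differentiable and the ray can be prolonged on both sides, the locally affine set through $w$ and the corresponding extremal subcone of $\breve C$ are well defined; Borel regularity in $w$ is automatic. The prolongation property, combined with the area estimate, yields exactly that the \emph{residual set} (points where the Borel construction fails) is $\mathcal H^h\llcorner_{\{t=\bar t\}}$-negligible for every $\bar t>0$, and that the disintegration of $\mathcal H^h\llcorner_{\{t=1\}\cap Z^h_\a}$ with respect to $\{Z^{h,\ell}_{\a,\b}\}$ is absolutely continuous with respect to $\mathcal H^\ell\llcorner_{Z^{h,\ell}_{\a,\b}\cap\{t=1\}}$, giving Points (1)--(3). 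Point (4) is a standard cyclical-monotonicity argument using the fact that $\vartheta$ furnishes admissible potentials for $\ind_{\breve C}$.

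For the indecomposability statement (5), I would copy verbatim the fixed-point scheme of \cite{biadan}: for every carriage $\Gamma$ and every countable dense family $\{\mathtt w_n\}_n$ one obtains an equivalence relation $E_{\Gamma,\mathtt w_n}$ whose equivalence classes of positive $\breve\mu$-measure give locally affine sets; taking a countable intersection produces a minimal equivalence relation $\bar E=\{\bar\theta(w)=\bar\theta(w')\}$, and the minimality together with the regularity of the disintegration with respect to $\mathcal H^h\llcorner_{\{t=1\}}$ forces the classes of positive $\breve\mu$-measure of maximal dimension ($\ell=h$) to be essentially $\ind_{\breve C}$-cyclically connected. The main obstacle in the whole scheme is step three, namely the transfer of regularity from $\mathcal L^{d+1}$ to $\mathcal H^d\llcorner_{\{t=1\}}$: this is where the transversality of the cones $C^h_\a$ to $\{t=\bar t\}$ is essential, and where the new Lax-type envelope $\vartheta$ replaces the function $\theta'$ of \cite{biadan} in order to enforce the area estimate slicewise rather than in the full ambient Lebesgue measure.
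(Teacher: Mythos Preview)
Your plan is essentially the paper's own proof, spread over Sections~5--8 and assembled in Section~\ref{Ss_proof_th_step}. Two organizational points are worth noting. First, the paper does not literally fix a single $(h,\a)$ and work fiberwise: it passes through the sheaf/fibration machinery (Proposition~\ref{P_countable_partition_in_reference_directed_planes} and Section~\ref{Ss_mapping_sheaf_to_fibration}) so that the Borel dependence on $\a$ is carried along automatically rather than recovered a posteriori. Second, and more substantively, the paper reverses your order for the partition and the minimality: it \emph{first} builds the full family of equivalence relations $E_{\mathtt W,\tilde\Gamma}$, extracts the minimal $\bar E$ via $\bar\theta$ (Section~\ref{Ss_properties_minimal_equivalence_relation}), and only \emph{then} constructs the locally affine partition $\tilde{\mathbf D}'$ from $\bar\theta$ (Section~\ref{S_decomposition_of_fibration}). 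Your presentation builds the partition from a single $\theta_{\Gamma,\{\mathtt w_n\}}$ and invokes minimality only for Point~(5); this works because the regularity arguments of Section~\ref{S_disintegration_on_affine} apply to any function with the cone-monotonicity property \eqref{E_cone_add_theta}, but the partition one ultimately wants is the one coming from $\bar\theta$, since otherwise the indecomposability in Point~(5) need not hold for the sets you produced.

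One small inaccuracy: Point~(4) is not obtained by interpreting $\vartheta$ as a Kantorovich potential. In the paper it follows from the linear-preorder machinery (Proposition~\ref{P_equiv_all_plan} and Theorem~\ref{T_uniqueness}): any $\bar\pi\in\Pi(\bar\mu,\{\underline{\bar\nu}^h_\a\})$ with finite cone cost satisfies $\bar\pi(\bar\preccurlyeq)=1$, hence is concentrated on $\bar E$, and on the regular set the optimal directions inside each equivalence class are exactly the extremal cones $C^\ell_\c$.
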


The proof of Theorem \ref{T_main_theor} is now accomplished by repeating the reasoning at most $d$ times as follows.

First one uses the decomposition of Theorem \ref{T_potential_deco} to get a first directed locally affine partition. \\
Then starting with the sets of maximal dimension $d$, one uses Theorem \ref{T_step_theorem} in order to obtain (countably many) indecomposable sets of affine dimension $d+1$ as in Point \eqref{Point_step_th_indeco} of Theorem \ref{T_step_theorem}. The remaining sets forms a directed locally affine partition with sets of affine dimension $h \leq d$. Note that if $C^h_\a$ is an extremal face of $\bar{\mathtt c}$ and $C^{h,\ell}_{\a,\b}$ is an extremal fact of $C^h_\a$, then clearly $C^{h,\ell}_{\a,\b}$ is an extremal face of $\bar{\mathtt c}$. \\
Applying Theorem \ref{T_step_theorem} to this remaining locally affine partition, one obtains indecomposable sets of dimension $h+1$ and a new locally affine partition made of sets with affine dimension $\leq h$, and so on.

The last step is to project the final locally affine partition $\{Z^h_\a,C^h_\a\}_{h,\a}$ of $\R^+ \times \R^d$ made of indecomposable sets (in the sense of Point \eqref{Point_step_th_indeco} of Theorem \ref{T_step_theorem}) in the original setting $\R^d$. By the definition of $\bar{\mathtt c}$ it follows that $c(-x) = \bar{\mathtt c}(1,x)$, so that any extremal cone $C^h_\a$ of $\bar{\mathtt c}$ corresponds to the extremal face $O^h_\a = - C^h_\a \cap \{t=1\}$ of $c$. Thus the family
\begin{equation*}
S^h_\a := Z^h_\a \cap \{t=1\}, \qquad O^h_\a := - C^h_\a \cap \{t=1\}
\end{equation*}
satisfies the statement, because $\bar \mu(\{t=1\}) = \bar \nu(\{t=0\}) = 1$.

\begin{remark}
\label{R_further_cases}
As a concluding remark, we observe that similar techniques work also without the assumption of superlinear growth and allowing $c$ to take infinite values. Indeed, first of all one decomposes the space $\R^d$ into indecomposable sets $S_\gamma$ w.r.t. the convex cost
\begin{equation*}
C := \clos\, \{c < \infty\},
\end{equation*}
using the analysis on the cone cost case. Notice that since w.l.o.g. $C$ has dimension $d$, this partition is countable.

Next in each of these sets one studies the transportation problem with cost $c$. Using the fact that these sets are essentially cyclically connected for all carriages $\Gamma$, then one deduces that there exist potentials $\phi_\beta,\psi_\beta$, and then the proof outlined above can start.

The fact that the intersection of $C$ (or of the cones $C^h_\a$) is not compact in $\{t = \bar t\}$ can be replaced by the compactness of the support of $\mu,\nu$, while the regularity of the functions $\theta'$, $\theta$ and $\vartheta$ depends only on the fact that $C \cap \{t = \bar t\}$ is a convex closed set of dimension $d$ (or $h$ for $C^h_\a$).
\end{remark}

%
%
%

\subsection{Structure of the paper}
\label{Ss_struct}

The paper is organized as follows.

In Section \ref{S_setting} we introduce some notations and tools we use in the next sections. Apart from standard functional spaces, we recall some definitions regarding multifunctions and linear/affine subspaces, adapted to our setting. Finally some basic notions on optimal transportation are presented.

In Section \ref{S_directed_locally_affine_partitions} we state the fundamental definition of \emph{directed locally affine partition} $\mathbf D = \{Z^h_\a,C^h_\a\}_{\nfrac{h = 0,\dots,d}{\a \in A^h}}$: this definition is the natural adaptation of the same definition in \cite{biadan}, with minor variation due to the presence of the preferential direction $t$. Proposition \ref{P_countable_partition_in_reference_directed_planes} shows how to decompose $\mathbf D$ into a countable disjoint union of directed locally affine partitions $\mathbf D(h,n)$ such that
\begin{list}{-}{2pt}
\item the sets $Z^h_\a$ in $\mathbf D(h,n)$ have fixed affine dimension,
\item the sets $Z^h_\a$ are almost parallel to a given $h$-dimensional plane $V^h_n$,
\item their projections on $V^h_n$ contain a given $h$-dimensional cube,
\item the projection of $C^h_\a$ on $V^h_n$ is close a given cone $C^h_n$.
\end{list}
As in \cite{biadan} the sets $\mathbf D(h,k)$ are called \emph{sheaf sets} (Definition \ref{D_shaef_set}).

As we said in the introduction, the line of the proof is to refine a directed locally affine partition in order to obtain either indecomposable sets or diminish their dimension by at least $1$: in Section \ref{S_fdlap} we show how the potentials $\bar \phi$, $\bar \psi$ can be used to construct a first directed locally affine partition. The approach is to associate forward and backward optimal rays to each point in $\R^+ \times \R^d$, and then define the \emph{forward/backward regular and regular transport set}: the precise definition is given in Definition \ref{def_regular_points}, we just want to observe that the regular points are in some sense \emph{generic}. After proving some regularity properties, Theorems \ref{T_decomp_phi_R-}, \ref{T_decomp_phi_R+} and Proposition \ref{P_equal_R-R+} show how to construct a directed locally affine partition $\mathbf D_{\bar \phi} = \{Z^h_\a,C^h_\a\}_{h,\a}$, formula \eqref{dlap_phi}. \\
The second part of the section proves that the partition induced by $Z^h_\a$ covers all $\{t=1\}$ up to a $\mathcal H^d$-negligible set and that the disintegration of $\mathcal H^d$ w.r.t. $Z^h_\a$ is regular. Here we need to refine the approach of \cite{Dan:PhD}, which gives only the regularity of the disintegration of $\mathcal L^{d+1} \llcorner_{\{t > 0\}}$. Proposition \ref{P_back_regul_phi} shows that $\mathcal H^d\llcorner_{\{t=1\}}$-a.e. point $z$ belongs to some $Z^h_\a$ (i.e. it is regular), and Proposition \ref{P_ac_disi} completes the analysis proving that the conditional probabilities of the disintegration of $\mathcal H^d \llcorner_{\{t=1\}}$ are a.c. with respect to $\mathcal H^h \llcorner_{Z^h_\a \cap \{t=1\}}$.

The next four sections describe how the iterative step work: given a directed locally affine partition $\mathbf D = \{Z^h_\a,C^h_\a\}_{h,\a}$ such that the disintegration of $\mathcal H^d \llcorner_{\{t=1\}}$ is regular, obtain a refined locally affine partition $\mathbf D' = \{Z^{h,\ell}_{\a,\b},C^{h,\ell}_{\a,\b}\}_{h,\ell,\a,\b}$, again with a regular disintegration and such that $Z^{h,\ell}_{\a,\b} \subset Z^h_\a$ for some $h \geq \ell$, but such that the sets of maximal dimension $h = \ell$ are indecomposable in the sense of Point \eqref{Point_indecomp_main_th} of Theorem \ref{T_main_theor}.

First of all, in Section \ref{S_disintegration_locally_affine} we define the notion of optimal transportation problems in a sheaf set $\{Z^h_\a,C^h_\a\}_\a$, with $h$-fixed: the key point is that the transport can occur only along the directions in the cone $C^h_\a$, see the transport cost \eqref{E_mathtt_c_mathbf_Z}. For the directed locally affine partition obtained from $\bar \phi$, this property is equivalent to the optimality of the transport plan. We report a simple example which shows why from this point onward we need to fix a transference plan, Example \ref{Ex_2ndmarg}. \\
The fact that the elements of a sheaf set are almost parallel to a given plane makes natural to map them into \emph{fibration}, which essentially a sheaf set whose elements $Z^h_\a$ are parallel. This is done in Section \ref{Ss_mapping_sheaf_to_fibration}, and Proposition \ref{P_equivalence_transference_sheaf_fibration} shown the equivalence of the transference problems.

The proof outlined in Section \ref{Sss_refi_cone_cost} is developed starting from Section \ref{S_monotone_relation_on_fibration}. For any fixed carriage $\tilde \Gamma \subset \{t=1\} \times \{t=0\}$ we construct in Section \ref{Ss_linear_prorder_fibration} first the family of sets $H_n$, and then the partition functions $\theta'$, $\theta$: the properties we needs (mainly the regularity of the level sets) are proved in Section \ref{Sss_constr_linear_preord}. In Section \ref{Sss_constr_family_linear_preord} we show how by varying $\tilde \Gamma$ we obtain a family of equivalence relations (whose elements are the level sets of $\theta$) closed under countable intersections. \\
The next section (Section \ref{Ss_properties_minimal_equivalence_relation}) uses the techniques developed in \cite{BiaCar} in order to get a minimal equivalence relation: the conclusion is that there exists a function $\bar \theta$, constructed with a particular carriage $\bar{\tilde \Gamma}$, which is finer that all other partitions, up to a $\bar \mu$-negligible set. The final example (Example \ref{Ex_no_cycl_conn_inner}) address a technical point: it shows that differently from \cite{biadan}, it is not possible to identify the sets of cyclically connected points with the Lebesgue points of the equivalence classes.

Section \ref{S_decomposition_of_fibration} strictly follows the approach of \cite{biadan} in order to obtain from the fibration a refined locally affine partition. Roughly speaking the construction is very similar to the construction with the potential $\bar \phi$: one defines the optimal directions and the regular points more or less as in the potential case. After listing the necessary regularity properties of the objects introduced at the beginning of this section, in Section \ref{Ss_partition_transport_set} we give the analogous partition function of the potential case and obtain the refined locally affine partition $\tilde{\mathbf D}' = \{Z^{h,\ell}_{\a,\b},C^{h,\ell}_{\a,\b}\}_{h,\ell,\a,\b}$.

Section \ref{S_disintegration_on_affine} addresses the regularity problem of the disintegration. As said in the introduction, the main idea is to replace $\bar \theta$ with its u.s.c. envelope $\bar \vartheta$, which has the property that its optimal rays reach $t=0$ for \emph{all} point in $\R^+ \times \R^d$. A slight variation of the approach used with the potential $\bar \phi$ gives that $\mathcal H^d \llcorner_{\{t = \bar t\}}$-a.e. point is regular (Proposition \ref{P_back_regul_vartheta}) for the directed locally affine partition given by $\bar \vartheta$. Using the fact that $\bar \theta = \bar \vartheta$ $\mathcal H^h \llcorner_{\{t=\bar t\}}$-a.e., one obtains the regularity of $\mathcal H^d \llcorner_{\{t = \bar t\}}$-a.e. point for the directed locally affine partition induced by $\bar \theta$ (Corollary \ref{P_back_regul_theta}). The area estimate for optimal rays of $\bar \vartheta$ (Lemma \ref{L_inner_area_estimate}) allows with an easy argument to prove the regularity of the disintegration, Proposition \ref{P_ac_disi_bar_theta}. 

The final Section \ref{S_transl_orig} explains how the steps outlined in the last four sections can be used in order to obtain the proof of Theorem \ref{T_main_theor}.

Finally in Appendix \ref{A_appendix_1} we recall the result of \cite{biacar:cmono} concerning linear preorders and the existence of minimal equivalence relations and their application to optimal transference problems.

\section{General notations and definitions}
\label{S_setting}

As standard notation, we will write $\N$ for the natural numbers, $\N_0 = \N \cup \{0\}$, $\Q$ for the rational numbers, $\R$ for the real numbers. The set of positive rational and real numbers will be denoted by $\Q^+$ and $\R^+$ respectively. To avoid the analysis of different cases when parameters are in $\R$ or $\N$, we set $\R^0 := \N$.
The first infinite ordinal number will be denoted by $\omega$, and the first uncountable ordinal number is denoted by $\Omega$.

The $d$-dimensional real vector space will be denoted by $\R^d$. The euclidian norm in $\R^d$ will be denoted by $|\cdot|$. For every $k\leq d$, the open unit ball in $\RR^h$ with center $z$ and radius $r$ will be denoted with $B(z,r)$ and for every $x\in\R^h$, $\bar t \geq 0$,  $B^h(\bar t,x,r):= B(\bar t,x,r)\cap \{t=\bar t\}$. 

Moreover, for every $a,b\in \RR^d$ define the close segment, the open segment, and the section at $t = \bar t$ respectively as :
\[ \segment{a,b}:= \{ \lambda a+ (1-\lambda) b: \lambda\in[0,1]  \},\quad \rrbracket a,b\llbracket:= \{ \lambda a+ (1-\lambda) b: \lambda\in]0,1[  \}, \quad \segment{a,b}(\bar t):= \segment{a,b}\cap \{ t =\bar t\}.\]

The closure of a set $A$ in a topological space $X$ will be written $\clos\,A$, and its interior by $\inter\,A$. If $A \subset Y \subset X$, then the relative interior of $A$ in $Y$ is $\interr A$: in general the space $Y$ will be clear from the context. The topological boundary of a set $A$ will be denoted by $\partial A$, and the relative boundary is $\partial_\mathrm{rel} A$. The space $Y$ will be clear from the context.

If $A$, $A'$ are subset of a real vector space, we will write
\begin{equation*}
A + A' := \big\{ z + z', z \in A, z' \in A' \big\}.
\end{equation*}
If $T \subset \R$, then we will write
\begin{equation*}
T A := \big\{ t z, t \in T, z \in A \big\}.
\end{equation*}

The convex envelope of a set $A \subset \RR^d$ will be denoted by $\conv\,A$. If $A \subset \RR^d $, its  convex direction envelope is defined as
\begin{equation*}
	\convd A := \{ t=1\} \cap \big( \R^+ \cdot \conv\,A \big).
\end{equation*}

If $x \in \prod_i X_i$, where $\prod_i X_i$ is the product space of the spaces $X_i$, we will denote the projection on the $\bar i$-component as $\mathtt p_{\bar i} x$ or $\mathtt p_{x_{\bar i}} x$: in general no ambiguity will occur. Similarly we will denote the projection of a set $A \subset \prod_i X_i$ as $\mathtt p_{\bar i} A$, $\mathtt p_{x_{\bar i}} A$. In particular for every $\bar t\geq 0$ and $x\in \R^d$, $\prj_t (\bar t, x):= x$.


\subsection{Functions and multifunctions}
\label{Ss_souslin_multifunction}

A multifunction $\mathbf f$ will be considered as a subset of $X \times Y$, and we will write
\[
	\mathbf f(x) = \big\{ y \in Y: (x,y) \in \mathbf f \big\}.
\]
The inverse will be denoted by
\begin{equation*}
\mathbf f^{-1} = \big\{ (y,x) : (x,y) \in \mathbf f \big\}.
\end{equation*}

With the same spirit, we will not distinguish between a function $\mathtt f$ and its graph $\Graph\, \mathtt f$, in particular we say that the function $\mathtt f$ is \emph{$\sigma$-continuous} if $\Graph\,\mathtt f$ is $\sigma$-compact. Note that we do not require that its domain is the entire space.

If $\mathtt f$, $\mathtt g$ are two functions, their composition will be denoted by $\mathtt g \circ \mathtt f$.

The epigraph of a function $\mathtt f : X \to \R$ will be denoted by
\begin{equation*}
\epi\,\mathtt f := \big\{ (x,t) : \mathtt f(x) \leq t \big\}.
\end{equation*}

The identity map will be written as $\mathbb I$, the characteristic function of a set $A$ will be denoted by
\begin{equation*}
\chi_A(x) :=
\begin{cases}
1 & x \in A, \crcr
0 & x \notin A,
\end{cases}
\end{equation*}
and the indicator function of a set $A$ is defined by
\begin{equation*}
\ind_A(x) := \begin{cases}
             0 & x \in A, \crcr
	     \infty & x \notin A.
             \end{cases}
\end{equation*}

\subsection{Affine subspaces and cones}
\label{Ss_intro_affine_subspaces_cones}

We now introduce some spaces needed in the next sections: we will consider these spaces with the topology given by the Hausdorff distance $\mathtt d_{\mathrm H}$ of their elements in every closed ball $\clos\, B(0,r)$ of $\R^d$, i.e.
\[
\mathtt d(A,A') := \sum_n 2^{-n} \disth \big( A \cap B(0,n), A' \cap B(0,n) \big).
\]
for two generic elements $A$, $A'$.

We will denote points in $\RR^d$ as $z = (t,x)$.

For $h,h',d \in \N_0$, $h' \leq h \leq d$, define $\mathcal G(h,\RR^d)$ to be the set of $(h+1)$-dimensional subspaces of $\RR^d$ such that their slice at $t=1$ is a $h$-dimensional subspace of $\{t=1\}$, and let $\mathcal A(h,\RR^d)$ be the set of $(h+1)$-dimensional affine subspaces of $\RR^d$ such that their slice at $t=1$ is a $h$-dimensional affine subspace of $\{t=1\}$. If $V \in \mathcal A(h,\RR^d)$, we define $\mathcal A(h',V) \subset \mathcal A(h',\RR^d)$ as the $(h'+1)$-dimensional affine subspaces of $V$ such that their slice at time $t=1$ is a $h'$-dimensional affine subset.

We define the projection on $A \in \mhA(h,[0,+\infty) \times \R^d)$ with $\bar t$ fixed as $\tpt_A$:
\[
\tpt_A (\bar t, x) = \big( \bar t, \mathtt p_{A \cap \{t=\bar t\}} x \big).
\]

If $A \subset \RR^d$, then define its affine span as
\begin{equation*}
\aff\,A := \bigg\{ \sum_i t_i z_i, i \in \N, t_i \in \R, z_i \in A, \sum_i t_i = 1 \bigg\}.
\end{equation*}
The \emph{linear dimension} of the set $\aff\,A \subset \RR^d$ is denoted by $\dim\,A$. The orthogonal space to $\mathrm{span}\, A := \aff(A \cup \{0\})$ will be denoted by $A^\perp$.
For brevity, in the following the dimension of $\aff A\cap\{t =\bar t\}$ will be called \emph{dimension at time $\bar t$} (or if there is no ambiguity \emph{time fixed dimension}) and denoted by $\dim_{\bar t} A$.

Let $\mathcal C(h,\RR^d)$ be the set of closed convex non degenerate cones in $\RR^d$ with vertex in $(0,0)$ and dimension $h+1$: non degenerate means that their linear dimension is $h+1$ and their intersection with $\{ t=1 \}$ is a compact convex set of dimension $h$. Note that if $C \in \mhC(h,\tRd)$, then $\aff\,C \in \mhA(h,\tRd)$ and conversely if $\aff\,C \in \mhA(h,\tRd)$ and $C\cap\{t=1\}$ is bounded then $C \in \mhC(h,\tRd)$. 

Set also for $C \in \mathcal C(h,\RR^d)$
\begin{equation*}
	\mathcal D C := C \cap \{t=1\},
\end{equation*}
and
	\begin{align*}
		\mathcal{DC}(h,\RR^d) :=&~ \big\{ \mathcal D C : C \in \mathcal C(h,\RR^d) \} \crcr
							=&~ \big \{ K\subset \{t=1\} : K\textrm{ is convex and compact}\big\}.
	\end{align*}
The latter set is the set of \emph{directions} of the cones $C \in \mathcal C(h,\RR^d)$. We will also write for $V \in \mathcal G(h,\RR^d)$
\begin{equation*}
\mathcal C(h',V) := \big\{ C \in \mathcal C(h',\RR^d): \aff\,C  \subset V \big\}, \quad \mathcal{DC}(h,V) := \big\{ \mathcal D C : C \in \mathcal C(h,V) \}.
\end{equation*}

Define $\mathcal K(h)$ as the set of all $h$-dimensional compact and convex subset of $\{t=1\}$.
If $K\in\mathcal K(h)$, set the \emph{open set}
\begin{equation}
	\label{E_epsilon_neigh_of_Cka_section}
	\mathring K(r) := \big( K + B^{d+1}(0,r) \big) \cap \aff\,K .
\end{equation}

\noindent Define $K(r) := \clos\,K(r) \in \mathcal K(h)$. Notice that $K = \cap_n \mathring K(2^{-n})$. 

For $r < 0$ we also define the open set
\begin{equation}
	\label{E_inverse_neigh_Cone_section}
	\mathring K(-r) := \Big\{ z \in \{t=1\} : \exists \epsilon > 0 \ \big( B^{d+1}(z,r+\epsilon) \cap \aff\,K \subset K \big) \Big\},
\end{equation}
so that $\interr K = \cup_n \mathring K(-2^{-n})$: as before $K(-r) := \clos\,\mathring K(-r) \in \mathcal K(h,\RR^d)$ for $0 < -r \ll 1$. 

If $V$ is a $h$-dimensional subspace of $\{t=1\}$, $K\in\mathcal K(h)$ such that $K\subset V$ and given two real numbers $r,\lambda > 0$, consider the subsets $L_{\rm d}(h,K,r,\lambda)$ of $\mathcal K$ defined by
\begin{align}
\label{E_L_V_r_def_section}
	L_{\rm d}(h,K,r,\lambda) := \Big\{ K' \in \mathcal K(h) : \ 	(i)&~ K(-r) \subset \mathtt p_V \mathring K', \crcr
													(ii)&~ \mathtt p_V K' \subset \mathring K, \crcr
													(iii)&~ \disth(\mathtt p_V K',K') < \lambda \Big\}.
\end{align}
The subscript $\rm d$ refers to the fact that we are working in $\{t=1\} \times \R^d$.

\begin{figure}
	\centering
		\subfloat[][The sets $\mathring{K}(-r),\mathring K(r)$ for a given $K \in \mathcal K(h)$.]
			{\includegraphics[width=.45\columnwidth]{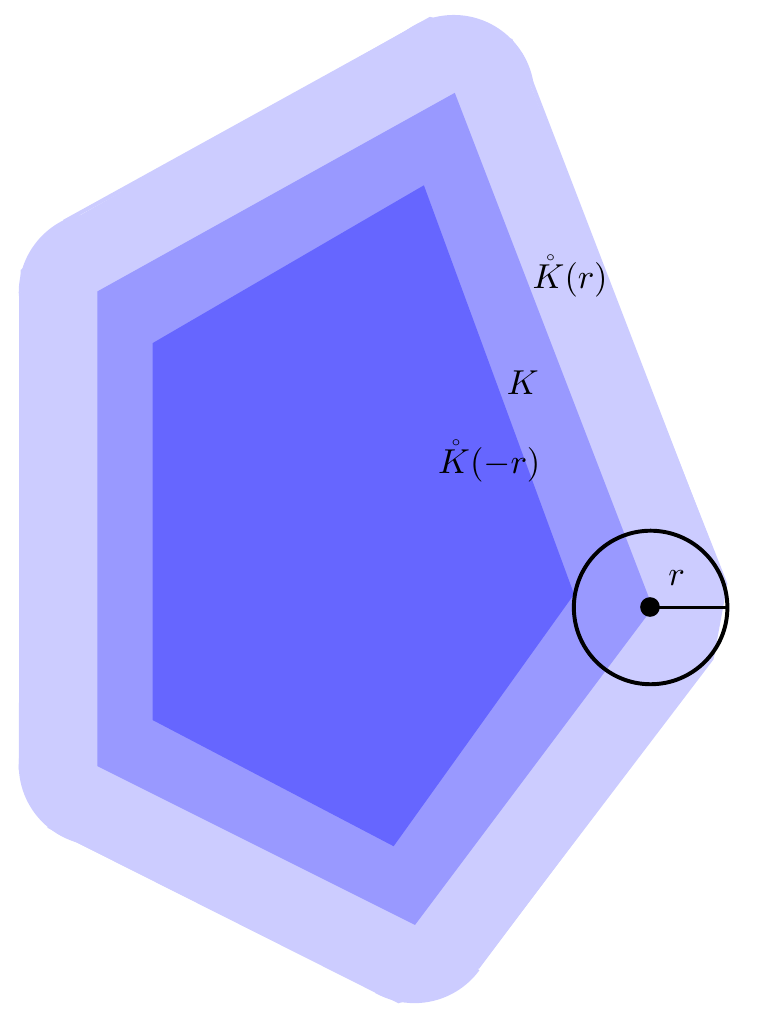}} \quad
		\subfloat[][The set $L_{\rm d}(h,K,r,\lambda)$.]
			{\includegraphics[width=.45\columnwidth]{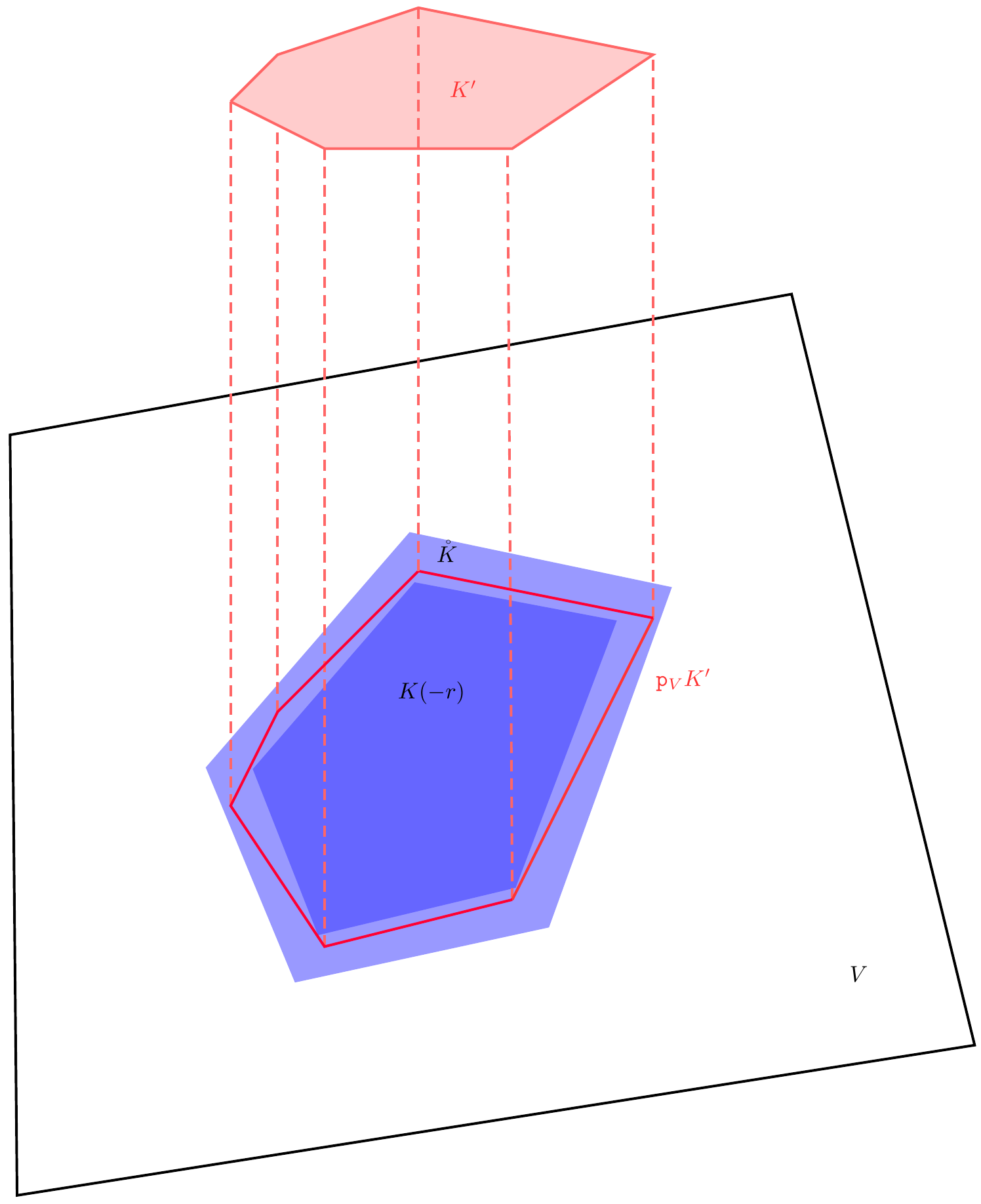}}
		\caption{The sets defined in \eqref{E_epsilon_neigh_of_Cka_section}, \eqref{E_inverse_neigh_Cone_section} and \eqref{E_L_V_r_def_section}.}
		\label{fig:subfig}
\end{figure}

Recall that according to the definition of $C\in\mathcal C(h,\RR^d)$, $C\cap\{t=1\}$ is compact. Define
\begin{equation*}
	L(h,C,r,\lambda) := \Big\{ C\in \mathcal C(h,\RR^d): C\cap\{t=1\} \in L_{\rm d}(h,K,r,\lambda) \Big\}.
\end{equation*}
It is fairly easy to see that for all $r,\lambda > 0$ the family
\begin{equation}
	\label{E_base_of_C_k_R_ell_section}
	\mathfrak L(h,r,\lambda) := \Big\{ L(h,C,r',\lambda'), C \in \mathcal C(h,\RR^d), 0 < r' < r, 0 < \lambda' < \lambda \Big\}
\end{equation}
generates a prebase of neighborhoods of $\mathcal C(h,\RR^d)$. In particular, being the latter separable, we can find countably many sets $L(h,C_n,r_n,\lambda_n)$, $n \in \N$, covering $\mathcal C(h,\RR^d)$, and such that $\big(C_n\cap\{t=1\}\big)(-r_n) \in \mathcal K(h)$. 

Let $C\in\mathcal C(h, \RR^d)$ and $r>0$.
For simplicity, we define 
\begin{equation*}
	\mathring C(r) := \{0\} \cup \R^+ \cdot \Big( \big( \mathcal D C + B^{d+1}(0,r) \big) \cap \aff\,\mathcal D C \Big),
\end{equation*}
\begin{equation*}
	\mathring C(-r) := \{0\} \cup \R^+ \cdot \Big\{ z \in \{t=1\} : \exists \epsilon > 0 \ \big( B^{d+1}(z,r+\epsilon) \cap \aff\,\mathcal D C \subset \mathcal D C \big) \Big\},
\end{equation*}
\[C(r) := \clos\,C(r) \quad \textrm{and}\quad C(-r) := \clos\,C(-r). \]
Notice that \eqref{E_base_of_C_k_R_ell_section} can be rewritten using these new definitions.


\subsection{Partitions}
\label{Ss_partitions_intro}

We say that a subset $Z \subset \RR^d$ is \emph{locally affine} if there exists $h \in \{0,\dots,d\}$ and $V \in \mathcal A(h,\RR^d)$ such that $Z \subset V$ and $Z$ is relatively open in $V$, i.e. $\interr Z \not= \emptyset$. Notice that we are not considering here $0$-dimensional sets (points), because we will not use them in the following.

A \emph{partition} in $\RR^d$ is a family $\mathcal Z = \{Z_{\mathfrak a}\}_{\mathfrak a \in \mathfrak A}$ of disjoint subsets of $\RR^d$. We do not require that $\mathcal Z$ is a \emph{covering} of $\RR^d$, i.e. $\cup_\mathfrak a Z_\mathfrak a = \RR^d$.

A \emph{locally affine partition} $\mathcal Z = \{Z_{\mathfrak a}\}_{\mathfrak a \in \mathfrak A}$ is a partition such that each $Z_{\mathfrak a}$ is locally affine. We will often write
\[
\mathcal Z = \bigcup_{k=0}^d \mathcal Z^h, \quad \mathcal Z^h = \big\{ Z_\mathfrak a, \mathfrak a \in \mathfrak A: \dim\,Z_\mathfrak a = h + 1 \big\},
\]
and to specify the dimension of $Z_\mathfrak a$ we will add the superscript $(\dim\,Z_\mathfrak a - 1)$: thus, the sets in $\mathcal Z^h$ are written as $Z^h_\mathfrak a$, and $\mathfrak a$ varies in some set of indexes $\mathfrak A^{d-h}$ (the reason of this notation will be clear in the following. In particular $\mathfrak A^{d-h}\subseteq\R^{d-h}$). 

\subsection{Measures and transference plans}
\label{Ss_measure_transference}

We will denote the Lebesgue measure of $\RR^d$ as $\mathcal L^{d+1}$, and the $k$-dimensional Hausdorff measure on an affine $k$-dimensional subspace $V$ as $\mathcal H^h \llcorner_V$. In general, the restriction of a function/measure to a set $A \in \RR^d$ will be denoted by the symbol $\llcorner_A$ following the function/measure.

The Lebesgue points $\mathrm{Leb}(A)$ of a set $A \subset \RR^d$ are the points $z \in A$ such that
\begin{equation*}
\lim_{r \to 0} \frac{\mathcal L^{d+1}(A \cap B(z,r))}{\mathcal L^{d+1}(B(z,r))} = 1.
\end{equation*}
If $\varpi$ is a locally bounded Borel measure on $\RR^d$, we will write $\varpi \ll \mathcal L^{d+1}$ if $\varpi$ is a.c. w.r.t. $\mathcal L^{d+1}$, and we say that $z$ is a \emph{Lebesgue point of $\varpi \ll \mathcal L^{d+1}$} if
\begin{equation*}
\mathtt f(z) > 0 \quad \wedge \quad \lim_{r \to 0} \frac{1}{\mathcal L^{d+1}(B(z,r))} \int_{B(z,r)} \big| \mathtt f(z') - \mathtt f(z) \big| \mathcal L^{d+1}(dz') = 0,
\end{equation*}
where we denote by $\mathtt f$ the Radon-Nikodym derivative of $\varpi$ w.r.t. $\mathcal L^{d+1}$, i.e. $\varpi = \mathtt f \mathcal L^{d+1}$.
We will denote this set by $\mathrm{Leb}\,\varpi$.

The set of probability measure on a measurable space $X$ will be denoted by $\mathcal P(X)$. In general the $\sigma$-algebra is clear from the context.

If $\varpi$ is a measure on the measurable space $X$ and $\mathtt f : X \mapsto Y$ is a $\varpi$-measurable map, then we will denote the \emph{push-forward of $\varpi$} by $\mathtt f_\sharp \varpi$.

We will also use the following notation: for a generic Polish space $(X,\mathtt d)$, measures $\mu,\nu \in \mathcal P(X)$ and Borel cost function $\mathtt c : X \times X \to [0,\infty]$ we set
\begin{equation*}
\Pi(\mu,\nu) := \Big\{ \pi \in \mathcal P(X \times X) : (\mathtt p_1)_\sharp \pi = \mu, (\mathtt p_2)_\sharp \pi = \nu \Big\}.
\end{equation*}
\begin{equation*}
\Pi^f_{\mathtt c}(\mu,\nu) := \bigg\{ \pi \in \Pi(\mu,\nu) : \int_{X \times X} \mathtt c \pi < +\infty \bigg\},
\end{equation*}
\begin{equation*}
\Pi^{\mathrm{opt}}_{\mathtt c}(\mu,\nu) := \bigg\{ \pi \in \Pi(\mu,\nu) : \int_{X \times X} \mathtt c \pi = \inf_{\pi' \in \Pi(\mu,\nu)} \int_{X \times X} \mathtt c \pi' \bigg\}.
\end{equation*}

If $\Gamma \subset X \times X$, then an \emph{axial path with base points $(z_i,z_i') \in \Gamma$, $i=1,\dots,I$} starting in $z = z_1$ and ending in $z''$ is the sequence of points
\begin{equation}
\label{axial_path_explicit}
	(z,z'_1) = (z_1,z'_1),(z_2,z'_1),\dots,(z_i,z'_{i-1}),(z_i,z'_i),(z_{i+1},z'_i),\dots,(z_I,z'_I),(z'',z'_I).
\end{equation}
We will say that the axial path \emph{connects} $z$ to $z''$: note that $z \in \mathtt p_1 \Gamma$. A \emph{closed axial path or cycle} is an axial path such that $z=z''$. The axial path has finite cost if it is contained in $\{\mathtt c < \infty\}$.

We say that $A \subset X$ is \emph{$(\Gamma,\mathtt c)$-cyclically connected} if for any $z,z'' \in A$ there exists an axial path with finite cost connecting $z$ to $z''$: equivalently we can say that there exists a closed axial path whose projection on $X$ contains $z$, $z''$. From the definition it follows that $A \subset \mathtt p_1 \Gamma$.

The \emph{Souslin sets $\Sigma^1_1$} of a Polish space $X$ are the projections of the Borel sets of $X \times X$. The $\sigma$-algebra generated by the Souslin sets will be denoted by $\varTheta$.

\section{Directed locally affine partitions}
\label{S_directed_locally_affine_partitions}

The key element in our proof is the definition of locally affine partition: this definition is not exactly the one given given in \cite{biadan} because we require that if the cone has linear dimension $h+1$, then its intersection with $t=1$ is a compact convex set of linear dimension $h$.

%
%

\begin{definition}
\label{D_loca_aff_part}
A \emph{directed locally affine partition} in $\RR^d$ is a partition into locally affine sets $\{Z^h_\mathfrak a\}_{\nfrac{h = 0,\dots,{d}}{\mathfrak a \in \mathfrak A^{d-h}}}$, $Z^h_\a  \subset \RR^d$ and $\mathfrak A^{d-h} \subset \R^{d-h}$, together with a map
\begin{equation*}
\mathbf d : \bigcup_{h=0}^{d} \{h\} \times \mathfrak A^{d-h} \to \bigcup_{h=0}^{d} \mhC(h,\RR^{d})
\end{equation*}
satisfying the following properties:
\begin{enumerate}
\item \label{Point_1_graph_mathcal_D_sigma_compact} the set
\begin{equation*}
\mathbf D = \bigg\{ \big( h,\mathfrak a,z,\mathbf d(h,\mathfrak a) \big): k \in \{0,\dots,d\}, \a \in \A^{d-h}, z \in Z^h_\mathfrak a \bigg\}, 
\end{equation*}
is $\sigma$-compact in $\cup_h \big( \{h\} \times \R^{d-h} \times (\RR^d) \times \mhC(h,\RR^{d})\big)$, i.e. there exists a family of compact sets
\[
	K_n \subset \bigcup_h \big( \{h\} \times \R^{d-h} \times (\RR^d)  \big)
\]
such that $Z^h_\a  \cap \mathtt p_z K_n$ is compact and
\[
\mathtt p_{h,\a} K_n \ni (h,\mathfrak a) \mapsto (Z^h_\mathfrak a \cap \mathtt p_z K_n,\mathbf d(h,\mathfrak a)) 
\]
is continuous w.r.t. the Hausdorff topology;


\item denoting $C^h_\mathfrak a := \mathbf d(h,\mathfrak a)$, then
\[
\forall z \in Z^h_\mathfrak a \ \Big( \aff\, Z^h_\mathfrak a = \aff (z + C^h_\mathfrak a) \Big);
\]

\item \label{Point_3_def_direc_part} the plane $\aff\, Z^h_\a $ satisfies $\aff\, Z^h_\a  \in \mhA(h,\RR^d)$.

\end{enumerate}
\end{definition}

\begin{remark}
\label{R_3rd_point_nondeg}
Using the fact that $C^h_\a$ is not degenerate, one sees immediately that Point \ref{Point_3_def_direc_part} is unnecessary.
\end{remark}

The map $\mathbf d$ will be called \emph{direction map} of the partition, or \emph{direction vector field} for $h=0$. Sometimes in the following we will write
\[
\mathbf d(z) = \mathbf d(h,\mathfrak a) \quad \text{for} \quad z \in Z^h_\mathfrak a,
\]
being $Z^h_\mathfrak a$ a partition, or we will use also the notation $\{Z^h_\mathfrak a,C^h_\mathfrak a\}_{h,\mathfrak a}$. For shortness we will write 
\begin{equation}
\label{E_mathbf_Z_base_partition}
\mathbf Z^h := \mathtt p_z \mathbf D(h) = \bigcup_{\mathfrak a \in \mathfrak A_h} Z^h_\mathfrak a, \qquad \mathbf Z := \mathtt p_z \mathbf D = \bigcup_{h=0}^{d} \mathbf Z^h = \bigcup_{h=0}^{d} \bigcup_{\mathfrak a \in \mathfrak A_h} Z^h_\mathfrak a.
\end{equation}


For each $C^h_n \in \mhC(h,\RR^{d})$,  (the index $n$ is because of the proposition below), consider a family $\mathrm e^h(n)$ of vectors $\{\mathrm e^h_i(n), i =0,\dots,h\}$ in $\R^d$ such that
\begin{equation}
\label{E_span_mathrm_e_h_i_n}
\mathcal C(h,\RR^d) \ni C(\{\mathrm e^h(n)\}) := \bigg\{ \sum_{i=0}^h t_i (1,\mathrm e^h_i(n)), t_i \in [0,\infty) \bigg\} \subset \mathring C^h_n(-r_n).
\end{equation}
Define also
\begin{equation}
\label{E_U_square_mathrm_e}
U(\{\mathrm e^h(n)\}) := \{t=0\} \times \conv\,\mathrm e^h(n). 
\end{equation}

Note that
\[
\aff\, \big\{ (0,0),(1,\mathrm e^h_0(n)),\dots,(1,\mathrm e^h_h(n)) \big\} \in \mhA(h,\tRd),
\]
so that $C(\{\mathrm e^h(n)\}) \in \mhC(h,\RR^{d})$.

\begin{figure}
	\centering
	\includegraphics[scale=0.85]{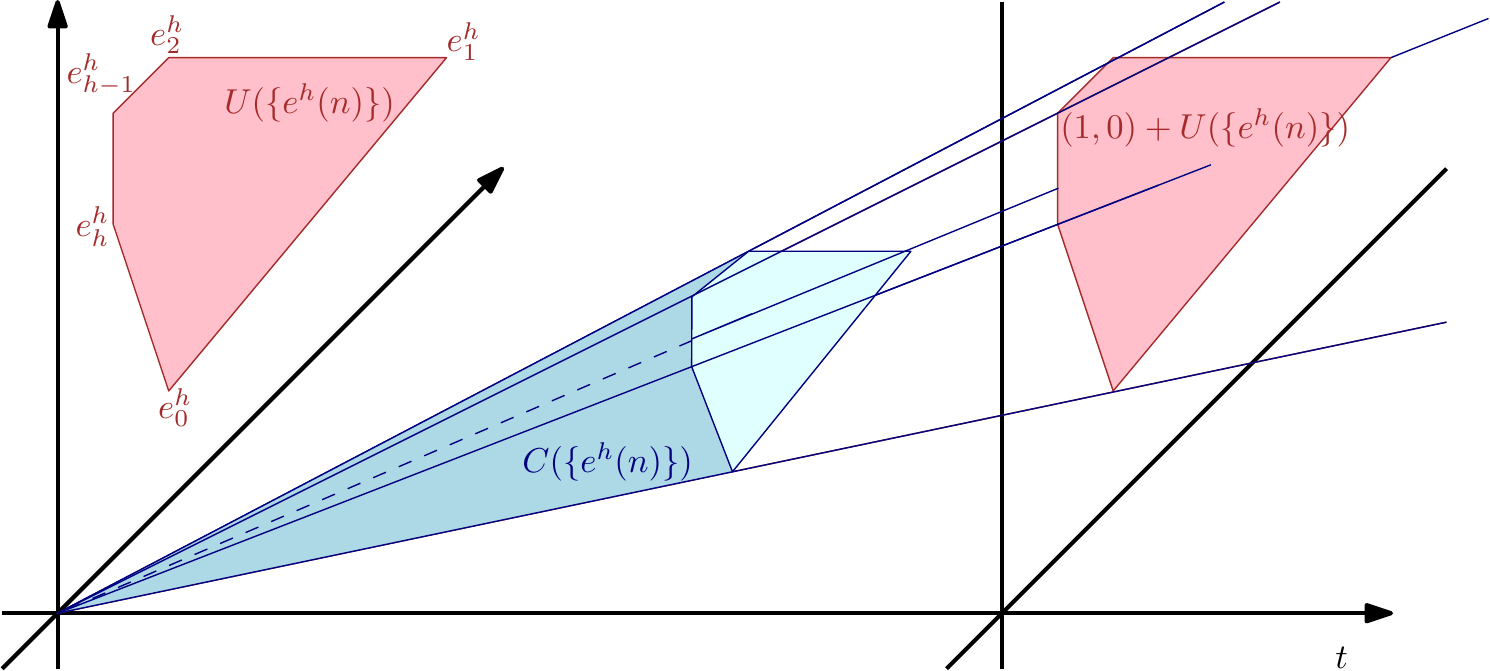}
	\caption{Definition of $C(\{\mathrm e^h(n)\})$ and $U(\{\mathrm e^h(n)\})$, formulas \eqref{E_span_mathrm_e_h_i_n} and \eqref{E_U_square_mathrm_e}.}
	\label{U_C_picture}
\end{figure}


The following proposition is the adaptation of Proposition 3.15 of \cite{biadan} to the present situation.


\begin{proposition}
\label{P_countable_partition_in_reference_directed_planes}
There exists a countable covering of $\mathbf D$ into disjoint $\sigma$-compact sets $\mathbf D(h,n)$, $h = 0,\dots,d$ and $n \in \N$, with the following properties: there exist
\begin{itemize}
\item vectors $\{\mathrm e^h_i(n)\}_{i=0}^h \subset \mathbb R^d$, with linear span
\[
V^h_n = \mathrm{span} \big\{ (1,\mathrm e_0^h(n)),\dots,(1,\mathrm e_h^h(n)) \big\} \in \mathcal A(h,\RR^d),
\]
\item a cone $C^h_n \in \mhC(h,V^h_n)$,
\item a given point $z^h_n \in V^h_n $,
\item constants $r^h_n,\lambda^h_n \in (0,\infty)$,
\end{itemize}
such that, setting
\[
\mathfrak A^h_n := \mathtt p_\mathfrak a \mathbf D(h,n),  \qquad  C^h_\a = \mathtt p_{\mathcal C(h,[0,\infty] \times \R^d)} \bD(h,n)(\a),
\]
it holds:
\begin{enumerate}
\item \label{Point_0_Proposition_P_countable_partition} $\mathtt p_{\{0,\dots,d\}} \mathbf D(h,n) =\{h\}$ for all $n \in \N$, i.e. the intersections of the elements $Z^h_\a$, $C^h_\a$ with $\{t=1\}$ have linear dimension $h$, for $\a \in \A^h_n$;
\item \label{Point_1_Proposition_P_countable_partition} the cone generated by $\{\mathrm e_i^h(n)\}$ is not degenerate and strictly contained in $C^h_n$,
\[
C(\{\mathrm e^h_i(n)\}) \in \mhC(h,V^h_n ), \qquad C(\{\mathrm e^h_i(n)\}) \subset \mathring C^h_n(-r^h_n);
\]
\item \label{Point_2_Proposition_P_countable_partition} the cones $C^h_\a $, $\a \in \A^{d-h}_n$, have a uniform opening,
\[
C^h_n(-r^h_n) \subset \prj^{\bar t}_{V^h_n} \mathring C^h_\mathfrak a;
\]
\item \label{Point_3_Proposition_P_countable_partition} the projections of cones $C^h_\a $, $\a \in \A^{d-h}_n$, are strictly contained in $C^h_n$,
\[
\prj^{\bar t}_{V^h_n} C^h_\mathfrak a \subset \mathring C^h_n; 
\]
\item \label{Point_4_Proposition_P_countable_partition} the projection at constant $t$ on $V^h_n$ is not degenerate: there is a constant $\kappa>0$ such that
\[
\big| \tpt_{V^h_n} (z - z') \big| \geq \kappa |z - z'| \quad \text{ for all }  z,z' \in C^h_\mathfrak a \cap \{t=\bar t\}, \ \mathfrak a \in \mathfrak A^h_n, \bar t \geq 0;
\]
\item the projection at constant $t$ of $Z^h_\a $ on $V^h_n$ contains a given cube,
\[
z^h_n +  \, U(\{\mathrm e^h_i(n)\}) \subset \prj^{\bar t}_{V^h_n} Z^h_\mathfrak a. 
\]
\end{enumerate}
\end{proposition}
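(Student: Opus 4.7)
The proof will adapt Proposition 3.15 of \cite{biadan} to the present framework, where the novel feature is that planes and cones must belong to $\mhA(h,\tRd)$ and $\mhC(h,\tRd)$ respectively, i.e.\ they must be non-degenerate in the $t$-direction. The plan is to first split $\mathbf{D}$ according to the linear dimension $h+1$ of $Z^h_\a$, writing $\mathbf{D} = \bigsqcup_{h=0}^d \mathbf{D}^h$; this step immediately yields Point~\eqref{Point_0_Proposition_P_countable_partition} and preserves $\sigma$-compactness by Definition~\ref{D_loca_aff_part}. The remainder of the argument will be carried out separately for each fixed $h$.

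Next, I would enumerate a countable family of parameter tuples $(V^h_n, C^h_n, \{\mathrm e^h_i(n)\}_{i=0}^h, z^h_n, r^h_n, \lambda^h_n)_{n\in\N}$, where $V^h_n \in \mathcal G(h,\tRd)$ and $C^h_n \in \mhC(h,V^h_n)$ vary in countable dense families (these spaces are separable in the Hausdorff metric), the vectors $\mathrm e^h_i(n)$ are rational points chosen so that $C(\{\mathrm e^h_i(n)\}) \subset \mathring C^h_n(-r^h_n)$ (which guarantees Point~\eqref{Point_1_Proposition_P_countable_partition}), the basepoint $z^h_n$ ranges in a countable dense subset of $V^h_n$, and $r^h_n, \lambda^h_n$ are positive rationals. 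This enumeration exhausts the prebase $\mathfrak L(h,r,\lambda)$ from \eqref{E_base_of_C_k_R_ell_section}.

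For each $\a \in \A^{d-h}$ I would then set $N^h_\a \subset \N$ to be the set of indices $n$ for which: (a)~$C^h_\a \in L(h,C^h_n,r^h_n,\lambda^h_n)$, which by \eqref{E_L_V_r_def_section} encodes Points~\eqref{Point_2_Proposition_P_countable_partition} and \eqref{Point_3_Proposition_P_countable_partition}; (b)~the Hausdorff bound in \eqref{E_L_V_r_def_section} provides a uniform lower bound on $|\tpt_{V^h_n}(z-z')|$ yielding Point~\eqref{Point_4_Proposition_P_countable_partition}, with a constant $\kappa = \kappa(r^h_n,\lambda^h_n)$ depending only on the parameters; and (c)~the cube inclusion $z^h_n + U(\{\mathrm e^h_i(n)\}) \subset \tpt_{V^h_n} Z^h_\a$ holds. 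Non-emptiness of $N^h_\a$ is the crux: one first uses density of the prebase at $C^h_\a$ to pick $(V^h_n,C^h_n,r^h_n,\lambda^h_n)$ meeting (a)--(b); having fixed these, $\tpt_{V^h_n} Z^h_\a$ is a non-empty open subset of $V^h_n \cap \{t=\bar t\}$ by relative openness of $Z^h_\a$ together with the bi-Lipschitz character of the projection, and so it contains $z^h_n + U(\{\mathrm e^h_i(n)\})$ for some rational $z^h_n$ and a sufficiently small scaling of the basis. I would then assign $\a$ to $\A^h_n$ by $n := \min N^h_\a$ and set $\mathbf D(h,n)$ to be the resulting restriction of $\mathbf D^h$, which inherits $\sigma$-compactness because all the selection conditions are of closed (or countable union of closed) Hausdorff-type in the $\sigma$-compact structure of Definition~\ref{D_loca_aff_part}.

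The main obstacle I foresee is precisely the simultaneous satisfaction of Point~\eqref{Point_4_Proposition_P_countable_partition} and the cube inclusion: the bi-Lipschitz constant $\kappa$ and the scale of the cube must be chosen compatibly, which reduces to controlling the transversality between $\aff Z^h_\a$ and the kernel of $\tpt_{V^h_n}$. This transversality holds here because $\aff Z^h_\a$ and $V^h_n$ both belong to $\mhA(h,\tRd)$ (hence both project bijectively onto $\{t=\bar t\}$) and are close in Hausdorff distance; but this is the point where the argument genuinely departs from the $1$-homogeneous setting of \cite{biadan} and, I expect, requires the most careful bookkeeping.
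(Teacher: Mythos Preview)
Your proposal is correct and follows essentially the same approach as the paper: the paper's proof simply refers to Proposition~3.15 of \cite{biadan} and observes that the only modification needed is replacing orthogonal projections by $t$-fixed projections $\tpt_{V^h_n}$, which remain bi-Lipschitz on each $\aff Z^h_\a$ precisely because of Point~\eqref{Point_3_def_direc_part} of Definition~\ref{D_loca_aff_part} (equivalently, non-degeneracy of $C^h_\a$). Your final paragraph identifies exactly this transversality issue as the crux, and your more explicit treatment of the countable selection via the prebase $\mathfrak L(h,r,\lambda)$ and the minimal-index assignment is a faithful unpacking of the argument that the paper leaves to the citation.
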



Note that clearly the $Z^h_\a $ are transversal to $\{t= \text{constant}\}$.

\begin{figure}
\centering
\resizebox{14cm}{7cm}{\input{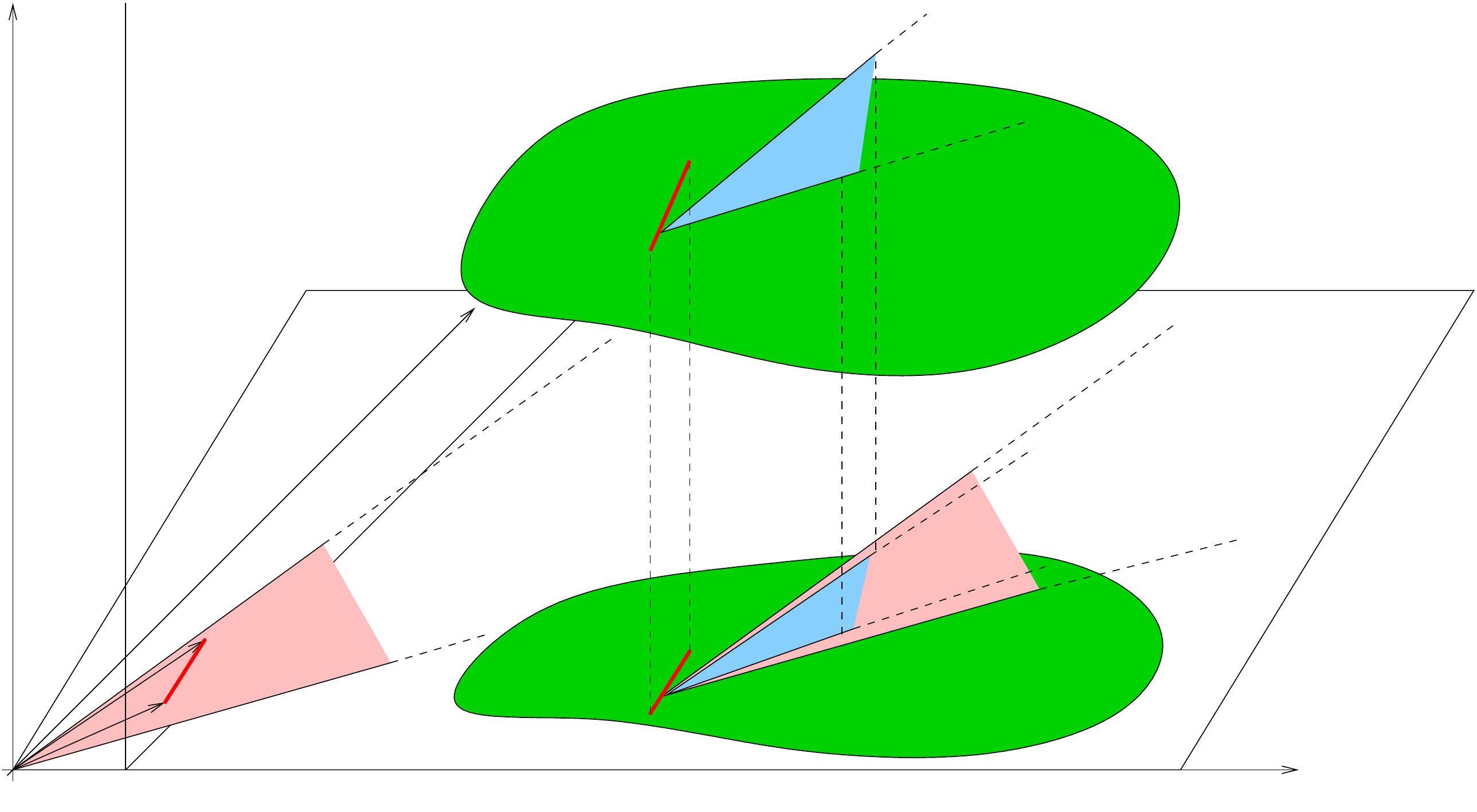_t}}
\caption{The decomposition presented in Proposition \ref{P_countable_partition_in_reference_directed_planes}.}
\end{figure}

\begin{proof}
The only difference w.r.t. the analysis done in \cite{biadan} is the fact that we are using projections with $t$ constant, instead of projecting on $V^h_n$. However the assumption of Point \ref{Point_3_def_direc_part} of Definition \ref{D_loca_aff_part} gives that the projection of $Z^h_\a $, $C^h_\a$ at $t$ fixed is a set of linear dimension $h$, and thus we can take as a base for the partitions sets of the form \eqref{E_span_mathrm_e_h_i_n}, \eqref{E_U_square_mathrm_e}.
\end{proof}

Following the same convention of \eqref{E_mathbf_Z_base_partition}, we will use the notation $\mathbf Z^h_n := \mathtt p_z \mathbf D(h,n)$.

By the above proposition and the transversality to $\{t = \bar t\}$, the sets $\mathfrak A^h_n$ can be now chosen to be 
\begin{equation}
\label{E_mathfrak_A_h_def}
\mathfrak A^h_n := \mathbf Z^h_n \cap (\tpt_{V_h})^{-1}(z_n), \quad \mathfrak A^h := \bigcup_{n \in \N} \mathfrak A^h_n.
\end{equation}

%

\begin{definition}
\label{D_shaef_set}
We will call a directed locally affine partition $\mathbf D(h,n)$ a \emph{$h$-dimensional directed sheaf set with base directions $C^h_n$, $C^h_n(-r_n)$ and base rectangle $z_n + \lambda_n \, U(\mathrm e^h_i)$} if it satisfies the properties listed in Proposition \ref{P_countable_partition_in_reference_directed_planes} for some $\{\mathrm e^h_i(n)\}_{i=0}^h \subset \R^d$, $V^h_n = \mathrm{span} \{(1,\mathrm e^h_0),(1,\mathrm e^h_i),(1,\mathrm e^h_n) \}$, $C^h_n \in \mhC(h,V^h_n)$, $z_n \in \R^{d+1}$, $r_n,\lambda_n \in (0,\infty)$.
\end{definition}

\begin{remark}
\label{R_choice_t=1_loc_aff}
In the following we are only interested in the sets $Z^h_\a $ such that $Z^h_\a  \cap \{t=1\} \not= \emptyset$. Thus, the definition of $\mathbf D$ could be restricted to these sets, and the quotient space $\A^{d-h}$ can be taken to be a subset of an affine subspace $\{t=1\} \times \R^{d-h}$.
\end{remark}

\section{Construction of the first directed locally affine partition }
\label{S_fdlap}

In this section we show how to use the potential $\extphi$ to find a directed locally affine partition in the sense of the previous section. The approach follows closely \cite{Dan:PhD}: the main variations are in proving regularity, Sections \ref{Ss_back_forw_regul_phi} and \ref{Ss_regul_disi_phi}.
	

\begin{definition}
	\label{D_sub_super_differential_ext_phi}
	We define the \textit{sub-differential} of $ {\extphi}$ at $z$ as \[ \partial ^-   {\extphi} (z) := \big\{ z' \in \RR^d :  {\extphi}(z)- {\extphi}(z')=\bar{\mathtt c}(z-z') \big\}, \] and the \textit{super-differential} of $ {\extphi}$ at $z$ as \[ \partial ^+  {\extphi}(z) := \big\{ z'\in \RR^d :  {\extphi}(z')- {\extphi}(z)=\bar{\mathtt c}(z'-z) \big\}.\]
\end{definition}

\begin{figure}
	\centering
	\includegraphics[scale=0.65]{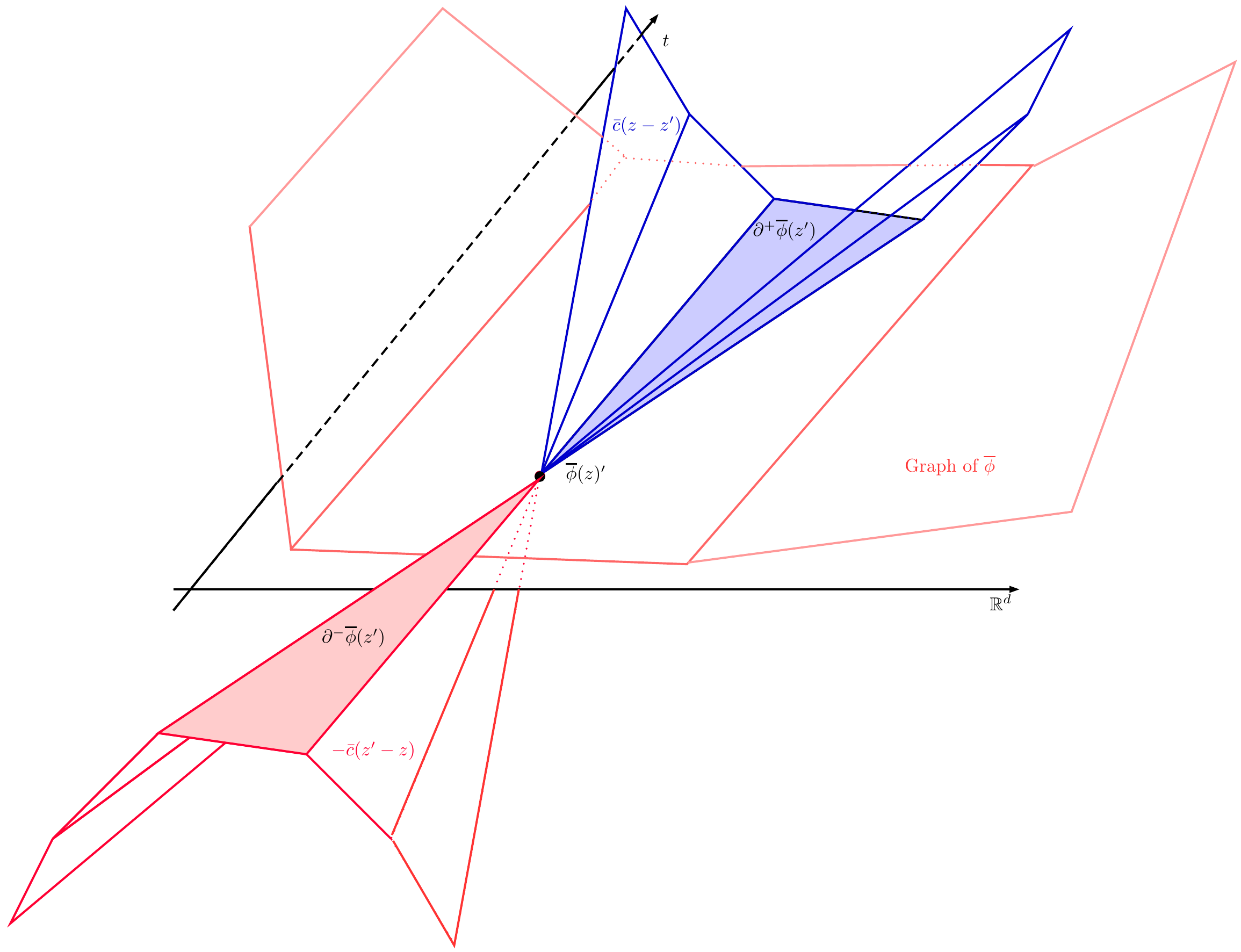}
	\caption{The sets $\partial^-\extphi(z)$, $\partial^+\extphi(z)$ of Definition \ref{D_sub_super_differential_ext_phi} are obtained intersecting $\epi\,\extc$, $-\epi\,\extc$ with $\Graph\, \extphi$, respectively.}
\end{figure}

\begin{definition}
	\label{D_opt_ray_phi}
	We say that a segment $\segment{z,z'}$ is an \textit{optimal ray} for $\extphi$ if
	\[ \extphi(z')- {\extphi}(z)=\bar{\mathtt c}(z'-z). \]
	We say that a segment $\segment{z,z'}$ is a \textit{maximal optimal ray} if it is maximal with respect to set inclusion.
\end{definition}

\begin{definition}
	The \textit{backward direction multifunction} is given by 
		\[\mathcal{D}^-  {\extphi}(z) = \bigg\{\frac{z-z'}{\prj_t(z-z')}: z'\in\partial^-  {\extphi}(z)\setminus \{z\} \bigg\},\] 
	and \textit{forward direction multifunction} is given by 
		\[\mathcal{D}^+  {\extphi}(z) = \bigg\{\frac{z'-z}{\prj_t(z'-z)}: z'\in\partial^+  {\extphi}(z)\setminus \{z\} \bigg\}.\]
\end{definition}

\begin{definition}
\label{D_F_pm_bar_phi}
	The \emph{convex cone generated} by $\mathcal{D}^-  {\extphi} $ (resp. by $\mathcal{D}^+ {\extphi}$) is the cone 
	\[F_{\extphi}^-(z) = \mathbb R^+\cdot \conv\, \mathcal{D}^-  {\extphi}(z) \qquad \big( \textrm{resp. } F_{\extphi}^+(z) = \mathbb R^+ \cdot\conv\, \mathcal{D}^+  {\extphi}(z) \big).\] 
\end{definition}

\begin{definition}
	The \textit{backward transport set} is defined respectively by 
	\[T_{\extphi}^- := \big\{ z : \partial^-  {\extphi}(z) \not = \{z\} \big\},\] the \textit{forward transport set} by 
	\[T_{\extphi}^+ := \big\{ z : \partial^+  {\extphi}(z) \not = \{z\} \big\},\] and the \textit{transport set} by \[T_{\extphi}=T_{\extphi}^-\cap T_{\extphi}^+.\]
\end{definition}
	
\begin{definition}\label{def_regular_points}
	The \textit{h-dimensional backward/forward regular transport sets} are defined for $h=0,\ldots,d $ respectively as
	\begin{eqnarray*}
		R_{\extphi}^{-,h} := \left\{ \begin{array}{lcl} 
										& (i)   & \mathcal{D}^-  {\extphi} (z) = \conv\, \mathcal{D}^-  {\extphi}(z) \\
										z\in T_{\extphi}^- :		& (ii)  & \textrm{dim}(\conv\, \mathcal{D}^-  {\extphi} (z) )=h \\
										& (iii) & \exists z'\in T_{\extphi}^-\cap(z+\interr F_{\extphi}^-(z) )\\ & & \textrm{ such that } {\extphi}(z)= {\extphi}(z')+\bar{\mathtt c}(z'-z)\textrm{ and }(i),(ii)\textrm{ hold for }z'
	        			\end{array}\right\},
	\end{eqnarray*}
	and
	\begin{eqnarray*}
		R_{\extphi}^{+,h} := \left\{ \begin{array}{lcl} 
				 						& (i)   & \mathcal{D}^+  {\extphi} (z) = \conv\, \mathcal{D}^+  {\extphi}(z) \\
										z\in T_{\extphi}^+ :	& (ii)  & \textrm{dim}(\conv\,\mathcal{D}^+  {\extphi} (z) )=h \\
										& (iii) & \exists z'\in T_{\extphi}^+\cap(z-\interr F_{\extphi}^+(z) )\\ & & \textrm{ such that } {\extphi}(z')= {\extphi}(z) +\bar{\mathtt c}(z-z')\textrm{ and }(i),(ii)\textrm{ hold for }z'
	        			\end{array}\right\}.
	\end{eqnarray*}
\end{definition}
		
\begin{figure}
	\centering
	\includegraphics[scale=0.70]{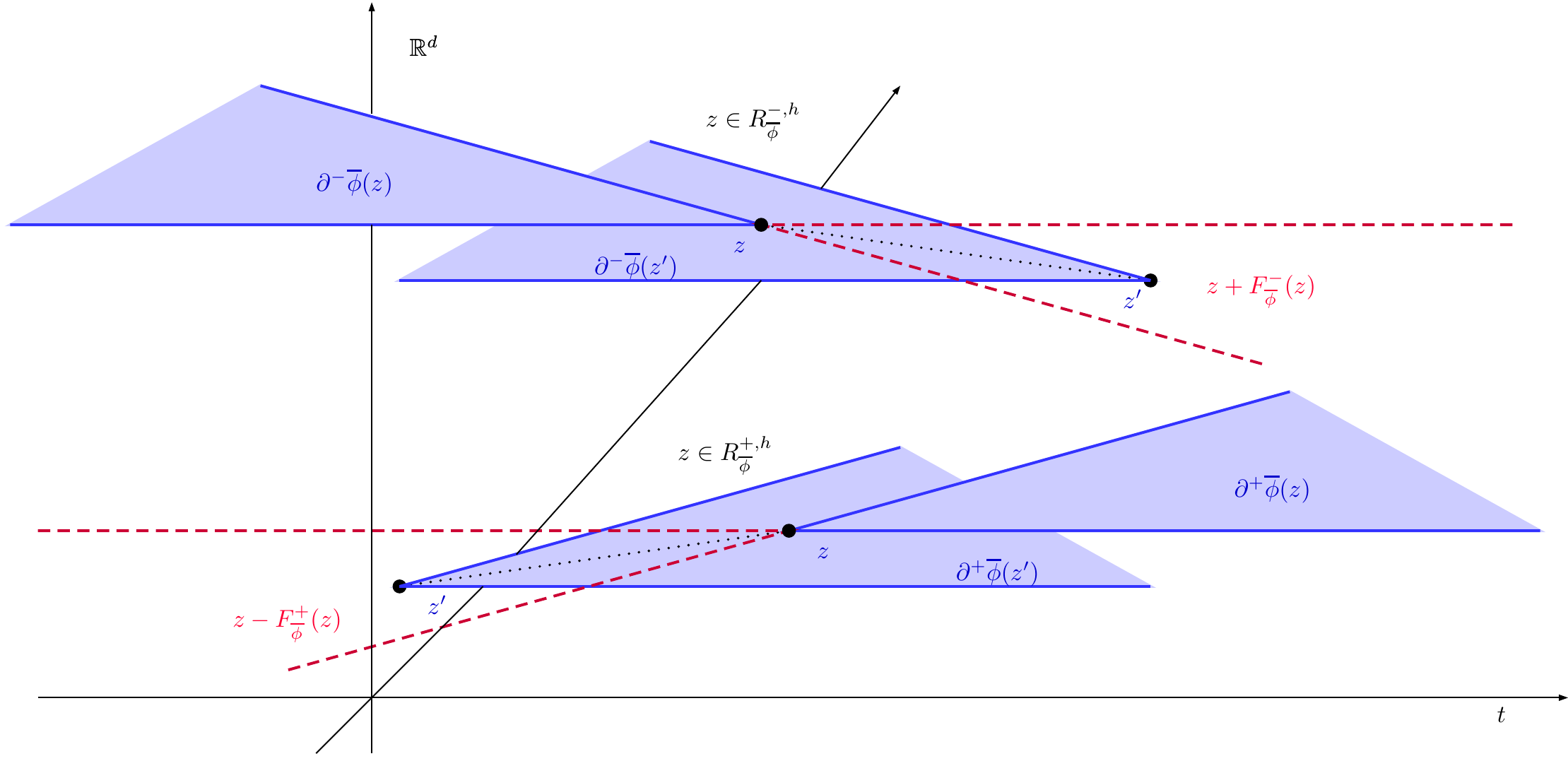}
	\caption{The sets $R_{\extphi}^{-}$ and $R_{\extphi}^{+}$ of Definition \ref{def_regular_points}.}
\end{figure}
		
Define the \textit{backward (resp. forward) transport regular set} as		
\[R_{\extphi}^- := \bigcup_{h=0}^{d} R_{\extphi}^{-,h} \qquad \bigg(\textrm{resp. } R_{\extphi}^+ := \bigcup_{h=0}^{d} R_{\extphi}^{+,h} \bigg),\]
and the \textit{regular transport set} as
\[R_{\extphi} := R_{\extphi}^+\cap R_{\extphi}^-.\]
Finally define the \emph{residual set} $N$ by \[N_{\extphi}  := T_{\extphi} \setminus R_{\extphi}.\]

\begin{proposition}
\label{p_sigma_continuity_extphi}
	The set $\partial^\pm \extphi$, $T^\pm_{\extphi}$, $\mathcal D^\pm\extphi$, $F^\pm_{\bar \phi}$, $R_{\extphi}^{\pm,h}$, $R_{\extphi}^\pm$, $R_{\extphi}$ are $\sigma-$compact.
\end{proposition}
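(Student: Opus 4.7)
The plan is to treat the seven families in order, repeatedly exploiting that $\RR^d$ is locally compact and $\sigma$-compact, so that $\sigma$-compactness passes through closed subsets, continuous images, proper projections, and countable unions and intersections. Under the compactness reduction of Remark~\ref{R_only_compact} we may take $\phi,\psi$ Lipschitz, so the Lax formula yields a continuous $\extphi$ on $\RR^d$, while $\bar{\mathtt c}$ is l.s.c. by construction.

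Since $(z,z')\mapsto\extphi(z)-\extphi(z')-\bar{\mathtt c}(z-z')$ is u.s.c. and the universal inequality $\extphi(z)-\extphi(z')\le\bar{\mathtt c}(z-z')$ holds, $\partial^-\extphi$ coincides with the closed set $\{\ge 0\}$, hence is $\sigma$-compact; symmetrically for $\partial^+\extphi$. Writing
\[
T^-_\extphi=\bigcup_{n\in\N}\mathtt p_1\bigl(\partial^-\extphi\cap\{|z-z'|\ge 2^{-n}\}\bigr),
\]
the superlinearity of $\mathtt c$ together with the Lipschitz bound on $\extphi$ forces $|z'|$ bounded in $|z|$ on $\partial^-\extphi$, so the projection is proper and each piece is $\sigma$-compact. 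Off the diagonal in $\partial^-\extphi$ one has $\prj_t(z-z')>0$ (finiteness of $\bar{\mathtt c}(z-z')$ forces $\prj_t\ge 0$, with equality only at the origin), so $(z,z')\mapsto(z,(z-z')/\prj_t(z-z'))$ is continuous and displays $\mathcal D^-\extphi$ as a $\sigma$-compact image. Carath\'eodory in $\R^{d+1}$ writes any $v\in F^-_\extphi(z)$ as $\sum_{i=0}^{d+1}t_iv_i$ with $t_i\ge 0$ and $v_i\in\mathcal D^-\extphi(z)$, presenting $F^-_\extphi$ as the continuous image of the $\sigma$-compact tuple space of such $(z,\{t_i\},\{v_i\})$.

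For $R^{\pm,h}_\extphi$, the delicate step, I would reformulate membership in terms of local affinity of $\extphi$: $z\in R^{-,h}_\extphi$ iff there exist affinely independent $v_0,\dots,v_h\in\R^{d+1}$, a scale $\varepsilon>0$, and a direction $\bar v\in\interr\conv\{v_0,\dots,v_h\}$ such that $\extphi$ is affine on the backward $(h+1)$-simplex $z-[0,\varepsilon]\cdot\conv\{v_0,\dots,v_h\}$ (capturing conditions (i) and (ii) at $z$) and the analogous property holds at the forward witness $z'=z+\varepsilon\,\bar v$ (capturing (iii) together with (i),(ii) at $z'$). Affinity of the continuous function $\extphi$ on a fixed compact simplex is a countable intersection of closed equality conditions (via rational barycentric coordinates), so ranging over a countable family of rational simplex data gives $R^{-,h}_\extphi$ as a countable union of $\sigma$-compact sets; the same construction with reversed time orientation handles $R^{+,h}_\extphi$. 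Finally $R^\pm_\extphi=\bigcup_{h=0}^dR^{\pm,h}_\extphi$ and $R_\extphi=R^-_\extphi\cap R^+_\extphi$ are $\sigma$-compact by countable unions and intersections.

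The main obstacle is condition (i), whose raw form "$\mathcal D^-\extphi(z)$ is convex" is a universal quantifier on the fibers of a multifunction and does not obviously preserve $\sigma$-compactness. The strategy is to transfer it from the direction fiber to $\extphi$ itself: $\mathcal D^-\extphi(z)$ contains an $h$-dimensional convex chunk precisely when $\extphi$ is affine on a corresponding backward $(h+1)$-simplex, and affinity on a compact set is a countable intersection of closed conditions, compatible with the countable-union structure of $\sigma$-compactness and thereby replacing the problematic "for every direction" quantifier by a "there exists a rational simplex" existential.
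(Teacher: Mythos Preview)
Your treatment of $\partial^\pm\extphi$, $T^\pm_\extphi$, $\mathcal D^\pm\extphi$ and $F^\pm_\extphi$ is essentially the paper's, with only cosmetic variations (you are in fact more careful than the paper about the l.s.c.\ of $\bar{\mathtt c}$ and about why the projection giving $T^\pm$ is proper).

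For $R^{\pm,h}_\extphi$ there is a genuine gap: the proposed equivalence ``$z\in R^{-,h}_\extphi$ iff $\extphi$ is affine on some backward $(h{+}1)$-simplex at $z$ and at a forward witness $z'=z+\varepsilon\bar v$'' is false. Affinity of $\extphi$ on a simplex $S\ni z$ does not force $S\subset\partial^-\extphi(z)$, which requires the pointwise equality $\extphi(z)-\extphi(w)=\bar{\mathtt c}(z-w)$, not merely that $\extphi$ is linear on $S$. Concretely, take $\mathtt c$ strictly convex with $\mathtt c(0)=0$ and $\phi=\psi\equiv 0$, so $\extphi\equiv 0$: then $\extphi$ is affine on every simplex, yet $\partial^-\extphi(z)$ is just the vertical segment $\{(t',x):0\le t'\le t\}$, hence $R^{-,h}_\extphi=\emptyset$ for all $h\ge 1$, contradicting your criterion. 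Even after replacing ``$\extphi$ affine on $S$'' by the correct closed condition ``$S\subset\partial^-\extphi(z)$'', an existential over rational simplices only yields that $\mathcal D^-\extphi(z)$ contains an $h$-dimensional convex piece; it captures neither the global convexity (i) of $\mathcal D^-\extphi(z)$, nor the exact dimension (ii), and the link $z'\in\partial^+\extphi(z)$ required in (iii) is also absent from your reformulation.

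The paper sidesteps these universal-type conditions by working in the Hausdorff topology on compact subsets of $\{t=1\}$: since the graph of $z\mapsto\mathcal D^-\extphi(z)$ is $\sigma$-compact and the fibers are compact, the map is $\sigma$-continuous into that hyperspace; then $A\mapsto\conv A$ is continuous, so condition (i) is the equalizer $\{\mathcal D^-\extphi=\conv\mathcal D^-\extphi\}$ of two $\sigma$-continuous maps; lower semicontinuity of $K\mapsto\dim K$ makes $\{\dim=h\}$ locally closed; and the open condition $z'\in z+\interr C$ is rendered $\sigma$-compact by the countable exhaustion $\bigcup_n\{z'\in z+C(-2^{-n})\setminus B(0,2^{-n})\}$. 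This Hausdorff-space bookkeeping is exactly the missing ingredient in your plan.
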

\begin{proof}
	$\partial^\pm \extphi$. The map \[(z,z')\mapsto \Phi(z,z'):= \extphi(z') - \extphi(z) - \extc(z'-z)\] is continuous. Therefore, $\partial^\pm \extphi = \Phi^{-1}(0)$ is $\sigma$-compact.
	
	$T^\pm_{\extphi}$. The set $T^-_{\extphi}$ is the projection of the $\sigma$-compact set
		\[\bigcup_{n} \Big\{   \partial^-\extphi \cap \big\{ \big( z,z' \big):\vert z-z'\vert\geq 2^{-n} \big\} \Big\},\]
		and hence $\sigma$-compact. The same reasoning can be used for $T^+$.
	
	$\mathcal D^\pm\extphi$. Since
		\[ \{(z,z'):t(z) > t(z')\} \ni (z,z')\mapsto \frac{z-z'}{t(z)-t(z')}\in \{t=1\} \]
		is continuous, it follows that $\mathcal D^-\extphi$ is $\sigma$-compact, being the image of a $\sigma$-compact set by a continuous function.
		The same reasoning holds for $\mathcal D^+\extphi$.
		
		A similar analysis can be carried out for the $\sigma$-compactness of $F_{\extphi}^\pm$.
	
	$R_{\extphi}^{\pm,h}$.  Since the maps $A\mapsto \conv\, A$ is continuous with respect to the Hausdorff topology, and the dimension of a convex set is a lower semicontinuous map, the only point to prove is that the set 
	\[\Big\{ (z,z',C)\in \RR^h\times \RR^h\times \mathcal C(h, \RR^h): z'\in z - \interr C \Big\}\] is $\sigma$-compact. This follows by taking considering the cones $C(-r)$ 
	and writing the previous set as the countable union of $\sigma$-compact sets as follows
	\[\bigcup_{n}\Big\{ (z,z', C)\in \RR^h\times\RR^h\times\mathcal C(h,\RR^h):z'\in z+C(-2^{-n}) \setminus B(0,2^{-n}) \Big\}.\]
	Hence the set
	\begin{equation*}
		\begin{array}{lcrl}
			\bigg\{(z,z',C) & : &(i)		& z,z'\in T^-_{\extphi}\\
					&&(ii)	& C=F^-_{\extphi}(z) \\
					&&(iii)	& z'\in z+\interr C\\
					&&(iv)	& \dim\,(\conv \mathcal D^-\extphi(z)) = \dim\,(\conv \mathcal D^-\extphi(z')) = h\\
					&&(v)	& \mathcal D^-\extphi(z) = \conv\, \mathcal D^-\extphi(z), \mathcal D^-\extphi(z') = \conv\, \mathcal D^-\extphi(z')\bigg\}
			\end{array}
	\end{equation*}
	is $\sigma$-compact, and thus $R^{-,h}$ is $\sigma$-compact, too.
	The proof for $R_{\extphi}^+$ is analogous, and hence the regularity for $R_{\extphi}$ follows.
\end{proof}

\begin{proposition}\label{phi_transitivity_property}
	Let $z,z',z''\in [0,+\infty)\times\R^d$, then the following statements hold:
	\begin{enumerate}
		\item $z'\in\partial^- {\extphi}(z)$ and $z\in\partial^-  {\extphi}(z'')$ imply $z'\in\partial^-  {\extphi}(z'')$; 
		\item $z''\in\partial^+ {\extphi}(z)$ and $z\in\partial^+  {\extphi}(z')$ imply $z''\in\partial^+  {\extphi}(z')$. 		
	\end{enumerate}
\end{proposition}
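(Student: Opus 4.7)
The plan is to reduce both statements to two elementary properties of $\extphi$ and $\bar{\mathtt c}$: the general one-sided potential inequality $\extphi(w)-\extphi(w') \leq \bar{\mathtt c}(w-w')$ valid whenever $\mathtt p_t w \geq \mathtt p_t w'$ (stated in the text following \eqref{HJ_def_extphi}), and the subadditivity $\bar{\mathtt c}(u+v) \leq \bar{\mathtt c}(u) + \bar{\mathtt c}(v)$, which follows from $\bar{\mathtt c}$ being a convex, positively $1$-homogeneous function on $\R^{d+1}$ with value $\ind_{\{0\}}$ on $\{t=0\}$ and $+\infty$ on $\{t<0\}$.

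For item (1), I would first extract from the hypotheses the time orderings. Since $z'\in\partial^-\extphi(z)$ forces $\bar{\mathtt c}(z-z')<\infty$, by the definition \eqref{E_def_bar_tt_c} we must have $\mathtt p_t z \geq \mathtt p_t z'$; similarly $z\in\partial^-\extphi(z'')$ gives $\mathtt p_t z'' \geq \mathtt p_t z$. In particular $\mathtt p_t z'' \geq \mathtt p_t z'$, so the potential inequality gives the upper bound $\extphi(z'')-\extphi(z') \leq \bar{\mathtt c}(z''-z')$. On the other hand, adding the two defining equalities yields
\begin{equation*}
\extphi(z'')-\extphi(z') = \bar{\mathtt c}(z''-z) + \bar{\mathtt c}(z-z') \geq \bar{\mathtt c}\bigl((z''-z)+(z-z')\bigr) = \bar{\mathtt c}(z''-z'),
\end{equation*}
where the inequality is subadditivity. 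Chaining the two bounds forces equality throughout, which is exactly the statement $z'\in\partial^-\extphi(z'')$.

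Item (2) is obtained by the symmetric argument: from $z''\in\partial^+\extphi(z)$ and $z\in\partial^+\extphi(z')$ one gets $\mathtt p_t z''\geq \mathtt p_t z\geq \mathtt p_t z'$, and then the same sandwich $\bar{\mathtt c}(z''-z')\leq \bar{\mathtt c}(z''-z)+\bar{\mathtt c}(z-z')=\extphi(z'')-\extphi(z')\leq \bar{\mathtt c}(z''-z')$ yields $\extphi(z'')-\extphi(z')=\bar{\mathtt c}(z''-z')$, i.e.\ $z''\in\partial^+\extphi(z')$.

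There is essentially no obstacle here; the only mildly delicate point is to observe that the inclusions in $\partial^\pm\extphi$ automatically encode the correct time ordering (so that the potential inequality is applicable in the right direction), and that the finiteness of $\bar{\mathtt c}$ on the differences involved — together with its definition as $t\,\mathtt c(-x/t)$ for $t>0$ with $\mathtt c$ convex — is what legitimises the subadditivity step. Both are immediate from the setup, so the proof is just the two-line sandwich above applied in each case.
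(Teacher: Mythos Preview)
Your argument is correct and is exactly the unpacking of what the paper means by ``It easily follows from Definition \ref{D_sub_super_differential_ext_phi}'': you add the two defining equalities, use subadditivity of the $1$-homogeneous convex cost $\bar{\mathtt c}$, and close the sandwich with the potential inequality $\extphi(w)-\extphi(w')\leq\bar{\mathtt c}(w-w')$ in the correct time ordering. There is no substantive difference between your route and the paper's.
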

\begin{proof}
	It easily follows from Definition \ref{D_sub_super_differential_ext_phi}.
\end{proof}
Moreover, it is easy to prove that:
\[z' \in \partial^\pm  {\extphi}(z) \quad \Longrightarrow \quad \partial^\pm  {\extphi}(z')\subset \partial^\pm  {\extphi}(z). \]


\begin{definition}
  	Let $z$ and $z'$ such that $  {\extphi}(z') -  {\extphi}(z) = \bar{\mathtt c}(z'-z)$ and define
  	\begin{equation}
  	\label{E_cal_Q_def}
  	\mathcal Q_{ {\extphi}}(z,z') := {\rm p}_{\RR^d} \big\{    \big( (z, {\extphi}(z)) + \epi\,{\bar{\mathtt c}} \big) \cap \big((z', {\extphi}(z')) - \epi\,{\bar{\mathtt c}}\big)    \big\}.
  	\end{equation}
\end{definition}


\begin{lemma}\label{definition_of_Q}
	It holds, \[ \mathcal Q_{ {\extphi}}(z,z') \subseteq \partial^- {\extphi}(z')\cap \partial^+ {\extphi}(z).\]
	
	Moreover
	\begin{equation}\label{E_minimal_extremal_face_containing_z_z_prime_phi}
		\R^+ \big( \mathcal Q_{ {\extphi}} (z,z') - z \big)= \R^+ \big( z'- \mathcal Q_{ {\extphi}} (z,z')\big) = F(z,z').
	\end{equation}
	where $F(z,z')$ is the projection of the minimal extremal face of $\epi\,{\bar{\mathtt c}}$ containing of $(z'-z,  {\extphi}(z')- {\extphi}(z))$.
\end{lemma}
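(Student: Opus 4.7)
The proof splits naturally into the two assertions of the lemma.

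For the first assertion, take $w \in \mathcal Q_{\extphi}(z,z')$; by definition there is $s \in \R$ with $s - \extphi(z) \geq \bar{\mathtt c}(w-z)$ and $\extphi(z') - s \geq \bar{\mathtt c}(z'-w)$. Summing these gives
\begin{equation*}
\extphi(z') - \extphi(z) \geq \bar{\mathtt c}(w-z) + \bar{\mathtt c}(z'-w).
\end{equation*}
Since $\extphi(z') - \extphi(z) = \bar{\mathtt c}(z'-z)$ by hypothesis and $\bar{\mathtt c}(z'-z) \leq \bar{\mathtt c}(w-z) + \bar{\mathtt c}(z'-w)$ by subadditivity (a consequence of convexity and $1$-homogeneity of $\bar{\mathtt c}$), both inequalities must be equalities. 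In particular $t(z) \leq t(w) \leq t(z')$ (otherwise one of the costs is $+\infty$), so the general HJ inequality $\extphi(w) - \extphi(z) \leq \bar{\mathtt c}(w-z)$ and $\extphi(z') - \extphi(w) \leq \bar{\mathtt c}(z'-w)$ applies and, summed, gives $\extphi(z') - \extphi(z) \leq \bar{\mathtt c}(w-z) + \bar{\mathtt c}(z'-w) = \extphi(z') - \extphi(z)$. Hence both are equalities, i.e. $w \in \partial^+ \extphi(z) \cap \partial^- \extphi(z')$.

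For the second assertion, observe that since $\bar{\mathtt c}$ is $1$-homogeneous, $\epi\,\bar{\mathtt c}$ is a convex cone with vertex at $0$. Writing $E := (z'-z, \extphi(z')-\extphi(z))$ and, for $w \in \mathcal Q_{\extphi}(z,z')$, $A := (w-z, \bar{\mathtt c}(w-z))$, $B := (z'-w, \bar{\mathtt c}(z'-w))$, the first part shows $A,B \in \partial(\epi\,\bar{\mathtt c})$ and $E = A + B$. Let $F'$ denote the minimal extremal face of $\epi\,\bar{\mathtt c}$ containing $E$, so that $F(z,z') = \mathtt p_{\RR^d} F'$. The identity $E = \tfrac12(2A) + \tfrac12(2B)$ expresses $E$ as the midpoint of two elements of $\epi\,\bar{\mathtt c}$; extremality of $F'$ forces $2A, 2B \in F'$, and since $F'$ is itself a convex cone with vertex $0$ we conclude $A,B \in F'$. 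Projecting yields $\R^+(\mathcal Q_{\extphi}(z,z')-z) \subset F(z,z')$ and $\R^+(z'-\mathcal Q_{\extphi}(z,z')) \subset F(z,z')$.

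For the reverse inclusions, fix $v \in F(z,z')$, so that $(v,\bar{\mathtt c}(v)) \in F'$. Because $F'$ is a convex cone with vertex $0$ and $E$ lies in its relative interior, there exists $\eps > 0$ such that $E - \eps (v, \bar{\mathtt c}(v)) \in F'$, i.e. $\bar{\mathtt c}\bigl((z'-z) - \eps v\bigr) = \bar{\mathtt c}(z'-z) - \eps \bar{\mathtt c}(v)$. Setting $w := z + \eps v$ and $s := \extphi(z) + \eps \bar{\mathtt c}(v)$ one checks directly that $s - \extphi(z) = \bar{\mathtt c}(w-z)$ and $\extphi(z') - s = \bar{\mathtt c}(z'-w)$, so $w \in \mathcal Q_{\extphi}(z,z')$ and $v = \eps^{-1}(w-z)$. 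Symmetrically, setting $w' := z' - \eps v$ gives $w' \in \mathcal Q_{\extphi}(z,z')$ with $v = \eps^{-1}(z'-w')$. This establishes \eqref{E_minimal_extremal_face_containing_z_z_prime_phi}.

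The only delicate point is the cone-extremality step in the second paragraph, namely deducing that both $A$ and $B$ belong to the minimal extremal face containing $A+B$; everything else reduces to writing out the one-sided inequalities of the Hopf--Lax formula and exploiting $1$-homogeneity of $\bar{\mathtt c}$.
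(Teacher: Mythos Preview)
Your proof is correct and follows essentially the same approach as the paper. The first part is the same chain of inequalities (the paper inserts $\extphi(\bar z)$ directly, you sum first and then invoke the two HJ inequalities, but the content is identical). For the second part the paper simply cites the elementary convex-analysis fact that for a compact convex $K$ with $0\in K$ the set $K\cap\mathrm{span}(K\cap(-K))$ is the extremal face containing $0$ in its relative interior; your argument spells this out explicitly via the midpoint/extremality step and the relative-interior perturbation, which is a perfectly good (and more self-contained) way to obtain \eqref{E_minimal_extremal_face_containing_z_z_prime_phi}.
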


\begin{proof}
	Let $(\bar z, \bar r) \in \big( (z, {\extphi}(z)) + \epi\,{\bar{\mathtt c}} \big) \cap \big((z', {\extphi}(z')) - \epi\,{\bar{\mathtt c}}\big) $: by definition,
	\[\bar r -  {\extphi}(z) \geq \bar{\mathtt c}(\overline z-z) \qquad \textrm{and} \qquad  {\extphi}(z') -\bar r \geq \bar{\mathtt c}(z'-\overline  z).\]
	Hence, from $   {\extphi}(z') -  {\extphi}(\bar z)\leq \bar{\mathtt c}( z' -\bar z )$,
	\begin{align*}
		{\extphi}(\bar z) -  {\extphi}(z) 	&~	\geq	{\extphi}(\bar z) -\bar r + \bar{\mathtt c}(\bar z-z)\crcr
								&~	\geq  {\extphi}(\bar z) -  {\extphi}(z') + \bar{\mathtt c}(z'-\bar z )+ \bar{\mathtt c}(\bar z-z) 
								\geq \bar{\mathtt c}(\bar z-z).
	\end{align*}
	Then $\bar z \in \partial^+  {\extphi}(z) $ and similarly one can prove $\bar z \in \partial^-  {\extphi}(z')$.
	
	The second part of the statement is an elementary property of convex sets: if $K$ is a compact convex set and $0 \in K$, then
	\begin{equation*}
	K \cap \mathrm{span}\,(K\cap (-K))
	\end{equation*}
	is the extremal face of $K$ containing $0$ in its relative interior. Since for us $K$ is a cone, the particular form \eqref{E_minimal_extremal_face_containing_z_z_prime_phi} follows.
\end{proof}  

\begin{figure}
	\centering
		\subfloat[][The set $\mathcal Q(z,z')$ and Lemma \ref{definition_of_Q}.]
			{\includegraphics[width=.40\columnwidth]{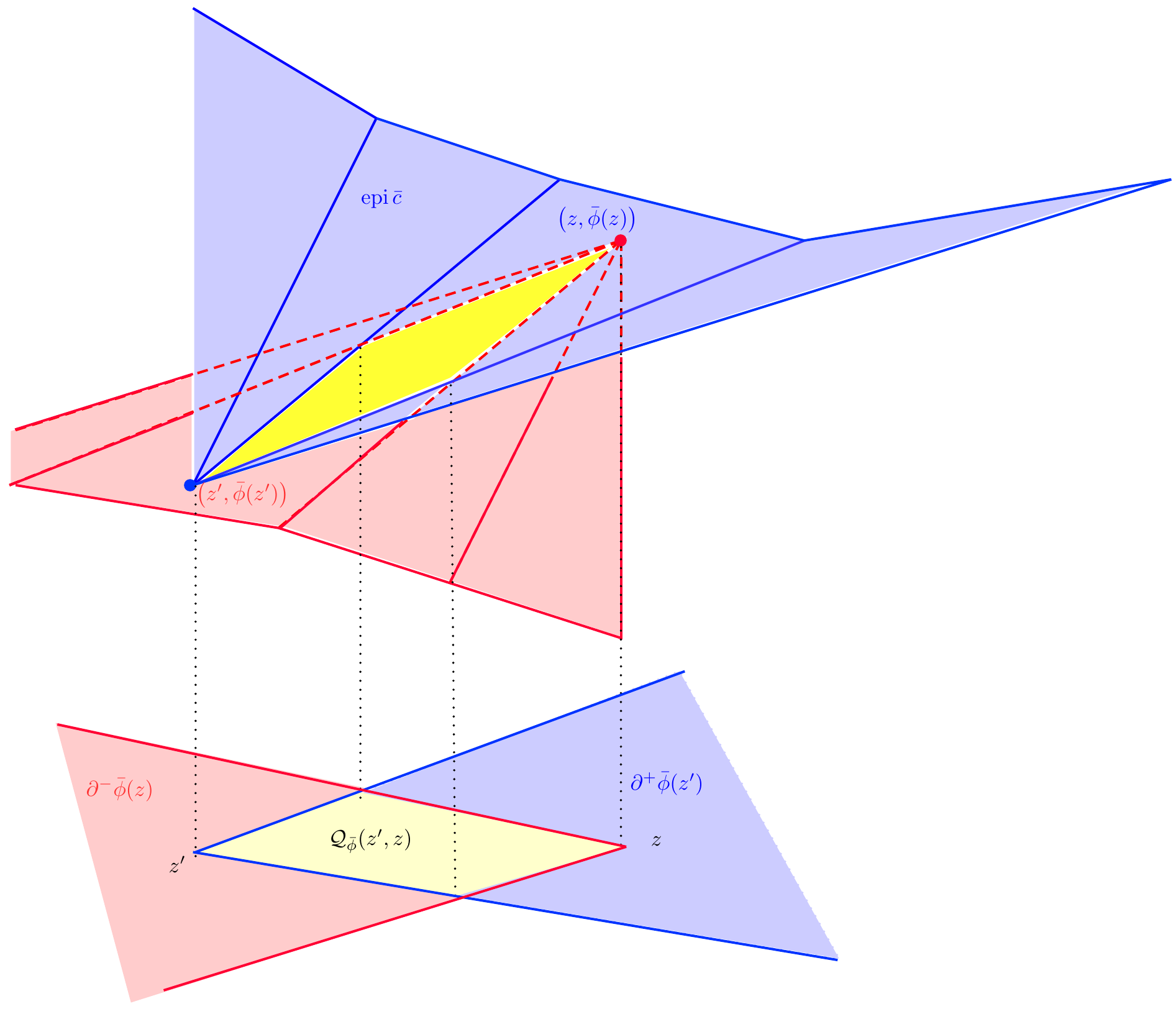}} \quad
		\subfloat[][The representation formula \eqref{E_part_phi_sum_Q} for $\partial^+  {\extphi} (z)$.]
			{\includegraphics[width=.45\columnwidth]{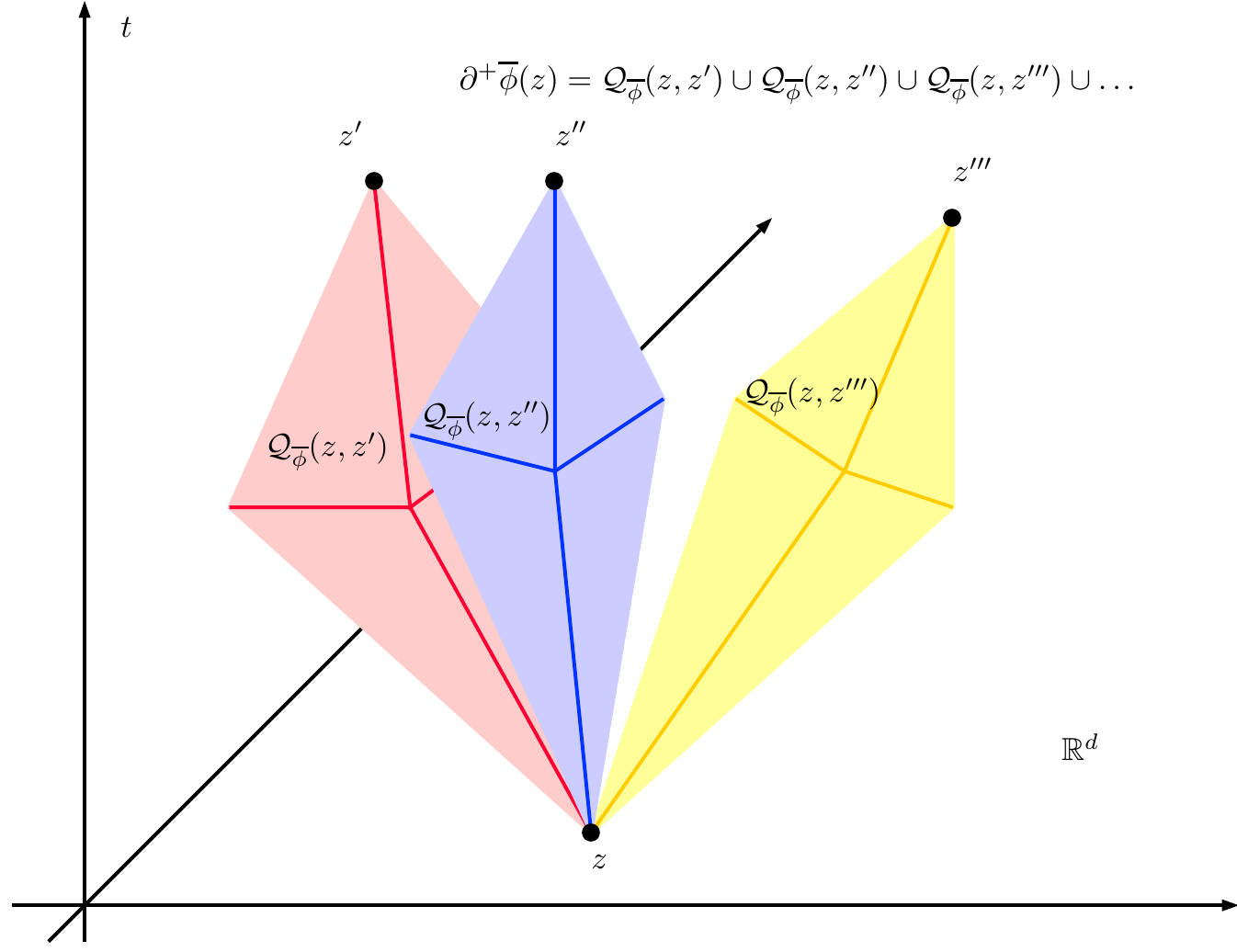}}
\caption{}\label{F_def_q}
\end{figure}  

In particular, one deduces immediately that $\partial^\pm \extphi$ is the union of sets of the form \eqref{E_cal_Q_def}, Figure \ref{F_def_q}:
\begin{equation}
\label{E_part_phi_sum_Q}
\partial^-  {\extphi} (z) = \bigcup_{z'\in\partial^-  {\extphi}(z)} \mathcal Q_{ {\extphi}} (z',z), \qquad   \partial^+  {\extphi} (z) = \bigcup_{z'\in\partial^+  {\extphi}(z)} \mathcal Q_{ {\extphi}} (z,z').
\end{equation}

\begin{proposition}\label{Pdirectionfaces1_phi}
	Let $F$ be the projection on ${\RR^d}$ of an extremal face of $\epi\,{\bar{\mathtt c}}$. The following holds:
	\begin{enumerate}
		\item\label{1_faces_phi} $ F\cap\{t=1\} \subseteq \mathcal{D}^-  {\extphi}(z) \ \iff \ \exists\delta >0 \textrm{ such that } B(z,\delta) \cap (z-F)\subseteq \partial^-  {\extphi}(z). $
		\item\label{2_faces_phi} If $F\cap\{t=1\} \subseteq \mathcal{D}^-  {\extphi}(z)$ is maximal w.r.t. set inclusion, then \[ \forall z'\in B(z,\delta) \cap (z-\interr F) \ \big( \mathcal{D}^-  {\extphi}(z') = F\cap\{t=1\} \big), \] with $\delta>0$ given by the previous point.
		\item\label{3_faces_phi} The following conditions are equivalent:
				\begin{enumerate}
					\item\label{3a_faces_phi} $\mathcal{D}^-  {\extphi}(z) = F_{\extphi}^- (z)\cap\{t=1\}$;
					\item\label{3b_faces_phi} the family of cones \[ \big\{ \mathbb R^+ \cdot\big( z- \mathcal  Q_{ {\extphi}} (z',z)\big), z'\in \partial^-  {\extphi}(z)\big\}  \]
							has a unique maximal element w.r.t. set inclusion, which coincides with $F_{\extphi}^-(z)$;
					\item\label{3c_faces_phi} 
					$\partial^- {\extphi}(z)\cap \interr (z - F_{\extphi}^-(z)) \not= \emptyset$;
					\item\label{3d_faces_phi} $\mathcal{D}^-  {\extphi}(z) = \conv\, \mathcal{D}^-  {\extphi}(z)$.
				\end{enumerate}
	\end{enumerate}
\end{proposition}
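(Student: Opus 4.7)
The plan is to reduce everything to the fact that, because $F$ is the projection of an extremal face of $\epi\,\bar{\mathtt c}$ and $\bar{\mathtt c}$ is $1$-homogeneous, the cost $\bar{\mathtt c}$ coincides with a linear functional $\ell_F$ on $F$; equivalently, one has the additivity $\bar{\mathtt c}(w_1 + w_2) = \bar{\mathtt c}(w_1) + \bar{\mathtt c}(w_2)$ whenever $w_1, w_2 \in F$. Lemma \ref{definition_of_Q} will be the main workhorse.

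For Point \eqref{1_faces_phi}, the implication $(\Leftarrow)$ is immediate from the definition of $\mathcal D^- \extphi$: if $B(z,\delta)\cap (z-F)\subset \partial^- \extphi(z)$, then $z-\epsilon v \in \partial^-\extphi(z)$ for $v\in F\cap\{t=1\}$ and $\epsilon$ small, so $v\in\mathcal D^-\extphi(z)$. For $(\Rightarrow)$, choose finitely many $v_0,\dots,v_h \in F\cap\{t=1\}$ generating $F$ as a convex cone with $0$ in the relative interior of the simplex, together with $z_i := z - s_i v_i \in \partial^-\extphi(z)$. For any small $\bar z \in z-F$, write $z-\bar z = \sum \mu_i(s_i v_i)$ with $\mu_i \in [0,\bar\mu]$; using the additivity of $\bar{\mathtt c}$ on $F$ one checks that the point $(\bar z, \extphi(z)-\bar{\mathtt c}(z-\bar z))$ lies in $\big((z_i,\extphi(z_i))+\epi\,\bar{\mathtt c}\big)\cap\big((z,\extphi(z))-\epi\,\bar{\mathtt c}\big)$ for a suitable $i$, so by Lemma \ref{definition_of_Q} one gets $\bar z \in \partial^-\extphi(z)$. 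The $\delta$ is obtained uniformly from the minimum of the $s_i$'s.

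For Point \eqref{2_faces_phi}, use Point \eqref{1_faces_phi} to get $B(z,\delta)\cap (z-\interr F)\subset \partial^- \extphi(z)$, and then Proposition \ref{phi_transitivity_property} yields $\partial^- \extphi(z') \subset \partial^-\extphi(z)$ for every $z' \in B(z,\delta)\cap(z-\interr F)$. Point \eqref{1_faces_phi} applied at $z'$ then gives $F\cap\{t=1\}\subset \mathcal D^-\extphi(z')$. For the reverse inclusion, suppose some $v^\star \in \mathcal D^-\extphi(z')\setminus F$ exists. Then $z'-s v^\star \in \partial^-\extphi(z')\subset \partial^-\extphi(z)$ by transitivity, and applying Lemma \ref{definition_of_Q} to the pair $(z'-sv^\star,z)$ produces an extremal face whose projection strictly contains $F$ (because $z-z' \in \interr F$ and the new ray adds a direction outside $F$); by Point \eqref{1_faces_phi} the slice of this larger face at $\{t=1\}$ is in $\mathcal D^-\extphi(z)$, contradicting maximality of $F\cap\{t=1\}$. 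For Point \eqref{3_faces_phi}, the equivalence \eqref{3a_faces_phi}$\Leftrightarrow$\eqref{3d_faces_phi} is an immediate unwinding of the definitions, using $\mathcal D^-\extphi(z)\subset\{t=1\}$ so that $F^-_{\extphi}(z)\cap\{t=1\}=\conv\,\mathcal D^-\extphi(z)$. For \eqref{3a_faces_phi}$\Rightarrow$\eqref{3c_faces_phi} pick $v$ in the relative interior of $F^-_\extphi(z)\cap\{t=1\}$; optimality yields $z-s v\in\partial^-\extphi(z)$, and $sv \in \interr F^-_\extphi(z)$. For \eqref{3c_faces_phi}$\Rightarrow$\eqref{3a_faces_phi}, pick $z^\star = z-w^\star\in\partial^-\extphi(z)$ with $w^\star\in\interr F^-_\extphi(z)$; by Lemma \ref{definition_of_Q} the cone $\R^+(z-\mathcal Q_\extphi(z^\star,z))$ is the projection of the minimal extremal face of $\epi\,\bar{\mathtt c}$ containing $(w^\star,\bar{\mathtt c}(w^\star))$, and since $w^\star$ lies in the relative interior of $F^-_\extphi(z)$ this face must coincide with $F^-_\extphi(z)$; Point \eqref{1_faces_phi} then gives $F^-_\extphi(z)\cap\{t=1\}\subset\mathcal D^-\extphi(z)$, which combined with the trivial reverse inclusion proves \eqref{3a_faces_phi}. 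Finally \eqref{3a_faces_phi}$\Rightarrow$\eqref{3b_faces_phi} follows by combining \eqref{3a_faces_phi}$\Rightarrow$\eqref{3c_faces_phi} with the previous argument, and \eqref{3b_faces_phi}$\Rightarrow$\eqref{3c_faces_phi} is direct since the unique maximal element is realized by some $z^\star \in\partial^-\extphi(z)$ with $z - z^\star \in \interr F^-_\extphi(z)$ by the form of $\mathcal Q_\extphi$.

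The main obstacle is the $(\Rightarrow)$ direction of Point \eqref{1_faces_phi}: transferring the scalar equalities $\extphi(z_i) = \extphi(z)-\bar{\mathtt c}(s_i v_i)$ along finitely many rays into an equality on a full open neighborhood of $z$ inside $z-F$. The cleanest way is precisely through Lemma \ref{definition_of_Q}: one has to verify carefully that the convex combination of the $(z_i,\extphi(z_i))$-based epigraphic cones leaves the intersection $\mathcal Q_\extphi(z^\star,z)$ nonempty for a suitable $z^\star$, which is where the additivity $\bar{\mathtt c}(w_1+w_2)=\bar{\mathtt c}(w_1)+\bar{\mathtt c}(w_2)$ on $F$ (i.e.\ the extremal face property) is essential; without it one would only get the one-sided inequality from the potential.
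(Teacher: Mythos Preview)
Your treatment of Points \eqref{2_faces_phi} and \eqref{3_faces_phi} is essentially the same as the paper's, just with the cycle of equivalences in \eqref{3_faces_phi} traversed in a slightly different order.

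For Point \eqref{1_faces_phi}($\Rightarrow$), however, the paper's argument is both simpler and avoids a genuine gap in yours. The observation you miss is that the hypothesis $F\cap\{t=1\}\subset\mathcal D^-\extphi(z)$ already hands you a \emph{single} point $z'\in(z-\interr F)\cap\partial^-\extphi(z)$: just pick any direction $v$ in the relative interior of $F\cap\{t=1\}$ and take the corresponding $z'$. One application of Lemma \ref{definition_of_Q} to the pair $(z',z)$ then gives $\mathcal Q_\extphi(z',z)\subset\partial^-\extphi(z)$; since $z-z'\in\interr F$, the minimal extremal face through $z-z'$ is $F$ itself, so by \eqref{E_minimal_extremal_face_containing_z_z_prime_phi} one has $\R^+\big(z-\mathcal Q_\extphi(z',z)\big)=F$, and hence $\mathcal Q_\extphi(z',z)$ already contains a full neighborhood $B(z,\delta)\cap(z-F)$.

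Your route via finitely many extreme generators $v_0,\dots,v_h$ runs into trouble precisely at the ``for a suitable $i$'' step. If the $v_i$ are extreme rays of $F$, then the minimal extremal face through $s_iv_i$ is strictly smaller than $F$, so $\mathcal Q_\extphi(z_i,z)$ only fills out that sub-face near $z$; a generic $\bar z\in B(z,\delta)\cap(z-F)$ will not lie in any single $\mathcal Q_\extphi(z_i,z)$. Concretely, in the simplicial case $\bar z-z_i=(1-\mu_i)s_iv_i-\sum_{j\neq i}\mu_js_jv_j$ has negative coefficients along the $v_j$, $j\neq i$, so it need not lie in $F$ and your additivity computation of $\bar{\mathtt c}(\bar z-z_i)$ breaks down. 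One can repair this by an iterated argument (move along one $v_i$ at a time, reapplying the lemma at each intermediate point), but that is exactly the detour the single-interior-point choice eliminates.
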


We recall that $F^-_{\bar \phi}$ is defined in Definition \ref{D_F_pm_bar_phi}. 

\begin{proof}
	\emph{Point \eqref{1_faces_phi}.} Only the first implication has to be proved. The assumption implies that there exists a point
	\[ z'\in \big( z- \interr F\big) \cap \partial^-  {\extphi}(z) \]
	and thus $\partial^-  {\extphi}(z)$ contains $\mathcal Q_{ {\extphi}} (z',z)$ by Lemma \ref{definition_of_Q}. It is fairly easy to see that this yields the conclusion, because there exists $\delta>0$ such that
	\[ B(z,\delta) \cap \big( z-F \big) \subseteq \mathcal Q_{ {\extphi}} (z',z). \]
	
	\emph{Point \eqref{2_faces_phi}.} The transitivity property of Lemma \ref{phi_transitivity_property} implies one inclusion. The opposite one follows because $\bar z$ is an inner point of $\mathcal Q_{ {\extphi}} (z',z)$.
	
	\emph{Point \eqref{3_faces_phi}.} \eqref{3b_faces_phi} implies \eqref{3a_faces_phi}: by Lemma \ref{definition_of_Q} it follows that the set $\mathcal{D}^-  {\extphi}(z)$ can be decomposed as the union of extremal faces with inner directions: since the dimension of extremal faces must increase by one at each strict inclusion, every increasing sequence of extremal faces has a maximum.
	If the maximal face $F^\mathrm{max}$ is unique, we apply Lemma \ref{definition_of_Q} to a point $\bar z$ in an inner direction, obtaining that $F^\mathrm{max} = F_{ {\extphi}}^+(z)$.
	
	\eqref{3a_faces_phi} implies \eqref{3d_faces_phi} and \eqref{3d_faces_phi} implies \eqref{3c_faces_phi}: these implications follow immediately from the definition of $\mathcal{D}^-  {\extphi}$.
	
	\eqref{3c_faces_phi} implies \eqref{3b_faces_phi}: if there is a direction in the interior of an extremal face, than by Lemma \ref{definition_of_Q} we conclude that the whole face is contained in $\mathcal{D}^- {\extphi}(z)$.
\end{proof}

A completely similar proposition can be proved for $\partial^+  {\extphi}$: we state it without proof.	

\begin{proposition}\label{Pdirectionfaces1_phi+}
	Let $F$ be the projection on ${\RR^d}$ of an extremal face of $\epi\,{\bar{\mathtt c}}$. The following holds:
	\begin{enumerate}
		\item $ F\cap\{t=1\} \subseteq \mathcal{D}^+  {\extphi}(z) \ \iff \ \exists\delta >0 \textrm{ such that } B(z,\delta) \cap (z+F)\subseteq \partial^+  {\extphi}(z). $
		\item If $F\cap\{t=1\} \subseteq \mathcal{D}^+  {\extphi}(z)$ is maximal w.r.t. set inclusion, then \[ \forall z'\in B(z,\delta) \cap (z+\interr F) \ \big( \mathcal{D}^+  {\extphi}(z') = F\cap\{t=1\}\big), \] with $\delta>0$ given by the previous point.
		\item The following conditions are equivalent:
				\begin{enumerate}
					\item $\mathcal{D}^+  {\extphi}(z) = F_{\extphi}^+ (z)\cap\{t=1\}$;
					\item the family of cones \[ \big\{ \mathbb R^+ \cdot\big( z+ \mathcal  Q_{ {\extphi}} (z',z)\big), z'\in \partial^+  {\extphi}(z)\big\}  \]
							has a unique maximal element by set inclusion, which coincides with $F_{\extphi}^+(z')$;
					\item 
					$\partial^+ {\extphi}(z)\cap \interr (z + F_{\extphi}^+(z)) \not = \emptyset$;
					\item $\mathcal{D}^+  {\extphi}(z) = \conv \mathcal{D}^+  {\extphi}(z)$.
				\end{enumerate}
	\end{enumerate}
\end{proposition}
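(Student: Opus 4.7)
The plan is to mirror the argument of Proposition \ref{Pdirectionfaces1_phi}, with every occurrence of the backward objects $\partial^{-}\extphi$, $\mathcal D^{-}\extphi$, $F_{\extphi}^{-}$, and the subtraction $z-F$ replaced by their forward counterparts $\partial^{+}\extphi$, $\mathcal D^{+}\extphi$, $F_{\extphi}^{+}$ and $z+F$. The key algebraic input is that $\mathcal Q_{\extphi}(z,z')$ is \emph{symmetric}: by Lemma \ref{definition_of_Q} it is simultaneously contained in $\partial^{-}\extphi(z')$ and $\partial^{+}\extphi(z)$, and the cone relation \eqref{E_minimal_extremal_face_containing_z_z_prime_phi} gives both $\R^{+}(\mathcal Q_{\extphi}(z,z')-z)=F(z,z')$ and $\R^{+}(z'-\mathcal Q_{\extphi}(z,z'))=F(z,z')$. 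Moreover Proposition \ref{phi_transitivity_property}\,(2) is the forward transitivity lemma we need, and \eqref{E_part_phi_sum_Q} gives the forward decomposition $\partial^{+}\extphi(z)=\bigcup_{z'\in\partial^{+}\extphi(z)}\mathcal Q_{\extphi}(z,z')$.

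For Point (1), the nontrivial direction assumes $F\cap\{t=1\}\subseteq \mathcal D^{+}\extphi(z)$. By the definition of $\mathcal D^{+}\extphi$ we can pick $z'\in (z+\interr F)\cap\partial^{+}\extphi(z)$; then Lemma \ref{definition_of_Q} gives $\mathcal Q_{\extphi}(z,z')\subseteq \partial^{+}\extphi(z)$, and \eqref{E_minimal_extremal_face_containing_z_z_prime_phi} says $\R^{+}(\mathcal Q_{\extphi}(z,z')-z)$ equals the minimal extremal face through $z'-z$, which contains $F$ since $z'-z\in\interr F$. Hence a whole neighborhood $B(z,\delta)\cap(z+F)$ sits inside $\mathcal Q_{\extphi}(z,z')\subseteq\partial^{+}\extphi(z)$. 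For Point (2), one inclusion follows from the forward transitivity (Proposition \ref{phi_transitivity_property}\,(2)) applied to any $z'\in B(z,\delta)\cap(z+\interr F)$: every $z''\in\partial^{+}\extphi(z)$ compatible with $F$ yields $z''\in\partial^{+}\extphi(z')$. The converse inclusion comes from the fact that $z'$ is an interior point of $\mathcal Q_{\extphi}(z,z'')$ for any $z''\in z+\interr F$ deep enough in the cone, so the same face shows up as a subset of $\mathcal D^{+}\extphi(z')$; since $F$ is maximal in $\mathcal D^{+}\extphi(z)$ one cannot enlarge it at $z'$ either.

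For Point (3), the implication (3b)$\Rightarrow$(3a) is as in the backward case: by \eqref{E_part_phi_sum_Q} $\partial^{+}\extphi(z)$ is a union of sets $\mathcal Q_{\extphi}(z,z')$, each generating an extremal face of $\epi\,\extc$ via \eqref{E_minimal_extremal_face_containing_z_z_prime_phi}; dimensions form a finite increasing chain, so maxima exist, and uniqueness of the maximum forces $\mathcal D^{+}\extphi(z)$ to equal the slice of $F^{+}_{\extphi}(z)$ via Lemma \ref{definition_of_Q} applied to an interior direction. (3a)$\Rightarrow$(3d) is immediate because $F^{+}_{\extphi}(z)\cap\{t=1\}$ is convex, and (3d)$\Rightarrow$(3c) because by definition of $F_{\extphi}^{+}$ any interior point of the convex hull is an interior direction. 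Finally (3c)$\Rightarrow$(3b): an interior direction $z'\in\partial^{+}\extphi(z)\cap\interr(z+F_{\extphi}^{+}(z))$ makes $\mathcal Q_{\extphi}(z,z')$ generate the whole cone $F_{\extphi}^{+}(z)$, hence the cone $\R^{+}(\mathcal Q_{\extphi}(z,z')-z)$ contains all others in the family, giving uniqueness of the maximum.

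The only delicate point, exactly as in the backward case, is ensuring that the maximal element in (3b) exists and coincides with $F_{\extphi}^{+}(z)$: this is purely a finite-dimensional convex-geometry argument on the chain of extremal faces of $\epi\,\extc$, and is already encoded in Lemma \ref{definition_of_Q} and formula \eqref{E_minimal_extremal_face_containing_z_z_prime_phi}. Once this is in place, all three points transcribe verbatim from the backward proof by reversing the role of $z$ and $z'$ and replacing $-$ by $+$ throughout; no new phenomenon appears because $\extc$ is a fixed $1$-homogeneous convex function and its extremal faces carry no directional bias.
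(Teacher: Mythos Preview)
Your proposal is correct and follows precisely the approach the paper intends: the paper explicitly states this proposition without proof, noting only that ``a completely similar proposition can be proved for $\partial^{+}\extphi$,'' and your argument is exactly the forward transcription of the proof of Proposition~\ref{Pdirectionfaces1_phi}, using Lemma~\ref{definition_of_Q}, formula~\eqref{E_minimal_extremal_face_containing_z_z_prime_phi}, the forward transitivity of Proposition~\ref{phi_transitivity_property}(2), and the decomposition~\eqref{E_part_phi_sum_Q}. One minor remark: you correctly write $\mathcal Q_{\extphi}(z,z')$ for $z'\in\partial^{+}\extphi(z)$, which is the convention consistent with the definition of $\mathcal Q_{\extphi}$; the paper's statement of (3b) appears to have a typographical slip in the ordering of arguments.
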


As a consequence of Point \eqref{3_faces_phi} of the previous propositions, we will call sometimes $F_{\extphi}^-(z)$, $F_{\extphi}^+(z)$ the \emph{maximal backward/forward extremal face}.

Now we construct a map which gives a directed affine partition in $\RR^d$ up to a residual set. Define first
\begin{equation*}
	\begin{array}{ccccc}
		\mathtt v_{ {\extphi}}^- &:& R_{\extphi}^- &\to&  \cup_{h = 0}^d \mathcal A(h, \RR^d) \\ [.5em]
		&& z &\mapsto& \mathtt v_{ {\extphi}}^-(z) := \aff\, \partial^-  {\extphi}(z)
	\end{array}
\end{equation*}

\begin{lemma}
	The map $\mathtt v_{ {\extphi}}^-$ is $\sigma$-continuous.
\end{lemma}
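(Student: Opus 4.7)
The plan is to exhibit the graph of $\mathtt v_{\extphi}^-$ as a $\sigma$-compact subset of $R_{\extphi}^- \times \bigcup_h \mathcal A(h,\RR^d)$, with the second factor equipped with the Hausdorff topology of Section \ref{Ss_intro_affine_subspaces_cones}. By Proposition \ref{p_sigma_continuity_extphi} the decomposition $R_{\extphi}^- = \bigcup_{h=0}^{d} R_{\extphi}^{-,h}$ is into $\sigma$-compact pieces, and on each piece $R_{\extphi}^{-,h}$ the image of $\mathtt v_{\extphi}^-$ is automatically contained in $\mathcal A(h,\RR^d)$: indeed, by Point \eqref{3a_faces_phi} of Proposition \ref{Pdirectionfaces1_phi}, for $z \in R_{\extphi}^{-,h}$ one has $\aff\,\partial^-\extphi(z) = z + \mathrm{span}\,F^-_{\extphi}(z)$, which has linear dimension $h+1$. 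So it suffices to treat one dimension $h$ at a time.

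Next I would localize further using the countable prebase $\mathfrak L(h,r,\lambda)$ introduced in \eqref{E_base_of_C_k_R_ell_section}, writing
\[
R_{\extphi}^{-,h}(n) := \big\{ z \in R_{\extphi}^{-,h} : F^-_{\extphi}(z) \in L(h,C^h_n,r_n,\lambda_n) \big\}, \qquad n \in \N,
\]
so that $R_{\extphi}^{-,h} = \bigcup_n R_{\extphi}^{-,h}(n)$ and each $R_{\extphi}^{-,h}(n)$ is $\sigma$-compact (the $\sigma$-compactness of the multifunction $z \mapsto F^-_{\extphi}(z)$ following from the $\sigma$-compactness of $\partial^-\extphi$ proved in Proposition \ref{p_sigma_continuity_extphi}). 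On a compact piece $K \subset R_{\extphi}^{-,h}(n)$, I would prove continuity of $\mathtt v_{\extphi}^-$ restricted to $K$ by showing that, for any sequence $z_m \to z$ in $K$ with $V_m := \mathtt v_{\extphi}^-(z_m) \to V_\infty$ in $\mathcal A(h,\RR^d)$, one has $V_\infty = \mathtt v_{\extphi}^-(z)$. Continuity of the map $(w,w') \mapsto \extphi(w') - \extphi(w) - \extc(w'-w)$ gives upper semicontinuity of the subdifferential multifunction; hence, choosing $h+1$ affinely independent points $\zeta_0^m,\dots,\zeta_h^m \in \partial^-\extphi(z_m)$ that affinely span $V_m$ and passing to convergent subsequences, the limit points lie in $\partial^-\extphi(z)$, so $V_\infty \subseteq \aff\,\partial^-\extphi(z) = \mathtt v_{\extphi}^-(z)$.

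The matching inclusion $V_\infty \supseteq \mathtt v_{\extphi}^-(z)$ is forced by a dimension argument: on $R_{\extphi}^{-,h}(n)$ the cone $F^-_{\extphi}(z_m)$ contains the fixed $(h+1)$-dimensional cone $C^h_n(-r_n)$, and by Points \eqref{3c_faces_phi} and \eqref{1_faces_phi} of Proposition \ref{Pdirectionfaces1_phi}, $\partial^-\extphi(z_m)$ contains a full neighborhood inside $z_m - F^-_{\extphi}(z_m)$. Combined with the local Lipschitz bounds on $\extphi$ inherited from $\extc$, this produces, uniformly over $K$, an $(h+1)$-simplex inside $\partial^-\extphi(z_m)$ whose $(h+1)$-dimensional volume is bounded away from zero; any Hausdorff limit of such simplices retains positive volume, forcing $\dim V_\infty \geq h+1$ and hence $V_\infty = \mathtt v_{\extphi}^-(z)$. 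The continuity on each such $K$ then delivers $\sigma$-compactness of $\Graph\,\mathtt v_{\extphi}^-$.

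The main obstacle I anticipate is precisely the non-degeneracy argument in the last step: a priori, the affine hulls $V_m$ could collapse in the limit to a subspace of dimension strictly less than $h+1$, which would violate the continuity statement. The decomposition into the sets $R_{\extphi}^{-,h}(n)$ is tailored exactly to preclude this, because the uniform inner cone $C^h_n(-r_n) \subseteq F^-_{\extphi}(z)$ provides the $(h+1)$ genuinely independent directions needed to keep the simplex non-degenerate in the Hausdorff limit.
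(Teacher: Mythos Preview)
Your proposal is correct, but it is considerably more elaborate than the paper's own proof, which is two lines: since $z \mapsto \partial^- \extphi(z)$ is $\sigma$-continuous by Proposition \ref{p_sigma_continuity_extphi}, and the map $A \mapsto \aff\,A$ is $\sigma$-continuous in the Hausdorff topology, the composition is $\sigma$-continuous.

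What you have done is essentially unpack the second assertion in that composition. The map $A \mapsto \aff\,A$ is certainly not continuous (dimensions can drop in Hausdorff limits), so one must stratify by dimension and then argue that on each stratum the affine hull varies continuously on suitable compact pieces. Your decomposition into the sets $R_{\extphi}^{-,h}(n)$ via the prebase $\mathfrak L(h,r,\lambda)$, together with the upper semicontinuity of $\partial^-\extphi$ and the uniform inner-cone non-degeneracy argument, is exactly the content hidden behind the paper's phrase ``$A \mapsto \aff\,A$ is $\sigma$-continuous''. Your route buys a self-contained verification; the paper's buys brevity by treating that fact as standard. One small slip: on $R_{\extphi}^{-,h}(n)$ it is the \emph{projection} $\mathtt p^{\bar t}_{V^h_n} F^-_{\extphi}(z)$ that contains $C^h_n(-r_n)$, not $F^-_{\extphi}(z)$ itself, but this does not affect the non-degeneracy conclusion you draw.
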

\begin{proof}
	Since $\partial^-  {\extphi} (z)$ is $\sigma$-continuous by Proposition \ref{p_sigma_continuity_extphi} and the map $A \mapsto \aff\, A$ is $\sigma$-continuous in the Hausdorff topology, the conclusion follows.
\end{proof}

Notice that we are assuming the convention $\R^0 = \N$.

\begin{theorem}
\label{T_decomp_phi_R-}
	The map $\mathtt v_{ {\extphi}}^-$ induces a partition
	\[ \bigcup_{h=0}^d \Big\{ Z^{h,-}_{\mathfrak a} \subset \RR^d, \mathfrak a \in \R^{d-h}\Big\} \]
	on $R^-_{\extphi}$ such that the following holds:
	\begin{enumerate}
		\item\label{1_part} the sets $Z^{h,-}_{\mathfrak a}$ are locally affine;
		\item\label{2_part} there exists a projection $F^{h,-}_{\mathfrak a}$ of an extremal face $F^{h,-}_{\mathfrak a}$ with dimension $h+1$ of the cone $\epi\,{\bar{\mathtt c}}$ such that \[\forall z\in Z^{h,-}_{\mathfrak a},\quad \aff\, Z^{h,-}_{\mathfrak a} = \aff(z-F^{h,-}_{\mathfrak a}) \quad\textrm{and}\quad \mathcal{D}^-  {\extphi}(z)=(F^{h,-}_{\mathfrak a})\cap\{t=1\};\]
		\item\label{3_part} for all $z\in T^-$ there exists $r>0$, $F^{h,-}_{\mathfrak a}$ such that \[ B(z,r) \cap (z-\interr F^{h,-}_{\mathfrak a}) \subseteq Z^{h,-}_{\mathfrak a}. \]
	\end{enumerate}
\end{theorem}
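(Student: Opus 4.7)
The plan is to build the partition by letting $z \sim z'$ iff $\mathtt v_{\bar\phi}^-(z) = \mathtt v_{\bar\phi}^-(z')$, and then declare each equivalence class in $R^-_{\bar\phi}$ to be a partition element $Z^{h,-}_{\mathfrak a}$, indexed by the dimension $h+1 = \dim \mathtt v_{\bar\phi}^-(z)$ and by a parameter $\mathfrak a$ chosen in a Borel cross-section. The $\sigma$-continuity of $\mathtt v_{\bar\phi}^-$ proved in the preceding lemma (together with the fact that $\dim\,\aff\,\partial^-\bar\phi$ is a Borel function) shows that this equivalence relation has $\sigma$-compact graph, so a Borel selection (e.g.\ projecting $\mathtt v_{\bar\phi}^-(z)$ onto a fixed complementary $(d-h)$-plane after possibly passing to a countable decomposition as in Proposition \ref{P_countable_partition_in_reference_directed_planes}) produces the indexing $\mathfrak a \in \R^{d-h}$.

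Next I would identify the face $F^{h,-}_{\mathfrak a}$. Since $z \in R^{-,h}_{\bar\phi}$ forces $\mathcal D^-\bar\phi(z) = \conv\,\mathcal D^-\bar\phi(z)$ of dimension $h$, the equivalence \eqref{3_faces_phi} of Proposition \ref{Pdirectionfaces1_phi} gives a unique maximal extremal face $F$ of $\epi\,\bar{\mathtt c}$ (of dimension $h+1$) such that $F \cap \{t=1\} = \mathcal D^-\bar\phi(z)$ and $F = F^-_{\bar\phi}(z)$; set $F^{h,-}_{\mathfrak a}:=F$. Point 2 of Proposition \ref{Pdirectionfaces1_phi} then says that on $B(z,\delta)\cap(z-\interr F)$ the direction multifunction is constant and equal to $F\cap\{t=1\}$, so every such point is $\sim$-equivalent to $z$ and has the same affine hull $\aff(z-F)=\mathtt v_{\bar\phi}^-(z)$. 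Hence $\aff\,Z^{h,-}_{\mathfrak a} = \aff(z - F^{h,-}_{\mathfrak a})$ for every $z\in Z^{h,-}_{\mathfrak a}$, which is Point \eqref{2_part}, and Point \eqref{3_part} follows because the ball-wedge $B(z,\delta)\cap(z-\interr F)$ lies entirely inside the same equivalence class.

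For Point \eqref{1_part}, the inclusion $B(z,\delta)\cap(z-\interr F^{h,-}_{\mathfrak a}) \subset Z^{h,-}_{\mathfrak a}$ obtained from Point 2 of Proposition \ref{Pdirectionfaces1_phi} shows that $Z^{h,-}_{\mathfrak a}$ has nonempty relative interior in the $(h+1)$-plane $\aff(z-F^{h,-}_{\mathfrak a})$ at each of its points. To upgrade this to relative openness (i.e.\ local affineness), I would combine the transitivity property of Lemma \ref{phi_transitivity_property} with Lemma \ref{definition_of_Q}: if $z,z' \in Z^{h,-}_{\mathfrak a}$ then $z' \in z-F^{h,-}_{\mathfrak a}$ or one reaches $z'$ via a point in $\mathcal Q_{\bar\phi}$, and around any such intermediate point the same open wedge construction is available; in particular any $z'$ equivalent to $z$ admits its own neighborhood in $\aff(z-F^{h,-}_{\mathfrak a})$ contained in the class, giving relative openness.

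The main obstacle I expect is the Borel/measurable indexing step: parametrizing the equivalence classes by $\mathfrak a \in \R^{d-h}$ uniformly in $h$ requires decomposing $R^-_{\bar\phi}$ into countably many sheaf-set-like pieces and choosing transversal sections on which $\mathtt v_{\bar\phi}^-$ restricts to an injection. This is essentially the content to be matched with the statement of Proposition \ref{P_countable_partition_in_reference_directed_planes}. Verifying that the pieces produced by this decomposition cover $R^-_{\bar\phi}$ and that the label $\mathfrak a$ can be chosen Borel (using the $\sigma$-continuity of $\mathtt v_{\bar\phi}^-$ and of $z\mapsto F^-_{\bar\phi}(z)$) is the technical core; everything else is a direct application of Propositions \ref{Pdirectionfaces1_phi} and \ref{phi_transitivity_property}.
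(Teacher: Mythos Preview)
Your outline is broadly correct, but the argument for Point \eqref{1_part} has a real gap. The backward wedge $B(z,\delta)\cap(z-\interr F^{h,-}_{\mathfrak a})$ does \emph{not} contain $z$ in its relative interior: $z$ sits at the vertex of the cone, on the boundary of the wedge. Having such a one-sided wedge at every point of a set does not force the set to be relatively open (a closed half-line is the obvious counterexample). Your ``upgrade'' paragraph is then circular, since it asserts that each $z'$ in the class admits a full neighborhood in the class, which is precisely what has to be shown, and stringing together backward wedges via transitivity still only produces backward wedges.

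The missing ingredient is condition (iii) in the definition of $R^{-,h}_{\extphi}$, which you never invoke. That condition furnishes a point $z''\in T^-_{\extphi}\cap\big(z+\interr F^-_{\extphi}(z)\big)$ with $z\in\partial^-\extphi(z'')$ and the same face structure (i)--(ii). Combining this forward point $z''$ with a backward point $z'\in\partial^-\extphi(z)\cap\big(z-\interr F^-_{\extphi}(z)\big)$ obtained from (i)--(ii) via Proposition \ref{Pdirectionfaces1_phi}\eqref{3c_faces_phi}, the transitivity of Proposition \ref{phi_transitivity_property} gives $z'\in\partial^-\extphi(z'')$, and Lemma \ref{definition_of_Q} places $z$ in the \emph{relative interior} of the diamond $\mathcal Q_{\extphi}(z',z'')$. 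This diamond has dimension $h+1$, is contained in $Z^{h,-}_{\mathfrak a}$, and is the genuine open neighborhood you need. This is exactly the paper's proof.

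A smaller issue: Point \eqref{3_part} is asserted for all $z\in T^-_{\extphi}$, not only for $z\in R^-_{\extphi}$, whereas your derivation presupposes $z\in Z^{h,-}_{\mathfrak a}$. For a general $z\in T^-_{\extphi}$ one first selects a \emph{maximal} extremal face $F$ among those appearing in $\mathcal D^-\extphi(z)$; Point \eqref{2_faces_phi} of Proposition \ref{Pdirectionfaces1_phi} then shows that every interior point of the backward wedge satisfies (i)--(ii) with that same $F$, and interior points of the wedge forward of a given one serve as the witnesses for (iii), so the whole wedge interior lands in a single $Z^{h,-}_{\mathfrak a}$.
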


The choice of $\mathfrak a$ is in the spirit of Proposition \ref{E_mathfrak_A_h_def}.

\begin{proof}
	Being a map, $\mathtt v_{ {\extphi}}^-$ induced clearly a partition $\{Z^{h,-}_{\mathfrak a}, h = 0,\ldots,d , \mathfrak a \in \R^{d-h}\}$.
	
	\emph{Point \eqref{1_part}.} Let $z \in Z^{h,-}_{\mathfrak a}$. By assumption, $z \in R_{\extphi}^{-}$ (or more precisely $z \in R_{\extphi}^{h,-}$ for some $h$), so that by Point (i) of Definition \ref{def_regular_points} of $R^{-,h}_{\extphi}$ there exists $z'$ such that \[ z' \in z - \interr \partial^-  {\extphi}(z).\]
	In the same way, by Point (iii) of Definition \ref{def_regular_points} of $R^{-,h}_{\extphi}$ there exists $z''$ such that \[z'' \in z + \interr \partial^-  {\extphi}(z).\]
	By Lemma \ref{definition_of_Q} we conclude that $z$ is contained in the interior of $\mathcal Q_{ {\extphi}} (z',z'')$, and this is a relatively open subset of $Z^{h,-}_{\mathfrak a}$, being of dimension \[\dim\,\partial^-   {\extphi}(z)=h+1.\]
	
	\emph{Point \eqref{2_part}.} Since $z \in R^{-, h}(\mathfrak a)$, then the maximal backward extremal face $F^{h,-}_{\mathfrak a}$ is given by $F_{\extphi}^-(z)$. Using the fact that $z$ is contained in a relatively open set of $Z^{h,+}_{\mathfrak a}$, the statements are a consequence of Proposition \ref{Pdirectionfaces1_phi}.
	
	\emph{Point \eqref{3_part}.} 
	If $z \in T^-_{\extphi}$, then $\partial^-  {\extphi}(z) \not= \emptyset$. We can thus take a maximal cone of the family \[ \big\{ \R^+ \cdot \mathcal Q_{ {\extphi}} (z,z'), z' \in \partial^-  {\extphi}(z) \big\}, \] and the point $z' \in \partial^-  {\extphi}(z)$ such that $\mathcal Q_{ {\extphi}} (z,z')$ is maximal with respect to the set inclusion: it is thus fairly simple to verify that \[ \interr \mathcal Q_{ {\extphi}} (z,z') \subset Z^{h,-}_{\mathfrak a} \] for some $h \in\{ 0,\ldots,d\}$, $\mathfrak a \in \R^{d-h}$. Hence, if $F^{h,+}_{\mathfrak a}$ is a projection on $\RR^d$ of an extremal face of a cone for $z \in \interr \mathcal Q_{ {\extphi}} (z,z')$, then from \eqref{E_minimal_extremal_face_containing_z_z_prime_phi} the conclusion follows.
\end{proof}

A completely similar statement holds for $R^+$, by considering of $\sigma$-continuous map

\begin{equation*}
	\begin{array}{ccccc}
		\mathtt v_{ {\extphi}}^+ &:& R_{\extphi}^+ &\to&  \cup_{h = 0}^d \mathcal A(h, \RR^d) \\ [.5em]
		&& z &\mapsto& \mathtt v_{ {\extphi}}^+(z) := \aff\, \partial^+  {\extphi}(z)
	\end{array}
\end{equation*}

\begin{theorem}
\label{T_decomp_phi_R+}
	The map $\mathtt v_{ {\extphi}}^+$ induces a partition
	\[ \bigcup_{h'=0}^d \Big\{ Z^{h',+}_{\mathfrak a'} \subset \RR^d, \mathfrak a' \in \R^{d-h'}\Big\} \]
	on $R^+_{\extphi}$ such that the following holds:
	\begin{enumerate}
		\item\label{1_part+} the sets $Z^{h',+}_{\mathfrak a'}$ are locally affine;
		\item\label{2_part+} there exists a projection $F^{h',+}_{\mathfrak a'}$ of an extremal face with dimension $h'+1$ of the cone $\epi\,{\bar{\mathtt c}}$ such that \[\forall z\in Z^{h',+}_{\mathfrak a'},\quad \aff\, Z^{h',+}_{\mathfrak a'} = \aff(z+F^{h',+}_{\mathfrak a'}) \quad\textrm{and}\quad \mathcal{D}^-  {\extphi}(z)=F^{h',+}_{\mathfrak a'} \cap\{t=1\};\]
		\item\label{3_part+} for all $z\in T^+$ there exists $r>0$, $F^{h',+}_{\mathfrak a'}$ such that \[ B(z,r) \cap (z+\interr F^{h',+}_{\mathfrak a'}) \subseteq Z^{h',+}_{\mathfrak a'}. \]
	\end{enumerate}
\end{theorem}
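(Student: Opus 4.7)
The plan is to follow \emph{verbatim} the proof of Theorem \ref{T_decomp_phi_R-}, exchanging backward objects for their forward analogs throughout. All of the needed forward tools are already in place: Proposition \ref{p_sigma_continuity_extphi} gives $\sigma$-compactness of $\partial^+\extphi$ and of $R^{+,h}_{\extphi}$; Lemma \ref{definition_of_Q} is already symmetric, since $\mathcal Q_{\extphi}(z,z') \subseteq \partial^+\extphi(z) \cap \partial^-\extphi(z')$; and Proposition \ref{Pdirectionfaces1_phi+} is the exact forward counterpart of Proposition \ref{Pdirectionfaces1_phi} used in the backward proof.

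First I would observe that $\mathtt v_{\extphi}^+$ is $\sigma$-continuous by the same argument used for $\mathtt v_{\extphi}^-$: $\partial^+\extphi$ is $\sigma$-compact and $A \mapsto \aff A$ is $\sigma$-continuous in the Hausdorff topology. Its fibers define the partition $\{Z^{h',+}_{\a'}\}_{h',\a'}$ of $R^+_{\extphi}$. For Point (1), I fix $z \in Z^{h',+}_{\a'} \subset R^{+,h'}_{\extphi}$. Point (i) of Definition \ref{def_regular_points} combined with Proposition \ref{Pdirectionfaces1_phi+}(3) gives $\mathcal D^+\extphi(z) = F^+_{\extphi}(z) \cap \{t=1\}$, so I can find $z'' \in \partial^+\extphi(z) \cap \interr(z + F^+_{\extphi}(z))$; Point (iii) supplies $z' \in T^+_{\extphi} \cap (z - \interr F^+_{\extphi}(z))$ with $z \in \partial^+\extphi(z')$. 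The forward transitivity (Proposition \ref{phi_transitivity_property}(2)) then yields $z'' \in \partial^+\extphi(z')$, so Lemma \ref{definition_of_Q} applies to the pair $(z', z'')$ and places $z$ in the relative interior of $\mathcal Q_{\extphi}(z', z'')$, a relatively open subset of $Z^{h',+}_{\a'}$ of dimension $h'+1$.

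For Point (2), since $z \in R^{+,h'}_{\extphi}$ the maximal forward extremal face $F^{h',+}_{\a'}$ is unambiguously identified with $F^+_{\extphi}(z)$, and the claimed identities $\aff Z^{h',+}_{\a'} = \aff(z + F^{h',+}_{\a'})$ and $\mathcal D^+\extphi(z) = F^{h',+}_{\a'} \cap \{t=1\}$ follow from Proposition \ref{Pdirectionfaces1_phi+} using the local affinity just established. For Point (3), given $z \in T^+_{\extphi}$ the family $\{\R^+ \cdot \mathcal Q_{\extphi}(z,z'), z'\in \partial^+\extphi(z)\}$ admits a maximal element by a dimension argument; picking $z'$ realizing it, the relative interior of $\mathcal Q_{\extphi}(z, z')$ lies in some $Z^{h',+}_{\a'}$, and \eqref{E_minimal_extremal_face_containing_z_z_prime_phi} identifies the desired ball intersection with a subset of that relative interior. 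No genuinely new obstacle is anticipated; the only bookkeeping care is the sign convention ($z + F$ in place of $z - F$) and the use of the forward clause of Proposition \ref{phi_transitivity_property} in place of the backward one, so that super-differential information propagates in the correct direction along optimal rays.
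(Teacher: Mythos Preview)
Your proposal is correct and matches the paper's approach exactly: the paper does not give a separate proof of Theorem \ref{T_decomp_phi_R+} at all, stating only that ``a completely similar statement holds for $R^+$'' after proving Theorem \ref{T_decomp_phi_R-}, so the intended argument is precisely the forward/backward duality you carry out. Your bookkeeping (using Proposition \ref{Pdirectionfaces1_phi+} in place of Proposition \ref{Pdirectionfaces1_phi}, the forward clause of Proposition \ref{phi_transitivity_property}, and $\mathcal Q_{\extphi}(z',z'')$ with $z' \in z - \interr F^+_{\extphi}(z)$ and $z'' \in z + \interr F^+_{\extphi}(z)$) is exactly what is needed.
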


	In general $h \not= h'$, but on $R_{\extphi}$ the two dimensions (and hence the affine spaces $\aff\,\partial^\pm \extphi(z)$) coincide.

\begin{proposition}
\label{P_equal_R-R+}
	If $z \in R_{\extphi}$ then \[ \mathtt v_{ {\extphi}}^-(z) = \mathtt v_{ {\extphi}}^+(z).\]
\end{proposition}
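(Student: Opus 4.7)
The plan is to show the stronger statement $F^-_{\extphi}(z) = F^+_{\extphi}(z)$, from which the conclusion follows immediately: indeed, by Proposition \ref{Pdirectionfaces1_phi} Point \eqref{3_faces_phi} (available because condition (i) of $R^{-,h}$ gives (3d)), one has $\mathcal D^-\extphi(z) = F^-_{\extphi}(z) \cap \{t=1\}$, whence $\partial^-\extphi(z) \subseteq z - F^-_\extphi(z)$. Combined with (3c) and Lemma \ref{definition_of_Q} applied to any $z' \in \partial^-\extphi(z) \cap \interr(z - F^-_\extphi(z))$, which yields a set $\mathcal Q_\extphi(z',z) \subseteq \partial^-\extphi(z)$ whose affine hull equals $z + \mathrm{span}\,F^-_\extphi(z)$, one gets
\[
\mathtt v^-_\extphi(z) = \aff\,\partial^-\extphi(z) = z + \mathrm{span}\,F^-_\extphi(z),
\]
and analogously $\mathtt v^+_\extphi(z) = z + \mathrm{span}\,F^+_\extphi(z)$. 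So equality of the cones gives equality of the affine hulls.

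To show $F^-_\extphi(z) \subseteq F^+_\extphi(z)$, use that $z \in R^-_\extphi$ to produce $z_- \in T^-_\extphi$ with $z_- - z \in \interr F^-_\extphi(z)$ and the optimality $\extphi(z_-) - \extphi(z) = \bar{\mathtt c}(z_- - z)$; equivalently $z_- \in \partial^+\extphi(z)$, so the direction $(z_- - z)/\mathtt p_t(z_- - z)$ lies in $\mathcal D^+\extphi(z) \subseteq F^+_\extphi(z)$, and therefore $z_- - z \in F^+_\extphi(z)$. Now lift to $\epi\,\bar{\mathtt c}$: since any proper extremal face of $\epi\,\bar{\mathtt c}$ lies on the graph of $\bar{\mathtt c}$ (where $\bar{\mathtt c}$ is affine along that face), the map $v \mapsto (v, \bar{\mathtt c}(v))$ is an affine bijection from each projected face $F^\pm_\extphi(z)$ onto its lifted counterpart $\tilde F^\pm_\extphi(z)$, preserving relative interiors. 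Thus the single lifted point $(z_- - z, \bar{\mathtt c}(z_- - z))$ simultaneously lies in $\mathrm{relint}\,\tilde F^-_\extphi(z)$ and in $\tilde F^+_\extphi(z)$. Applying the standard convex-analytic fact that an extremal face of a convex set containing a relative interior point of another extremal face must contain that face, one concludes $\tilde F^-_\extphi(z) \subseteq \tilde F^+_\extphi(z)$, hence $F^-_\extphi(z) \subseteq F^+_\extphi(z)$. The reverse inclusion is obtained by the symmetric argument: from $z \in R^+_\extphi$ one gets $z_+$ with $z - z_+ \in \interr F^+_\extphi(z)$ and $z_+ \in \partial^-\extphi(z)$, so $z - z_+ \in F^-_\extphi(z)$, and the same face-containment lemma gives $\tilde F^+_\extphi(z) \subseteq \tilde F^-_\extphi(z)$.

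The only delicate point is the identification of relative interiors through the lift: this requires the observation that each proper extremal face of the epigraph projects bijectively and affinely onto its image in $\RR^d$, which reduces the argument to standard facts on faces of convex sets and does not pose any real difficulty. The rest of the proof is essentially an application of Lemma \ref{definition_of_Q} and the characterisation of maximal extremal faces from Propositions \ref{Pdirectionfaces1_phi} and \ref{Pdirectionfaces1_phi+}.
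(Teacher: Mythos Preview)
Your argument is correct and in spirit matches the paper's, though you are considerably more explicit. The paper's proof compresses everything into two lines: it asserts $h=h'$ ``because we have inner directions both forward and backward'' and then concludes from $z$ lying in the relatively open set $\interr(Z^{h,-}_\mathfrak a \cap Z^{h',+}_{\mathfrak a'})$. Unpacking this, the paper is implicitly using the same inner point $z_-$ from condition~(iii) of $R^{-,h}_{\extphi}$ to produce, via Lemma~\ref{definition_of_Q}, a full $(h{+}1)$-dimensional piece $\mathcal Q_\extphi(z,z_-) \subset \partial^+\extphi(z)$; this forces $h' \geq h$, and by symmetry $h=h'$, after which equality of the $(h{+}1)$-dimensional affine hulls follows because one contains an open piece of the other.

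Your route differs only in the technical packaging: instead of a dimension count on the sets $\mathcal Q_\extphi$, you lift to $\epi\,\bar{\mathtt c}$ and invoke the standard face-containment principle (a face meeting the relative interior of another face must contain it) to obtain directly $F^-_\extphi(z)=F^+_\extphi(z)$. This is arguably cleaner, and the identification $\mathtt v^\pm_\extphi(z) = z + \mathrm{span}\,F^\pm_\extphi(z)$ that you derive from Propositions~\ref{Pdirectionfaces1_phi} and~\ref{Pdirectionfaces1_phi+} makes the conclusion immediate. The two arguments are equivalent in content; yours simply avoids the somewhat elliptical appeal to ``$z$ in the relatively open intersection'' by making the convex-geometric mechanism explicit.
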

\begin{proof}
	By the definition of $R_{\extphi}$, it follows that $h=h'$ because we have inner directions both forward and backward, and since each $z$ is in the relatively open set \[ \interr \big( Z^{h,-}_{\mathfrak a} \cap Z^{h',+}_{\mathfrak a'} \big), \] then $\aff\, \partial^-  {\extphi}(z) = \aff\, \partial^+  {\extphi}(z)$, i.e. $\mathtt v_{ {\extphi}}^-(z) = \mathtt v_{ {\extphi}}^+(z)$.
\end{proof}

Define thus on $R_{\extphi}$ \[ \mathtt v_{ {\extphi}} := \mathtt v_{ {\extphi}}^- \llcorner_R = \mathtt v_{ {\extphi}}^+ \llcorner_R,\] and let \[\Big\{ Z^{h}_{\mathfrak a}, \mathfrak a\in \R^{d-h}\Big\} \] be the partition induced by $\mathtt v_{ {\extphi}}$: since $R_{\extphi} = \cup_h (R_{\extphi}^{-,h} \cap R_{\extphi}^{+,h})$, it follows that \[ Z^{h}_{\mathfrak a} = Z^{h,-}_{\mathfrak a} \cap Z^{h,+}_{\mathfrak a},\] once the parametrization of $\mathcal A(h,\aff\, Z^h_\mathfrak a)$ is chosen in a compatible way.
We can then introduce the extremal cones of $\epi\,\extphi$
\[C_a^h := \epi\,\extphi \cap \big( \mathtt v_{\extphi}(z)-z \big) = F_a^{h,+}  = F_a^{h,-}.\]
Finally, define the set \[\mathbf D_{ {\extphi}} \subset \bigcup_{h=0,\ldots,d} \Big( \{h\} \times \R^{d-h} \times \R^h \times \mathcal C(h,\RR^d)\Big)\] by
\begin{equation}
\label{dlap_phi}
 	\mathbf D_{ {\extphi}} := \Big\{ \big( h,\mathfrak a,z,C \big) : C =C^h_\mathfrak a, z \in Z^h_{\mathfrak a} \Big\}.
\end{equation} 

\begin{lemma}
	The set $\mathbf D_{ {\extphi}}$ is $\sigma$-compact.
\end{lemma}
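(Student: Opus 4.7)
The strategy is to exhibit $\mathbf D_{\bar\phi}$ as the graph of a $\sigma$-continuous map defined on the $\sigma$-compact set $R_{\bar\phi}$. Since the graph of a $\sigma$-continuous map on a $\sigma$-compact domain is $\sigma$-compact, this yields the claim. By Proposition \ref{p_sigma_continuity_extphi} the set $R_{\bar\phi} = \bigcup_h (R^{-,h}_{\bar\phi} \cap R^{+,h}_{\bar\phi})$ is $\sigma$-compact, and the dimension tag $h(z) = \dim \partial^{-}\bar\phi(z)$ is constant on each stratum, hence trivially $\sigma$-continuous once one restricts to $R^{-,h}_{\bar\phi} \cap R^{+,h}_{\bar\phi}$.

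Next I would show that the map $z \mapsto C(z)$ assigning to $z \in Z^h_\mathfrak a$ the cone $C^h_\mathfrak a$ is $\sigma$-continuous on $R_{\bar\phi}$. By Point \eqref{3_faces_phi} of Proposition \ref{Pdirectionfaces1_phi} (and its forward counterpart Proposition \ref{Pdirectionfaces1_phi+}), together with Proposition \ref{P_equal_R-R+}, one has
\[
C^h_\mathfrak a \cap \{t=1\} = \mathcal D^{+}\bar\phi(z) = \mathcal D^{-}\bar\phi(z), \qquad z \in Z^h_\mathfrak a,
\]
so that $C(z) = \R^{+}\cdot \mathcal D^{+}\bar\phi(z)$. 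Since $\mathcal D^{\pm}\bar\phi$ are $\sigma$-compact (Proposition \ref{p_sigma_continuity_extphi}) and the operation $A \mapsto \R^{+}\cdot A$ is continuous w.r.t. the Hausdorff topology on closed convex cones, $z \mapsto C(z)$ is $\sigma$-continuous with values in $\mathcal C(h,\RR^d)$. Equivalently, $C^h_\mathfrak a$ is the unique extremal face of $\epi\,\bar{\mathtt c}$ whose linear span equals $\mathtt v_{\bar\phi}(z) - z$, and $\mathtt v_{\bar\phi}$ is $\sigma$-continuous by construction.

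To handle the label $\mathfrak a \in \R^{d-h}$, I would choose a Borel parametrization as follows: fix a countable dense family of affine $(d-h)$-planes $\{\Pi^h_m\}_m$ transversal to the generic direction in $\mathcal A(h,\RR^d)$, and set
\[
\mathfrak a(z) := \Pi^h_{m(z)} \cap Z^h_{\mathfrak a(z)} \in \R^{d-h},
\]
where $m(z)$ is the least index such that $\Pi^h_m$ meets $\aff\,\partial^{-}\bar\phi(z) \ni z$ transversally in a single point. This selection is $\sigma$-continuous because the transversality condition defines a $\sigma$-compact subset of $R_{\bar\phi}$ and the intersection point depends continuously on $(\mathtt v_{\bar\phi}(z),z)$. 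Combining the three $\sigma$-continuous maps $z \mapsto h(z)$, $z \mapsto \mathfrak a(z)$, $z \mapsto C(z)$ with the identity, we obtain that
\[
\mathbf D_{\bar\phi} = \Big\{ \bigl(h(z), \mathfrak a(z), z, C(z)\bigr) : z \in R_{\bar\phi} \Big\}
\]
is the image of $R_{\bar\phi}$ under a $\sigma$-continuous map, hence $\sigma$-compact.

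The only real obstacle is the bookkeeping of the $\mathfrak a$-parametrization, which must be chosen Borel-uniformly in $h$ without circularity with the forthcoming Proposition \ref{P_countable_partition_in_reference_directed_planes}; the transversal-plane device above is self-contained and avoids this. The rest of the argument is a routine assembly of the $\sigma$-compactness statements already established in Proposition \ref{p_sigma_continuity_extphi} together with continuity of elementary convex-geometric operations.
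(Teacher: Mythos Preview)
Your proof is correct and follows the same idea as the paper's: the paper's proof is the single sentence ``Since $\mathtt v_{\bar\phi}$ is $\sigma$-continuous, the conclusion follows,'' and your argument is precisely the unpacking of that sentence into its four coordinate maps $(h,\mathfrak a,z,C)$. One small correction: Proposition~\ref{P_countable_partition_in_reference_directed_planes} is not ``forthcoming'' but lies in Section~\ref{S_directed_locally_affine_partitions}, prior to this lemma, and the paper's $\mathfrak a$-parametrization via \eqref{E_mathfrak_A_h_def} is essentially your transversal-plane selection; since that formula is a direct section of $\mathtt v_{\bar\phi}$ and does not presuppose $\sigma$-compactness of $\mathbf D_{\bar\phi}$, the circularity you guard against never actually arises.
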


\begin{proof}
	Since $\mathtt v_{ {\extphi}}$ is $\sigma$-continuous, the conclusion follows.
\end{proof}

The next two sections will prove that this partition satisfies the condition of Theorem \ref{T_potential_deco}.

\subsection{Backward and forward regularity}
\label{Ss_back_forw_regul_phi}

The first point we need to prove is that $\mathcal H^d$-almost every point in $\{t=1\}$ is regular, i.e. it belongs to $R_{\extphi}$. 

We recall below the result obtained in \cite{biadan,Dan:PhD}, rewritten in our settings. 

\begin{proposition}[Theorem 5.21, \cite{biadan}]
\label{Theorem521}
	$\mathcal L^{d+1}$-almost every point in $\RR^d$ is regular.
\end{proposition}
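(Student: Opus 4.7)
Since this is quoted as \cite[Theorem 5.21]{biadan}, my plan is to transfer their argument to the present $1$-homogeneous setting with preferred coordinate $t$. By symmetry between forward and backward (just swap the roles of $\partial^\pm \bar\phi$ and of $\bar\phi(t,\cdot)$ on $\{t=0\}$ versus $\{t=1\}$), it suffices to show $\mathcal L^{d+1}(\R^{d+1}\setminus R^-_{\bar\phi})=0$, because $R_{\bar\phi}=R^-_{\bar\phi}\cap R^+_{\bar\phi}$. First observe that $\{t>0\}\subset T^-_{\bar\phi}$: indeed for $t>0$ the infimum in \eqref{HJ_def_extphi} is attained at some $x'$, so $z'=(0,x')\in \partial^-\bar\phi(z)\setminus\{z\}$; similarly one gets $\{t<1\}\subset T^+_{\bar\phi}$. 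Hence the complements of $T^\pm_{\bar\phi}$ are already $\mathcal L^{d+1}$-negligible and it only remains to study $T^-_{\bar\phi}\setminus R^-_{\bar\phi}$.

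I would split this residual set into the union
\[
A:=\bigl\{z\in T^-_{\bar\phi}:\mathcal D^-\bar\phi(z)\ne \conv\,\mathcal D^-\bar\phi(z)\bigr\},\qquad
B:=\bigl\{z\in T^-_{\bar\phi}\setminus A:\text{condition (iii) of Def.\ \ref{def_regular_points} fails}\bigr\},
\]
and bound each separately. For $A$, by Proposition \ref{Pdirectionfaces1_phi}\eqref{3_faces_phi} at every $z\in A$ there exist at least two distinct maximal extremal faces $F_1\ne F_2$ of $\epi\,\bar{\mathtt c}$ realized in $\partial^-\bar\phi(z)$. Using the $\sigma$-continuity of $z\mapsto \partial^-\bar\phi(z)$ and the stratification of the extremal faces of the convex set $\epi\,\bar{\mathtt c}$, one reduces to countably many measurable strata indexed by pairs of face dimensions; on each stratum the set of $z$ having two distinct maximal faces lies in the relative boundary of the locally affine set $Z^{h,-}_{\mathfrak a}$ associated with either face, hence inside a countable union of locally affine sets of affine dimension strictly less than $d+1$, which is $\mathcal L^{d+1}$-null.

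For $B$, I would work on a single $\sigma$-compact stratum of $z\mapsto \mathtt v^-_{\bar\phi}(z)$ of dimension $h+1$ and use the area/Fubini argument standard in the Sudakov theory (see \cite{Dan:PhD,biadan}): along any maximal backward ray $z\mapsto z-sv$ with $v\in\interr F^{h,-}_{\mathfrak a}$, the failure of condition (iii) can happen only at the backward endpoint of the ray; a transversality/Jacobian estimate for the map $(s,z)\mapsto z-sv$ (controlled from below by the non-degeneracy of the cone direction, exactly as in the sheaf-set picture of Proposition \ref{P_countable_partition_in_reference_directed_planes}) then gives, via Fubini, that the "endpoint set" has vanishing $\mathcal L^{d+1}$ measure. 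Summing over the countably many strata and over $h=0,\dots,d$ yields $\mathcal L^{d+1}(B)=0$.

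The main obstacle is the area estimate in the second step: one needs to verify that the parametrization of points by (base point, ray parameter) is Lipschitz with a uniformly positive Jacobian on each piece, and this requires using the cone non-degeneracy together with the $1$-homogeneity of $\bar{\mathtt c}$ to avoid a possible degeneracy along the $t$-direction. Once this is in place, the rest is a bookkeeping argument over the countable decomposition into sheaf sets provided by Proposition \ref{P_countable_partition_in_reference_directed_planes}. Essentially the same computation will reappear in Section \ref{Ss_regul_disi_phi} to upgrade this $\mathcal L^{d+1}$-regularity to the sharper $\mathcal H^d\llcorner_{\{t=1\}}$-regularity needed for Theorem \ref{T_potential_deco}.
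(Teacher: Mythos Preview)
The paper does not prove this proposition at all: it is stated purely as a citation of \cite[Theorem~5.21]{biadan} (``We recall below the result obtained in \cite{biadan,Dan:PhD}, rewritten in our settings''), and is used only as a black box in the subsequent Proposition~\ref{P_back_regul_phi}. So there is no paper proof to compare against; what you have written is an attempted reconstruction of the argument from \cite{biadan}.

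That said, your sketch has a genuine gap. The claim ``similarly one gets $\{t<1\}\subset T^+_{\bar\phi}$'' is not justified: the Lax formula \eqref{HJ_def_extphi} defines $\bar\phi$ forward from $t=0$, so every $z$ with $t(z)>0$ has a backward optimal ray, but nothing guarantees a forward one. Forward rays for $\bar\phi$ exist only where $\bar\phi=\bar\psi$ (the region where the forward and backward solutions agree), and this is in general a proper subset of $\{0<t<1\}$. The actual argument in \cite{biadan} does not use such a shortcut; it works in the abstract setting of complete $\mathtt c$-Lipschitz foliations and establishes negligibility of $T_{\bar\phi}\setminus R_{\bar\phi}$ via area estimates (cf.\ Theorem~\ref{T_regula} later in this paper for the analogous statement for $\bar\vartheta$), without ever needing $T^\pm_{\bar\phi}$ to be a priori full measure. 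Your treatment of the set $A$ is also too loose: having two distinct maximal extremal faces does not by itself place $z$ on the boundary of some $Z^{h,-}_{\mathfrak a}$; the negligibility of such points requires the area/cone-vector-field machinery rather than a dimension count.
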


Next we introduce a key tool for proving the regularity: the area estimate.  

\begin{lemma}
\label{L_inner_area_estimate_phi}
	Let $\bar t> s> \varepsilon>0$, and consider a Borel and bounded subset $S \subset \{ t=\bar t \}$ made of backward regular points.
	Then for every $(\bar t, x)\in S$ there exists a point $\sigma_s(\bar t, x) \in \interr \big( \partial^-\extphi(\bar t, x)\cap\{t=s\} \big)$ such that
	\begin{equation} 
	\label{F_inner_area_estimate_phi}
		\mathcal H^d (\sigma_s (S)) \geq \bigg( \frac{s-\varepsilon}{\bar t-\varepsilon} \bigg)^d \mathcal H^d(S).
	\end{equation}
\end{lemma}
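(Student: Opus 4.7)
For each $z=(\bar t,x)\in S$, backward regularity gives that $\mathcal D^-\extphi(z)=\conv\,\mathcal D^-\extphi(z)$ has nonempty interior in $\aff\,\partial^-\extphi(z)$, so interior directions exist. Combining Point \eqref{2_faces_phi} of Proposition \ref{Pdirectionfaces1_phi}, transitivity (Proposition \ref{phi_transitivity_property}) and the Lax representation \eqref{HJ_def_extphi} of $\extphi$ --- which guarantees that every point of $\tRd$ admits at least one optimal backward ray reaching $\{t=0\}$ --- one sees that the optimal ray from $z$ along an interior direction can be optimally prolonged across all intermediate heights, and in particular down to $\{t=\varepsilon\}$, with endpoint lying in $\interr(\partial^-\extphi(z)\cap\{t=\varepsilon\})$. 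The $\sigma$-compactness of $\partial^-\extphi$ established in Proposition \ref{p_sigma_continuity_extphi} then allows a Kuratowski--Ryll-Nardzewski selection of a Borel map $\tau:S\to\{t=\varepsilon\}$ with $\tau(z)\in\interr(\partial^-\extphi(z)\cap\{t=\varepsilon\})$.

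Setting
\begin{equation*}
\sigma_s(z):=\frac{s-\varepsilon}{\bar t-\varepsilon}\,z+\frac{\bar t-s}{\bar t-\varepsilon}\,\tau(z),
\end{equation*}
the point $\sigma_s(z)$ lies on the segment $\segment{\tau(z),z}$ at height $s$, and convexity of the backward cone in the slab $\{\varepsilon\le t\le\bar t\}$ (which sits inside the extremal face $z-F^-_{\extphi}(z)$, with $z$ as vertex at $t=\bar t$) gives $\sigma_s(z)\in\interr(\partial^-\extphi(z)\cap\{t=s\})$. For the area bound I invoke the \emph{no-crossing property} of interior optimal rays: should the rays $\segment{z_1,\tau(z_1)}$ and $\segment{z_2,\tau(z_2)}$ meet at some $w$ with $\varepsilon<\mathtt p_t w<\bar t$, transitivity would place both $\tau(z_i)$ in $\partial^-\extphi(w)$, and by Point \eqref{2_faces_phi} of Proposition \ref{Pdirectionfaces1_phi} combined with the strict interiority of each $\tau(z_i)$ in its backward slice, the two rays would coincide in a neighbourhood of $w$, forcing $z_1=z_2$. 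Hence $\sigma_s$ is injective on $S$.

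The area inequality then follows from the classical focusing argument for solutions of the Hamilton--Jacobi equation (analogous to the one outlined in \cite{Dan:PhD,biaglo:HJ2}). Consider the Lipschitz homotopy $\gamma(z,r):=(1-r)\,z+r\,\tau(z)$, $r\in[0,1]$, whose evaluation at $r_*:=(\bar t-s)/(\bar t-\varepsilon)$ coincides with $\sigma_s$. At any differentiability point of $\tau$ one has $D\sigma_s=(1-r_*)\,I+r_*\,D\tau$, and the monotonicity of $\tau$ inherited from the no-crossing property gives that the eigenvalues of $D\tau$ are non-negative, whence
\[
|\det D\sigma_s|\ge (1-r_*)^d=\Big(\frac{s-\varepsilon}{\bar t-\varepsilon}\Big)^d.
\]
Integrating via the area formula for the injective Lipschitz map $\sigma_s$ then yields \eqref{F_inner_area_estimate_phi}. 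The main obstacle is exactly the no-crossing / injectivity step, which hinges on the strict interiority of the selected direction (at boundary directions optimal rays genuinely can cross at intermediate times); once this is available, both the Jacobian lower bound and the Borel measurability of the selection $\tau$ are routine.
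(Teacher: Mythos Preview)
Your no-crossing step is where the argument fails. Suppose the two interior rays $\segment{z_1,\tau(z_1)}$ and $\segment{z_2,\tau(z_2)}$ meet at some $w$ with $\varepsilon<\mathtt p_t w<\bar t$. Your reasoning correctly yields that $\mathcal D^-\extphi(w)$ coincides with the maximal face at both $z_1$ and $z_2$, hence $F^-_{\extphi}(z_1)=F^-_{\extphi}(z_2)=:F$; but this only forces $z_1,z_2\in (w+\interr F)\cap\{t=\bar t\}$, which is an $h$-dimensional set when $z_i\in R^{-,h}_{\extphi}$. For $h\ge 1$ nothing prevents $z_1\neq z_2$: two distinct inner rays of the \emph{same} $(h{+}1)$-dimensional cone $w+F$ generically meet. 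Interiority rules out crossings between rays belonging to \emph{different} extremal faces, not between rays of the same face. So $\sigma_s$ need not be injective, and the subsequent Jacobian computation collapses.

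The paper circumvents this by a dimensional reduction rather than a direct selection. After stratifying $S$ by the dimension $h$ and by a reference cone $K'$ contained in the projection of every $F^-_{\extphi}(z)$ (a decomposition in the spirit of Proposition~\ref{P_countable_partition_in_reference_directed_planes}), it slices by parallel planes $A\in\mathcal A(d-h,\RR^d)$ chosen so that each $A$ meets any translate of $K'$ in a \emph{single} inner direction. On every slice the point $\interr\partial^-\extphi(z)\cap\{t=\varepsilon\}\cap A$ is then uniquely determined for each $z\in S\cap A$, and one is reduced to the classical unique-ray situation of \cite[Lemma~2.13]{Car:strictly}, where the area bound is standard; Fubini over the family of slices reassembles \eqref{F_inner_area_estimate_phi}. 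A secondary issue in your write-up: even granting injectivity, the Jacobian step is not justified as stated, since $\tau$ is merely Borel and ``no-crossing $\Rightarrow$ eigenvalues of $D\tau\ge 0$'' has no content in dimension $>1$ without a monotonicity structure you have not established.
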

\begin{proof}
	First of all we recall that from \eqref{HJ_def_extphi} every point has always an optimal ray reaching $\{t=0\}$. Using the assumption that the points in $S$ are backward regular and the transitivity property stated in Proposition \ref{phi_transitivity_property}, it follows that
	\begin{equation*}
	\dim\,\partial^- \extphi(z) \cap \{t=\varepsilon\} = h,  \qquad z \in S \cap R^{-.h}_{\extphi}.
	\end{equation*}
	In particular, it contains a given cone $z - K$ made of inner rays of $\partial^- \bar \phi(z)$.
	
	
	Using the fact that $\mathcal C(h,\RR^d)$ is separable and a decomposition analogous to the one of Proposition \ref{P_countable_partition_in_reference_directed_planes}, we can assume that there is a fixed $h$-dimensional cone $K'$ such that
	\begin{equation*}
	K' \cap \{t=\epsilon\} \subset \mathtt p^{\bar t}_{\mathrm{span}\,K'} \big( (z - \partial^- \extphi(z)) \cap \{t=\varepsilon\} \big).
	\end{equation*}
	Hence we can slice the sets $\partial^- \extphi(S)$ by a family of parallel planes in $\mathcal A(d-h,\RR^d)$ whose intersection with (a suitable translate of) $K'$ is an inner direction of $K'$.
%
%

	In this way, we find a $(d-h)$-dimensional problem one each affine plane $A$ such that for every $(\bar t, x)\in S \cap A$ there exists a unique point in $\interr\,\partial^-\extphi(\bar t,x) \cap \{t=\varepsilon\} \cap A$.
%
	We can now follow the strategy adopted in \cite[Lemma 2.13]{Car:strictly} and obtain the area formula.
\end{proof}

\begin{figure}
	\centering
	\includegraphics[scale=0.65]{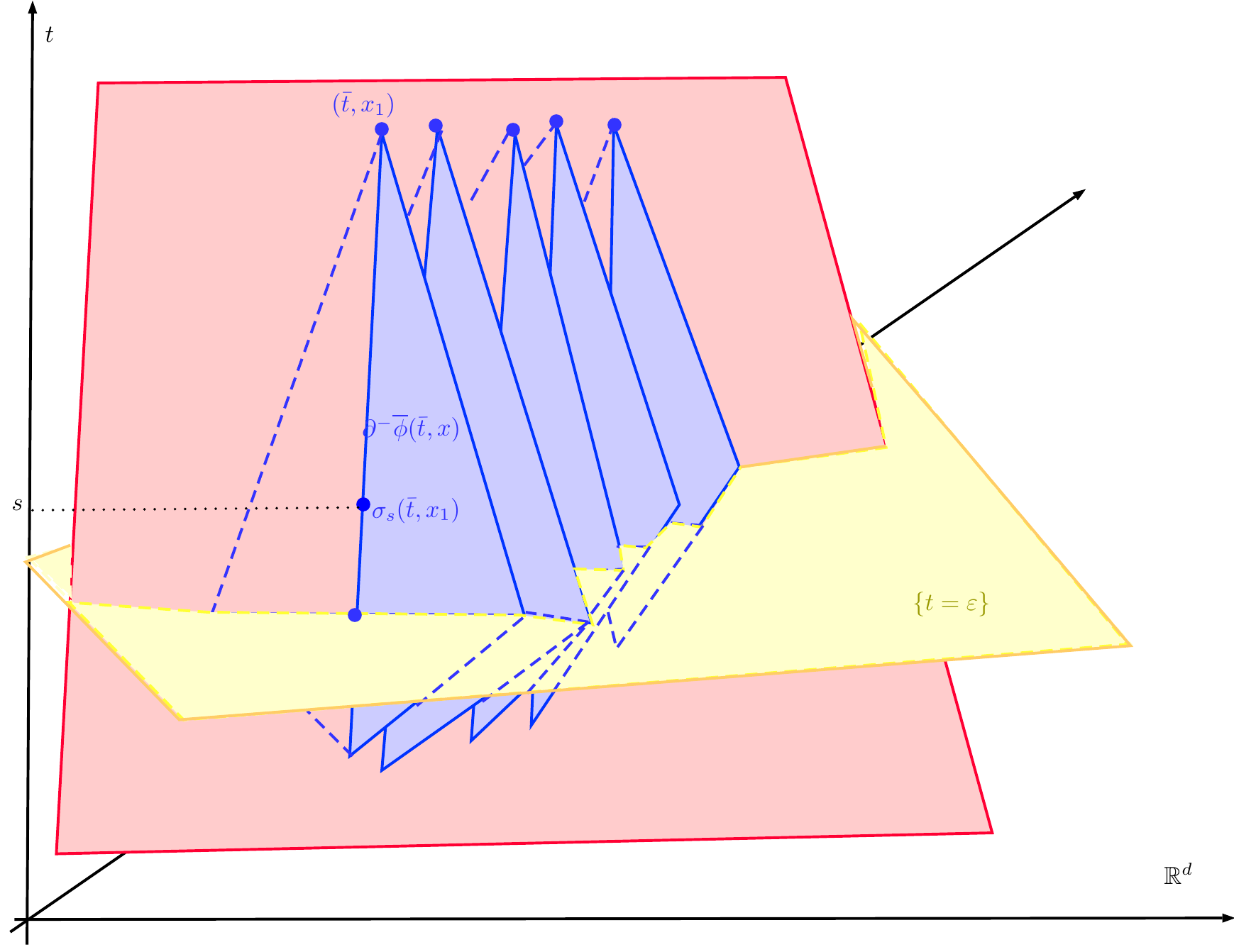}
	\caption{The strategy to prove Lemma \ref{L_inner_area_estimate_phi}: the pink plane is the transversal plane where $\partial^- \extphi(z)$ has a unique inner ray.}
	 \label{L_inner_area_estimate_phi_figure}
\end{figure}

\begin{remark}
\label{R_from_back_to_fowr_extphi}
We underline that the dimension of $\partial^- \extphi(z)$ is constant along the inner ray selected in the proof of the previous lemma. A similar property holds along inner rays of $\partial^+ \extphi(z)$, $z \in R^+_{\extphi}$.
%
%
\end{remark}

We can now prove the regularity of $\mathcal H^d \llcorner_{\{t=1\}}$-a.e. point.

\begin{proposition}
	\label{P_back_regul_phi}
	$\mathcal H^d-$almost every point in $\{t=1\}$ is regular for $\extphi$.
\end{proposition}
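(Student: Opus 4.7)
The plan is to transfer regularity from generic time-slices to the specific slice $\{t=1\}$, using the area estimate of Lemma \ref{L_inner_area_estimate_phi} together with the dimension-preservation property of inner optimal rays (Remark \ref{R_from_back_to_fowr_extphi}). The point is that Proposition \ref{Theorem521} gives regularity $\mathcal L^{d+1}$-a.e.\ in $\tRd$, but $\{t=1\}$ is an $\mathcal L^{d+1}$-null slice, so Fubini only yields regularity on \emph{a.e.} slice $\{t=\bar t\}$, not the specific one $\bar t=1$. We transport the generic regularity onto $\{t=1\}$ along optimal rays.

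First, by Proposition \ref{Theorem521} and Fubini applied to $\mathcal L^{d+1}=dt\otimes \mathcal H^d\llcorner_{\{t=\bar t\}}$, the non-regular set $N_{\extphi}$ has $\mathcal H^d(N_{\extphi}\cap\{t=\bar t\})=0$ for $\mathcal L^1$-a.e.\ $\bar t>0$. I pick two sequences $\bar t_n^-\nearrow 1$ and $\bar t_n^+\searrow 1$ of good levels, which will be used to produce backward and forward regularity at $\{t=1\}$ respectively. For \textbf{backward regularity}, take $\bar t=\bar t_n^+>1$ and a bounded Borel subset $S\subset R_{\extphi}\cap\{t=\bar t_n^+\}$ of full $\mathcal H^d$-measure in a ball. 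Applying Lemma \ref{L_inner_area_estimate_phi} with $s=1$ yields a map $\sigma_1:S\to\interr(\partial^-\extphi(\cdot)\cap\{t=1\})$ with
\[
\mathcal H^d(\sigma_1(S))\;\geq\; \Big(\tfrac{1-\varepsilon}{\bar t_n^+-\varepsilon}\Big)^{d}\,\mathcal H^d(S).
\]
Now $\sigma_1(z)$ lies on an inner backward ray of $z$, so by the transitivity in Proposition \ref{phi_transitivity_property} and the dimension-constancy along inner rays (Remark \ref{R_from_back_to_fowr_extphi}), one has $\sigma_1(z)\in R^-_{\extphi}\cap\{t=1\}$ with the same affine dimension $h$ of $\partial^-\extphi$. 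Letting $\varepsilon\downarrow 0$ and $\bar t_n^+\downarrow 1$ while varying $S$ over an exhausting family of bounded subsets of $\{t=\bar t_n^+\}$, the union of images covers $\mathcal H^d$-a.e.\ point of $\{t=1\}$, giving $\mathcal H^d(\{t=1\}\setminus R^-_{\extphi})=0$.

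For \textbf{forward regularity} at $\{t=1\}$ I run the symmetric argument with $\bar t_n^-<1$: the Lax formula \eqref{HJ_def_extphi} defines $\extphi$ on all of $\tRd$, so the analogous ``forward'' version of the area estimate (via inner rays of $\partial^+\extphi$, with the roles of $z$ and the inner point exchanged) lets me push regular points at $\{t=\bar t_n^-\}$ forward along inner super-differential rays to $\{t=1\}$, again covering $\mathcal H^d$-a.e.\ point. Combining, $\mathcal H^d$-a.e.\ $z\in\{t=1\}$ lies in $R^-_{\extphi}\cap R^+_{\extphi}=R_{\extphi}$.

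The main obstacle is that Lemma \ref{L_inner_area_estimate_phi} only gives a lower bound on the image measure of $\sigma_1$, not a surjectivity statement on $\{t=1\}$; one has to argue that, as $\bar t_n^+\downarrow 1$, the images fill $\{t=1\}$ up to an $\mathcal H^d$-null set. This is handled exactly as in the proof of Lemma \ref{L_inner_area_estimate_phi}: after slicing $\partial^-\extphi$ by $(d-h)$-dimensional transversal planes where the inner backward ray becomes single-valued, the map $\sigma_1$ becomes (on each slice) an almost-affine contraction with Jacobian close to $1$ as $\bar t_n^+\to 1$, hence $\mathcal H^d(\{t=1\}\cap B\setminus\sigma_1(S))\to 0$ for any ball $B$, which suffices.
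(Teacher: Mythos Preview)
Your proposal shares the main ingredients with the paper's proof (Fubini to obtain regular slices near $t=1$, the area estimate of Lemma~\ref{L_inner_area_estimate_phi}, and dimension constancy along inner rays), and your backward-regularity step is correct. However, the forward-regularity step contains a gap, and the paper organizes the argument differently to avoid it.

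For forward regularity you push points from a regular slice $\{t=\bar t_n^-\}$ below $1$ \emph{forward} to $\{t=1\}$, invoking an ``analogous forward version of the area estimate'' via inner rays of $\partial^+\extphi$. But Lemma~\ref{L_inner_area_estimate_phi} relies on the Lax formula \eqref{HJ_def_extphi}, which guarantees that every point has a \emph{backward} optimal ray reaching $\{t=0\}$; there is no symmetric statement for $\extphi$ ensuring that inner forward rays from a forward regular point at $\{t=\bar t_n^-\}$ extend all the way to $\{t=1\}$. Your appeal to the Lax formula (``defines $\extphi$ on all of $\tRd$'') does not supply this: it produces backward characteristics, not forward ones.

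The paper bypasses this asymmetry by using a \emph{single} family of backward inner rays running from $\{t=1+\varepsilon\}$ all the way down to $\{t=1-\varepsilon\}$. Both slices are chosen (via Fubini) with $\mathcal H^d$-a.e.\ full regularity, and the key observation is that an inner backward ray starting at a backward regular point and \emph{landing at a fully regular point} is made entirely of regular points (Figure~\ref{regular_propo_figure}). Since by the area estimate the image $\sigma_{1-\varepsilon}(S)$ of the regular set $S\subset\{t=1+\varepsilon\}$ has measure at least $\big(\tfrac{1-\varepsilon-\varepsilon'}{1+\varepsilon-\varepsilon'}\big)^d\mathcal H^d(S)$ in $\{t=1-\varepsilon\}$, its intersection with the regular set there has the same measure; each such ray crosses $\{t=1\}$ at a regular point, and letting $\varepsilon,\varepsilon'\to 0$ finishes. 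In short, the paper obtains both backward and forward regularity at $\{t=1\}$ from one backward map, using the landing point $z'\in\{t=1-\varepsilon\}$ as the witness for condition~(iii) of $R^{+,h}_{\extphi}$ at the intermediate point. This is exactly what your two-map scheme is missing; adopting the single-ray trick repairs your argument with no extra work.
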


\begin{proof}
By Proposition \ref{Theorem521} and Fubini theorem there is $\varepsilon>0$ arbitrary small such that $\mathcal H^d$-a.e. point $z$ of $\{t=1 \pm \varepsilon\}$ is a  regular point for $\extphi$.
%
%

Let $\varepsilon'>0$ be fixed according to Lemma \ref{L_inner_area_estimate_phi}. The area estimate \ref{F_inner_area_estimate_phi} gives that the measure of points in $\{t=1-\varepsilon\}$ which belong to an inner ray of a backward regular point in $\{t=1+\varepsilon\}$ is larger than
\[ \bigg( \frac{1-\varepsilon-\varepsilon'}{1+\varepsilon -\varepsilon'} \bigg)^d \mathcal H^d (S).\]
By assumption these points are also regular (and thus forward regular).

Observe that an inner optimal ray starting from a backward regular point and arriving in a regular point is made of regular points, see Figure \ref{regular_propo_figure}. 
Therefore, by the arbitrariness of $\varepsilon$ and $\varepsilon'$ we conclude the proof.
\end{proof}

\begin{figure}
	\centering
	\includegraphics[scale=0.75]{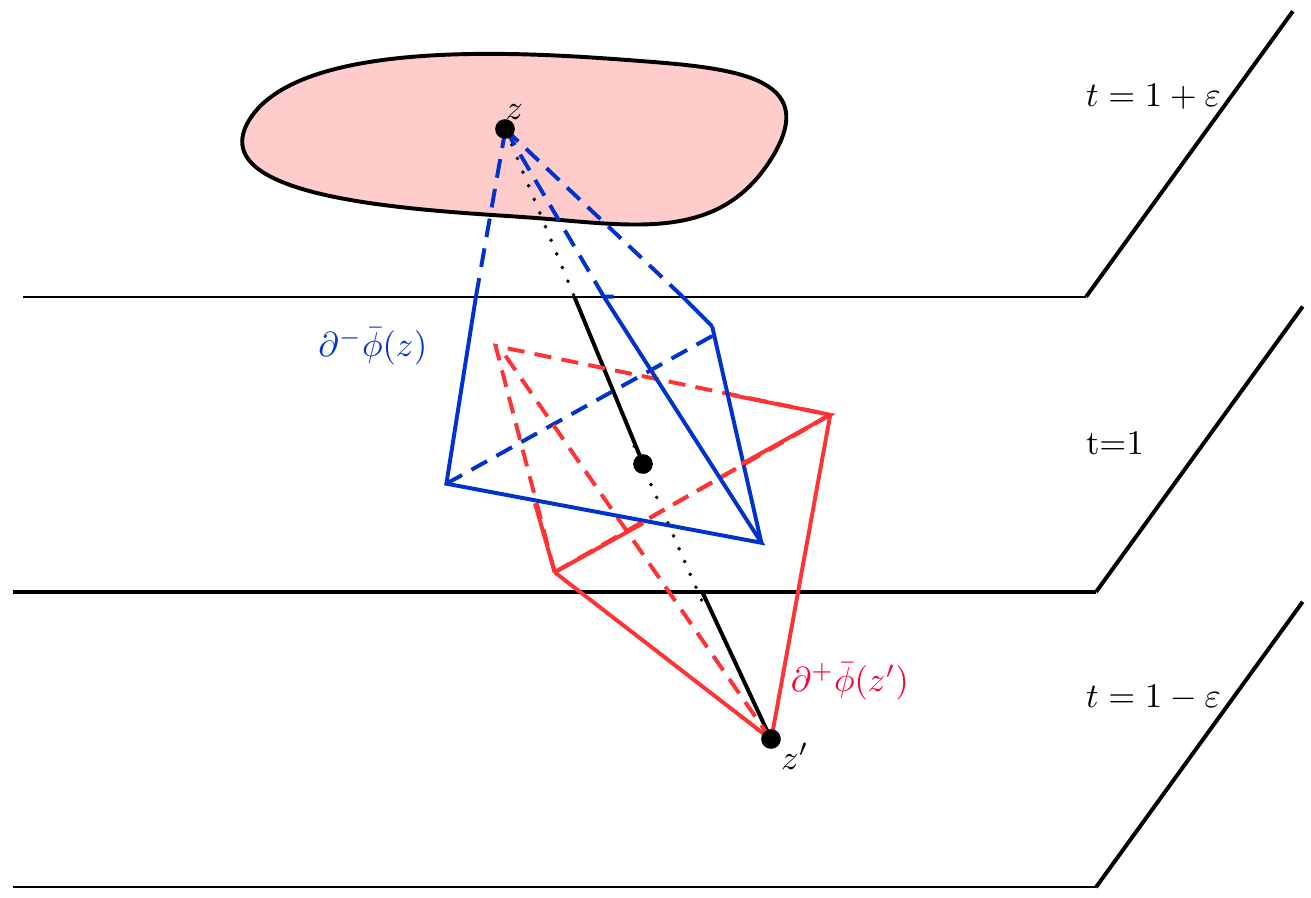}
	\caption{If $z,z'$ are regular points, then also the inner ray $\segment{z,z'}$ is made of regular points (proof of Proposition \ref{P_back_regul_phi}).}
	\label{regular_propo_figure}
\end{figure}

Hence Point \eqref{Point_1_pote_deco} of Theorem \ref{T_potential_deco} is proved.

\subsection{Regularity of the disintegration}
\label{Ss_regul_disi_phi}


By \cite[(3) of Theorem 1.1]{biadan} we know that
\[ \mathcal L^{d+1}\llcorner_{\bigcup_\a Z_\a^h} = \int f(\a,z) \mathcal H^{h+1}\llcorner_{Z_\a^h}(dz) \eta^h({d}\a).\]
so that by Fubini Theorem 
\[ \mathcal H^{d}\llcorner_{\{t=1+\varepsilon\}\cap\bigcup_\a Z_\a^h} = \int f(\a, x) \mathcal H^h\llcorner_{\{t=1+\varepsilon\}\cap Z_\a^h}(dx) \eta^h({d}\a)\qquad\text{for a.e. }\varepsilon>0.\]

Recalling the decomposition of Lemma \ref{L_inner_area_estimate_phi}, we fix the set of indexes
\begin{equation*}
\A^h_{\varepsilon,K} := \Big\{ \a \in \A^h : Z^h_\a \cap \{t=1+\varepsilon\} \not= \emptyset \ \text{and} \ K \subset \mathtt p_{\aff\,K} C^h_\a \Big\},
\end{equation*}
with $K \in \mathcal C(h,\RR^d)$ given.

An easy argument based on the push forward of $\mathcal H^d$ along the rays selected in the proof of Lemma \ref{L_inner_area_estimate_phi} (see for example \cite[Section 5]{biadan}) gives that there is
\begin{equation*}
c(\a, x) \in \big( (1 - \varepsilon/2)^d,2^d \big)
\end{equation*}
such that 
\[ \mathcal H^{d}\llcorner_{\{t=1\}\cap\bigcup_\a Z_\a^h} = \int_{\A^h_{\varepsilon,K}} c(\a, x) f(\a, x) \mathcal H^h\llcorner_{\{t=1+\varepsilon\}\cap Z_\a^h}(dx) \eta^h({d}\a).\]
The lower estimate of $c$ is given immediately by Lemma \ref{L_inner_area_estimate_phi} for $\bar t= 1 + \varepsilon/2$, $\varepsilon' = \varepsilon/2$ and $s=1$. \\
The upper estimate follows by inverting the roles of $\bar t=1+\varepsilon$ and $s=1$: in this case the ray starts in $Z^h_\a \cap \{t=1\}$ and ends in $Z^h_\a \cap \{t=1+\varepsilon\}$, and we are estimating the area between $t=1$ and $t=1+\varepsilon/2$. Using the same rays of Lemma \ref{L_inner_area_estimate_phi} in the backward direction and applying \eqref{F_inner_area_estimate_phi}, one obtains the second bound. 

Notice now that in the partition of the proof of Lemma \ref{L_inner_area_estimate_phi} the inner rays are parallel inside the elements of the partition: once the cone $K$ and the transversal planes $V_K$ are chosen, in each element $Z^h_\a$ the rays $Z^h_\a \cap V_K$ are parallel, so that the map along
\begin{equation}
\label{E_parall_transl_phi}
\begin{array}{ccccc}
\mathtt t_{V_K} &:& \underset{\A^h_{\varepsilon,K}}{\cup} Z^h_\a \cap \{t=1+\varepsilon/2\} &\to& \underset{\A^h_{\varepsilon,K}}{\cup} Z^h_\a \cap \{t=1\} \\ [1em]
&& Z^h_\a \ni x &\mapsto& \mathtt t_{V_k}(x) := (x + V_K) \cap Z^h_\a \cap \{t=1\}
\end{array}
\end{equation}
is just a translation (see Figure \ref{Fi_area_est_phi}). We thus deduce that
\begin{equation*}
(\mathtt t_{V_k})_\sharp \mathcal H^h \llcorner_{Z^h_\a \cap \{t=1+\varepsilon/2\}} = \mathcal H^h \llcorner_{\mathtt t_{V_K}(Z^h_\a \cap \{t=1+\varepsilon/2\}},
\end{equation*}
and that $c(\a,x) = c(\a)$.

Define
\begin{equation*}
f'(\a, \mathtt t_{V_K}(x)) :=  c(\a) f(\a,x),
\end{equation*}
so that we can write
%
	\[ \mathcal H^{d}\llcorner_{\{t=1\}\cap \cup_{\A^h_{\varepsilon,K}} Z_\a^h} = \int_{\A^h_{\varepsilon,K}} f'(\a, x) \mathcal H^h\llcorner_{\{t=1\} \cap Z_\a^h}(dx) \eta^h({d}\a).\]

\begin{figure}
\centering
\resizebox{14cm}{7cm}{\input{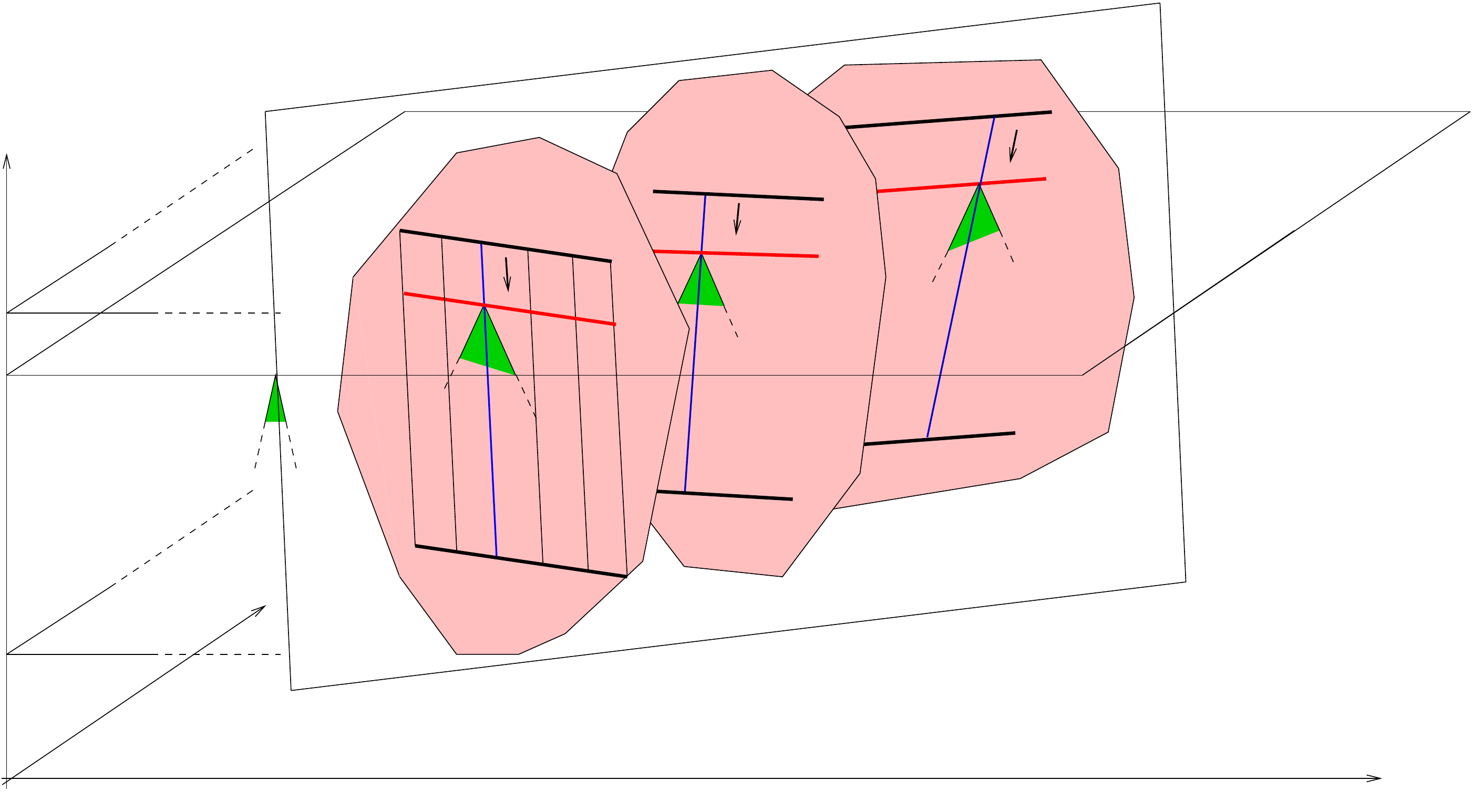_t}}
\caption{The parallel translation of \eqref{E_parall_transl_phi} along the direction $C^h_\a \cap V_K$.}
\label{Fi_area_est_phi}
\end{figure}

By the uniqueness of the disintegration, the previous formula gives the regularity of the disintegration of $\mathcal H^d \llcorner_{\mathtt t_{V_K}(\underset{A^h_{\varepsilon,K}}{\cup} Z^h_\a \cap \{t=1\})}$. By varying $K$ and $\epsilon$ and using the fact that $Z^h_\a$ are transversal to $\{t=1\}$ and relatively open, we obtain the following proposition:
	
	\begin{proposition}
		\label{P_ac_disi}
		 The disintegration 
		 \begin{equation*}
		 \mathcal H^d \llcorner_{\cup_{h,\a} Z^h_\a \cap \{t=1\}} = \sum_h \int v^h_\a \eta^h(d\a)
		 \end{equation*}
		 w.r.t. the partition $\{Z_h^\a \cap\{t=1\} \}_{h,\a}$ is regular:
		\[v^h_\a \ll \mathcal H^h \llcorner_{Z^h_\a}.\]
	\end{proposition}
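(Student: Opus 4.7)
The plan is to derive the absolute continuity of the conditional measures on $\{t=1\}$ by transferring the known regularity of the disintegration of $\mathcal L^{d+1}$ (from Theorem 1.1(3) of \cite{biadan}) from a slice $\{t=1+\varepsilon\}$ down to the target slice $\{t=1\}$ along the optimal rays of $\bar\phi$, using the area estimate of Lemma \ref{L_inner_area_estimate_phi}. Concretely, I would first apply Fubini to the identity $\mathcal L^{d+1}\llcorner_{\cup_\a Z^h_\a} = \int f(\a,z)\,\mathcal H^{h+1}\llcorner_{Z^h_\a}(dz)\,\eta^h(d\a)$ to get, for a.e.\ $\varepsilon>0$, a regular disintegration of $\mathcal H^d\llcorner_{\{t=1+\varepsilon\}\cap\cup_\a Z^h_\a}$ with conditional density $f(\a,\cdot)$ on $Z^h_\a\cap\{t=1+\varepsilon\}$.

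Next, since the direction cones $C^h_\a$ can vary, I would use the separability of $\mathcal C(h,\RR^d)$ to select a countable family of cones $K$ and restrict to the index set
\begin{equation*}
\A^h_{\varepsilon,K} := \Big\{ \a \in \A^h : Z^h_\a \cap \{t=1+\varepsilon\} \not= \emptyset \ \text{and} \ K \subset \mathtt p_{\aff\,K} C^h_\a \Big\}.
\end{equation*}
Within each such class there is a uniform transversal plane $V_K$ so that the ray selection of Lemma \ref{L_inner_area_estimate_phi} defines a Borel translation map $\mathtt t_{V_K}$ from $\{t=1+\varepsilon\}\cap Z^h_\a$ to $\{t=1\}\cap Z^h_\a$ by translating along the common inner direction $C^h_\a\cap V_K$. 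Because the rays inside each single $Z^h_\a$ are \emph{parallel} translates, the pushforward of $\mathcal H^h$ along $\mathtt t_{V_K}$ is exactly $\mathcal H^h$ on the image, and the Jacobian $c(\a,x)$ reduces to a function $c(\a)$ of $\a$ alone.

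To control $c(\a)$, I would apply the area estimate \eqref{F_inner_area_estimate_phi} in both directions between $\{t=1+\varepsilon/2\}$ and $\{t=1\}$: the forward bound gives $c(\a)\ge (1-\varepsilon/2)^d$ and the inversion of roles gives $c(\a)\le 2^d$, so $c$ is bounded between two positive constants. Setting $f'(\a,\mathtt t_{V_K}(x)) := c(\a) f(\a,x)$ yields
\begin{equation*}
\mathcal H^{d}\llcorner_{\{t=1\}\cap\bigcup_{\A^h_{\varepsilon,K}} Z_\a^h} = \int_{\A^h_{\varepsilon,K}} f'(\a, x)\,\mathcal H^h\llcorner_{Z_\a^h\cap\{t=1\}}(dx)\,\eta^h(d\a),
\end{equation*}
which by the uniqueness of the disintegration is precisely the regularity statement on that piece.

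Finally, I would conclude by countable exhaustion: letting $\varepsilon$ range over a sequence $\varepsilon_n\downarrow 0$ and $K$ over a countable dense family of cones, the sets $\cup_{\A^h_{\varepsilon_n,K}} Z^h_\a \cap \{t=1\}$ cover $\cup_{h,\a} Z^h_\a \cap \{t=1\}$ up to an $\mathcal H^d$-negligible set (thanks to the transversality of the $Z^h_\a$ and Proposition \ref{P_back_regul_phi}), giving $v^h_\a \ll \mathcal H^h\llcorner_{Z^h_\a}$. The main obstacle is the lower bound on $c(\a)$: it hinges on being able to run the area estimate \emph{backward} along rays, which requires both endpoints of the ray to lie in slices where the inner direction field is well-defined — this is exactly why one must first move to $\{t=1+\varepsilon\}$ (where regularity is given by Fubini) and then descend, rather than attempting to disintegrate directly at $\{t=1\}$.
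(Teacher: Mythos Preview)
Your proposal is correct and follows essentially the same line as the paper's proof: start from the $\mathcal L^{d+1}$-regularity of \cite{biadan}, apply Fubini to get regularity on $\{t=1+\varepsilon\}$ for a.e.\ $\varepsilon$, restrict to the index sets $\A^h_{\varepsilon,K}$, push forward along the parallel inner rays via the translation map $\mathtt t_{V_K}$ with the two-sided Jacobian bound $c(\a)\in((1-\varepsilon/2)^d,2^d)$ from the area estimate, and conclude by countable exhaustion in $K$ and $\varepsilon$. The only cosmetic difference is the order of presentation (the paper first writes the pushforward with a general $c(\a,x)$ and then observes parallelism forces $c(\a,x)=c(\a)$, whereas you observe parallelism first).
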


This concludes the proof of Point \eqref{Point_2_pote_deco} of Theorem \ref{T_potential_deco}. The last point of Theorem \ref{T_potential_deco} is an immediate consequence of the fact that $\extphi$ is a potential, and thus the mass is moving only along optimal rays $\Graph\,\extphi \cap (z - \epi\,\extc)$, and for all regular points $z$
\begin{equation*}
\mathtt p_{\RR^d} \big( \Graph\,\extphi \cap (z - \epi\,\extc) \big) \subset z - C^h_\a.
\end{equation*}

\begin{remark}
\label{R_form_image_meas}
The fact that $\eta^h \simeq \mathcal H^{d-h} \llcorner_{\A^h}$, with $\A^h$ chosen as in Remark \ref{R_choice_t=1_loc_aff}, is again a simple consequence of the estimate on the push-forward along optimal rays and Fubini Theorem. This result is exactly the same as the one stated in \cite[Theorem 5.18]{biadan}: we refer to that paper for the proof, because the form of the image measure is not essential in the construction and can be seen as an additional regularity of the partition.
\end{remark}

\section{Optimal transport and disintegration of measures on directed locally affine partitions}
\label{S_disintegration_locally_affine}


In this section and the following three ones we show how to refine a directed locally affine partition $\mathbf D$ either to lower the dimension of the sets or to obtain indecomposable sets. This procedure will then be applied at most $d$-times in order to obtain the proof of Theorem \ref{T_main_theor}.

Following the structure of the first  directed locally affine partition $\mathbf D_{\extphi}$ constructed in the previous section, we will consider the following three measures:
\begin{enumerate}
\item the measure $\mathcal H^d \llcorner_{\{t=1\}}$, with $\mathcal H^d(\{t=1\} \setminus \cup_{h,\a} Z^h_\a) = 0$;
\item the probability measure $\bar \mu := \delta_{\{t=1\}} \times \mu$, such that $\mu \ll \mathcal L^d$, and thus in particular $\bar \mu \ll \mathcal H^d \llcorner_{\{t=1\}}$; 
\item a probability measure $\bar \nu$ supported on $\{t=0\}$.
\end{enumerate}
%

On $\R^{d+1} \times \R^{d+1}$ we can define the natural transference cost
\begin{equation}
	\label{E_mathtt_c_mathbf_Z}
	\mathtt c_{\mathbf Z}(z,z') :=
		\begin{cases}
			0 & z \in Z^h_\mathfrak a, z - z' \in C^h_\mathfrak a, \crcr
			\infty & \text{otherwise}.
		\end{cases}
\end{equation}
Since
\[
\{\mathtt c_{\mathbf Z} < \infty\} = \big\{ (z,z') : z \in \mathbf Z, z - z' \in \mathbf d(z) \big\},
\]
i.e. it coincides with the projection $(\mathtt p_z,(\mathtt p_{\R^{d+1}} \circ \mathtt p_C)) \mathbf D$ of $\mathbf D$, then it is $\sigma$-continuous.

From Point \eqref{Point_3_pote_deco} of Theorem \ref{T_potential_deco}, it follows for $\mathbf D$ each optimal transference plan $\bar \pi$ has finite transference cost w.r.t. $\mathtt c_{\mathbf Z}$, so that the set $\Pi^f_{\mathtt c_{\mathbf Z}}(\bar \mu,\bar \nu)$ is not empty. From the observation (see Example \ref{Ex_2ndmarg} below) that in general the construction \emph{depends} on the selected transference plan $\underline{\bar \pi}$ through the marginals $\{\underline{\bar \nu}^h_\a\}_{h,\a}$, we will consider transference plans $\bar \pi \in \Pi(\bar \nu,\{\underline{\bar \nu}^h_\a\})$ such that
\[
\int \mathtt c_{\mathbf Z} \bar \pi < \infty.
\]
i.e. $\bar \pi \in \Pi^f_{\mathtt c_{\mathbf Z}}(\bar \mu,\{\underline{\bar \nu}^h_\a\})$. 

Consider the disintegrations on the partition $\{Z^h_\mathfrak a\}_{h,\a}$: if $z \mapsto (h(z),\mathfrak a(z))$ is the $\sigma$-continuous function whose graph is the projection $\mathtt p_{h,\mathfrak a,z} \mathbf D$, then
\[
\bar \mu = \sum_{h=0}^d \int_{\mathfrak A^h}\bar  \mu^h_\mathfrak a \xi^h(d\mathfrak a), \qquad \xi^h := \mathfrak a_\sharp \bar \mu \llcorner_{\mathbf Z^h}.
\]
In the same way we can disintegrate $\bar \pi \in \Pi(\bar \nu,\{\underline{\bar \nu}^h_\a\})$ w.r.t. the partition $\{Z^h_\mathfrak a \times \R^{d+1}\}_{h,\a}$, 
\begin{equation*}
	\bar \pi = \sum_{h=0}^d \int_{\mathfrak A^h} \bar \pi^h_\mathfrak a \xi^h(d\mathfrak a), \qquad \bar \mu^h_\mathfrak a = (\mathtt p_1)_\sharp \bar \pi^h_\mathfrak a.
\end{equation*}

Write also
\[
\bar \nu = \sum_{h=0}^d \int_{\mathfrak A^h} \underline{\bar \nu}^h_\mathfrak a \xi^h(d\mathfrak a), \qquad \underline{\bar \nu}^h_\mathfrak a = (\mathtt p_2)_\sharp \bar \pi^h_\mathfrak a,
\]
even if the above formula does not correspond to a real disintegration. 
%
%

In the following example we show why in general the partition depends on the plan $\underline{\bar \pi}$.

\begin{example} 
\label{Ex_2ndmarg}
For $d=2$ let
\begin{equation*}
\bar \mu = \frac{1}{8} \mathcal H^2 \llcorner_A, \qquad A = \big\{ (1,x) : \big| x - (\pm 2,0) \big| \leq 1 \big\}, 
\end{equation*}
\begin{equation*}
\bar \nu = \frac{1}{8} \big( 2 - ||x| - 2| \big) \mathcal H^1 \llcorner_B, \qquad B = \big\{ (0,x): x \in \{0\} \times [-4,4] \big\}.
\end{equation*}
and let the transportation cost $\bar{\mathtt c}$ be
\begin{equation*}
\extc(t,x) = \begin{cases}
|x|_\infty & t > 0, \crcr
\ind_{\{0\}}(x) & t = 0, \crcr
+\infty & t < 0,
\end{cases} \qquad |(x_1,x_2)|_\infty = \max \{|x_1|,|x_2|\}.
\end{equation*}

An pair of optimal plans $\bar \pi^\pm$ are given by
\begin{equation*}
\bar \pi^\pm = (\Id,\mathtt T^\pm)_\sharp \bar \mu, \qquad \text{where} \quad \mathtt T^\pm(x) := \big( 0,0, (x_2 \pm x_1) \big),\ x = (x_1,x_2) \in \R^2,
\end{equation*}
and, taking as a potential
$\bar \phi(t,x) = |x_1|$,
the decomposition obtained by the first step can be easily checked to be
\begin{equation*}
\begin{split}
Z^2_{\a_1} =&~ \big\{ (t,x), x_1 < 0 \big\}, \qquad C^2_{\a_1} = \big\{ (t,x), |x_2| \leq - x_1 \big\}, \crcr
Z^2_{\a_1} =&~ \big\{ (t,x), x_1 > 0 \big\}, \qquad C^2_{\a_2} = \big\{ (t,x), |x_2| \leq + x_1 \big\}.
\end{split}
\end{equation*}

\begin{figure}
\centering
\resizebox{14cm}{7cm}{\input{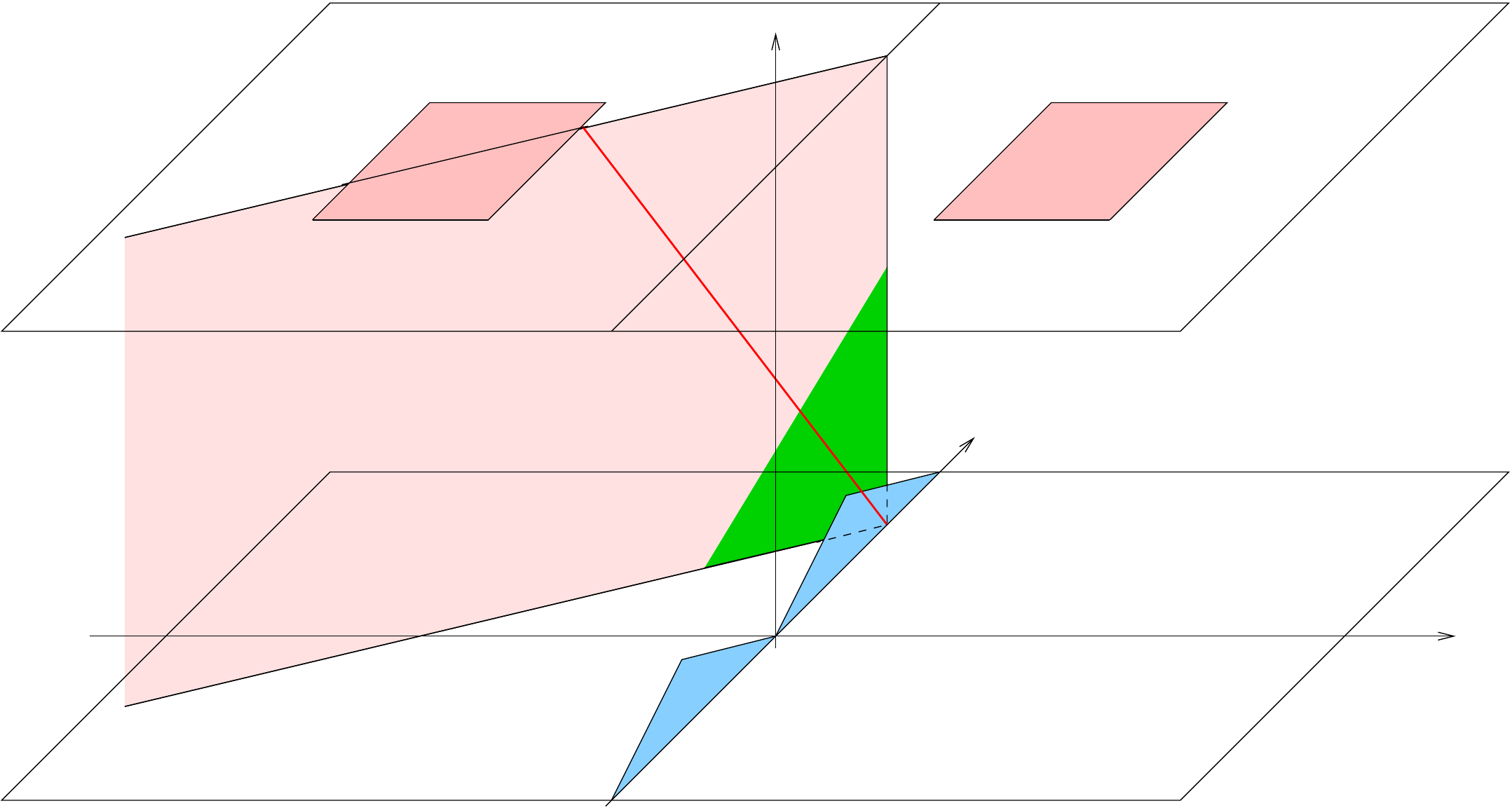_t}}
\caption{The transport problem studied in Example \ref{Ex_2ndmarg}.}
\label{Fi_ex_2ndmarg}
\end{figure}

Being the second marginals of the disintegration of $\bar \pi^\pm = \bar \pi^{2,\pm}_{a_1} + \bar \pi^{2,\pm}_{a_2}$ w.r.t. the partition $\{Z^h_{\a_i}\}_{i=1,2}$ given by
\begin{equation*}
\bar \nu^{2,\pm}_{\a_i} = (\mathtt p_2)_\sharp \bar \pi^{2,\pm}_{\a_i} = \frac{1}{4} \big( 2 - ||x| - 2| \big) \mathcal H^1 \llcorner_{B^\pm_i}, \qquad B^\pm_i = \big\{ (1,0,x_2): \pm (-1)^i x_2 \in [0,4] \big\},
\end{equation*}
the sets $\Pi(\bar \mu,\{\bar \nu^{h,-}_{\a_i}\})$, $\Pi(\bar \mu,\{\bar \nu^{h,+}_{\a_i}\})$ are different.

If we further proceed with the decomposition, we will obtain that the indecomposable partition corresponding to $\bar \pi^\pm$ is
\begin{equation*}
\begin{split}
Z^{1,\pm}_{\a} =&~ \big\{ (t,x), x_2 = \a \mp x_1, \pm (\mathrm{sign}\, \a) x_1 > 0 \big\}, \crcr
C^{1,\pm}_{\a} =&~ \big\{ (t,x), x_2 = \mp x_1, \pm (\mathrm{sign}\, \a) x_1 \geq 0 \big\}.
\end{split}
\end{equation*}
The parameterization is such that
\begin{equation*}
Z^{1,\pm}_\a \cap \{x_1=0\} = \{(0,\a)\}, \qquad \a \in \R.
\end{equation*}

We conclude with the observation that if instead we consider the transference plan $\bar \pi = (\bar \pi^+ + \bar \pi^-)/2$, then the first decomposition is already indecomposable in the space $\Pi(\bar \mu,\{\bar \nu^2_{\a_i}\})$, because
$\bar \nu^2_{\a_i} = \bar \nu$.
%
%
\end{example}

Consider now the disintegration of the Hausdorff measure $\mathcal H^d \llcorner_{\{t=1\}}$ w.r.t. the partition $\{Z^h_\a\}_{h,\a}$: 
\[
\mathcal H^d \llcorner_{\{t=1\}} = \sum_{h=0}^d \int_{\mathfrak A^h} \upsilon^h_\mathfrak a \eta^h(d\mathfrak a), \qquad \eta^h := \mathfrak a_\sharp \mathcal H^d \llcorner_{\mathbf Z^h \cap \{t=1\}}.
\]
Following Point \eqref{Point_2_pote_deco} of Theorem \ref{T_potential_deco} on the regularity of the disintegration and taking into account the choice of the variable $\a$ considered in Remark \ref{R_choice_t=1_loc_aff}, we will recursively assume that the following
\begin{enumerate}
\item \label{Point_regularity_upsilon} the measures $\upsilon^h_\mathfrak a$ are equivalent to $\mathcal H^{h} \llcorner_{Z^h_\mathfrak a \cap \{t=1\}}$,
\item \label{Point_regularity_eta} the measures $\eta^h$ are equivalent to $\mathcal H^{d-h} \llcorner_{\mathfrak A^h}$ (see Remark \ref{R_form_image_meas}).
\end{enumerate}

We will write
\begin{equation*}
	\bar  \mu^h_\mathfrak a = {\mathtt f}_\mathfrak a \mathcal H^h, \quad {\mathtt f}_\mathfrak a \ \text{Borel}.
\end{equation*}


\subsection{Mapping a sheaf set to a fibration}
\label{Ss_mapping_sheaf_to_fibration}

Consider one of the $h$-dimensional directed sheaf sets $\mathbf D(h,n)$, $h=0,\dots,d$, constructed in Proposition \ref{P_countable_partition_in_reference_directed_planes}, and define the map
\begin{equation}
	\label{E_function_r_definition}
	\begin{array}{ccccc}
		\mathtt r &:& \mathbf D(h,n) &\to& \R^{d-h} \times \big( [0,+\infty) \times \R^h \big) \times \mhC(h,[0,+\infty) \times \R^h) \\ [.5em]
&& (\mathfrak a,z,C^{h}_\mathfrak a) &\mapsto& \mathtt r(\mathfrak a,z,C^{h}_\mathfrak a) := \big( \mathfrak a,\tpt_{\aff \, C^{h}_n} z, \tpt_{\aff \, C^{h}_n} C^{h}_\mathfrak a \big)
	\end{array}
\end{equation}
Remember that $C^h_n$ is the reference cone for each cone in $\mathbf D(h,n)$ and $\aff\, C^h_n$ is the reference plane for each $Z^h_\a$ in this sheaf.
 
Being the projection of a $\sigma$-compact set, $\mathtt r$ is $\sigma$-continuous. Clearly, since $z$ determines $\mathfrak a$ and $\mathfrak a$ determines $C^h_\mathfrak a$, also the maps
\begin{equation*}
\begin{array}{ccccl}
	\tilde{\mathtt r} &:& \mathbf Z^{h}_n &\to& \R^{d-h} \times \big( [0,+\infty) \times \R^h) \times \mhC(h,[0,+\infty) \times \R^h) \\ [.5em]
	&& z &\mapsto& \tilde{\mathtt r}(z) := \big( \mathfrak a(z),\tpt_{\aff \, C^{h}_n} z, \tpt_{\aff \, C^{h}_n} C^{h}_{\mathfrak a(z)} \big) \crcr
	&&&\crcr
	\hat{\mathtt r} &:& \mathfrak A^{h}_n &\to& \R^{d-h} \times {\mathcal C}(h,[0,+\infty) \times \R^h) \\ [.5em]
	&& \mathfrak a &\mapsto& \hat{\mathtt r}(z) := \big( \mathfrak a, \tpt_{\aff \, C^{h}_n} C^{h}_\mathfrak a \big)
\end{array}
\end{equation*}
are $\sigma$-continuous. We will use the notation
\begin{equation*}
w \in [0,+\infty) \times \R^h, \quad \tilde Z^{h}_\mathfrak a := \big( \mathtt i_{h} \circ \tpt_{\aff\,C^{h}_n} \big) Z^{h}_\mathfrak a \quad \text{and} \quad \tilde C^{h}_\mathfrak a := \big( \mathtt i_{h} \circ \tpt_{\aff\,C^{h}_n} \big) C^{h}_\mathfrak a, 
\end{equation*}
where $\mathtt i_{h} : V^{h}_n = \aff\, C^h_n \to \R^{h}$ is the identification map. Moreover set $\tilde{\mathbf Z}^{h}_n := \cup_\mathfrak a \{\mathfrak a\} \times \tilde Z^{h}_\mathfrak a$.

From Points \eqref{Point_2_Proposition_P_countable_partition} and \eqref{Point_3_Proposition_P_countable_partition} of Proposition \ref{P_countable_partition_in_reference_directed_planes} we deduce the following result.

\begin{figure}
	\centering
\resizebox{14cm}{10cm}{\input{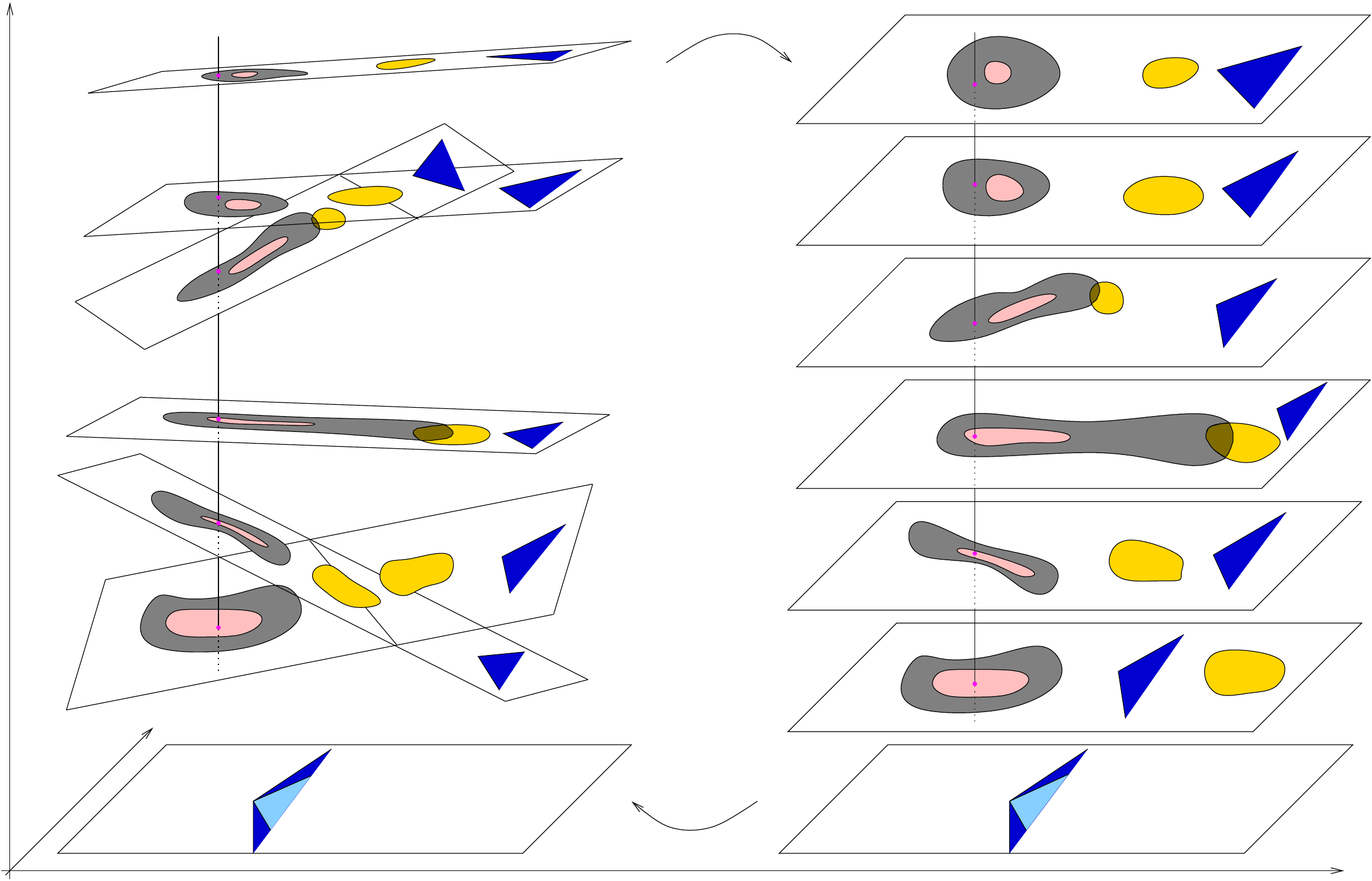_t}}
	\caption{The map $\mathtt r$ defined in \ref{E_function_r_definition}.} 
\end{figure}

\begin{lemma}
\label{L_uniform_opening}
There exists two cones $C_n$, $C_n(-r_n)$ in ${\mathcal C}(h,[0,+\infty) \times \R^h)$ such that
\[
\forall \mathfrak a \in \mathfrak A^{h}_n \ \Big( C_n(-r_n) \subset \mathring{\tilde C}^{h}_\mathfrak a 
\ \wedge \ 
\tilde C^{h}_\mathfrak a \subset \mathring C_n \Big).
\]
\end{lemma}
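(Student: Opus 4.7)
The plan is to simply transport Points \eqref{Point_2_Proposition_P_countable_partition} and \eqref{Point_3_Proposition_P_countable_partition} of Proposition \ref{P_countable_partition_in_reference_directed_planes} through the identification $\mathtt i_h : V^h_n \to [0,+\infty) \times \R^h$, which is by construction an affine isomorphism sending $\aff\,C^h_n$ onto $[0,+\infty) \times \R^h$ and preserving the $t$-coordinate. The natural candidates are
\begin{equation*}
C_n := \mathtt i_h(C^h_n), \qquad C_n(-r_n) := \mathtt i_h(C^h_n(-r^h_n)),
\end{equation*}
both of which lie in $\mathcal C(h,[0,+\infty) \times \R^h)$ because $\mathtt i_h$ is linear and bijective on the cones and sends $\{t=1\}$ onto $\{t=1\}$ (so non-degeneracy is preserved).

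First I would note that by the very definition \eqref{E_function_r_definition} of $\mathtt r$ and the notation $\tilde C^h_\a = (\mathtt i_h \circ \mathtt p^{\bar t}_{\aff\,C^h_n}) C^h_\a$, the projection and identification commute with the operations $\mathring{(\cdot)}$ and $(\cdot)(-r)$ on cones: indeed $\mathtt i_h$ sends closed balls of $\aff\,C^h_n$ centered at $0$ to closed balls of $[0,+\infty)\times\R^h$ centered at $0$ (up to a bi-Lipschitz distortion that does not affect the strict inclusions we need), and $\mathtt p^{\bar t}_{V^h_n}$ is an open map between the ambient spaces. Then, applying Point \eqref{Point_2_Proposition_P_countable_partition} of Proposition \ref{P_countable_partition_in_reference_directed_planes} gives
\begin{equation*}
C_n(-r_n) = \mathtt i_h\bigl( C^h_n(-r^h_n) \bigr) \subset \mathtt i_h\bigl( \mathtt p^{\bar t}_{V^h_n} \mathring C^h_\a \bigr) = \mathring{\tilde C}^h_\a,
\end{equation*}
and Point \eqref{Point_3_Proposition_P_countable_partition} gives symmetrically
\begin{equation*}
\tilde C^h_\a = \mathtt i_h\bigl( \mathtt p^{\bar t}_{V^h_n} C^h_\a \bigr) \subset \mathtt i_h\bigl( \mathring C^h_n \bigr) = \mathring C_n,
\end{equation*}
which is exactly the desired uniform opening, valid for every $\a \in \mathfrak A^h_n$.

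The only point that requires a small verification (rather than being entirely formal) is that $\mathtt i_h$ does commute with the operations $\mathring{K}(\pm r)$ defined in \eqref{E_epsilon_neigh_of_Cka_section}–\eqref{E_inverse_neigh_Cone_section}: a priori these operations are defined intrinsically in the affine span of the cone, and $\mathtt i_h$ is an affine isometry on $\aff\,C^h_n \cap \{t=1\}$ (or, if one prefers to work up to an innocuous bi-Lipschitz constant, one simply adjusts the value of $r_n$). This is the only mild obstacle, and it is purely a bookkeeping issue since $\mathtt i_h$ is a fixed affine identification. Once this is settled the lemma is proved with no further work, and in particular the constants $r_n$, as well as the choice of $C_n$, depend only on the sheaf index $n$ and not on $\a$, which is the content of the statement.
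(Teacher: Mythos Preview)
Your proposal is correct and follows essentially the same approach as the paper: the paper's proof is the single line ``Take $C_n := (\mathtt i_h \circ \tpt_{\aff\,C^h_n}) C^h_n$'', and you have simply unpacked this by making explicit the appeal to Points \eqref{Point_2_Proposition_P_countable_partition} and \eqref{Point_3_Proposition_P_countable_partition} of Proposition \ref{P_countable_partition_in_reference_directed_planes} and the bookkeeping about $\mathtt i_h$ commuting (up to adjusting $r_n$) with the $(\pm r)$-operations.
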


\begin{proof}
	Take $C_n := ( \mathtt i_{h} \circ \tpt_{\aff\,C^{h}_n} ) C^{h}_n$. 
\end{proof}

\begin{definition}
\label{D_fibration}
A $\sigma$-compact subset $\tilde{\mathbf D}$ of $\R^{d-h} \times ([0,+\infty) \times \R^h) \times \mhC(h,[0,\infty) \times \R^h)$ such that
\begin{itemize}
\item the cone $\tilde{\mathbf D}(\mathfrak a,w)$ is independent of $w$,
\item there are two non degenerate cones $\tilde C, \tilde C' \in \mhC(h,[0,+\infty) \times \R^{h})$, $\tilde C \subset \tilde C'$, (replacing of $C_n(-r_n)$, $C_n$) satisfying Lemma \ref{L_uniform_opening},
\end{itemize}
will be called an \emph{$h$-dimensional directed fibration}.
\end{definition}

In particular we have that $\mathbf r(\mathbf D(h,n))$ is an $h$-dimensional directed fibration. 

The fact that we are considering transference problems in $\Pi(\bar \mu,\{\underline{\bar \nu}^h_\mathfrak a\})$ allows to rewrite them in the coordinates $(\mathfrak a,w) \in \R^{d-h} \times \big( [0,+\infty) \times \R^h \big)$. Indeed, consider the multifunction $\check{\mathbf r}$ whose inverse is the map
\begin{equation}
\label{E_check_mathbf_r_def}
\begin{array}{ccccc}
\check{\mathbf r}^{-1} &:& \mathfrak A^h_n \times \big( [0,+\infty) \times \R^h \big) &\to& [0,\infty) \times \R^d \\ [.5em]
&& (\mathfrak a,w) &\mapsto& \check{\mathbf r}^{-1}(\mathfrak a,w) := \aff\,Z^h_\mathfrak a \cap (\mathtt i_k \circ \tpt_{\aff\,C^{h}_n})^{-1}(w)
\end{array}
\end{equation}
and define the transport cost
\begin{equation}
\label{E_transference_cost_on_fibration}
\tilde{\mathtt c}^{h}_n \big(\mathfrak a,w,\mathfrak a',w') :=
\begin{cases}
0 & \mathfrak a = \mathfrak a', w - w' \in \tilde C^{h}_\mathfrak a, \crcr
\infty & \text{otherwise}.
\end{cases}
\end{equation}
It is clear that
\[
\mathtt c_{\mathbf Z} \big( \check{\mathtt r}^{-1}(\mathfrak a,w), \check{\mathtt r}^{-1}(\mathfrak a',w') \big) = \tilde{\mathtt c}^h_n \big( \mathfrak a, w, \mathfrak a,w'  \big). \]

Define the measures $\tilde \mu,\tilde \nu \in \mathcal P(\R^{d+1})$ by
\begin{equation*}
\tilde \mu := \int_{\mathfrak A^h_n} \tilde \mu_\mathfrak a \xi^{h}(d\mathfrak a), \quad \tilde \mu_\mathfrak a := (\mathtt i_{h} \circ \mathtt p^t_{\aff\,C^h_n})_\sharp \underline{\bar \mu}^{h}_\mathfrak a,
\end{equation*}
\begin{equation*}
\tilde \nu := \int_{\mathfrak A^h_n} \tilde \nu_\mathfrak a \xi^{h}(d\mathfrak a), \quad \tilde \nu_\mathfrak a := (\mathtt i_{h} \circ \mathtt p^t_{\aff\,C^h_n})_\sharp \bar \nu^{h}_\mathfrak a.
\end{equation*}

Since the marginals of the conditional probabilities $\bar \pi^{h}_\mathfrak a$ are fixed for all $\bar \pi \in \Pi(\bar \mu,\{\bar \nu^h_\mathfrak a\})$, then it is fairly easy to deduce the next proposition.

\begin{proposition}
\label{P_equivalence_transference_sheaf_fibration}
It holds
\[
\Pi^f_{\mathtt c_{\mathbf Z}}(\bar \mu,\{\underline{\bar \nu}^h_\mathfrak a\}) = \big( \check{\mathbf r}^{-1} \otimes \check{\mathbf r}^{-1} \big)_\sharp \Pi^f_{\tilde{\mathtt c}^h_n}(\tilde \mu,\tilde \nu).
\]
Moreover $\tilde \mu(\{t=1\}) = 1$, $\tilde \nu(\{t=0\}) = 1$.
\end{proposition}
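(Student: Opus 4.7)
The plan is to exploit the fact that the cost $\mathtt c_{\mathbf Z}$ is $+\infty$ outside the diagonal-in-$\mathfrak a$ structure, so that any finite-cost plan automatically respects the fibers of the partition; the map $\check{\mathbf r}$ then becomes a Borel isomorphism between these fibers. Concretely, I would first verify that $\check{\mathbf r}$ is a Borel bijection between $\mathbf Z^h_n = \cup_{\mathfrak a\in\mathfrak A^h_n}Z^h_\mathfrak a$ and a Borel subset of $\mathfrak A^h_n\times([0,\infty)\times\R^h)$. Injectivity on each $Z^h_\mathfrak a$ follows from Point \eqref{Point_4_Proposition_P_countable_partition} of Proposition \ref{P_countable_partition_in_reference_directed_planes} (the projection $\mathtt p^{\bar t}_{V^h_n}$ is non-degenerate, hence invertible on $\aff\,Z^h_\mathfrak a$); surjectivity and the Borel structure are direct from the explicit inverse formula \eqref{E_check_mathbf_r_def} together with the $\sigma$-continuity of $\tilde{\mathtt r}$.

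Next I would record the two compatibility identities. \emph{Cost compatibility:} for any $(\mathfrak a,w)$ and $(\mathfrak a',w')$ in the fibration domain,
\begin{equation*}
\mathtt c_{\mathbf Z}\bigl(\check{\mathbf r}^{-1}(\mathfrak a,w),\check{\mathbf r}^{-1}(\mathfrak a',w')\bigr)=\tilde{\mathtt c}^h_n(\mathfrak a,w,\mathfrak a',w'),
\end{equation*}
since $z=\check{\mathbf r}^{-1}(\mathfrak a,w)\in Z^h_\mathfrak a$ with $z-z'\in C^h_\mathfrak a$ if and only if $\mathfrak a=\mathfrak a'$ and $w-w'\in\tilde C^h_\mathfrak a$ (the transversality of $C^h_\mathfrak a$ to the time-fixed planes makes $\tpt_{\aff C^h_n}$ a linear bijection between $C^h_\mathfrak a$ and $\tilde C^h_\mathfrak a$). \emph{Marginal compatibility:} by construction $\tilde\mu_\mathfrak a=\check{\mathbf r}_\sharp\bar\mu^h_\mathfrak a$ and $\tilde\nu_\mathfrak a=\check{\mathbf r}_\sharp\underline{\bar\nu}^h_\mathfrak a$, so that $\check{\mathbf r}_\sharp\bar\mu=\tilde\mu$ and the disintegration of $\tilde\nu$ w.r.t.\ $\mathfrak a$ has conditionals $\tilde\nu_\mathfrak a$.

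With these two identities the double inclusion is a bookkeeping exercise. For the $\subseteq$ direction, given $\bar\pi\in\Pi^f_{\mathtt c_{\mathbf Z}}(\bar\mu,\{\underline{\bar\nu}^h_\mathfrak a\})$, the finiteness condition forces $\bar\pi$ to be concentrated on the domain of $\check{\mathbf r}\otimes\check{\mathbf r}$, so the push-forward $\tilde\pi:=(\check{\mathbf r}\otimes\check{\mathbf r})_\sharp\bar\pi$ is well defined, has marginals $\tilde\mu,\tilde\nu$ by the marginal compatibility, and has finite $\tilde{\mathtt c}^h_n$-cost by the cost identity. For the $\supseteq$ direction, given $\tilde\pi\in\Pi^f_{\tilde{\mathtt c}^h_n}(\tilde\mu,\tilde\nu)$ the finite cost forces $\tilde\pi$ to live on $\{\mathfrak a=\mathfrak a'\}$, so disintegrating $\tilde\pi$ w.r.t.\ the first coordinate one gets $\tilde\pi=\int\tilde\pi_\mathfrak a\,\xi^h(d\mathfrak a)$ with $\tilde\pi_\mathfrak a\in\Pi(\tilde\mu_\mathfrak a,\tilde\nu_\mathfrak a)$ of finite cost; pulling back by $\check{\mathbf r}^{-1}\otimes\check{\mathbf r}^{-1}$ yields a plan $\bar\pi$ whose disintegration w.r.t.\ $\{Z^h_\mathfrak a\times\R^{d+1}\}$ has conditionals $\bar\pi^h_\mathfrak a$ with marginals $\bar\mu^h_\mathfrak a$ and $\underline{\bar\nu}^h_\mathfrak a$, hence $\bar\pi\in\Pi^f_{\mathtt c_{\mathbf Z}}(\bar\mu,\{\underline{\bar\nu}^h_\mathfrak a\})$.

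Finally, the "moreover" statement is immediate: the map $\tpt_{\aff\,C^h_n}$ preserves the time coordinate, so $\tilde\mu(\{t=1\})=\bar\mu(\{t=1\})=1$ and $\tilde\nu(\{t=0\})=\bar\nu(\{t=0\})=1$. The only delicate point I expect is the pull-back step: one has to make sure that measurably selecting $\tilde\pi_\mathfrak a$ and pulling it back via $\check{\mathbf r}^{-1}$ on each fiber produces a jointly Borel measure, but this follows from the $\sigma$-continuity of $\check{\mathbf r}^{-1}$ together with the standard disintegration theorem for Polish spaces, so no genuinely new difficulty appears beyond the bookkeeping.
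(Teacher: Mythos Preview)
Your proposal is correct and matches the paper's approach: the paper does not give a detailed proof at all, only the one-line remark that ``since the marginals of the conditional probabilities $\bar \pi^{h}_\mathfrak a$ are fixed for all $\bar \pi \in \Pi(\bar \mu,\{\bar \nu^h_\mathfrak a\})$, then it is fairly easy to deduce the next proposition.'' Your argument is precisely the unpacking of that remark---the cost and marginal compatibility identities plus the fiberwise bijection---so there is nothing to add or correct.
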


By Point \eqref{Point_4_Proposition_P_countable_partition} of Proposition \ref{P_countable_partition_in_reference_directed_planes} one deduces that $(\mathtt p_{\aff\,C^{h}_n})_\sharp \upsilon^h_\mathfrak a$ are equivalent to $\mathcal H^{h} \llcorner_{\tilde Z^{h}_\mathfrak a \cap \{t=1\}}$, being $\upsilon^h_\a$ the conditional probabilities of the disintegration of $\mathcal H^d \llcorner_{\{t=1\}}$, so that in particular the measure $\check{\mathbf r}_\sharp \mathcal H^d \llcorner_{\mathbf Z^{h}_a \cap \{t=1\}}$ is equivalent to $\mathcal H^d \llcorner_{\tilde{\mathbf Z}^{h}_n \cap \{t=1\}}$. We have used the fact that $\check{\mathbf r}$ is single valued on $\mathbf Z^{h}_n$.

\section{Analysis of the cyclical monotone relation on a fibration}
\label{S_monotone_relation_on_fibration}

In this section we study the cyclical monotone relation generated by transference plans with finite cost on a fibration $\tilde{\mathbf D}$.

We recall that a directed fibration is a $\sigma$-compact subset of
\[
\Big\{ (\mathfrak a,w,C) \in \R^{d-h} \times \big( [0,+\infty) \times \R^h \big) \times \mhC(h,[0,+\infty) \times \R^h) \Big\}
\]
with the properties that $\mathtt p_{\mathfrak a,C} \tilde{\mathbf D}$ is the graph of a $\sigma$-compact map $\mathfrak a \mapsto  C_\mathfrak a \in \mhC(h,[0,+\infty) \times \R^h)$ and there are two cones $\tilde C,\tilde C' \in {\mathcal C}(h,\RR^h)$ such that
\begin{equation}
\label{E_uniform_opening_on_fibration}
\forall \mathfrak a \in \mathtt p_\mathfrak a \tilde{\mathbf D} \ \Big( \tilde C \subset \mathring{\tilde C}_\mathfrak a \ \wedge \ \tilde C_\mathfrak a \subset \mathring{\tilde C}' \Big).
\end{equation}
We will use the notation $\tilde{\mathfrak A} := \mathtt p_{\mathfrak a} \tilde{\mathbf D} \subset \R^{d-h}$, $\tilde Z_\mathfrak a := \mathtt p_w \tilde{\mathbf D}(\mathfrak a)$, $\tilde{\mathbf Z} := \mathtt p_{\mathfrak a,w} \tilde{\mathbf D}$: essentially the notation is the same for $\mathbf D(h,n)$, only neglecting the index $h$ and $n$.

The properties \eqref{E_uniform_opening_on_fibration} of the two cones $\tilde C, \tilde C' \in \mathcal C(h,[0,+\infty) \times \R^h)$ allows us to choose coordinates $w = (t,x) \in [0,+\infty) \times \R^{h}$ such that
\begin{equation*}
\tilde C = \epi\,\mathtt{co}, \qquad \tilde C' = \epi\,\mathtt{co}'
\end{equation*}
for two $1$-Lipschitz $1$-homogeneous convex functions $\mathtt{co}, \mathtt{co}': \R^{h} \to [0,+\infty)$ such that $\mathtt{co}'(x) < \mathtt{co}(x)$ for all $x \not= 0$. In the same way, let $\mathtt{co}_\mathfrak a : \R^{h} \to [0,+\infty)$ be $1$-Lipschitz $1$-homogeneous convex functions such that
\begin{equation*}
\tilde C_\mathfrak a = \epi\,\mathtt{co}_\mathfrak a.
\end{equation*}
Clearly from \eqref{E_uniform_opening_on_fibration} for $x \not= 0$ it holds $\mathtt{co}'(x) < \mathtt{co}_\mathfrak a(x) < \mathtt{co}(x)$. Moreover from the assumption that $\tilde C' \cap \{t=1\}$ is bounded, we have that $\mathtt{co}'(x) > 0$ for $x \not= 0$.


Define the transference cost $\tilde{\mathtt c}$ as in \eqref{E_transference_cost_on_fibration}
\begin{equation*}
\tilde{\mathtt c} \big(\mathfrak a,w,\mathfrak a',w' \big) :=
\begin{cases}
0 & \mathfrak a = \mathfrak a', w - w' \in \tilde C_\mathfrak a, \crcr
\infty & \text{otherwise}.
\end{cases}
\end{equation*}
Since $\tilde{\mathtt c} (\mathfrak a,w,\mathfrak a',w') < \infty$ implies $\mathfrak a = \mathfrak a'$, we will often write
\begin{equation*}
\tilde{\mathtt c}_\mathfrak a(w,w') := \tilde{\mathtt c} \big(\mathfrak a,w,\mathfrak a,w' \big).
\end{equation*}

From the straightforward geometric property of a convex cone $C$
\begin{equation}
\label{E_sum_of_cones_inside_cone}
w \in C \quad  \Longrightarrow \quad w + C \subset C,
\end{equation}
one deduces that
\begin{equation*}
\tilde{\mathtt c}(\mathfrak a,w,\mathfrak a',w'), \tilde{\mathtt c}(\mathfrak a',w',\mathfrak a'',w'') < \infty \quad \Longrightarrow \quad \tilde{\mathtt c}(\mathfrak a,w,\mathfrak a'',w'') < \infty.
\end{equation*}
Note that in particular $\mathfrak a = \mathfrak a' = \mathfrak a''$.

Consider two probability measures $\tilde \mu$, $\tilde \nu$ in $\tilde{\mathfrak A} \times ([0,+\infty) \times \R^h)$ such that their disintegrations
\begin{equation}
\label{E_disintegration_tilde_mu_nu}
\tilde \mu = \int_{\tilde{\mathfrak A}} \tilde \mu_\mathfrak a \tilde \xi(d\mathfrak a), \ \tilde \xi := (\mathtt p_{\tilde{\mathfrak A}})_\sharp \tilde \mu, \qquad \tilde \nu = \int_{\tilde{\mathfrak A}} \tilde \nu_\mathfrak a \tilde \xi'(d\mathfrak a),  \ \tilde \xi' = (\mathtt p_{\tilde{\mathfrak A}})_\sharp \tilde \nu,
\end{equation}
satisfy
\[
\tilde \xi = \tilde \xi' \qquad \text{and} \qquad \tilde \mu_\mathfrak a(\tilde Z_\mathfrak a \cap \{t=1\}) = \tilde \nu_\a(\tilde Z^h_\a \cap \{t=0\}) = 1.
\]
It is fairly easy to see that if $\tilde \pi \in \Pi^f(\mu,\nu)$, then
\begin{equation}
\label{E_disintegration_tilde_pi}
\tilde \pi = \int_{\tilde{\mathfrak A}} \tilde \pi_\mathfrak a \tilde \xi(d\mathfrak a) \qquad \text{with} \qquad \tilde \pi_\mathfrak a \in \Pi^f(\tilde \mu_\mathfrak a, \tilde \nu_\mathfrak a), 
\end{equation}
and conversely if $\mathfrak a \mapsto \tilde \pi_\mathfrak a \in \Pi^f(\tilde \mu_\mathfrak a,\tilde \nu_\mathfrak a)$ is an $\tilde \xi$-measurable function, then the transference plan given by the integration in \eqref{E_disintegration_tilde_pi} is in $\Pi^f(\tilde \mu,\tilde \nu)$.

We denote by $\varGamma(\tilde \pi)$ the family of $\sigma$-compact carriages $\tilde \Gamma$ of $\tilde \pi \in \Pi^f(\tilde \mu,\tilde \nu)$,
\begin{equation*}
\varGamma(\tilde \pi) := \Big\{ \tilde \Gamma \subset \{\tilde{\mathtt c} < \infty\} \cap \{t=1\} \times \{t=0\} : \tilde \pi(\tilde \Gamma) = 1 \Big\}, 
\end{equation*}
and set
\begin{equation*}
\varGamma := \bigcup_{\tilde \pi \in \Pi^f(\tilde \mu,\tilde \nu)} \varGamma(\tilde \pi).
\end{equation*}
The section of a set $\tilde \Gamma \in \varGamma$ at $(\mathfrak a,\mathfrak a)$ will be denoted by $\tilde \Gamma(\mathfrak a) \subset \{t=1\} \times \{t=0\}$.


\subsection{\texorpdfstring{A linear preorder on $\tilde{\mathfrak A} \times \RR^h$}{A linear preorder on the fibration}}
\label{Ss_linear_prorder_fibration}

Let $\tilde \Gamma \in \varGamma$. The following lemma is taken from \cite[Lemma 7.3]{biadan}: we omit the proof because it is completely similar.


\begin{lemma}
\label{L_z_n_dense_selections}
There exist a $\tilde \xi$-conegligible set $\tilde{\mathfrak A}' \subset \R^{d-h}$ and a countable family of $\sigma$-continuous functions $\mathtt w_n : \tilde{\mathfrak A}' \to \{t=1\} \times \R^h$, $n \in \N$, such that
\[
\forall n \in \N, \mathfrak a \in \tilde{\mathfrak A}' \ \Big( \mathtt w_n(\mathfrak a) \subset \mathtt p_1 \tilde \Gamma(\mathfrak a) \subset \clos\,\{\mathtt w_n(\mathfrak a)\}_{n \in \N} \Big). 
\]
\end{lemma}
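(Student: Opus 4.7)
The plan is to build the family by a standard measurable-selection-plus-density argument, exploiting that $\tilde\Gamma$ is $\sigma$-compact. First I would write $\tilde\Gamma = \bigcup_{k\in\N} K_k$ with $K_k$ compact. Since $\tilde\pi(\tilde\Gamma) = 1$ and $(\mathtt p_{\mathfrak a})_\sharp \tilde\mu = \tilde\xi$, the fibre $\tilde\Gamma(\mathfrak a) \subset (\{t=1\}\times\R^h)\times(\{t=0\}\times\R^h)$ is non-empty for $\tilde\xi$-a.e.\ $\mathfrak a$; removing the exceptional $\tilde\xi$-negligible set I obtain a Borel $\sigma$-compact $\tilde{\mathfrak A}' \subset \tilde{\mathfrak A}$ on which every fibre of $\tilde\Gamma$ is non-empty.

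Next, fix a countable basis $\{U_m\}_{m\in\N}$ of the topology on $\{t=1\}\times\R^h$ consisting of open balls with rational centres and radii, so that $\mathrm{diam}\,U_m$ can be made arbitrarily small. For each pair $(k,m)$ set
\[
F_{k,m} := \bigl\{ (\mathfrak a,w,w') \in K_k : w \in \clos U_m \bigr\}, \qquad E_{k,m} := \mathtt p_{\mathfrak a} F_{k,m}.
\]
Both sets are $\sigma$-compact. The multifunction $\mathfrak a \mapsto F_{k,m}(\mathfrak a)$ has compact values and is upper semicontinuous on each compact piece of $F_{k,m}$, so selecting the lexicographic minimum in the coordinates of $(w,w') \in \R^{2h+2}$ on each piece yields a $\sigma$-continuous selection $\mathtt w_{k,m}: E_{k,m} \to \clos U_m$ with $\bigl(\mathfrak a, \mathtt w_{k,m}(\mathfrak a)\bigr) \in \mathtt p_{\mathfrak a,w} K_k$. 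In particular $\mathtt w_{k,m}(\mathfrak a) \in \mathtt p_1 \tilde\Gamma(\mathfrak a)$.

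Finally, I enumerate $\{\mathtt w_{k,m}\}_{k,m}$ as $\{\mathtt w_n\}_n$. Each $\mathtt w_n$ has $\sigma$-compact graph and the inclusion $\mathtt w_n(\mathfrak a) \in \mathtt p_1 \tilde\Gamma(\mathfrak a)$ holds by construction wherever $\mathtt w_n$ is defined. For the density part, take $\mathfrak a \in \tilde{\mathfrak A}'$ and $w \in \mathtt p_1 \tilde\Gamma(\mathfrak a)$: pick $k$ such that $w \in \mathtt p_1 K_k(\mathfrak a)$ and, for each $\varepsilon > 0$, choose $m$ with $w \in U_m$ and $\mathrm{diam}\,U_m < \varepsilon$, so that $\mathfrak a \in E_{k,m}$ and $|\mathtt w_{k,m}(\mathfrak a) - w| < \varepsilon$. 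Letting $\varepsilon \to 0$ gives $w \in \clos\{\mathtt w_n(\mathfrak a)\}_n$, completing the density. The main technical point is that I need genuine $\sigma$-continuity rather than mere Borel measurability; this forces the decomposition into compact slabs, on which an upper-semicontinuous compact-valued multifunction admits a continuous lexicographic selection, guaranteeing the required $\sigma$-compact graph.
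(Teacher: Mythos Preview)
Your overall strategy---decompose $\tilde\Gamma$ into compacta $K_k$, intersect first projections with a countable basis $\{U_m\}$, and select into each piece---is exactly the standard scheme, and the paper itself simply refers to \cite[Lemma~7.3]{biadan} for this argument. The density part of your proof is clean and correct.

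There is, however, a genuine gap in one step: the assertion that ``selecting the lexicographic minimum \dots\ yields a $\sigma$-continuous selection''. Upper semicontinuity of the compact-valued fibre map $\mathfrak a \mapsto F_{k,m}(\mathfrak a)$ does \emph{not} by itself force the lexicographic minimum to have $\sigma$-compact graph. Already for a compact $K \subset A \times \R$ the graph of $\mathfrak a \mapsto \min K(\mathfrak a)$ is only a $G_\delta$ subset of $K$ (its complement in $K$ is the projection of the $\sigma$-compact set $\{(\mathfrak a,y,y') : (\mathfrak a,y),(\mathfrak a,y')\in K,\ y' \le y - 1/n\}$), and a $G_\delta$ in a compact set need not be $\sigma$-compact. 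So the selection you produce is Borel, not a priori $\sigma$-continuous in the paper's sense.

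The fix is routine and uses exactly the freedom the statement gives you, namely the $\tilde\xi$-null exceptional set. For each Borel selection $\mathtt w_{k,m}$ on the compact $E_{k,m}$, apply Lusin's theorem to obtain compacta $C^j_{k,m} \subset E_{k,m}$ with $\tilde\xi\bigl(E_{k,m}\setminus \bigcup_j C^j_{k,m}\bigr)=0$ on which $\mathtt w_{k,m}$ is continuous; the restrictions $\mathtt w_{k,m}\llcorner_{C^j_{k,m}}$ then have compact graph. Re-indexing $\{\mathtt w_{k,m}\llcorner_{C^j_{k,m}}\}_{k,m,j}$ as $\{\mathtt w_n\}_n$ gives a countable family of genuinely $\sigma$-continuous partial functions. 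Since the countable union of the exceptional sets is still $\tilde\xi$-null, you may shrink $\tilde{\mathfrak A}'$ accordingly; your density argument goes through unchanged on the remaining conegligible set because for every $\mathfrak a$ there and every $(k,m)$ with $\mathfrak a\in E_{k,m}$ one of the $C^j_{k,m}$ contains $\mathfrak a$.
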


Define the set $H_n \subset \tilde{\mathfrak A} \times \R^h$ by
\begin{equation}
\label{E_Hn_definition}
H_n := \bigg\{ (\mathfrak a,w): \exists I \in \N, \big\{ (w_i,w'_i) \big\}_{i = 1}^I \subset \tilde \Gamma(\mathfrak a) \ \Big( w_1 = \mathtt w_n(\mathfrak a) \ \wedge \ \tilde{\mathtt c}(\mathfrak a,w_{i+1},\mathfrak a,w'_i),\tilde{\mathtt c}(\mathfrak a,w,\mathfrak a,w'_I) < \infty \Big) \bigg\}.
\end{equation}
This set represents the points which can be reached from $\mathtt w_n(\mathfrak a)$ by means of axial path of finite costs (see \eqref{axial_path_explicit} and Figure \ref{Fi_Hn_def}).

\begin{figure}
	\centering
	\includegraphics[scale=0.85]{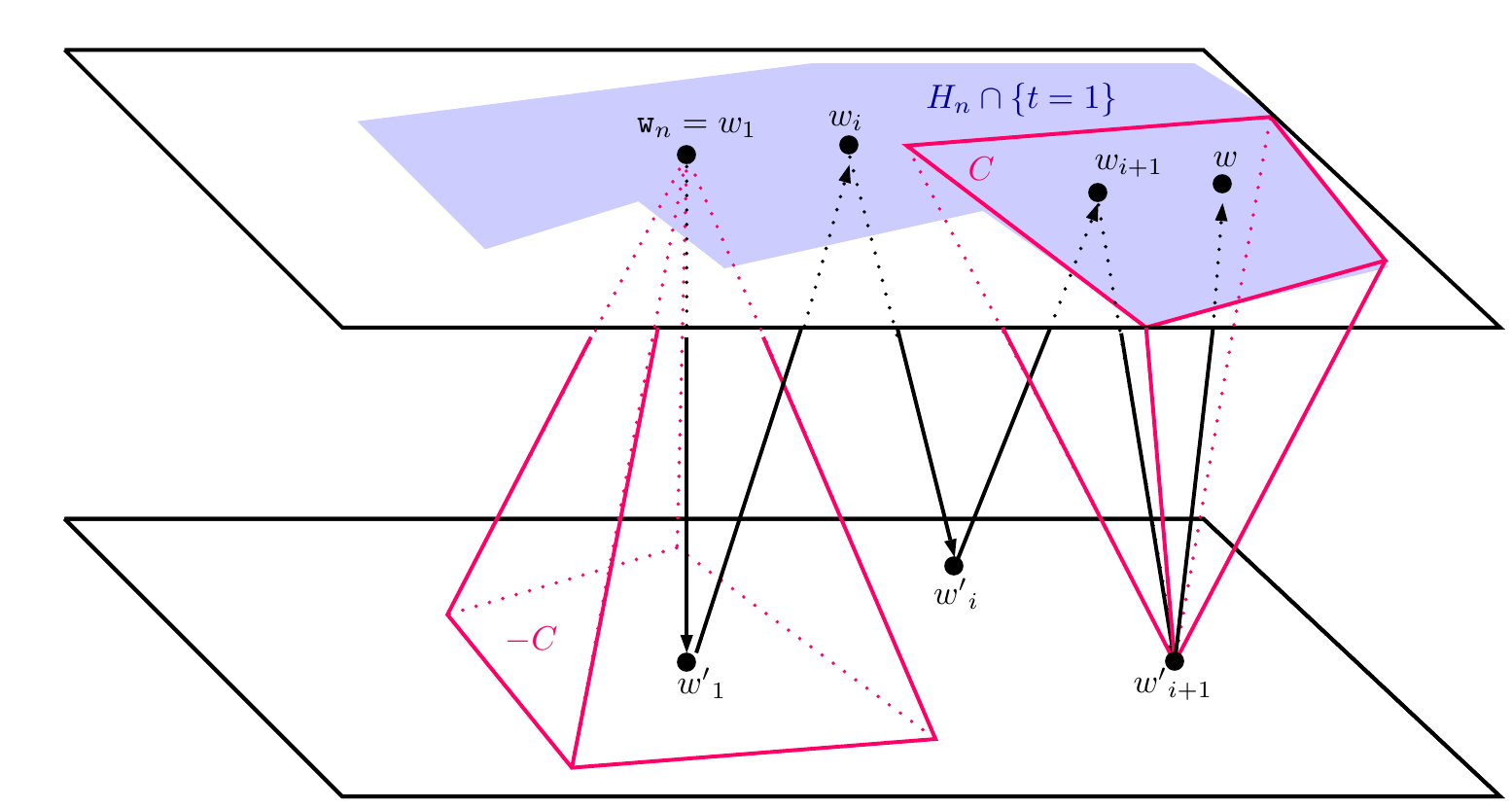}
	\caption{Construction of the set $H_n$, formula \ref{E_Hn_definition}.}
	\label{Fi_Hn_def}
\end{figure}

\begin{proposition}
\label{P_Hn_sigma_cpt_compatibility}
The set $H_n$ is $\sigma$-compact in $\R^{d-h} \times ([0,+\infty) \times \R^h)$, and moreover, defining the Borel set $\tilde{\mathfrak A}^\dag_n := \{\mathfrak a : H_n(\mathfrak a) \not= \emptyset\}$, then there exists a Borel function $\mathtt h_n : \tilde{\mathfrak A}^\dag \times \R^{h} \to [0,+\infty)$ such that for all $x,x' \in \R^{h}$
\begin{equation*}
\mathtt h_n(\mathfrak a,x') \leq \mathtt h_n(\mathfrak a,x) + \mathtt{co}_{\mathfrak a}(x'-x)
\end{equation*}
and
\begin{equation*}
\big\{ (\mathfrak a,t,x) : t > \mathtt h_n(\mathfrak a,x) \big\} \subset H_n \subset \big\{ (\mathfrak a,t,x) : t \geq \mathtt h_n(\mathfrak a,x) \big\}.
\end{equation*}
\end{proposition}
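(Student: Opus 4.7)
The plan is to read $H_n$ in each fiber $\{\mathfrak a\}\times([0,+\infty)\times\R^h)$ as a ``$\tilde C_\mathfrak a$-upward-closed'' region in $w$ and to take $\mathtt h_n$ to be its lower $t$-envelope. The $\sigma$-compactness of $H_n$ is essentially bookkeeping: write $\tilde\Gamma=\bigcup_k\tilde\Gamma_k$ with $\tilde\Gamma_k$ compact, likewise the $\sigma$-compact graphs of $\mathtt w_n$ and of the multifunction $\mathfrak a\mapsto\tilde C_\mathfrak a$, and for each $I\in\N$ consider the set of tuples $(\mathfrak a,w_1,w'_1,\dots,w_I,w'_I,w)$ satisfying $w_1=\mathtt w_n(\mathfrak a)$, $(w_i,w'_i)\in\tilde\Gamma_k(\mathfrak a)$, $w_{i+1}-w'_i\in\tilde C_\mathfrak a$ and $w-w'_I\in\tilde C_\mathfrak a$. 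This is an intersection of finitely many $\sigma$-compact conditions, hence $\sigma$-compact; projecting onto $(\mathfrak a,w)$ and taking unions over $I$ and $k$ yields $H_n$ as $\sigma$-compact.

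The geometric core is that $H_n$ is stable under translation by $\tilde C_\mathfrak a$ in the $w$-variable: if an axial path witnesses $(\mathfrak a,w)\in H_n$ with terminal point $w'_I$, then for any $(s,y)\in\tilde C_\mathfrak a$ the vector $(w+(s,y))-w'_I=(w-w'_I)+(s,y)$ still lies in $\tilde C_\mathfrak a$ (convex cones are closed under addition), so the same path witnesses $(\mathfrak a,w+(s,y))\in H_n$. I then set
\[
\mathtt h_n(\mathfrak a,x):=\inf\bigl\{t\geq 0:(\mathfrak a,t,x)\in H_n\bigr\},
\]
which is finite throughout $\tilde{\mathfrak A}^\dag_n\times\R^h$: picking any $(\mathfrak a,t_0,x_0)\in H_n$ and adding the vector $(\co_\mathfrak a(x-x_0),x-x_0)\in\tilde C_\mathfrak a$ produces a point of $H_n$ with $x$-component equal to $x$. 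Specializing the stability to $(s,y)=(\tau,0)$ with $\tau>0$ shows that $H_n$ is upward closed in $t$, so the sandwich $\{t>\mathtt h_n\}\subset H_n\subset\{t\geq\mathtt h_n\}$ holds. Subadditivity $\mathtt h_n(\mathfrak a,x')\leq\mathtt h_n(\mathfrak a,x)+\co_\mathfrak a(x'-x)$ follows by applying $\tilde C_\mathfrak a$-stability with $(s,y)=(\co_\mathfrak a(x'-x),x'-x)$ to a point $(\mathfrak a,t,x)\in H_n$ and letting $t\searrow\mathtt h_n(\mathfrak a,x)$.

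For the Borel regularity of $\mathtt h_n$, I write $H_n=\bigcup_k K_k$ with $K_k$ compact, define $g_k(\mathfrak a,x):=\inf\{t:(\mathfrak a,t,x)\in K_k\}$ extended to $+\infty$ off $\mathtt p_{\mathfrak a,x}K_k$, observe that each $g_k$ is lower semicontinuous on its natural domain and hence Borel, and conclude that $\mathtt h_n=\inf_k g_k$ is Borel. The main technical obstacle is the compactness bookkeeping in the first paragraph: when the target $(\mathfrak a,w)$ is confined to a compact set, one needs the intermediate iterates $w_i,w'_i$ of an axial path of fixed length $I$ to be forced into a compact set as well, so that projection preserves $\sigma$-compactness. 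This is precisely the role played by the uniform two-sided opening $\tilde C\subset\tilde C_\mathfrak a\subset\tilde C'$ of \eqref{E_uniform_opening_on_fibration}: boundedness of $\tilde C'\cap\{t=1\}$, together with the fact that every $w_i$ lies in $\{t=1\}$ and every $w'_i$ in $\{t=0\}$ and that each step has bounded $t$-displacement, propagates the compactness constraint inductively from the endpoints $w_1=\mathtt w_n(\mathfrak a)$ and $w$ to all intermediate $w_i,w'_i$.
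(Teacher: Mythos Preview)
Your proof is correct and follows essentially the same route as the one the paper defers to \cite{biadan}; the paper itself omits the argument, recording only the explicit formula
\[
\mathtt h_n(\mathfrak a,x)=\inf\big\{\mathtt{co}_\mathfrak a(x-y): y\in \mathtt i_h\big(H_n(\mathfrak a)\cap\{t=0\}\big)\big\},
\]
which coincides with your lower-$t$-envelope because $H_n(\mathfrak a)=\big(H_n(\mathfrak a)\cap\{t=0\}\big)+\tilde C_\mathfrak a$ and $\tilde C_\mathfrak a=\epi\,\mathtt{co}_\mathfrak a$.

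One remark: the ``compactness bookkeeping'' concern in your last paragraph is a red herring. Projections of $\sigma$-compact sets are always $\sigma$-compact (write $S=\bigcup_m K_m$ with $K_m$ compact and project each $K_m$); no control on the fibers is needed. So once you have checked that the set of admissible tuples $(\mathfrak a,w_1,w'_1,\dots,w_I,w'_I,w)$ is a finite intersection of $\sigma$-compact sets in the product space, $\sigma$-compactness of its projection is immediate, and the uniform opening \eqref{E_uniform_opening_on_fibration} plays no role at this stage.
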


The above statement is the analog of Proposition 7.4 of \cite{biadan}, and we omit the proof. The function $\mathtt h_n$ is given explicitly by
\begin{align*}
\mathtt h_n(x) &~= \inf \big\{ \mathtt{co}_\a(x-y),y\in \mathtt i_d(H_n \cap \{t=0\}) \big\} \crcr&~= \min \Big\{ \mathtt{co}_\a(x-y),y \in \clos\,\big( \mathtt i_d(H_n \cap \{t=0\}) \big) \Big\}.
\end{align*}

The separability of $\R^d$ and the non degeneracy of the cone $\tilde C_\a$ yields the next lemma.

\begin{lemma}
\label{L_H_n_cap_t}
There exist countably many cones $\{w'_i + \tilde C_\a\}_{i \in \N}$, $\{w'_i\}_{i \in \N} \subset \mathtt i_h(\mathtt p_2 \tilde \Gamma(\a) \cap H_n(\a))$, such that
\begin{equation*}
\mathtt i_h \big( \mathring{H}_n(\a) \big) = \bigcup_{i \in \N} w'_i + \mathring{\tilde C}_\a.
\end{equation*}
Moreover, the set $\partial (\mathtt i_h(H_n(\a))) \cap \{t = \bar t\}$ is $(h-1)$-rectifiable for all $\bar t > 0$.
\end{lemma}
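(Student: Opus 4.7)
The statement contains two claims, which I treat separately.

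\textbf{Countable cone covering.} The definition \eqref{E_Hn_definition} of $H_n$ gives that every $w \in H_n(\a)$ lies in $w'_I + \tilde C_\a$ for the terminal point $w'_I \in \mathtt p_2 \tilde\Gamma(\a)$ of the connecting axial path; appending the trivial step $w = w'_I$ to that path shows $w'_I \in H_n(\a)$ as well, yielding the uncountable cover
\begin{equation*}
H_n(\a) = \bigcup_{w' \in \mathtt p_2 \tilde\Gamma(\a) \cap H_n(\a)} \big( w' + \tilde C_\a \big).
\end{equation*}
To refine to a countable family covering the relative interior: if $(t,x) \in \mathring H_n(\a)$ and $\eps>0$ is small, then $(t-\eps,x) \in H_n(\a)$ lies in $w' + \tilde C_\a$ for some $w'$; since $\mathtt{co}_\a(0)=0$ the vector $(\eps,0)$ belongs to $\mathring{\tilde C}_\a$, and convexity of the cone then yields $(t,x) \in w' + \mathring{\tilde C}_\a$. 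The set $\mathtt p_2 \tilde\Gamma(\a) \cap H_n(\a)$ is separable; picking a countable dense subfamily $\{w'_i\}_i$ and using that any fixed point of $w' + \mathring{\tilde C}_\a$ remains in $w'_i + \mathring{\tilde C}_\a$ when $w'_i$ is close enough to $w'$ gives $\mathring H_n(\a) \subset \bigcup_i (w'_i + \mathring{\tilde C}_\a)$; the reverse inclusion is automatic since each $w'_i \in H_n(\a)$ forces $w'_i + \mathring{\tilde C}_\a \subset \mathring H_n(\a)$.

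\textbf{Rectifiability of time slices.} Unwinding the explicit formula for $\mathtt h_n$ recorded just after Proposition \ref{P_Hn_sigma_cpt_compatibility}, for every $\bar t > 0$ one has
\begin{equation*}
\big\{x \in \R^h : \mathtt h_n(\a, x) \le \bar t\big\} = \overline Y + \bar t K, \qquad K := \{x\in\R^h : \mathtt{co}_\a(x) \le 1\},
\end{equation*}
where $Y := \mathtt i_h(\mathtt p_2 \tilde\Gamma(\a) \cap H_n(\a))$. The inclusions $\tilde C \subset \mathring{\tilde C}_\a \subset \mathring{\tilde C}'$ with $\tilde C, \tilde C'$ non-degenerate force $K$ to be a convex body (compact and with $0 \in \mathring K$). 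By Proposition \ref{P_Hn_sigma_cpt_compatibility},
\begin{equation*}
\partial\big(\mathtt i_h(H_n(\a))\big) \cap \{t=\bar t\} \subset \{x : \mathtt h_n(\a, x) = \bar t\} = \partial\big(\overline Y + \bar t K\big),
\end{equation*}
and it is a classical fact in convex geometry that the topological boundary of a Minkowski sum of an arbitrary closed set with a convex body of non-empty interior is covered by countably many Lipschitz graphs over hyperplanes normal to outward directions of $K$, hence is $(h-1)$-rectifiable.

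The only mildly subtle ingredient is this last rectifiability assertion: once the time slice is identified with a sublevel set of the Minkowski-gauge distance $x \mapsto \inf_{y \in \overline Y} \mathtt{co}_\a(x-y)$, the conclusion is a standard result from geometric measure theory. The density argument used in the first part is entirely routine.
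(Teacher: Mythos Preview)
Your countable-cone covering argument is exactly what the paper has in mind; it dismisses this part in one line as following from ``the separability of $\R^d$ and the non degeneracy of the cone $\tilde C_\a$'', and your detailed verification is correct.

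For the rectifiability of the time slice your route genuinely differs from the paper's. You identify the slice $\{\mathtt h_n(\a,\cdot)\le\bar t\}$ as a Minkowski sum $\overline Y+\bar t K$ with $K$ a convex body and invoke a general rectifiability principle for boundaries of such sums. The paper instead works directly from the representation $H=\bigcup_i(w'_i+\mathring K)$: it splits $\partial K$ into finitely many $L$-Lipschitz graphs $O_j$, observes that two boundary points of $H$ lying on translates of the same piece $O_j$ either sit on a common $2L$-Lipschitz graph or are separated by distance $\approx\bar t$, and concludes that $\partial H\cap B(0,R)$ is covered by $\mathcal O(R/\bar t)$ Lipschitz graphs. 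Both arguments prove the lemma as stated, but the paper's hands-on count buys strictly more: it yields the explicit local estimate $\mathcal H^{h-1}(\partial H\cap B(0,R))\lesssim R^h/\bar t+R^{h-1}$, and it is this quantitative bound---not bare rectifiability---that is invoked in the very next lemma to show $\theta'_{\mathtt W,\tilde\Gamma}\llcorner_{\{t=\bar t\}}$ is locally BV with the stated total-variation control. Your Minkowski-sum viewpoint is conceptually cleaner, but to support the downstream application you would still need to extract a perimeter estimate uniform in $Y$ and scaling like $1/\bar t$, which essentially forces the Lipschitz-piece count anyway. A minor point: the ``classical fact'' you cite is standard for a Euclidean ball but less readily citable for a general non-symmetric convex body; a reference or the short direct argument would tighten this.
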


The estimate given in the proof below is well known, we give it for completeness.

\begin{proof}
We need to prove only the second part. Let $K = \tilde C_\a \cap \{t=\bar t\}$, and consider in $\R^h$ a set $H$  of the form
\begin{equation*}
H = \bigcup_{i \in \N} w'_i + \mathring{K}.
\end{equation*}
%
If a point $w$ belongs to $\partial H$, then it belongs to the boundary of $w' + K$ for a suitable $w'$.

Being $K = C_\a \cap \{t=1\}$ a compact convex set, the set $\partial K$ can be divided into finitely many $L$-Lipschitz graphs $O_i$, $i=1,\dots,I$. By restricting their domains, for all $\bar i$ fixed we can assume that if two points $w_j$, $j=1,2$, are such that 
\begin{equation*}
w_1 \in (w_1' + O_{\bar i}) \setminus ( w_2' + K ) \qquad \text{and} \qquad w_2 \in (w_2' + O_{\bar i} ) \setminus ( w_1' + K ),
\end{equation*}
then either they belong to a common $2L$-Lipschitz graph or their distance is of order $\mathrm{diam}\,K \approx \bar t$ (see Figure \ref{picture_rectificability}).

The previous assumption on the sets $O_i$ implies that the points in $\partial H_n \cap B(0,R)$ of the form $w' + O_{\bar i}$, with $\bar i$ fixed, can be arranged into at most $\frac{R}{\bar t}$ $2L$-Lipschitz graphs: hence we can estimate
%
%
%
\begin{equation*}
\mathcal H^{h-1} \big( \partial H \cap B(0,R) \big) \approx \max \bigg\{ \frac{R}{\bar t},1 \bigg\} \cdot R^{h-1} \approx \frac{R^h}{\bar t} + R^{h-1}.
\end{equation*}
For $R \ll \bar t$ we made use of the observation that there can be only $1$ Lipschitz graph inside the $B(0,R)$.
%
%
%
%
\end{proof}

\begin{figure}
	\centering
	\includegraphics[scale=0.85]{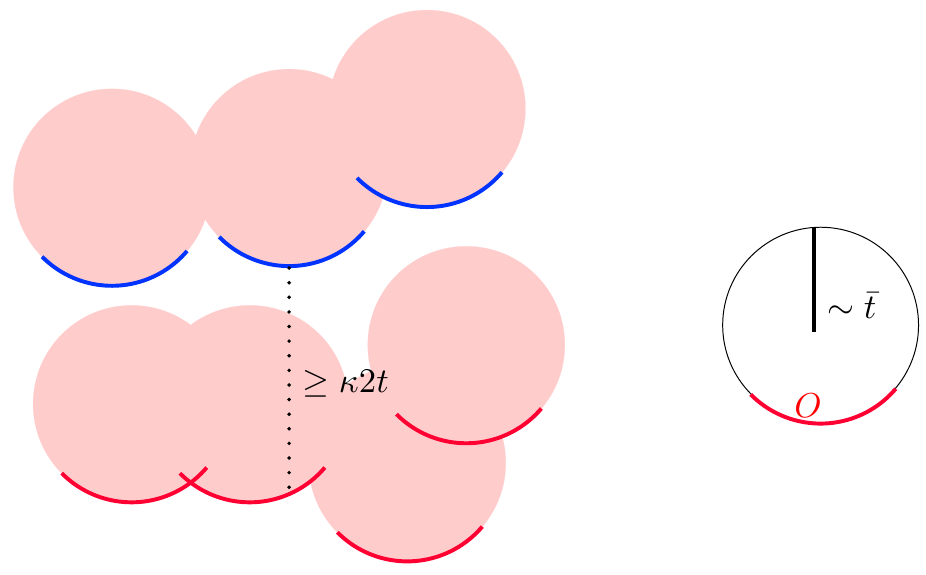}
	\caption{We can consider a straight line in $\{t=\bar t\}$ traversal to $O$. The distance between two points of $(\partial H_n) \cap B^h(z,r)$ on this line and belonging to some translations of $O$ is of the order of $\bar t$. (Lemma \ref{L_H_n_cap_t}).}
	\label{picture_rectificability}
\end{figure}

\subsubsection{\texorpdfstring{Construction of the linear preorder $\preccurlyeq_{\mathtt W}$}{Construction of a particular linear preorder}}
\label{Sss_constr_linear_preord}

Denote with $\mathtt W = \{\mathtt w_n\}_{n \in \N}$ the countable family of functions constructed in Lemma \ref{L_z_n_dense_selections}. 

Define first the function
\begin{equation}                   
\label{E_theta_Z_definition}
\begin{array}{ccccc}
\theta'_{\mathtt W,\tilde \Gamma} &:& \tilde{\mathfrak A}' \times ([0,+\infty) \times \R^h) &\to& \tilde{\mathfrak A}' \times [0,1] \\ [.5em]
&& (\mathfrak a,w) &\mapsto& \theta'_{\mathtt W,\tilde \Gamma}(\mathfrak a,w) := \Big( \mathfrak a, \max \Big\{ 0, \sum_n 2 \cdot 3^{-n} \chi_{H_n}(\mathfrak a,w) \Big\} \Big)
\end{array}
\end{equation}
It is fairly easy to show that $\theta'_{\mathtt W,\tilde \Gamma}$ is Borel. The dependence on $\tilde \Gamma$ occurs because the family $\mathtt W$ is chosen once $\tilde \Gamma$ has been selected.

Since we are interested only in the values of the functions on $\mathtt p_1 \tilde \Gamma$ and the measure $\tilde \mu$ is a.c., then once the function $\theta'_{\mathtt W,\tilde \Gamma}$ has been computed we define a new function $\theta_{\mathtt W,\tilde \Gamma}$ by
\begin{equation}
\label{E_level_theta'}
\big\{ w : \theta_{\mathtt W,\tilde \Gamma}(\a,w) \geq \underline{\theta} \big\} = \bigcup_{\nfrac{w' \in \mathtt p_2 \tilde \Gamma (\a)}{\theta'_{\mathtt W,\Gamma}(w') \geq \underline{\theta}}} w' + \tilde C_\a, \qquad \underline{\theta} \in [0,1].
\end{equation}
Being $\mathtt p_2 \tilde \Gamma(\a)$ $\sigma$-compact and $\a \mapsto \tilde C_a$ $\sigma$-compact, it is standard to prove that $\theta_{\mathtt W,\tilde \Gamma}$ is Borel if $\theta'_{\mathtt W,\tilde \Gamma}$ is.

The main reason for the introduction of the function $\theta$ will be clear in Section \ref{S_disintegration_on_affine}: indeed, $\theta$ and its upper semicontinuous envelope $\vartheta$ satisfy a Lax representation formula similar to the Lax formula for HJ equation (Remark \ref{R_HJ_prop}), so that the techniques used in order to prove regularity of the disintegration for $\bar \phi$ (Sections \ref{Ss_back_forw_regul_phi} and \ref{Ss_regul_disi_phi}) can be adapted to this context.

\begin{lemma}
\label{L_prop_theta_prim}
The functions $\theta_{\mathtt W,\tilde \Gamma}(\a),\theta'_{\mathtt W,\tilde \Gamma}(\a)$ are locally SBV on every section $\{t = \bar t\}$, and 
\[
(w,w') \in \tilde \Gamma(\a) \qquad \Longrightarrow \qquad \theta'_{\mathtt W,\tilde \Gamma}(\mathfrak a,w) = \theta'_{\mathtt W,\tilde \Gamma}(\a,w') = \theta_{\mathtt W,\tilde \Gamma}(\a,w) = \theta_{\mathtt W,\tilde \Gamma}(\a,w').
\]
\end{lemma}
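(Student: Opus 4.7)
The plan has two independent parts: the invariance of $\theta'$ and $\theta$ along pairs of $\tilde\Gamma(\a)$, and the local SBV regularity on slices $\{t=\bar t\}$. The invariance is driven entirely by the elementary property $\tilde C_\a + \tilde C_\a \subset \tilde C_\a$ of a convex cone; the SBV regularity is the harder part and rests on Lemma \ref{L_H_n_cap_t}.

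For the invariance, fix $(w,w')\in\tilde\Gamma(\a)$, so $w - w' \in \tilde C_\a$. I will first show $\chi_{H_n}(\a,w)=\chi_{H_n}(\a,w')$ for every $n$. If $w'\in H_n$ with witnessing path $(w_i,w_i')_{i=1}^I$ ending with $w' - w_I' \in \tilde C_\a$, then $w - w_I' = (w-w')+(w'-w_I')\in \tilde C_\a$, so the \emph{same} path certifies $w\in H_n$. Conversely, if $w\in H_n$, I extend the path by appending $(w_{I+1},w'_{I+1})=(w,w')\in\tilde\Gamma(\a)$: the required transition $w-w_I'\in\tilde C_\a$ is given, and the final condition $w'-w'_{I+1}=0\in\tilde C_\a$ holds since $0$ is the vertex of the cone. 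Summing over $n$ with the geometric weights yields $\theta'(\a,w)=\theta'(\a,w')$; call this value $t_0$. For $\theta$, taking $u=w'$ in the supremum defining $\theta(\a,w)$ and $\theta(\a,w')$ shows both are $\geq t_0$. The reverse inequality uses the same cone transitivity: if $u\in \mathtt p_2\tilde\Gamma(\a)$ and $w-u\in\tilde C_\a$ (resp. $w'-u\in\tilde C_\a$), then any path reaching $u$ also reaches $w$ (resp. $w'$) by concatenating with the single vector $w-u$ (resp. $w'-u$), so $\theta'(\a,u)\leq \theta'(\a,w)=t_0$ (resp. $\leq \theta'(\a,w')=t_0$).

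For the SBV regularity, fix $\bar t>0$ and work on the slice identified with $\R^h$. By Lemma \ref{L_H_n_cap_t} each $H_n(\a)\cap\{t=\bar t\}$ has locally $(h-1)$-rectifiable topological boundary; tracing the quantitative estimate in that proof gives the uniform bound $\mathcal H^{h-1}(\partial H_n(\a)\cap B(0,R)\cap\{t=\bar t\})\leq C(R,\bar t,\tilde C_\a)$, independent of $n$. Hence each $\chi_{H_n(\a)\cap\{t=\bar t\}}$ is a set of locally finite perimeter, its distributional gradient is purely a jump measure (no Cantor or absolutely continuous part), and the series defining $\theta'(\a,\bar t,\cdot)$ converges absolutely in $BV_{\mathrm{loc}}(\R^h)$ because $\sum_n 2\cdot 3^{-n}<\infty$. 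The limiting measure $D\theta'(\a,\bar t,\cdot)$ is supported on the countable union $\bigcup_n\partial H_n(\a)\cap\{t=\bar t\}$, which is still $(h-1)$-rectifiable, so no Cantor part is produced and $\theta'(\a,\bar t,\cdot)\in SBV_{\mathrm{loc}}(\R^h)$.

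For $\theta$, I observe that every superlevel set
$\{\theta(\a,\cdot)\geq c\}\cap\{t=\bar t\}=\bigcup_{u\in\mathtt p_2\tilde\Gamma(\a),\theta'(u)\geq c}(u+\tilde C_\a)\cap\{t=\bar t\}$
is again a union of translates of the fixed compact convex body $\tilde C_\a\cap\{t=\bar t-0\}$, so the proof of Lemma \ref{L_H_n_cap_t} applies verbatim and yields the same uniform $(h-1)$-rectifiable perimeter bound. Applying the BV coarea formula produces $\theta(\a,\bar t,\cdot)\in BV_{\mathrm{loc}}(\R^h)$, and since all its superlevel-set perimeters are concentrated on rectifiable sets, $D\theta$ itself is concentrated on a $\sigma$-finite $(h-1)$-rectifiable set, ruling out a Cantor part. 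The main obstacle I expect is precisely this SBV step: the invariance is a one-line manipulation with cones, but controlling the perimeter of an \emph{arbitrary} union of translates of $\tilde C_\a\cap\{t=\bar t\}$ uniformly, and then propagating this through the coarea formula to exclude a Cantor part for the supremum $\theta$, is what makes Lemma \ref{L_H_n_cap_t} essential.
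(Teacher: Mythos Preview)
Your plan follows the paper's approach closely. The invariance argument (the four equalities on $\tilde\Gamma(\a)$) is correct and in fact more carefully spelled out than in the paper. The SBV argument for $\theta'$ via the absolutely convergent BV series $\sum 2\cdot 3^{-n}\chi_{H_n}$ is exactly the paper's mechanism: the derivative is forced onto the \emph{countable} union $\bigcup_n \partial H_n$, which is $(h-1)$-rectifiable and therefore carries no Cantor part.

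There is, however, a gap in your SBV argument for $\theta$. You obtain BV from coarea together with the uniform perimeter bound on superlevel sets, which is fine, but then assert that ``since all its superlevel-set perimeters are concentrated on rectifiable sets, $D\theta$ itself is concentrated on a $\sigma$-finite $(h-1)$-rectifiable set.'' This implication is false in general: the Cantor staircase on $[0,1]$ has every superlevel set equal to an interval (boundary a single point, trivially $0$-rectifiable), yet its derivative is purely Cantor. The difficulty is that $\bigcup_c \partial^*\{\theta>c\}$ is an \emph{uncountable} union and need not be $\mathcal H^{h-1}$-$\sigma$-finite. The paper is admittedly terse at this point (``the same analysis can be repeated for $\theta$''), but what makes the argument succeed for $\theta'$ is precisely the countable-sum structure forcing $D\theta'$ onto $\bigcup_n \partial H_n$. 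To close the gap for $\theta$ you need to reproduce that countable structure---for instance by showing that on each slice $\{t=\bar t\}$ the derivative of $\theta$ is carried by a countable union of Lipschitz graphs built from the finitely many faces $O_i$ of $\tilde C_\a\cap\{t=\bar t\}$ (as in the proof of Lemma~\ref{L_H_n_cap_t})---rather than relying on a bare coarea argument, which by itself cannot exclude a Cantor part.
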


Hence the function $\theta_{\mathtt W,\tilde \Gamma}$ has the same values of $\theta'_{\mathtt W,\tilde \Gamma}$ on $\mathtt p_1 \tilde \Gamma(\a) \cup \mathtt p_2 \tilde \Gamma(\a)$.

\begin{proof}
From the definition of $H_n(\a)$, formula \eqref{E_Hn_definition}, it is fairly easy to see that
\[
(w,w') \in \tilde \Gamma \ \Big( w \in H_n(\a) \ \Leftrightarrow \ w' \in H_n(\a) \Big),
\]
hence $\theta'_{\mathtt W,\tilde \Gamma}(\a,w) = \theta'_{\mathtt W,\tilde \Gamma}(\a,w')$. Moreover
\[
\theta'_{\mathtt W,\tilde \Gamma}(\a,[0,+\infty) \times \R^h) \subset \bigg\{ \sum_n s_n 3^{-n}, s_n \in \{0,2\} \bigg\},
\]
so that its range in $\mathcal L^1$-negligible (it is a subset of the ternary Cantor set).
By \eqref{E_Hn_definition}, the sets $H_n(\a) \cap \{t = \bar t\}$ is the union of compact convex sets containing a ball of radius $\mathcal O(\bar t)$, and then by Lemma \ref{L_H_n_cap_t} it is of locally finite perimeter: more precisely, in each ball in $\R^h$ of radius $r$ its perimeter is $\mathcal O(r^h/\bar t + r^{h-1})$.

Being $\theta'_{\mathtt W,\tilde \Gamma}(\a) \llcorner_{\{t = \bar t\}}$ given by the sum of the functions of $3^{-n} \chi_{H_n \cap \{t = \bar t\}}$ with (relative) perimeter $\approx 3^{-n} (r^h/\bar t +r^{h-1})$ in each ball of $\R^h$ of radius $r$, it follows that $\theta'_{\mathtt W,\tilde \Gamma}(\a) \llcorner_{\{t = \bar t\}}$ is locally BV with
\[
\TV(\theta'_{\mathtt W,\tilde \Gamma}(\a) \llcorner_{\{t=\bar t\}},B(x,r)) \approx \frac{r^h}{\bar t} + r^{h-1}.
\]
Being a countable sum of rectifiable sets,
it is locally SBV in each plane $\{t=\bar t\}$: more precisely, only the jump part of $D_w \theta'_{\mathtt W,\tilde \Gamma}(\a)$ is non zero.

The same analysis can be repeated for $\theta_{\mathtt W,\tilde \Gamma}$, using the definition \eqref{E_level_theta'}. This concludes the proof of the regularity.

The fact that $\theta_{\mathtt W,\tilde \Gamma}(\a,w) = \theta_{\mathtt W,\tilde \Gamma}(\a,w')$ if $(w,w') \in \tilde \Gamma(\a)$ is a fairly easy consequence of $\theta'_{\mathtt W,\tilde \Gamma}(\a,w) = \theta'_{\mathtt W,\tilde \Gamma}(\a,w')$ and the definition of $\theta$. Indeed, it is clear that $\theta \geq \theta'$; on the other hand, if $w'_i \in \mathtt p_2 \tilde \Gamma (\a)$ is a maximizing sequence for $w \in \mathtt p_1 \tilde \Gamma(\a)$, then the definition of $\theta'$ gives
\begin{equation*}
\theta'(w) \geq \theta'(w'_i),
\end{equation*}
and then $\theta'(w) = \theta'(w') = \theta(w)$. Since $\theta(w') \leq \theta(w)$ by \eqref{E_level_theta'}, the conclusion follows.
\end{proof}

\begin{lemma}
	$\theta_{\mathtt W,\tilde \Gamma}(\a)$ and $\theta'_{\mathtt W,\tilde \Gamma}(\a)$ are SBV in $[0,+\infty) \times \R^h$. 
\end{lemma}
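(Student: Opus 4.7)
The plan is to reduce the $\mathrm{SBV}$ property of both functions to the observation that each of their super-level sets is, up to its topological boundary, an epigraph of a $1$-Lipschitz function with a uniform constant, and to exploit the closure of $\mathrm{SBV}$ under $L^1_\loc$-convergence with uniformly bounded variation and jump parts concentrated on countable unions of $h$-rectifiable sets.

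I would first handle $\theta'_{\mathtt W,\tilde\Gamma}(\a)$. Proposition \ref{P_Hn_sigma_cpt_compatibility} identifies $H_n(\a)$, up to its topological boundary, with the epigraph of $\mathtt h_n(\a,\cdot) : \R^h \to [0,+\infty)$, and the estimate $\mathtt h_n(\a,x') \le \mathtt h_n(\a,x) + \mathtt{co}_\a(x'-x)$ together with the $1$-Lipschitz character of $\mathtt{co}_\a$ shows that $\mathtt h_n(\a,\cdot)$ is $1$-Lipschitz. Hence $\partial H_n(\a)$ is an $h$-dimensional Lipschitz graph with $\mathcal H^h(\partial H_n(\a) \cap B_R) \le c R^h$ on every ball $B_R$ of $[0,+\infty) \times \R^h$ of radius $R$, uniformly in $n$ and $\a$, so $\chi_{H_n(\a)} \in \mathrm{SBV}_\loc$ with derivative concentrated on the rectifiable set $\partial H_n(\a)$. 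Writing
\[
\theta'_{\mathtt W,\tilde\Gamma}(\a) = \sum_{n} 2 \cdot 3^{-n} \chi_{H_n(\a)}
\]
and using the uniform perimeter bound yields $\sum_n 2 \cdot 3^{-n} |D\chi_{H_n(\a)}|(B_R) \le 3 c R^h < +\infty$, so the series converges in $L^1_\loc$ and in total variation. The standard closure theorem for $\mathrm{SBV}$ (no Cantor part is generated, since each term already has jump part on a rectifiable set and the sum of jump parts is still concentrated on the countable union $\bigcup_n \partial H_n(\a)$) gives $\theta'_{\mathtt W,\tilde\Gamma}(\a) \in \mathrm{SBV}_\loc([0,+\infty) \times \R^h)$.

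For $\theta_{\mathtt W,\tilde\Gamma}(\a)$, formula \eqref{E_level_theta'} shows that every super-level set $\{\theta(\a,\cdot) \ge c\}$ is a union of translated cones $w' + \tilde C_\a$, hence coincides up to boundary with the epigraph of a $1$-Lipschitz function $\mathtt g_c(\a,\cdot)$; the same perimeter estimate gives $|D\chi_{\{\theta(\a,\cdot) \ge c\}}|(B_R) \le c R^h$ uniformly in $c$. Combining the layer-cake representation $\theta(\a,\cdot) = \int_0^{3/2} \chi_{\{\theta(\a,\cdot) \ge c\}}\,dc$ with the coarea formula for $\mathrm{BV}$ functions yields $|D\theta(\a)|(B_R) \le \tfrac{3}{2} c R^h$, so $\theta(\a) \in \mathrm{BV}_\loc$. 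To exclude the Cantor part I would argue as follows: $\theta(\a,\cdot)$ takes values in the closure of the range of $\theta'$, which lies in the Lebesgue-negligible Cantor-type set $\{\sum_n 2 s_n 3^{-n} : s_n \in \{0,1\}\}$, so by Vol'pert's chain rule $D^a \theta(\a) = 0$; moreover $|D\theta(\a)|$ is concentrated on $\bigcup_c \partial\{\theta(\a,\cdot) \ge c\}$, and along every ray in the interior of $\tilde C_\a$, $\theta(\a,\cdot)$ is monotone with total variation $\le 3/2$, so the positive jumps along any such ray are at most countable and the $1$D restriction is a pure jump function. Integrating this information along cone directions via Fubini concentrates $D\theta(\a)$ on a countable union of Lipschitz $h$-graphs and excludes any Cantor contribution, so that $\theta_{\mathtt W,\tilde\Gamma}(\a) \in \mathrm{SBV}_\loc$.

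The main obstacle is this last step. For $\theta'$ the $\mathrm{SBV}$ property is almost mechanical, reducing to the explicit series decomposition in $H_n$'s and the closure theorem; for $\theta$ instead the supremum couples contributions from different $H_n$'s across different basepoints $w'$ and the range of $\theta$ is in general an uncountable subset of a Cantor-type set, so no analogous series decomposition in pieces $\chi_{\tilde H_n}$ is available (indeed $\sum_n 2 \cdot 3^{-n} \chi_{\tilde H_n} \ge \theta$ in general, strictly). The removal of the Cantor part has therefore to proceed through the structure of the monotone foliation by the $1$-Lipschitz graphs $\partial\{\theta(\a,\cdot) \ge c\}$ and the sliced monotonicity of $\theta$ along cone directions.
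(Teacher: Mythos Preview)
Your argument for $\theta'$ is correct and is a fleshed-out version of what the paper sketches: each $H_n(\a)$ is the epigraph of a $1$-Lipschitz function over $\R^h$, so the series $\sum_n 2\cdot 3^{-n}\chi_{H_n(\a)}$ converges in $\mathrm{BV}_\loc$ with derivative concentrated on the countable union $\bigcup_n\partial H_n(\a)$ of Lipschitz graphs. The paper's one-line proof (``the boundary of level sets is locally Lipschitz and the thesis follows'') records this same structure without spelling out the convergence.

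For $\theta$ there is a gap. From ``monotone along cone rays with bounded variation'' you infer ``countably many jumps'' and then ``the 1D restriction is a pure jump function''; the second inference fails---the Cantor staircase is monotone, bounded, has zero jumps, and is entirely Cantor part. What actually forces the 1D slices to be pure jump is the other fact you already recorded: the range of $\theta(\a)$ lies in a Lebesgue-null set (for a monotone real function the measure of the range equals the total variation minus the sum of the jump sizes, so a null range forces the jumps to carry all of $Df$). Even granting this, the Fubini step is not free: pure-jump slices along a single direction control only the corresponding component of $D^c\theta$, not the whole Cantor part. The clean route is to feed your two observations directly into the coarea formula rather than into slicing. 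Since $|D\chi_{\{\theta(\a)>s\}}|(B_R)\le cR^h$ uniformly in $s$ (Lipschitz-graph boundary) and the range of $\theta(\a)$ has Lebesgue measure zero, the integral $\int_{\mathrm{range}}D\chi_{\{\theta(\a)>s\}}\,ds$ vanishes; on each gap $(a_k,b_k)$ of the range the set $\{\theta(\a)>s\}$ is independent of $s$, so
\[
D\theta(\a)=\sum_k(b_k-a_k)\,D\chi_{\{\theta(\a)>a_k\}},
\]
a countable sum of measures each concentrated on a single Lipschitz graph over $\R^h$. This yields $\theta(\a)\in\mathrm{SBV}_\loc$ directly, and is presumably what the paper's ``the thesis follows'' abbreviates.
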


\begin{proof}
Being every sub levels of $\theta_{\mathtt W,\tilde \Gamma}(\a)$ and $\theta'_{\mathtt W,\tilde \Gamma}(\a)$ the sum of cones $w' + \tilde C_\a$, the boundary of level sets is locally Lipschitz and the thesis follows. 
\end{proof}

To estimate the regularity of the disintegration of the locally affine partition generated by $\theta$ (Section \ref{S_decomposition_of_fibration}), we define the function $\vartheta_{\mathtt W,\tilde \Gamma}(\a)$ as the upper semicontinuous envelope of $\theta_{\mathtt W,\tilde \Gamma}(\a)$:
\begin{equation*}
\big\{ \vartheta(\a) \geq \underline{\theta} \big\} = \clos \big\{ \theta(\a) \geq \underline{\theta} \big\}.
\end{equation*}
Being the topological boundaries of level sets of $\theta_{\mathtt W,\tilde \Gamma}(\a)$ rectifiable, the $\mathcal H^h \llcorner_{\{t=\bar t\}}$-measure of the points where $\vartheta_{\mathtt W,\tilde \Gamma}(\a)$ and $\theta_{\mathtt W,\tilde \Gamma}(\a)$ are different is $0$.

\begin{remark}
\label{R_HJ_prop}
We observe here the relation with the Lax formula for Hamilton-Jacoby equation (with inverted time). In fact, if we define the Lagrangian
\[
L_\a(w) = \ind_{\tilde C_\a}(w),
\]
then formula \eqref{E_level_theta'} can be rewritten as
\[
\theta_{\mathtt W,\tilde \Gamma}(\a, w) = \sup \Big\{ \theta'_{\mathtt W,\tilde \Gamma}(\a, w') - L_\a(w - w'), w' \in \{t=0\} \Big\}.
\]
Moreover, the definition of $\vartheta_{\mathtt W,\tilde \Gamma}$ yields that
\[
\vartheta_{\mathtt W,\tilde \Gamma}(\a, w) = \max \Big\{ \vartheta_{\mathtt W,\tilde \Gamma}(\a, w') - L_\a(w - w'), w' \in \{t=0\} \Big\}.
\]

Being the maximum reached in some point, it follows that $\vartheta_{\mathtt W,\tilde \Gamma}$ in some sense replaces the potential $\bar \phi$. The advantages of using $\theta_{\mathtt W,\tilde \Gamma}$ instead of $\vartheta_{\mathtt W,\tilde \Gamma}$ will be clear in the following sections.

We remark here only that the disintegration of the Lebesgue measure $\mathcal H^d \llcorner_{\{t=1\}}$ on the sub levels of $\theta$ or of $\vartheta$ is equivalent, as observed above. 

\end{remark}

The space $\R^{d-h} \times [0,1]$ is naturally linearly ordered by the lexicographic ordering $\trianglelefteq$: set for $\mathfrak a = (\mathfrak a_1,\dots,\mathfrak a_{d-h})$, $s \in [0,1]$,
\begin{equation}
\label{E_lexico_on_R_d_h}
\begin{split}
(\mathfrak a,s) \triangleleft (\mathfrak a',s') \quad \Longleftrightarrow \quad & \bigg[ \exists i \in \{1,\dots,d-h\} \ \Big( \forall j < i \ \big( \mathfrak a_j = \mathfrak a'_j \big) \ \wedge \ \mathfrak a_i < \mathfrak a'_i \Big) \bigg] \crcr
& \vee \quad \big[ \a_i = {\a'}_i \ \wedge \ s < s' \big].
\end{split}
\end{equation}
The pull-back of $\trianglelefteq$ by $\theta_{\mathtt W,\tilde \Gamma}$ is the linear preorder $\preccurlyeq_{\mathtt W,\tilde \Gamma}$ defined by
\begin{equation*}
\preccurlyeq_{\mathtt W,\tilde \Gamma} := \big( \theta_{\mathtt W,\tilde \Gamma} \otimes \theta_{\mathtt W,\tilde \Gamma} \big)^{-1}(\trianglelefteq^{-1}),
\end{equation*}
and the corresponding equivalence relation on $\tilde{\mathfrak A}' \times \R^h$ is
\begin{equation*}
E_{\mathtt W,\tilde \Gamma} := \preccurlyeq_{\mathtt W,\tilde \Gamma} \cap \preccurlyeq^{-1}_{\mathtt W,\tilde \Gamma} = \Big\{ (w,w') : \theta_{\mathtt W,\tilde \Gamma}(w) = \theta_{\mathtt W,\tilde \Gamma}(w') \Big\}.
\end{equation*}
By construction $(\mathfrak a,w) \sim_{E_{\mathtt W,\tilde \Gamma}} (\mathfrak a',w')$ implies that $\mathfrak a = \mathfrak a'$. 
By convention we will also set
\[
E_{\mathtt W,\tilde \Gamma}(\mathfrak a) = \Big\{ (w,w'):  \theta_{\mathtt W,\tilde \Gamma}(\a,w) = \theta_{\mathtt W,\tilde \Gamma}(\a,w') \Big\}.
\]


\begin{lemma}
\label{L_closed_path_same_theta}
Assume that $(\mathfrak a,w), (\mathfrak a'',w'') \in \mathtt p_1 \tilde \Gamma$ can be connected by a closed axial path of finite cost. Then $(\mathfrak a,w) \sim_{E_{\mathtt W,\tilde \Gamma}} (\mathfrak a'',w'')$.
\end{lemma}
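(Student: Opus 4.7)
The plan is to reduce the desired equivalence $(\mathfrak a,w)\sim_{E_{\mathtt W,\tilde\Gamma}}(\mathfrak a'',w'')$ to a reachability question for the sets $H_n$. First, any finite-cost axial path respects the fiber structure: since $\tilde{\mathtt c}(\mathfrak a,\cdot,\mathfrak a',\cdot) = +\infty$ whenever $\mathfrak a\neq\mathfrak a'$, the finiteness of the cost along the closed path forces $\mathfrak a=\mathfrak a''$, so the entire cycle lives in the single fiber $\{\mathfrak a\}\times([0,+\infty)\times\R^h)$. Because $w,w''\in\mathtt p_1\tilde\Gamma(\mathfrak a)$, Lemma \ref{L_prop_theta_prim} gives $\theta_{\mathtt W,\tilde\Gamma}(\mathfrak a,\cdot)=\theta'_{\mathtt W,\tilde\Gamma}(\mathfrak a,\cdot)$ at both points, so it suffices to show $\theta'_{\mathtt W,\tilde\Gamma}(\mathfrak a,w)=\theta'_{\mathtt W,\tilde\Gamma}(\mathfrak a,w'')$. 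By the defining series in \eqref{E_theta_Z_definition}, this further reduces to proving, for every $n\in\N$,
\[
(\mathfrak a,w)\in H_n \iff (\mathfrak a,w'')\in H_n.
\]

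Both implications follow by concatenation of axial paths, exploiting the transitivity property built into the definition \eqref{E_Hn_definition} of $H_n$. For the forward direction, assume $(\mathfrak a,w)\in H_n$ is witnessed by a sequence $\{(w_i,w_i')\}_{i=1}^I\subset\tilde\Gamma(\mathfrak a)$ with $w_1=\mathtt w_n(\mathfrak a)$ and $\tilde{\mathtt c}(\mathfrak a,w,\mathfrak a,w_I')<\infty$. Writing the given closed axial path as $\{(z_k,z_k')\}_{k=1}^J\subset\tilde\Gamma(\mathfrak a)$ with $z_1=w$, $z_j=w''$ for some $j$, and with all transitions $\tilde{\mathtt c}(\mathfrak a,z_{k+1\,\mathrm{mod}\,J},\mathfrak a,z_k')$ finite, one concatenates the two sequences to obtain $\{(w_1,w_1'),\dots,(w_I,w_I'),(z_1,z_1'),\dots,(z_j,z_j')\}$. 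The glueing transition $\tilde{\mathtt c}(\mathfrak a,z_1,\mathfrak a,w_I')=\tilde{\mathtt c}(\mathfrak a,w,\mathfrak a,w_I')<\infty$ comes from the endpoint condition for $w\in H_n$; the intermediate transitions within the cycle are finite by hypothesis; and the final step $\tilde{\mathtt c}(\mathfrak a,w'',\mathfrak a,z_j')$ vanishes since $w''=z_j$ and $(z_j,z_j')\in\tilde\Gamma(\mathfrak a)$. This yields $(\mathfrak a,w'')\in H_n$.

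The reverse direction is entirely symmetric but uses \emph{crucially} that the axial path is \emph{closed}: from $w''=z_j$ one traverses the cycle through $z_{j+1},\dots,z_J$ and wraps around past the transition $\tilde{\mathtt c}(\mathfrak a,z_1,\mathfrak a,z_J')<\infty$ to end at $z_1=w$. Concatenating this with any witness for $(\mathfrak a,w'')\in H_n$ produces a witness for $(\mathfrak a,w)\in H_n$. No serious obstacle arises: beyond the definitions and the pointwise equality $\theta=\theta'$ on $\mathtt p_1\tilde\Gamma$ granted by Lemma \ref{L_prop_theta_prim}, the proof amounts to bookkeeping of finite-cost transitions, with the closedness of the cycle entering exactly once, to make the direction reversal possible.
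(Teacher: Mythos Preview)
Your proof is correct and follows essentially the same approach as the paper: both reduce to showing $w\in H_n(\mathfrak a)\Leftrightarrow w''\in H_n(\mathfrak a)$ for all $n$ via concatenation of axial paths, then invoke Lemma~\ref{L_prop_theta_prim} to pass from $\theta'$ to $\theta$. The paper states the equivalence in one line without spelling out the concatenation, while you make the bookkeeping explicit; the content is the same.
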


\begin{proof}
Clearly $\mathfrak a = \mathfrak a''$, and thus the condition can be stated as follows: there exist $I \in \N$, $(w_i,w'_i) \in \tilde \Gamma(\mathfrak a)$, $i=1,\dots,I$, such that $\tilde{\mathtt c}_{\mathfrak a}(w_{i+1},w'_i) < \infty$, $i = 1,\dots,I$ with $w_{I+1} = w_1$, and moreover $w = w_{i_1}$, $w'' = w_{i_2}$ for some $i_1,i_2 \in I$. This implies that
\[
\forall n \in \N \ \big( w \in H_n(\mathfrak a) \ \Leftrightarrow \ w'' \in H_n(\mathfrak a) \big),
\]
which proves that $\theta'_{\mathtt W,\tilde \Gamma}(\a,w) = \theta'_{\mathtt W,\tilde \Gamma}(\a'',w'')$. From Lemma \ref{L_prop_theta_prim} the conclusion follows.
\end{proof}


A consequence of Lemma \ref{L_prop_theta_prim} is thus that $\tilde \Gamma \subset E_{\mathtt W,\tilde \Gamma}$. If $\Gamma'$ is another carriage contained in $\{\tilde{\mathtt c} < \infty\}$, then
\[
(w,w') \in  \Gamma' (\a) \quad \Longrightarrow \quad w \preccurlyeq_{\mathtt W,\tilde \Gamma} w',
\]
because
\begin{equation}
\label{E_cone_add_theta}
\theta_{\mathtt W, \Gamma'}(w' + \tilde C_\a) \geq \theta_{\mathtt W, \Gamma'}(w')
\end{equation}
by construction. In particular from Theorem \ref{T_uniqueness} we deduce the following proposition.

\begin{proposition}
\label{P_equiv_all_plan}
If $\tilde \pi' \in \Pi^f(\tilde \mu,\tilde \nu)$, then $\tilde \pi'$ is concentrated on $E_{\mathtt W,\tilde \Gamma}$.
\end{proposition}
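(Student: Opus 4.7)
The plan is to combine three facts already established: (i) $\tilde \Gamma\subset E_{\mathtt W,\tilde\Gamma}$ by Lemma~\ref{L_prop_theta_prim}; (ii) the cone-monotonicity $(w,w')\in\Gamma' \Rightarrow \theta_{\mathtt W,\tilde\Gamma}(\a,w)\ge \theta_{\mathtt W,\tilde\Gamma}(\a,w')$, which follows at once from \eqref{E_cone_add_theta} together with $w-w'\in\tilde C_\a$ and was recorded just before the statement; and (iii) the fact that $\tilde\pi$ (any plan carried by $\tilde\Gamma$) and $\tilde\pi'$ share the marginals $\tilde\mu,\tilde\nu$. Bringing these together, I expect to deduce that the nonnegative integrand $\theta_{\mathtt W,\tilde\Gamma}(\a,w)-\theta_{\mathtt W,\tilde\Gamma}(\a,w')$ has zero integral against $\tilde\pi'$, which forces $\tilde\pi'$-a.e.\ equality and hence $\tilde\pi'(E_{\mathtt W,\tilde\Gamma})=1$.

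Concretely, first fix a carriage $\Gamma'\in\varGamma(\tilde\pi')$. Finiteness of the cost forces $\a=\a'$, so both $\tilde\pi$ and $\tilde\pi'$ admit disintegrations $\tilde\pi=\int\tilde\pi_\a\,\tilde\xi(d\a)$, $\tilde\pi'=\int\tilde\pi'_\a\,\tilde\xi(d\a)$ with $\tilde\pi_\a,\tilde\pi'_\a\in\Pi^f(\tilde\mu_\a,\tilde\nu_\a)$, as in \eqref{E_disintegration_tilde_pi}. For $\tilde\xi$-a.e.\ $\a$, using the marginals of $\tilde\pi'_\a$,
\[
\int \bigl[\theta_{\mathtt W,\tilde\Gamma}(\a,w)-\theta_{\mathtt W,\tilde\Gamma}(\a,w')\bigr]\,\tilde\pi'_\a(dw\,dw') = \int \theta_{\mathtt W,\tilde\Gamma}(\a,w)\,\tilde\mu_\a(dw)-\int \theta_{\mathtt W,\tilde\Gamma}(\a,w')\,\tilde\nu_\a(dw').
\]
The right-hand side vanishes: since $\tilde\Gamma(\a)\subset E_{\mathtt W,\tilde\Gamma}(\a)$ by Lemma~\ref{L_prop_theta_prim}, we may swap $w$ and $w'$ in the integral against $\tilde\pi_\a$ to obtain
\[
\int\theta_{\mathtt W,\tilde\Gamma}(\a,w')\,\tilde\nu_\a(dw') = \int\theta_{\mathtt W,\tilde\Gamma}(\a,w')\,\tilde\pi_\a(dw\,dw') = \int\theta_{\mathtt W,\tilde\Gamma}(\a,w)\,\tilde\pi_\a(dw\,dw') = \int\theta_{\mathtt W,\tilde\Gamma}(\a,w)\,\tilde\mu_\a(dw).
\]
The integrand in the first display is thus nonnegative (by ingredient (ii)) and has zero integral, hence vanishes $\tilde\pi'_\a$-a.e., and Fubini over $\a$ gives $\tilde\pi'(E_{\mathtt W,\tilde\Gamma})=1$.

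The only technicalities to check are the measurability of the disintegrations, ensured by the Borel regularity of $\theta_{\mathtt W,\tilde\Gamma}$ proved above, and the integrability of $\theta_{\mathtt W,\tilde\Gamma}$, which is immediate since it takes values in $[0,1]$. Alternatively, the proposition follows in one stroke from the abstract uniqueness principle Theorem~\ref{T_uniqueness} (Appendix~\ref{A_appendix_1}) applied to the linear preorder $\preccurlyeq_{\mathtt W,\tilde\Gamma}$: $\tilde\pi$ is concentrated on $E_{\mathtt W,\tilde\Gamma}$ and every $\tilde\pi'\in\Pi^f(\tilde\mu,\tilde\nu)$ is $\preccurlyeq_{\mathtt W,\tilde\Gamma}$-monotone by ingredient (ii). I do not anticipate any real obstacle here: the hard content has already been packaged into the construction of $\theta_{\mathtt W,\tilde\Gamma}$ and into Lemma~\ref{L_prop_theta_prim}.
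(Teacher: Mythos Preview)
Your proposal is correct. The paper's own proof is exactly the one-line alternative you mention at the end: it simply records that $\tilde\Gamma\subset E_{\mathtt W,\tilde\Gamma}$ (Lemma~\ref{L_prop_theta_prim}), that any finite-cost carriage $\Gamma'$ satisfies $(w,w')\in\Gamma'(\a)\Rightarrow w\preccurlyeq_{\mathtt W,\tilde\Gamma} w'$ via \eqref{E_cone_add_theta}, and then invokes Theorem~\ref{T_uniqueness} directly.

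Your primary argument is a genuinely more elementary route: rather than appealing to the abstract linear-preorder machinery of Appendix~\ref{A_appendix_1}, you exploit that the $[0,1]$-valued component of $\theta_{\mathtt W,\tilde\Gamma}$ is bounded and Borel, so the integral identity
\[
\int\bigl[\theta_{\mathtt W,\tilde\Gamma}(\a,w)-\theta_{\mathtt W,\tilde\Gamma}(\a,w')\bigr]\,\tilde\pi'_\a(dw\,dw') = \int\theta_{\mathtt W,\tilde\Gamma}(\a,\cdot)\,d\tilde\mu_\a - \int\theta_{\mathtt W,\tilde\Gamma}(\a,\cdot)\,d\tilde\nu_\a = 0
\]
makes sense and forces the nonnegative integrand to vanish $\tilde\pi'_\a$-a.e. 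This is essentially the concrete computation that Theorem~\ref{T_uniqueness} abstracts; it works cleanly here precisely because $\theta$ is real-valued and bounded, whereas the abstract theorem handles preorders whose target is the lexicographically ordered $\R^{d-h}\times[0,1]^\alpha$ (needed later in Section~\ref{Sss_constr_family_linear_preord}), where no such integral argument is available. So your direct approach is adequate for this proposition but does not generalize to the composite preorders; the paper's appeal to Theorem~\ref{T_uniqueness} is chosen for uniformity with those later uses.
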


\subsubsection{\texorpdfstring{Construction of a $\sigma$-closed family of equivalence relations}{Construction of a countably closed family of equivalence relations}}
\label{Sss_constr_family_linear_preord}


The linear preorder $\preccurlyeq_{\mathtt W,\tilde \Gamma}$ depends on the family $\mathtt W$ of functions we choose and on the carriage $\tilde \Gamma$: by varying the $\tilde{\mathtt c}$-cyclically monotone carriage $\tilde \Gamma \in \varGamma$ and the family $\mathtt W$ dense in $\tilde \Gamma$ and we obtain in general different preorders. 

We can easily compose the linear preorders $\preccurlyeq_{\mathtt W_\beta,\tilde \Gamma_\beta}$, $\beta < \alpha$ countable ordinal number, by using the lexicographic preorder on $[0,1]^\alpha$: in fact, define the function (recall the notation $(\a,s) \in \R^{d-h} \times [0,1]$)
\begin{equation}
\label{E_theta_Zalpha_definition}
\begin{array}{ccccc}
\theta_{\{\mathtt W_\beta,\tilde \Gamma_\beta\}_{\beta < \alpha}} &:& \R^{d-h} \times [0,+\infty) \times \R^h &\to& \R^{d-h} \times [0,1]^{\alpha} \\ [.5em]
&& (\mathfrak a,w) &\mapsto& \theta_{\{\mathtt W_\beta,\tilde \Gamma_\beta\}_{\beta < \alpha}}(\mathfrak a,w) := \big( \mathfrak a, \{\mathtt p_s \theta_{\mathtt W_\beta,\tilde \Gamma_\beta}(\mathfrak a,w)\}_{\beta < \alpha} \big)
\end{array}
\end{equation}
As in the previous section $\theta_{\{\mathtt W_\beta,\tilde \Gamma_\beta\}_{\beta < \alpha}}$ is Borel, and the function should be considered defined in the domain $\cap_\beta \tilde{\mathfrak A}'_\beta$, where $\tilde{\mathfrak A}'_\beta$ is the domain of the family of functions $\mathtt W_\beta$.

If $\trianglelefteq$ is the lexicographic preorder in $\R^{d-h} \times [0,1]^\alpha$ as in \eqref{E_lexico_on_R_d_h}, then set
\begin{equation*}
\preccurlyeq_{\{\mathtt W_\beta,\tilde \Gamma_\beta\}_{\beta < \alpha}} := \big( \theta_{\{\mathtt W_\beta,\tilde \Gamma_\beta\}_{\beta < \alpha}} \otimes \theta_{\{\mathtt W_\beta,\tilde \Gamma_\beta\}_{\beta < \alpha}} \big)^{-1}(\trianglelefteq), \quad E_{\{\mathtt W_\beta,\tilde \Gamma_\beta\}_{\beta < \alpha}} := \preccurlyeq_{\{\mathtt W_\beta,\tilde \Gamma_\beta\}_{\beta < \alpha}} \cap \preccurlyeq_{\{\mathtt W_\beta,\tilde \Gamma_\beta\}_{\beta < \alpha}}^{-1}.
\end{equation*}
Clearly $\tilde \pi(E_{\{\mathtt W_\beta,\tilde \Gamma_\beta\}_{\beta < \alpha}}) = 1$, since $\tilde \pi(E_{\mathtt W_\beta,\tilde \Gamma_\beta})=1$ for all $\beta < \alpha$. To be an equivalence relation on $\R^{d-h} \times [0,+\infty) \times\R^h$, we can assume that $\Id \subset E_{\{\mathtt W_\beta,\tilde \Gamma_\beta\}_{\beta < \alpha}}$.

The next lemma is a simple consequence of the fact that a countable union of countable sets is countable. Its proof can be found in \cite[Proposition 7.5]{biadan}.

\begin{lemma}
\label{L_family_sigma_closed}
The family of equivalence relations
\begin{equation*}
\tilde{\mathcal E} := \Big\{ E_{\{\mathtt W_\beta,\tilde \Gamma_\beta\}_{\beta < \alpha}}, \mathtt W_\beta = \big\{ \mathtt w_{n,\beta} \big\}_{n \in \N}, \alpha \in \Omega \Big\}
\end{equation*}
is closed under countable intersection. Moreover, for ever $E_{\{\mathtt W_\beta,\tilde \Gamma_\beta\}_{\beta < \alpha}}$ there exists $\bar{\tilde \Gamma} \in \varGamma$ and $\bar{\mathtt W}$ such that
\begin{equation*}
E_{\bar{\mathtt W},\bar{\tilde \Gamma}} \subset E_{\{\mathtt W_\beta,\tilde \Gamma_\beta\}_{\beta < \alpha}}.
\end{equation*}
\end{lemma}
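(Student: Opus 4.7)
The plan is to establish the two statements in turn, and then indicate where the technical work really lies.

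\textbf{Part 1: Closure under countable intersection.} Suppose $\{E_k\}_{k\in\N}\subset\tilde{\mathcal E}$, with $E_k=E_{\{\mathtt W_{\beta,k},\tilde\Gamma_{\beta,k}\}_{\beta<\alpha_k}}$ for countable ordinals $\alpha_k<\Omega$. Since the class of countable ordinals is closed under countable sums, $\alpha:=\alpha_1+\alpha_2+\cdots<\Omega$ is still countable. I would concatenate the countable families $(\mathtt W_{\beta,k},\tilde\Gamma_{\beta,k})$ into a single transfinite sequence $(\mathtt W_\gamma,\tilde\Gamma_\gamma)_{\gamma<\alpha}$. By the construction \eqref{E_theta_Zalpha_definition} combined with the lexicographic order on $\R^{d-h}\times[0,1]^\alpha$, two points are identified by $\theta_{(\mathtt W_\gamma,\tilde\Gamma_\gamma)_{\gamma<\alpha}}$ if and only if they are identified by each component $\theta_{\mathtt W_{\beta,k},\tilde\Gamma_{\beta,k}}$, so $E_{(\mathtt W_\gamma,\tilde\Gamma_\gamma)_{\gamma<\alpha}}=\bigcap_k E_k$, which is thus an element of $\tilde{\mathcal E}$.

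\textbf{Part 2: Domination by a single pair.} Given $\alpha<\Omega$, enumerate $\{\beta<\alpha\}=\{\beta_n\}_{n\in\N}$. For each $n$ choose a plan $\tilde\pi_{\beta_n}\in\Pi^f(\tilde\mu,\tilde\nu)$ concentrated on $\tilde\Gamma_{\beta_n}$ and set
\[
\bar{\tilde\pi}:=\sum_{n\in\N}2^{-n}\tilde\pi_{\beta_n},\qquad \bar{\tilde\Gamma}:=\bigcup_{n\in\N}\tilde\Gamma_{\beta_n}.
\]
Then $\bar{\tilde\pi}$ has the right marginals by linearity of push-forward, and since $\tilde{\mathtt c}$ takes only the values $0,\infty$ it automatically belongs to $\Pi^f(\tilde\mu,\tilde\nu)$; moreover $\bar{\tilde\Gamma}$ is a $\sigma$-compact carriage for $\bar{\tilde\pi}$. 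I would apply Lemma \ref{L_z_n_dense_selections} to $\bar{\tilde\Gamma}$ to obtain a family $\bar{\mathtt W}$, enlarged (but still countable) to contain $\bigcup_{n,m}\{\mathtt w_{n,\beta_m}\}$, so that each starting point of each $\theta_{\mathtt W_{\beta_m},\tilde\Gamma_{\beta_m}}$ appears among the selections used by $\theta_{\bar{\mathtt W},\bar{\tilde\Gamma}}$. The required inclusion then follows by showing that the partition produced by $\theta_{\bar{\mathtt W},\bar{\tilde\Gamma}}$ refines every $\theta_{\mathtt W_{\beta_m},\tilde\Gamma_{\beta_m}}$: Proposition \ref{P_equiv_all_plan} gives that $\bar{\tilde\pi}$ concentrates on each $E_{\mathtt W_{\beta_m},\tilde\Gamma_{\beta_m}}$ and each $\tilde\pi_{\beta_n}$ concentrates on $E_{\bar{\mathtt W},\bar{\tilde\Gamma}}$, while property \eqref{E_cone_add_theta} guarantees the compatibility of sublevel sets of $\theta$ under $\tilde C_\a$-translations.

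\textbf{Where the work lies.} The only delicate point is the inclusion in Part~2. Because $\bar{\tilde\Gamma}\supseteq\tilde\Gamma_\beta$, the sets $H^{\bar{\tilde\Gamma}}_n(\mathtt w_{n,\beta})$ defined by \eqref{E_Hn_definition} are in general strictly larger than $H^{\tilde\Gamma_\beta}_n(\mathtt w_{n,\beta})$: axial paths in $\bar{\tilde\Gamma}$ may use pairs from other $\tilde\Gamma_{\beta'}$, producing reachable points not present in the individual $H^{\tilde\Gamma_\beta}_n$. Hence, agreement of all indicators $\chi_{H^{\bar{\tilde\Gamma}}_n}$ at two points $w_1,w_2$ does not formally imply agreement of all $\chi_{H^{\tilde\Gamma_\beta}_m}$, so the equivalence $\theta_{\bar{\mathtt W},\bar{\tilde\Gamma}}(w_1)=\theta_{\bar{\mathtt W},\bar{\tilde\Gamma}}(w_2)$ does not directly force $\theta_{\mathtt W_\beta,\tilde\Gamma_\beta}(w_1)=\theta_{\mathtt W_\beta,\tilde\Gamma_\beta}(w_2)$. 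The resolution, following the strategy of \cite[Proposition~7.5]{biadan}, is to analyse precisely which additional axial paths the combined carriage $\bar{\tilde\Gamma}$ creates, and to exploit the lex-order encoding in \eqref{E_theta_Zalpha_definition} together with the cone-closure property \eqref{E_cone_add_theta} to show that the $\bar\theta$-classes cannot straddle boundaries of any individual level set $\{\theta_{\mathtt W_\beta,\tilde\Gamma_\beta}\ge\underline\theta\}$. This is the step that absorbs the bulk of the argument and is the main technical obstacle.
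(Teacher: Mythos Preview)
Your proposal is correct and follows the same route as the paper, which itself gives no argument beyond calling the lemma ``a simple consequence of the fact that a countable union of countable sets is countable'' and referring to \cite[Proposition~7.5]{biadan}. Part~1 is exactly the ordinal-concatenation argument the paper has in mind; for Part~2 your construction of $\bar{\tilde\Gamma}=\bigcup_\beta\tilde\Gamma_\beta$ and $\bar{\mathtt W}\supset\bigcup_\beta\mathtt W_\beta$ is the natural one, and the obstacle you isolate (that $H^{\bar{\tilde\Gamma}}_n\supsetneq H^{\tilde\Gamma_\beta}_n$ in general) together with its resolution via the analysis in \cite[Proposition~7.5]{biadan} is precisely what the paper defers to---you may be slightly overstating how much work this step requires, since the paper treats the whole lemma as routine, but the substance matches.
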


\subsection{Properties of the minimal equivalence relation}
\label{Ss_properties_minimal_equivalence_relation}


Let $\bar E_{\{\mathtt W_\beta,\tilde \Gamma_\beta\}_{\beta < \alpha}}$ be the minimal equivalence relation chosen as in Lemma \ref{L_family_sigma_closed} after a minimal equivalence relation of Theorem \ref{T_minimal_equival} in Appendix \ref{Aa_minimal_equivalence} has been selected.

Let $\bar \theta' : \R^{d-h} \times ([0,+\infty) \times \R^h) \to \R^{d-h} \times [0,1]$ be the function obtained through \eqref{E_theta_Z_definition} with the set $\bar{\tilde \Gamma}$ and the family of functions $\bar{\mathtt W}$, and let $\bar \theta$ be the corresponding function given by \eqref{E_level_theta'}. For shortness in the following we will use only the notation $\bar E$, $\bar \theta$ and $\bar \preccurlyeq$, and the convention is that $\bar \theta$ is defined on a $\sigma$-compact set $\tilde{\mathfrak A}' \times ([0,\infty) \times \R^h)$ as in the discussion following \eqref{E_theta_Zalpha_definition}.

Let $\tilde \Gamma \in \varGamma$ be a $\sigma$-compact cyclically monotone set, and let $\theta_{\mathtt W,\tilde \Gamma} : \R^{d-h} \times ([0,+\infty) \times\R^h) \to \R^{d-h} \times [0,1]^\N$ be constructed as in Section \ref{Sss_constr_linear_preord}. 

By Corollary \ref{C_constant_for_minimal_equivalence}, it follows that there exists a $\tilde \mu$-conegligible $\sigma$-compact set $\tilde B \subset \R^{d-h} \times ([0,+\infty) \times \R^h)$ and a Borel function $\mathtt s : \R^{d-h} \times [0,1] \to \R^{d-h} \times [0,1]$ such that $\theta_{\mathtt W} = \mathtt s \circ \bar \theta$ on $\tilde B$: since
\begin{equation*}
	\mathtt p_\a \bar \theta = \mathtt p_\a \theta_{\mathtt W,\tilde \Gamma} = \Id,
\end{equation*}
it follows that we can write $\mathtt s(\a,s) = (\a,\mathtt s(\a,s))$, with a slight abuse of notation. The set $\tilde B$ depends on $\theta_{\mathtt W,\tilde \Gamma}$.

Applying this result to the equivalence classes of positive $\tilde \mu_\mathfrak a$-measure, where $\tilde \mu_\mathfrak a$ are the conditional probabilities given by \eqref{E_disintegration_tilde_mu_nu}, we obtain the following proposition.

\begin{proposition}
\label{P_minimality_on_class}
There exists a set $\tilde{\mathfrak A}'' \subset \tilde{\mathfrak A}'$ of full $\tilde \xi$-measure such that
\[
\forall \mathfrak a \in \tilde{\mathfrak A}'', \underline{\theta} \in [0,1] \ \bigg( \tilde \mu_\mathfrak a( \bar \theta^{-1}(\underline{\theta}) ) > 0 \quad \Longrightarrow \quad \exists \underline{\theta}' \in [0,1] \ \Big( \tilde \mu_\mathfrak a \big( \bar \theta^{-1}(\underline{\theta}) \setminus \theta_{\mathtt W,\tilde \Gamma}^{-1}(\underline{\theta}') \big) = 0 \Big) \bigg).
\]
\end{proposition}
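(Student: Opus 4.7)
The plan is to transfer the factorization identity $\theta_{\mathtt W,\tilde\Gamma} = \mathtt s \circ \bar\theta$, which holds $\tilde\mu$-a.e.\ by Corollary \ref{C_constant_for_minimal_equivalence} and the discussion preceding the statement, down to each conditional measure $\tilde\mu_\a$ via disintegration. The role of the minimality of $\bar E$ has already been used in the paragraph above to produce the $\sigma$-compact conegligible set $\tilde B$ and the Borel map $\mathtt s$, so there is no further appeal to the appendix material here; the proposition is essentially a fiberwise reading of the global identity.

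First I would fix the $\sigma$-compact set $\tilde B \subset \R^{d-h} \times ([0,+\infty) \times \R^h)$ with $\tilde\mu(\tilde B^c) = 0$ and the Borel function $\mathtt s$ with $\theta_{\mathtt W,\tilde\Gamma}(\a,w) = (\a,\mathtt s(\a,\mathtt p_s \bar\theta(\a,w)))$ for every $(\a,w)\in\tilde B$, using the abuse of notation $\mathtt s(\a,s) = (\a,\mathtt s(\a,s))$ already adopted in the text. Disintegrating $\chi_{\tilde B^c}\tilde\mu$ along the $\a$-fibers and invoking uniqueness of the disintegration \eqref{E_disintegration_tilde_mu_nu}, one obtains $\tilde\mu_\a(\tilde B(\a)^c)=0$ for $\tilde\xi$-a.e.\ $\a$, where $\tilde B(\a)$ denotes the $\a$-section of $\tilde B$. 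Define $\tilde{\mathfrak A}''\subset\tilde{\mathfrak A}'$ as the set of $\a$ where this identity holds; then $\tilde\xi(\tilde{\mathfrak A}'\setminus\tilde{\mathfrak A}'')=0$.

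Now fix $\a\in\tilde{\mathfrak A}''$ and $\underline\theta\in[0,1]$ with $\tilde\mu_\a(\bar\theta^{-1}(\underline\theta))>0$, and set $\underline\theta' := \mathtt s(\a,\underline\theta)$. On $\tilde B(\a)$ the factorization identity restricted to the $\a$-fiber reads $\mathtt p_s\theta_{\mathtt W,\tilde\Gamma}(\a,w) = \mathtt s(\a,\mathtt p_s\bar\theta(\a,w))$, so every $w \in \bar\theta^{-1}(\underline\theta)\cap\tilde B(\a)$ satisfies $\theta_{\mathtt W,\tilde\Gamma}(\a,w) = (\a,\underline\theta')$, i.e.\
\[
\bar\theta^{-1}(\underline\theta)\cap\tilde B(\a) \subset \theta_{\mathtt W,\tilde\Gamma}^{-1}(\underline\theta').
\]
Combining this inclusion with $\tilde\mu_\a(\tilde B(\a)^c)=0$ gives
\[
\tilde\mu_\a\bigl(\bar\theta^{-1}(\underline\theta)\setminus\theta_{\mathtt W,\tilde\Gamma}^{-1}(\underline\theta')\bigr) \leq \tilde\mu_\a(\tilde B(\a)^c) = 0,
\]
which is the claim.

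The only point worth keeping an eye on is the quantifier ``for all $\underline\theta\in[0,1]$''. A priori one might worry about accumulating uncountably many exceptional sets in $\a$; however, the exceptional set $\tilde B(\a)^c$ is a \emph{single} $\tilde\mu_\a$-null set that handles every level simultaneously, because the map $\underline\theta\mapsto\mathtt s(\a,\underline\theta)$ sends each level of $\bar\theta$ (intersected with $\tilde B(\a)$) into a level of $\theta_{\mathtt W,\tilde\Gamma}$. Thus no further countability argument on the levels is required, and the whole proposition reduces to the standard compatibility of disintegrations with $\sigma$-compact negligible sets.
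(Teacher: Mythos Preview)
Your proof is correct and follows essentially the same approach as the paper: disintegrate the $\tilde\mu$-negligible complement of $\tilde B$ to obtain a $\tilde\xi$-conegligible set $\tilde{\mathfrak A}''$ on which $\tilde\mu_\a(\tilde B(\a)^c)=0$, and then take $\underline\theta' = \mathtt s(\a,\underline\theta)$. Your write-up is in fact more explicit than the paper's, in particular in addressing why a single null set handles all levels $\underline\theta$ simultaneously.
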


\begin{proof}
Since the equivalence classes under consideration have positive $\tilde \mu_\mathfrak a$-measure, the $\tilde \mu$-negligible set $(\tilde{\mathfrak A}' \times [0,+\infty) \times \R^h) \setminus \tilde B$ satisfies
\[
\tilde \xi \Big( \Big\{ \mathfrak a : \exists \underline{\theta} \ \big( \tilde \mu_\mathfrak a(\bar \theta^{-1}(\underline{t}) \setminus \tilde B) > 0 \big) \Big\} \Big) = 0.
\]
In the remaining $\tilde \xi$-conegligible subset $\tilde{\mathfrak A}''$ of $\tilde{\mathfrak A}'$ the value $\underline{\theta}' = \mathtt s(\underline{\theta})$ satisfies the statement.
\end{proof}

The essential cyclical connectedness of $\bar \theta^{-1}(\underline{\theta})$ now follows from the following lemma, valid for a generic $\theta_{\mathtt W,\tilde \Gamma}$. This lemma justify the choice of the density properties of the functions $\mathtt w_n$, Lemma \ref{L_z_n_dense_selections}.

\begin{lemma}
\label{L_indecompo_mini}
If $\tilde \mu_a(\theta^{-1}_{\mathtt W, \tilde \Gamma}(\underline{\theta})) > 0$, $\a \in \tilde{\A}'$, then it is $\mathcal H^h_{\{t=1\}}$-essentially $\tilde \Gamma$-cyclically connected.
\end{lemma}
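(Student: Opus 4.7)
I will follow the strategy implicit in \cite{biadan}, exploiting three structural facts: (i) the explicit ternary-digit description of $\theta'_{\mathtt W,\tilde\Gamma}$, which converts reachability questions into digit-comparison questions; (ii) the cone monotonicity \eqref{E_cone_add_theta} combined with the fact that $H_w\supseteq w'_1+\tilde C_{\mathfrak a}\supseteq w+\tilde C_{\mathfrak a}$ for any edge $(w,w'_1)\in\tilde\Gamma(\mathfrak a)$, giving a forward ``open cone'' of reachability from any $w\in\mathtt p_1\tilde\Gamma(\mathfrak a)$; and (iii) the SBV regularity of $\theta'_{\mathtt W,\tilde\Gamma}$ in each slice $\{t=\bar t\}$ together with the $\mathcal H^{h-1}$-rectifiability of $\partial H_n\cap\{t=\bar t\}$ (Lemmas \ref{L_prop_theta_prim} and \ref{L_H_n_cap_t}).

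First I would check that the countable family $\{\mathtt w_n(\mathfrak a)\}_n\cap A$ is itself cyclically connected, where $A:=\theta^{-1}_{\mathtt W,\tilde\Gamma}(\underline\theta)$. If $\mathtt w_n,\mathtt w_m\in A$ then $\theta'(\mathtt w_n)=\theta'(\mathtt w_m)=\underline\theta$, and since $\mathtt w_k\in H_k(\mathfrak a)$ trivially (the one-edge path $(\mathtt w_k,w'_k)\in\tilde\Gamma(\mathfrak a)$), the $k$-th ternary digit of $\underline\theta$ is $2$ for $k\in\{n,m\}$; this forces $\mathtt w_n\in H_m$ and $\mathtt w_m\in H_n$, and concatenating the two forward axial paths yields a closed axial path through both. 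I would then reduce the general case to this one: for $\mathcal H^h\llcorner_{\{t=1\}}$-a.e.\ $w\in A\cap\mathtt p_1\tilde\Gamma(\mathfrak a)$ the relation $\theta'(\mathfrak a,w)=\underline\theta$ automatically gives $w\in H_n(\mathfrak a)$ for every $n\in N_{\underline\theta}$ (in particular for every $n$ with $\mathtt w_n\in A$), so it is enough to produce a single $n$ with $\mathtt w_n(\mathfrak a)\in A$ and $\mathtt w_n(\mathfrak a)\in H_w$, and then concatenate with Step~1.

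To produce such $\mathtt w_n$ at $\mathcal H^h$-a.e.\ $w\in A$, I would argue that by \eqref{E_sum_of_cones_inside_cone} and \eqref{E_cone_add_theta} the open cone $w+\mathring{\tilde C_{\mathfrak a}}$ is contained in $H_w\cap\{\theta\ge\underline\theta\}$. Its ``strict'' part $(w+\mathring{\tilde C_{\mathfrak a}})\cap\{\theta>\underline\theta\}$ is a union of translated sub-cones whose boundary (in the slice $\{t=\bar t\}$) is $\mathcal H^{h-1}$-rectifiable by Lemma \ref{L_H_n_cap_t}; hence for $\mathcal H^h$-a.e.\ $w\in A$ some relatively open subcone $w+K\subset w+\mathring{\tilde C_{\mathfrak a}}$ is contained, modulo an $\mathcal H^h$-null set, in $A$. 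Topological density of $\{\mathtt w_n(\mathfrak a)\}_n$ in $\mathtt p_1\tilde\Gamma(\mathfrak a)$ from Lemma \ref{L_z_n_dense_selections} then furnishes $\mathtt w_n(\mathfrak a)\in (w+K)\cap A$, completing the cycle. The main obstacle is exactly this last step: bridging the measure-theoretic density of $A$ to the topological density of $\mathtt W$ inside $A$. The crucial point is that $\{\theta\ge\underline\theta\}$ is a union of closed cones with rectifiable slice boundaries, so up to $\mathcal H^h$-null sets the level set $A$ behaves like an open set along the $\tilde C_{\mathfrak a}$-cone direction; this is precisely why introducing $\theta$ (via the cone closure \eqref{E_level_theta'}) rather than working solely with $\theta'$ is essential.
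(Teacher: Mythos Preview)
Your overall strategy is close to the paper's, but there is a concrete error in Step~2. You write that ``the open cone $w+\mathring{\tilde C_{\mathfrak a}}$ is contained in $H_w\cap\{\theta\ge\underline\theta\}$'' and then search for $\mathtt w_n$ inside a subcone $w+K\subset w+\mathring{\tilde C_\a}$. But $w\in\{t=1\}$ and $\tilde C_\a\subset\{t\ge 0\}$ has its vertex at the origin, so $(w+\mathring{\tilde C_\a})\cap\{t=1\}=\emptyset$: there is no room for any $\mathtt w_n(\a)\in\{t=1\}$ inside this cone. The correct object, which you in fact identify in your preamble, is the cone $w'_1+\tilde C_\a$ rooted at the $\tilde\Gamma$-image $w'_1\in\{t=0\}$; its slice at $t=1$ is the full-dimensional convex body you need. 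The paper works with exactly this cone $\bar w'+\tilde C_\a$.

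Even after this correction, the rectifiability argument you propose does not close the gap you correctly flag: knowing that an open set $U$ agrees with $A$ up to an $\mathcal H^h$-null set does \emph{not} force the countable set $\mathtt W$ (dense only in $\mathtt p_1\tilde\Gamma(\a)$) to meet $U\cap A$. The paper instead argues by contradiction and uses the Lebesgue-point property directly. If $A_1,A_2$ are two positive-$\tilde\mu_\a$-measure pieces of the level set with no axial path from $A_1$ to $A_2$, pick a Lebesgue point $\bar w\in A_1$ with edge $(\bar w,\bar w')\in\tilde\Gamma(\a)$; non-degeneracy of $\tilde C_\a$ makes $(\bar w'+\tilde C_\a)\cap\{t=1\}$ a convex body through $\bar w$ of positive density there, so $A_1\cap(\bar w'+\tilde C_\a)$ has positive $\mathcal H^h$-measure. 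The paper then places some $\mathtt w_{\bar n}$ in this intersection, so that $\theta'(\mathtt w_{\bar n})=\underline\theta$; since $\mathtt w_{\bar n}\in H_{\bar n}$ trivially, the $\bar n$-th ternary digit of $\underline\theta$ is $2$, forcing $A_2\subset H_{\bar n}$. The axial path $\bar w\to\bar w'\to\mathtt w_{\bar n}\to A_2$ then contradicts the choice of $A_1,A_2$. The decisive measure-theoretic input is the Lebesgue density of $A_1$ at $\bar w$ combined with the cone rooted at $\bar w'\in\{t=0\}$, not an attempt to exhibit the level set $A$ itself as essentially open.
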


\begin{proof}
Fix $\a \in \tilde{\A}'$ and assume the opposite. Then there are two sets $A_1,A_2$ in $\mathtt p_1(\tilde \Gamma(\a)) \cap \theta^{-1}_{\mathtt W, \tilde \Gamma}(\underline{\theta})$ of positive $\tilde \mu_\a$-measure such that each point of $A_1$ cannot reach any point of $A_2$.

If $(\bar w,\bar w') \in \tilde \Gamma(\a) \cup A_1 \times \{t=0\}$ is such that $\bar w$ is a $\mathcal H^h_{\{t=1\}}$-Lebesgue points of $A_1$, then using the non degeneracy of $\tilde C_\a$ and the density of $\mathtt W = \{\mathtt w_n\}_n$, we obtain that there exists a $\mathtt w_{\bar n}(\a) \in A_1 \cap (\bar w' + \tilde C_\a)$ with $H_{\bar n} \cap A_1 \not= \emptyset$. 

By the assumption that $\theta_{\mathtt W,\tilde \Gamma}$ is constant, we deduce that $A_2 \subset H_{\bar n}$, so that \emph{there is} an axial path connecting $\bar w$ to $A_2$.
%
%
%
%
%
%
%
%
%
%
%
%
%
%
\end{proof}

The next example shows that, differently from \cite[Theorem 7.2]{biadan}, the Lebesgue points of $\{\bar \theta_\a = t\}$ are not necessarily cyclically connected.

\begin{example}
\label{Ex_no_cycl_conn_inner}
Consider the sets in $\R^2$
\begin{equation*}
A_0 := \big\{ x_1 \geq 0, x_2 = 0 \big\}, \quad A_1 := A_0 + {B(0,1)},
\end{equation*}
and the map
\begin{equation*}
\mathtt T : \R^2 \setminus A_0 \to \R^2 \setminus A_1, \qquad \mathtt T(x) = x + \bigg( 1 - \frac{1}{2} \dist(x,A_0) \bigg)^+ \frac{x - \dist(x,A_0)}{|x - \dist(x,A_0)|}.
\end{equation*}

\begin{figure}
	\centering
	\includegraphics[scale=0.5]{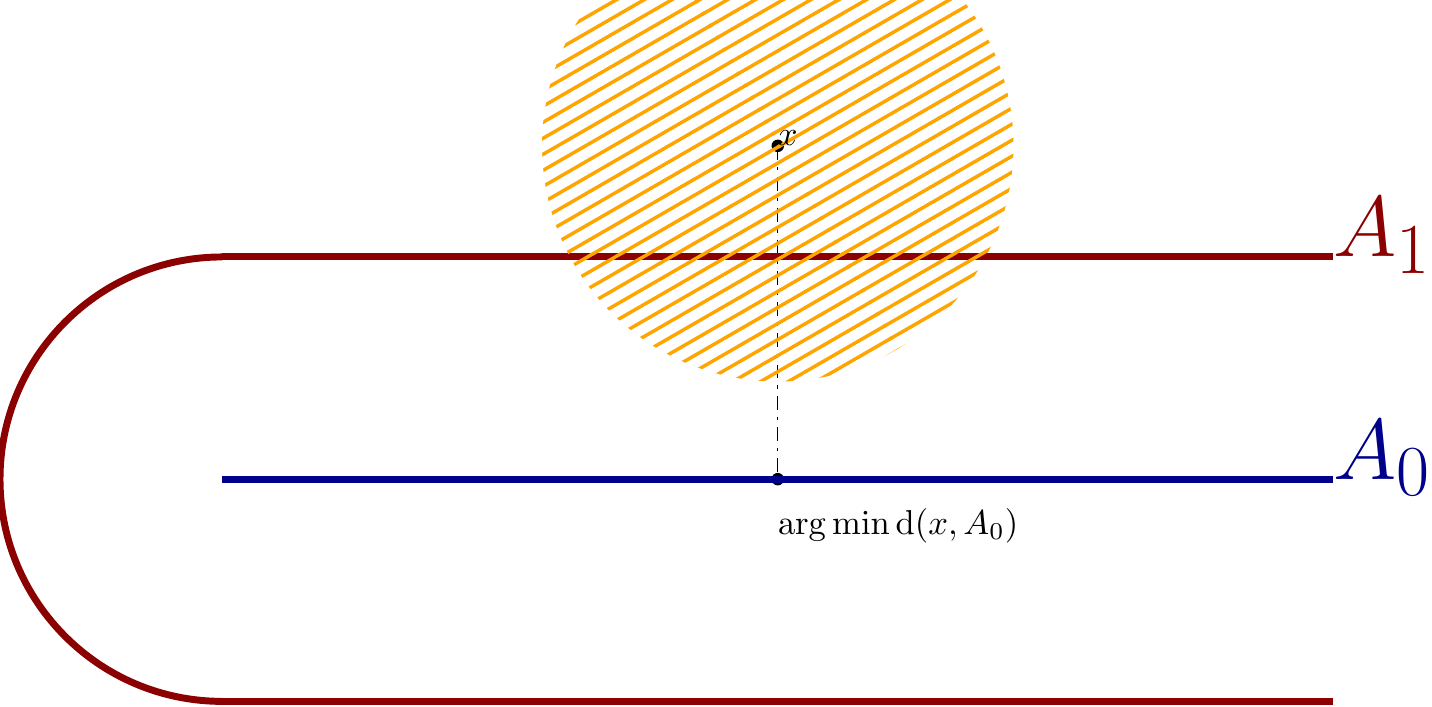}
	\caption{Example \ref{Ex_no_cycl_conn_inner}.}
\end{figure}

It is immediate to see that $\mathtt T$ is optimal for the cost $\mathtt c(x-x') = \ind_{|x|\leq 1}(x)$ and the measures $\mu$, $\mathtt T_\sharp \mu$ for all $\mu \in \mathcal P(A_1 \setminus A_0)$, and the sets and functions
\begin{equation*}
\begin{array}{lll}
\tilde \Gamma = \Graph(\mathtt T), & \mathtt W \cap \setminus A_0 = \emptyset, & \theta_{\mathtt W,\tilde \Gamma}(x) = \begin{cases} 2/3 & x \notin A_0, \crcr 0 & x \in A_0, \end{cases} \\ [2em]
\bar{\tilde \Gamma} = \tilde \Gamma \cup \Big\{ (x,x') : x \in A_0, x' \in (x + {B(0,1)}) \Big\}, & \clos(\bar{\mathtt W} \cap A_0) = A_0, & \bar \theta_{\bar{\mathtt W},\bar{\tilde \Gamma}} (x) = \frac{2}{3},
\end{array}
\end{equation*}
satisfy Proposition \ref{P_minimality_on_class}, the equivalence class for $\bar \theta$ being $\R^2$ but for $\theta$ is $\R^2 \setminus A_0$.
\end{example}

\section{Decomposition of a fibration into a directed locally affine partition}
\label{S_decomposition_of_fibration}

In this section we use the function $\mintheta$ constructed in the previous section to obtain a partition of subsets of $\R^{d-h} \times [0,+\infty) \times \R^h$ which is locally affine and satisfies some regularity properties: these properties are needed to prove the disintegration theorem of the next section.

The decomposition presented in this section can be performed using any $\sigma$-continuous function with the property \eqref{E_cone_add_theta}. In particular, in Section \ref{S_disintegration_on_affine} we will use the function $\bar \vartheta$.

Using Lusin Theorem (134Yd of \cite{MR2462519}) we can assume that $\mintheta$ is $\sigma$-continuous up to a $(\tilde \mu+\mathcal H^d\llcorner_{\{t=1\}})$-negligible set.


%

%
%
%

\subsection{Definitions of transport sets and related sets}
\label{Ss_definition_transport_initial_etc}

We define the following sets: they correspond to the sets used in \cite{biadan}, Section 4, adapted to the space $\R^{d-h} \times [0,+\infty) \times \R^h$, and are the analog of the sets used in Section \ref{S_fdlap} for the potential $\bar \phi$ and the cost $\bar{\mathtt c}$, replaced by $\bar \theta$ and $\tilde{\mathtt c}_\a$. 


\begin{description}
\item[Sub/super differential of $\mintheta$] we define the \emph{cone sub/super-differential} of $\mintheta$ at $(\a,w)$ as
\begin{align}
\label{E_sub_diff_bar_theta}
\partial^- \mintheta(\a) := \Big\{ (w,w') : \tilde{\mathtt c}_\a(w,w') < +\infty, \mintheta(\a,w) = \mintheta(\a,w') \Big\},
\end{align}
\begin{align*}
\partial^+ \mintheta(\a) := \Big\{ (w,w') : \tilde{\mathtt c}_\a(w',w) < +\infty, \mintheta(\a,w) = \mintheta(\a,w') \Big\}.
\end{align*}
Note that $\partial^- \mintheta = (\partial^+ \mintheta)^{-1}$. 
\item[Optimal Ray] the \emph{optimal rays} are the segments whose end points $(\a,w),(\a,w')$ satisfy 
\[ \mintheta(\a,w) = \mintheta(\a', w') \qquad \textrm{and} \qquad w \in w' + \tilde C_\a. \]

\item[Backward/forward transport set] the \emph{forward/backward transport set} are defined by
\begin{equation*}
	T_\mintheta^-(\a) := \big\{ w : \partial^- \mintheta(\a,w) \not= \{w\} \big\} = \mathtt p_1 \big( \partial^- \mintheta(\a) \setminus \Id \big),
\end{equation*}
\begin{equation*}
	T_\mintheta^+(\a) := \big\{ w : \partial^+ \mintheta(\a,w) \not= \{w\} \big\} = \mathtt p_1 \big( \partial^+ \mintheta(\a) \setminus \Id \big).
\end{equation*}

\item[Set of fixed points] the \emph{set of fixed points} is given by
\begin{equation*}
	F_\mintheta(\a) := \R^h \setminus \big( T_\mintheta^-(\a) \cup T_\mintheta^+(\a) \big).
\end{equation*}

\item[Backward/forward direction multifunction] The \emph{backward/forward direction multifunction} is given by
	\begin{equation*}
		\mathcal D^- \mintheta(\a) := \bigg\{ \bigg( w, \frac{w-w'}{\prj_t(w-w')} \bigg), w = (t,x) \in T_\mintheta^-(\a), w' = (t',x') \in \partial^- \mintheta(\a,w) \setminus \{w\} \bigg\},
	\end{equation*}
	\begin{equation*}
		\mathcal D^+ \mintheta(\a) := \bigg\{ \bigg( w, \frac{w'-w}{\prj_t(w'-w)} \bigg), w = (t,x) \in T_\mintheta^+(\a), w' = (t',x') \in \partial^+ \mintheta(\a,w) \setminus \{w\} \bigg\},
	\end{equation*}
normalized such that $\mathtt p_t \mathcal D^{\pm} \mintheta(\a,w) = 1$.

\item[Convex cone generated by $\mathcal D^\pm \mintheta$] define
\begin{equation*}
C_\mintheta^-(\a,w) := \R^+ \cdot \conv\,\mathcal D^- \mintheta(\a,w), \qquad C_\mintheta^+(\a,w) := \R^+ \cdot \conv\,\mathcal D^+ \mintheta(\a,w).
\end{equation*}

\item[Backward/forward regular transport set] the \emph{$\ell$-dimensional backward/forward regular transport sets} are defined for $\ell = 0,\dots,h$ respectively as
\begin{align*}
R_\mintheta^{-,\ell}(\a) := \bigg\{ w \in T^-(\a) :\ (i)&~ \mathcal D^- \mintheta(\a,w) = \conv \, \mathcal D^- \mintheta(\a,w) \crcr
						  (ii)&~ \dim \big( \conv\, \mathcal D^- \mintheta(\a,w) \big) = \ell \crcr
						  (iii)&~ \exists w' \in T_\mintheta^-(\a) \cap \big( w + \inter_{\mathrm{rel}}\, C_\mintheta^-(\a,w) \big) \ \bigl( (i),(ii) \ \text{hold for} \ w' \big) \bigg\},
\end{align*}
\begin{align*}
R_\mintheta^{+,\ell}(\a) := \bigg\{ w \in T^+(\a) :\ (i)&~ \mathcal D^+ \mintheta(\a,w) = \conv \, \mathcal D^+ \mintheta(\a,w) \crcr
						  (ii)&~ \dim \big( \conv\, \mathcal D^+ \mintheta(\a,w) \big) = \ell \crcr
						  (iii)&~ \exists w' \in T_\mintheta^+(\a) \cap \big( w - \inter_{\mathrm{rel}}\, C_\mintheta^+(\a,w) \big) \ \bigl( (i),(ii) \ \text{hold for} \ z' \big) \bigg\}.
\end{align*}
The \emph{backward/forward regular transport sets} and the \emph{regular transport set} are defined respectively by
\begin{equation*}
R_\mintheta^-(\a) := \bigcup_{\ell = 0}^h R_\mintheta^{-,\ell}(\a), \quad R_\mintheta^+(\a) := \bigcup_{\ell = 0}^h R_\mintheta^{-,\ell} \qquad \text{and} \qquad R_\mintheta(\a) := R_\mintheta^-(\a) \cap R_\mintheta^+(\a).
\end{equation*}
Finally define the \emph{residual set $N_\mintheta$} by
\begin{equation*}
N_\mintheta(\a) := T_\mintheta(\a) \setminus R_\mintheta(\a).
\end{equation*}

\end{description}



The next statements are completely analog to \cite[Section 4]{biadan}, and we will omit the proof. 

\begin{proposition}
\label{P_borel_regu_transport_sets}
The set $\partial^\pm \mintheta$, $T_\mintheta^\pm$, $F_\mintheta$, $\mathcal D^\pm \mintheta$, $C_\mintheta^\pm$, $R_\mintheta^{\pm,\ell}$, $R_\mintheta^\pm$, $R_\mintheta$ are $\sigma$-compact.
\end{proposition}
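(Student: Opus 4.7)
My plan is to imitate, almost verbatim, the structure of the proof of Proposition \ref{p_sigma_continuity_extphi}, now working with the minimal function $\mintheta$, the parametric cone cost $\tilde{\mathtt c}$ on the fibration, and the explicit dependence on $\a \in \tilde{\mathfrak A}$. The two inputs I will use repeatedly are: (a) the $\sigma$-compactness of the graph $\a \mapsto \tilde C_\a$ built into the definition of directed fibration (Definition \ref{D_fibration}), which yields at once that $\{\tilde{\mathtt c}<\infty\}=\{(\a,w,\a,w'):w-w'\in\tilde C_\a\}$ is $\sigma$-compact; (b) the $\sigma$-continuity of $\mintheta$ achieved by the Lusin-type reduction at the beginning of Section \ref{S_decomposition_of_fibration}.

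From (a) and (b) the set $\partial^-\mintheta$ is the intersection of $\{\tilde{\mathtt c}<\infty\}$ with the coincidence set $\{\mintheta(\a,w)=\mintheta(\a,w')\}$, hence $\sigma$-compact on the compact pieces where both are continuous; $\partial^+\mintheta$ is its transpose. Then $T^\pm_\mintheta$ is the $\mathtt p_w$-projection of
\[
\bigcup_n \Big( \partial^\pm \mintheta \cap \{ |w-w'|\geq 2^{-n} \} \Big),
\]
and is $\sigma$-compact by the usual trick of separating from the diagonal. The multifunction $\mathcal D^\pm\mintheta$ is the image of this projection under the continuous normalisation $(w,w')\mapsto (w-w')/\mathtt p_t(w-w')$: the map is well-defined because for $w'\in\partial^-\mintheta(\a,w)\setminus\{w\}$ the non-degeneracy of $\tilde C_\a$ (i.e. $\mathtt{co}'(x)>0$ for $x\neq 0$) forces $\mathtt p_t(w-w')>0$. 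The convexification $A\mapsto \R^+\cdot\conv A$ is continuous on compacts of the section $\{t=1\}$ in Hausdorff topology, so $C^\pm_\mintheta$ is $\sigma$-compact.

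For the regular sets $R^{\pm,\ell}_\mintheta$ I will copy exactly the last paragraph of the proof of Proposition \ref{p_sigma_continuity_extphi}: conditions (i) and (ii) are $\sigma$-compact (convexity is closed and dimension is u.s.c. under Hausdorff convergence), and condition (iii) is expressed as a countable union using the strict inner cones: for each $n$,
\[
\Big\{ (\a,w,w') : (i),(ii) \text{ hold at both endpoints},\ w-w' \in C^\pm_\mintheta(\a,w)(-2^{-n})\setminus B(0,2^{-n}) \Big\}
\]
is $\sigma$-compact, and $R^{\pm,\ell}_\mintheta$ is the projection of its countable union over $n$. Then $R^\pm_\mintheta=\bigcup_\ell R^{\pm,\ell}_\mintheta$ and $R_\mintheta=R^-_\mintheta\cap R^+_\mintheta$ are $\sigma$-compact as countable unions and a finite intersection of (locally compact) $\sigma$-compact sets lying in the fibration.

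The one entry that does not copy verbatim is $F_\mintheta=\R^h\setminus(T^-_\mintheta\cup T^+_\mintheta)$, which is a priori only $G_\delta$ (inside the ambient fibration). This is the main obstacle. To handle it I intend to use the monotonicity of $\mintheta(\a,\cdot)$ along the cone $\tilde C_\a$ guaranteed by \eqref{E_cone_add_theta}: non-membership of $w$ in $T^-_\mintheta(\a)$ becomes the \emph{strict} inequality $\mintheta(\a,w')<\mintheta(\a,w)$ for every $w'\in(w-\tilde C_\a)\setminus\{w\}$, and an analogous statement holds for $T^+_\mintheta$. Exploiting that the sublevels of $\mintheta$ are countable unions of shifted cones (Lemma \ref{L_H_n_cap_t}), the set $F_\mintheta$ can be presented as a countable intersection of sets of the form $\{\mintheta(\a,w)\neq \mintheta(\a,w_i')\}$ over a countable dense selection $\{w_i'\}$, yielding $\sigma$-compactness relative to the ambient compact slices of the fibration. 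Formalising the countable selection and checking that these cuts really produce a $\sigma$-compact (and not just $G_\delta$) set is the technical crux; once this is done, the full proposition follows.
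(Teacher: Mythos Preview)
Your approach for $\partial^\pm\mintheta$, $T^\pm_\mintheta$, $\mathcal D^\pm\mintheta$, $C^\pm_\mintheta$, $R^{\pm,\ell}_\mintheta$, $R^\pm_\mintheta$, $R_\mintheta$ is correct and is exactly the intended one: the paper omits the proof and refers to \cite{biadan}, Section 4, which is precisely the transplant of Proposition~\ref{p_sigma_continuity_extphi} you carry out, with the cost $\bar{\mathtt c}$ replaced by the $\sigma$-compact set $\{\tilde{\mathtt c}<\infty\}$ and $\extphi$ replaced by the $\sigma$-continuous $\mintheta$.

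The gap is in your treatment of $F_\mintheta$. You are right that this item has no counterpart in Proposition~\ref{p_sigma_continuity_extphi} (there $F^\pm_{\extphi}$ denotes the generated cone, not a fixed-point set) and that as a complement of a $\sigma$-compact set it is a priori only $G_\delta$. But your proposed fix cannot close the gap: passing from ``$\mintheta(\a,w')<\mintheta(\a,w)$ for all $w'$ in a countable dense family'' to the same strict inequality for every $w'\in(w-\tilde C_\a)\setminus\{w\}$ would require continuity of $\mintheta(\a,\cdot)$, whereas you only have $\sigma$-continuity; on the compact pieces where $\mintheta$ is continuous you recover the strict inequality, but nothing prevents a point $w'$ lying on a different piece from satisfying $\mintheta(\a,w')=\mintheta(\a,w)$. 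The appeal to Lemma~\ref{L_H_n_cap_t} does not help either: that lemma controls the boundary of a single $H_n$ on a fixed time-slice, not the trace of the full jump set of $\mintheta$ on an arbitrary backward cone $w-\tilde C_\a$.

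This residual point is, however, harmless for the paper: $F_\mintheta$ is never used after its definition (nor is $F_{\bar\vartheta}$), so you may simply drop it from the list, or weaken its regularity to ``Borel'' (which is immediate), without affecting any subsequent argument.
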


The next lemma follows easily from \eqref{E_cone_add_theta}: in the language of \cite{biadan}, we can say that the level sets of $\mintheta(\a)$ are complete $\tilde{\mathtt c}_\a$-Lipschitz graphs (\cite[Definition 4.1]{biadan}).

\begin{lemma}
\label{L_level_sets_contains_parallelogram}
If $(w,w') \in \partial^+ \mintheta(\a)$, then
\begin{equation}
	\label{E_Q_cone_minus_cone_intersection_z_z_prime}
	Q_{\mintheta,\a}(w,w') := (w + \tilde C_\a) \cap (w'- \tilde C_\a) \subset \partial^+ \mintheta(\a,w).
\end{equation}
Moreover,
\begin{equation*}
	\R^+\cdot (Q_{\mintheta,\a}(w,w') - w) = \R^+\cdot (w' - Q_{\mintheta,\a}(w,w')) =: O_\a(w,w')
\end{equation*}
where $O_\a(w,w')$ is the minimal extremal face of $\tilde C_\a$ containing $w'-w$.
\end{lemma}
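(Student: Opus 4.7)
\textbf{Plan for the proof of Lemma \ref{L_level_sets_contains_parallelogram}.}

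The first inclusion is an immediate consequence of the monotonicity property \eqref{E_cone_add_theta} together with the definition \eqref{E_sub_diff_bar_theta} of $\partial^+\mintheta$. Indeed, the hypothesis $(w,w')\in\partial^+\mintheta(\a)$ means $w'-w\in\tilde C_\a$ and $\mintheta(\a,w)=\mintheta(\a,w')$. Pick any $\bar w\in Q_{\mintheta,\a}(w,w')$. By definition $\bar w-w\in\tilde C_\a$ and $w'-\bar w\in\tilde C_\a$, so applying \eqref{E_cone_add_theta} twice gives
\begin{equation*}
\mintheta(\a,w)\;\leq\;\mintheta(\a,\bar w)\;\leq\;\mintheta(\a,w').
\end{equation*}
Since the two extremes are equal, equality holds throughout, and the finite-cost condition $\bar w-w\in\tilde C_\a$ then yields $\bar w\in\partial^+\mintheta(\a,w)$.

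For the second part, the symmetry $\R^+\cdot(Q_{\mintheta,\a}(w,w')-w)=\R^+\cdot(w'-Q_{\mintheta,\a}(w,w'))$ is automatic once one writes
\begin{equation*}
Q_{\mintheta,\a}(w,w')-w=\big\{u\in\tilde C_\a:(w'-w)-u\in\tilde C_\a\big\},
\end{equation*}
and notices that the substitution $u\mapsto(w'-w)-u$ is an involution exchanging $Q_{\mintheta,\a}(w,w')-w$ with $w'-Q_{\mintheta,\a}(w,w')$. So it suffices to identify the first cone with the minimal extremal face $O_\a(w,w')$ of $\tilde C_\a$ containing $v:=w'-w$.

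This is the standard elementary fact already invoked in the analogous Lemma \ref{definition_of_Q}: for a closed convex cone $K$ and $v\in K$, the cone generated by $K\cap(v-K)$ coincides with the smallest face of $K$ containing $v$. One direction is the defining property of a face: if $u\in K\cap(v-K)$, then $u+(v-u)=v$ with $u,v-u\in K$, and since $v$ lies in the relative interior of its minimal face $F$ and faces are stable under convex decomposition in $K$, both $u$ and $v-u$ must belong to $F$; hence $\R^+\cdot(K\cap(v-K))\subset F$. Conversely, for any $u\in F$, since $v\in\interr F$, there is $\lambda>0$ small enough so that $v-\lambda u\in F\subset K$, i.e.\ $\lambda u\in K\cap(v-K)$, which gives $F\subset\R^+\cdot(K\cap(v-K))$. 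Applying this to $K=\tilde C_\a$ and $v=w'-w$ concludes the identification with $O_\a(w,w')$.

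No serious obstacle is expected: the only delicate point is the face-characterization of $\R^+\cdot(K\cap(v-K))$, but this is standard convex geometry and is essentially the same computation as in \eqref{E_minimal_extremal_face_containing_z_z_prime_phi}. The whole argument reduces to combining the cone-monotonicity \eqref{E_cone_add_theta} of $\mintheta$ (which replaces the role of the potential inequality used in Lemma \ref{definition_of_Q}) with elementary convex analysis.
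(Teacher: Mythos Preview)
Your proof is correct and follows exactly the approach the paper intends: the first inclusion comes directly from the cone-monotonicity \eqref{E_cone_add_theta}, and the identification with the minimal extremal face is the same elementary convex-geometry fact already used in Lemma \ref{definition_of_Q} (cf.\ \eqref{E_minimal_extremal_face_containing_z_z_prime_phi}). The paper does not spell out the argument for this lemma, merely stating it ``follows easily from \eqref{E_cone_add_theta}'' and referring back to the analogous result for the potential $\bar\phi$; your write-up fills in precisely those details.
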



In particular, one deduces that as in the potential case
\[
\partial^- \mintheta(\a,w) = \bigcup_{w' \in \partial^- \mintheta(\a,w)} Q_{\mintheta,\a}(w',w), \qquad \partial^+\mintheta(\a,w) = \bigcup_{w' \in \partial^+ \mintheta(\a,w)} Q_{\mintheta,\a}(w,w').
\]
Moreover, by \eqref{E_sum_of_cones_inside_cone},
\begin{equation*}
	w' \in \partial^\pm \mintheta(\a,w) \quad \Longrightarrow \quad \partial^\pm \mintheta(\a,w') \subset \partial^\pm \mintheta(\a,w).
\end{equation*}


Using the same ideas of the proof of Proposition \ref{Pdirectionfaces1_phi}, we obtain an analogous proposition for $\mathcal D^- \bar \theta$.

\begin{proposition}
\label{Pdirectionfaces1_minus}
Let $O_\a \subset \tilde C_\a$ be an extremal face. Then the following holds.

\begin{enumerate}
\item \label{item1Pdire_minus}
$
O_\a  \cap \{ t=1 \} \subset \mathcal D^- \mintheta(\a,w) \quad \Longleftrightarrow \quad \exists \delta > 0 \ \Big( B(w,\delta) \cap \big( w - O_\a \big) \subset \partial^- \mintheta(\a,w) \Big).
$

\item \label{item2Pdire_minus} If $O_\a \cap \{ t=1 \} \subset \mathcal D^- \mintheta(\a,w)$ is maximal w.r.t. set inclusion, then
\[
\forall w' \in B^h(w,\delta) \cap \big( w - \interr O_\a \big) \ \Big( \mathcal D^- \mintheta(\a,w') =  O_\a \cap \{ t=1 \} \Big),
\]
with $\delta > 0$ given by the previous point.

\item \label{item3Pdire_minus} The following conditions are equivalent:

\begin{enumerate}
\item \label{a_item3Pdire_minus} $\mathcal D^- \mintheta(\a,w) =  C^-_{\bar \theta}(\a,w) \cap \{ t=1 \}$;

\item \label{b_item3Pdire_minus} the family of cones
\[
\big\{ \R^+ \cdot Q_\a(w',w), w' \in \partial^- \mintheta(\a,w) \big\}
\]
has a unique maximum by set inclusion, which coincides with $C^-_{\bar \theta}(\a,w)$;

\item \label{c_item3Pdire_minus} $\partial^- \mintheta(\a,w) \cap (z - \interr C^-_{\bar \theta}(\a,w)) \not= \emptyset$;

\item \label{d_item3Pdire_minus} $\mathcal D^- \mintheta(\a,w) = \conv \mathcal D^- \mintheta(\a,w)$.

\end{enumerate}

\end{enumerate}

\end{proposition}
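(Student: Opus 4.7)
The plan is to mirror the proof of Proposition \ref{Pdirectionfaces1_phi}, replacing the Kantorovich potential $\bar\phi$ and the convex cost $\bar{\mathtt c}$ by the partition function $\bar\theta$ and the cone indicator cost $\tilde{\mathtt c}_\a$. The central algebraic tool is Lemma \ref{L_level_sets_contains_parallelogram}, which now plays the role that Lemma \ref{definition_of_Q} played in the potential setting: whenever $(w,w')\in\partial^+\bar\theta(\a)$, the whole parallelogram $Q_{\bar\theta,\a}(w,w')$ sits inside the level set, and its associated conic envelope is the minimal extremal face of $\tilde C_\a$ containing $w'-w$. Combined with the inclusion $w'\in\partial^-\bar\theta(\a,w)\Rightarrow \partial^-\bar\theta(\a,w')\subset\partial^-\bar\theta(\a,w)$ noted after Lemma \ref{L_level_sets_contains_parallelogram}, these are the only two ingredients needed.

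For Point \ref{item1Pdire_minus}, the ``$\Leftarrow$'' implication is immediate from the definition of $\mathcal D^-\bar\theta$. For ``$\Rightarrow$'', from $O_\a\cap\{t=1\}\subset\mathcal D^-\bar\theta(\a,w)$ one picks a point $w_0\in\partial^-\bar\theta(\a,w)\setminus\{w\}$ whose normalised direction lies in the relative interior of $O_\a\cap\{t=1\}$, i.e. $w_0\in w-\interr O_\a$. Then $Q_{\bar\theta,\a}(w_0,w)\subset\partial^-\bar\theta(\a,w)$ by Lemma \ref{L_level_sets_contains_parallelogram}, and an elementary geometric argument produces $\delta>0$ with $B(w,\delta)\cap(w-O_\a)\subset Q_{\bar\theta,\a}(w_0,w)$. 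Point \ref{item2Pdire_minus} follows at once from the transitivity inclusion above applied at a point $w'$ in the relative interior of that parallelogram, together with Point \ref{item1Pdire_minus}: any extremal face strictly larger than $O_\a$ belonging to $\mathcal D^-\bar\theta(\a,w')$ would, via Lemma \ref{L_level_sets_contains_parallelogram} and transitivity, pull back to a strictly larger face at $w$, contradicting the maximality of $O_\a$.

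For Point \ref{item3Pdire_minus}, the plan is to close a cycle of implications. The step \ref{b_item3Pdire_minus}$\Rightarrow$\ref{a_item3Pdire_minus} applies Point \ref{item1Pdire_minus} to the unique maximal face in the family; \ref{a_item3Pdire_minus}$\Rightarrow$\ref{d_item3Pdire_minus} is trivial because an extremal face intersected with $\{t=1\}$ is convex; \ref{d_item3Pdire_minus}$\Rightarrow$\ref{c_item3Pdire_minus} follows from the definition of $C^-_{\bar\theta}(\a,w)$ as the conic hull of $\mathcal D^-\bar\theta(\a,w)$, using that a convex set coincides with the relative interior of its closure in full dimension. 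The content-rich implication is \ref{c_item3Pdire_minus}$\Rightarrow$\ref{b_item3Pdire_minus}: an inner ray forces the corresponding parallelogram to hit the relative interior of $C^-_{\bar\theta}(\a,w)$, and by Lemma \ref{L_level_sets_contains_parallelogram} the associated face equals $C^-_{\bar\theta}(\a,w)$, so that it contains every other cone in the family.

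The main obstacle I anticipate is the last equivalence in Point \ref{item3Pdire_minus}: one must exploit the dimension jump of extremal faces of $\tilde C_\a$ --- which guarantees that any ascending chain of such faces stabilises after finitely many steps --- together with the fact that the existence of an interior direction pins down the containing face unambiguously. This is exactly where the potential-case argument is most delicate, and the same care is needed here, but no new phenomenon appears because $\tilde C_\a$ shares with $\epi\,\bar{\mathtt c}$ all the convex-geometric features used there.
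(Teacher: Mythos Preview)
Your proposal is correct and follows essentially the same approach as the paper, which simply states that the proof uses ``the same ideas of the proof of Proposition \ref{Pdirectionfaces1_phi}'' with Lemma \ref{L_level_sets_contains_parallelogram} playing the role of Lemma \ref{definition_of_Q} and the transitivity inclusion replacing Proposition \ref{phi_transitivity_property}. Your cycle of implications for Point \ref{item3Pdire_minus} matches the paper's exactly, including the dimension-jump observation for extremal faces.
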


A completely similar proposition can be proved for $\mathcal D^+ \mintheta$. 

\begin{proposition}
\label{Pdirectionfaces1}
Let $O_\a \subset \tilde C_\a$ be an extremal face. 
Then the following holds.

\begin{enumerate}
\item \label{item1Pdire}
$
O_\a \cap \{ t=1 \} \subset \mathcal D^+ \mintheta(\a,w) \quad \Longleftrightarrow \quad \exists \delta > 0 \ \Big( B(w,\delta) \cap \big( w + O_\a \big) \subset \partial^+ \mintheta(\a,w) \Big).
$
\item \label{item2Pdire} If $O_\a  \cap \{ t=1 \}  \subset \mathcal D^+ \mintheta(\a,w)$ is maximal w.r.t. set inclusion, then
\[
\forall w' \in B(w,\delta) \cap \big( w + \interr O_\a \big) \ \Big( \mathcal D^+ \mintheta(\a,w') =  O_\a  \cap \{ t=1 \} \Big),
\]
with $\delta > 0$ given by the previous point.

\item \label{item3Pdire} The following conditions are equivalent:

\begin{enumerate}
\item \label{a_item3Pdire} $\mathcal D^+ \mintheta(\a,w) =  C_\mintheta^+(\a,w) \cap \{ t=1 \} $;

\item \label{b_item3Pdire} the family of cones
\[
\big\{ \R^+ \cdot Q_\a(w,w'), w' \in \partial^+ \mintheta(\a,w) \big\}
\]
has a unique maximum by set inclusion, which coincides with $C_\mintheta^+(\a,w)$;

\item \label{c_item3Pdire} $\partial^+ \mintheta(\a,w) \cap \interr (z + C_\mintheta^+(\a,w)) \not= \emptyset$;

\item \label{d_item3Pdire} $\mathcal D^+ \mintheta(\a,w) = \conv \mathcal D^+ \mintheta(\a,w)$.

\end{enumerate}

\end{enumerate}

\end{proposition}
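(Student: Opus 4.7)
The plan is to mirror verbatim the proof of Proposition \ref{Pdirectionfaces1_minus}, swapping forward and backward. The three ingredients to be reused are: (a) Lemma \ref{L_level_sets_contains_parallelogram}, which gives $Q_{\mintheta,\a}(w,w') \subset \partial^+\mintheta(\a,w)$ whenever $(w,w') \in \partial^+\mintheta(\a)$, together with the identification $\R^+\cdot(w' - Q_{\mintheta,\a}(w,w')) = O_\a(w,w')$ (the minimal extremal face of $\tilde C_\a$ containing $w'-w$); (b) the transitivity $w' \in \partial^+\mintheta(\a,w) \Rightarrow \partial^+\mintheta(\a,w') \subset \partial^+\mintheta(\a,w)$, which is immediate from \eqref{E_sub_diff_bar_theta} and $w + \tilde C_\a \subset \tilde C_\a$ whenever $w \in \tilde C_\a$; (c) the decomposition $\partial^+ \mintheta(\a,w) = \bigcup_{w'\in\partial^+\mintheta(\a,w)} Q_{\mintheta,\a}(w,w')$.

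For Point (\ref{item1Pdire}), the implication $(\Leftarrow)$ is immediate: any $w'$ in $B(w,\delta)\cap(w+\interr O_\a)\setminus\{w\}$ gives, after normalization by $\prj_t(w'-w)$, an arbitrary element of $O_\a\cap\{t=1\}$, and all such directions lie in $\mathcal D^+\mintheta(\a,w)$. For $(\Rightarrow)$, pick $w'$ with $(w'-w)/\prj_t(w'-w)$ in the relative interior of $O_\a\cap\{t=1\}$, so that $O_\a$ is the minimal extremal face of $\tilde C_\a$ containing $w'-w$; then Lemma \ref{L_level_sets_contains_parallelogram} yields $Q_{\mintheta,\a}(w,w') \subset \partial^+\mintheta(\a,w)$, and this parallelogram contains $B(w,\delta) \cap (w+O_\a)$ for $\delta$ small enough because $w$ is a vertex of $Q_{\mintheta,\a}(w,w')$ with local cone direction $O_\a$.

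For Point (\ref{item2Pdire}), fix $w' \in B(w,\delta)\cap(w+\interr O_\a)$. The inclusion $O_\a\cap\{t=1\}\subset\mathcal D^+\mintheta(\a,w')$ follows by applying Point (\ref{item1Pdire}) at $w$ and then transitivity: any direction of $O_\a$ that is realized at $w$ via a point in $B(w,\delta)\cap(w+O_\a)$ can be translated to $w'$ using that $w' - w \in \interr O_\a \subset \tilde C_\a$. For the reverse inclusion, suppose some extremal face $O'_\a$ strictly larger or not comparable with $O_\a$ belonged to $\mathcal D^+\mintheta(\a,w')$; then, again by Point (\ref{item1Pdire}) at $w'$ and transitivity through $w$ (recalling $w' \in w + \interr O_\a$ so that $w'+O'_\a \subset w + (\interr O_\a + O'_\a)$ and the face generated by $O_\a \cup O'_\a$ would appear at $w$), $\mathcal D^+\mintheta(\a,w)$ would contain an extremal face strictly larger than $O_\a$, contradicting maximality.

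For Point (\ref{item3Pdire}) the implications form a cycle. (\ref{b_item3Pdire})$\Rightarrow$(\ref{a_item3Pdire}): by (c) of the preliminaries, $\mathcal D^+\mintheta(\a,w)$ is the union of the sections at $\{t=1\}$ of the cones $\R^+\cdot Q_{\mintheta,\a}(w,w')$, and by Lemma \ref{L_level_sets_contains_parallelogram} these are extremal faces of $\tilde C_\a$; if their family admits a unique maximum it must coincide with $C_\mintheta^+(\a,w)\cap\{t=1\}$. (\ref{a_item3Pdire})$\Rightarrow$(\ref{d_item3Pdire}): $C_\mintheta^+(\a,w)\cap\{t=1\}$ is convex as the section of a convex cone. (\ref{d_item3Pdire})$\Rightarrow$(\ref{c_item3Pdire}): convexity of $\mathcal D^+\mintheta(\a,w)$ forces it to contain its relative interior, whence by Point (\ref{item1Pdire}) one finds a point of $\partial^+\mintheta(\a,w)$ in the relative interior of $w+C_\mintheta^+(\a,w)$. (\ref{c_item3Pdire})$\Rightarrow$(\ref{b_item3Pdire}): an inner direction together with Point (\ref{item1Pdire}) puts the whole face $C_\mintheta^+(\a,w)$ into $\mathcal D^+\mintheta(\a,w)$, showing uniqueness of the maximal cone in the family.

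The only nontrivial step is the maximality argument in Point (\ref{item2Pdire}), because one must check that the extremal face generated at $w$ by transitivity is genuinely strictly larger than $O_\a$; this is an elementary convex-geometric fact about extremal faces of a polyhedral cone and follows from the standard observation that the extremal face containing $\interr O_\a + O'_\a$ strictly contains $O_\a$ whenever $O'_\a \not\subset O_\a$.
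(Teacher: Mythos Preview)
Your proposal is correct and follows essentially the same approach as the paper: the paper does not give a separate proof of this proposition, simply noting that it is obtained by the same argument as Proposition~\ref{Pdirectionfaces1_phi} (via Proposition~\ref{Pdirectionfaces1_minus}), and your sketch reproduces that argument with the appropriate substitutions (Lemma~\ref{L_level_sets_contains_parallelogram} playing the role of Lemma~\ref{definition_of_Q}, and the cone transitivity replacing Proposition~\ref{phi_transitivity_property}). The extra detail you supply in Point~(\ref{item2Pdire}) about the extremal face generated by $\interr O_\a + O'_\a$ is a faithful unpacking of the paper's one-line remark that ``$\bar z$ is an inner point of $\mathcal Q$''.
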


As a consequence of Point \eqref{item3Pdire} of the previous propositions, we will call sometimes $C^-_{\bar \theta}(\a,w)$, $C^+_{\bar \theta}(\a,w)$ the \emph{maximal backward/forward extremal face}.

\begin{remark}
\label{R_vart_to_bar_vart}
In Section \ref{S_disintegration_on_affine} we will need to compute the same objects for the function $\bar \vartheta$. The definitions are exactly the same, as well as the statements of Propositions \ref{P_borel_regu_transport_sets}, \ref{Pdirectionfaces1_minus}, \ref{Pdirectionfaces1} and Lemma \ref{L_level_sets_contains_parallelogram}, just replacing the function $\bar \theta$ with $\bar \vartheta$. We thus will consider the sets
\begin{equation*}
\partial^\pm \bar \vartheta, \quad T^\pm_{\bar \vartheta}, \quad F_{\bar \vartheta}, \quad \mathcal D^\pm \bar \vartheta, \quad C^\pm_{\bar \vartheta}, \quad R^{\pm,\ell}_{\bar \vartheta}, \quad R^\pm_{\bar \vartheta}, \quad R_{\bar \vartheta}, \quad N_{\bar \vartheta},
\end{equation*}
and for the exact definition we refer to the analog for $\bar \theta$.
\end{remark}

\subsection{Partition of the transport set}
\label{Ss_partition_transport_set}

In this section we construct a map which give a directed locally affine partition in $\R^{d-h} \times [0,+\infty) \times \R^h$: more precisely, up to a residual set, we will find a directed locally affine partition on each fiber $\{\a\} \times [0,+\infty) \times \R^h$, and the dependence of this partition from the parameter $\a$ is $\sigma$-continuous.

Define the map
\begin{equation*}
\begin{array}{ccccc}
\mathtt v_\mintheta^- &:& R^-_{\bar \theta} &\to& \R^{d-h} \times \cup_{\ell = 0}^h \mathcal A(\ell,\R^h) \\ [.5em]
&& (\a,w) &\mapsto& \mathtt v_\mintheta^+(\a,w) := \big( \a,\aff\, \partial^- \mintheta(\a,w) \big)
\end{array}
\end{equation*}
\begin{lemma}
The map $\mathtt v_\mintheta^-$ is $\sigma$-continuous.
\end{lemma}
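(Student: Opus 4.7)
The proof will proceed exactly as the analogous lemma earlier in the paper (the $\sigma$-continuity of $\mathtt v_{\extphi}^-$), with the only delicate point being that $\aff$ is not continuous in the Hausdorff topology unless the dimension is locally constant. Thus the plan is to stratify by dimension so that this obstacle disappears.

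First, I would recall that by Proposition \ref{P_borel_regu_transport_sets} both $R_\mintheta^-$ and $\partial^- \mintheta$ are $\sigma$-compact, and decompose
\[
R_\mintheta^- = \bigcup_{\ell=0}^h R_\mintheta^{-,\ell},
\]
each stratum again $\sigma$-compact by Proposition \ref{P_borel_regu_transport_sets}. It suffices to prove $\sigma$-continuity of $\mathtt v_\mintheta^-$ on each $R_\mintheta^{-,\ell}$ separately. On this stratum the set-valued map $(\a,w) \mapsto \partial^- \mintheta(\a,w)$ has $\sigma$-compact graph inside $R_\mintheta^{-,\ell} \times (\R^{d-h} \times [0,\infty)\times\R^h)$; intersecting with balls $B(0,n)$ yields $\sigma$-compact graphs with compact fibers, and by a standard extraction (as in Section \ref{Ss_souslin_multifunction}) one writes the stratum as a countable union of compact sets $K_m$ on which
\[
(\a,w) \longmapsto \partial^- \mintheta(\a,w) \cap B(0,n)
\]
is Hausdorff-continuous.

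Next, I would invoke the elementary fact that the map $A \mapsto \aff\,A$ from compact subsets of $\R^N$ (with Hausdorff metric) to affine subspaces (with the metric $\mathtt d$ of Section \ref{Ss_intro_affine_subspaces_cones}) is continuous on any set on which $\dim\,\aff\,A$ is constant. On $R_\mintheta^{-,\ell}$ this constancy is \emph{built into the definition}: by Point (ii) of the definition of $R^{-,\ell}_\mintheta$ the set $\conv\,\mathcal D^- \mintheta(\a,w)$ has linear dimension $\ell$, so combining this with the representation $\partial^- \mintheta(\a,w) = \cup_{w'} Q_{\mintheta,\a}(w',w)$ (Lemma \ref{L_level_sets_contains_parallelogram}) and Point \eqref{item3Pdire_minus} of Proposition \ref{Pdirectionfaces1_minus} one gets
\[
\aff\,\partial^- \mintheta(\a,w) = w + \aff\,C^-_{\bar\theta}(\a,w),
\]
whose dimension equals $\ell + 1$ (the $+1$ accounting for the time direction) and is thus constant on $R_\mintheta^{-,\ell}$.

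Composing the Hausdorff-continuous multifunction with the Hausdorff-continuous map $\aff$ (on the dimension-$\ell$ stratum) gives that $\mathtt v_\mintheta^-$ is continuous on each $K_m$, hence $\sigma$-continuous on $R_\mintheta^{-,\ell}$, and finally $\sigma$-continuous on all of $R_\mintheta^-$ by taking a countable union of such decompositions over $\ell$ and $n$. The main (and only) subtlety, as noted, is the failure of continuity of $\aff$ in general, circumvented by the stratification into $R_\mintheta^{-,\ell}$; everything else is a routine combination of the $\sigma$-compactness facts already established in Proposition \ref{P_borel_regu_transport_sets} and the structural results of Lemma \ref{L_level_sets_contains_parallelogram} and Proposition \ref{Pdirectionfaces1_minus}.
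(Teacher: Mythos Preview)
Your proof is correct and follows essentially the same approach as the paper, which simply invokes the $\sigma$-continuity of $\partial^- \mintheta$ (Proposition \ref{P_borel_regu_transport_sets}) together with the $\sigma$-continuity of the map $A \mapsto \aff\, A$ in the Hausdorff topology. Your stratification by $\ell$ is exactly what underlies this latter claim, so your argument is a more explicit unpacking of the paper's two-line proof rather than a different route.
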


\begin{proof}
Since $\partial^- \mintheta(\a,w)$ is $\sigma$-continuous by Proposition \ref{P_borel_regu_transport_sets} and the map $A \mapsto \aff\, A$ is $\sigma$-continuous in the Hausdorff topology, the conclusion follows.
\end{proof}

We recall the convention $\R^0 = \N$.

\begin{theorem}
\label{TpartitionE-}
The map $\mathtt v_\mintheta^-$ induces a partition
\[
\bigcup_{\ell'=0}^h \Big\{ Z^{\ell',-}_{\a,\b'}, \a \in \R^{d-h}, \b' \in \R^{h-\ell'} \Big\}
\]
on $R^-_{\bar \theta}$ such that the following holds:
\begin{enumerate}
\item \label{item1TpartE-} each set $Z^{\ell',-}_{\a,\b'}$ is locally affine;

\item \label{item2TpartE-} there exists an extremal face $O^{\ell',-}_{\a,\b'}$ with dimension $\ell'$ of the cone $\tilde C_\a$ such that
\[
\forall w \in Z^{\ell',-}_{\a,\b'} \ \Big( \aff\,Z^{\ell',-}_{\a,\b'} = \aff (w + O^{\ell',-}_{\a,\b'}) \quad \wedge \quad \mathcal D^- \mintheta(\a,w) = O^{\ell',-}_{\a,\b'} \cap \{ t=1 \} \Big);
\]

\item \label{item3TpartE-} for all $w \in T^-_{\bar \theta}(\a)$ there exists $r > 0$, $O^{\ell',-}_{\a,\b'}$ such that
\[
B(w,r) \cap \big( w - \inter_\mathrm{rel} O^{\ell',-}_{\a,\b'} \big) \subset Z^{\ell',-}_{\a,\b'}.
\]
\end{enumerate}
\end{theorem}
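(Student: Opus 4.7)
The plan is to reproduce, fiberwise in $\a$, the proof of Theorem \ref{T_decomp_phi_R-} from the potential case, replacing the pair $(\extc,\extphi)$ by $(\tilde{\mathtt c}_\a,\mintheta(\a,\cdot))$ and using Lemma \ref{L_level_sets_contains_parallelogram}, Proposition \ref{Pdirectionfaces1_minus} and Proposition \ref{P_borel_regu_transport_sets} in place of Lemma \ref{definition_of_Q}, Proposition \ref{Pdirectionfaces1_phi} and the $\sigma$-compactness statements for the potential. Since $\mathtt v_\mintheta^-$ is a single-valued Borel map on $R^-_\mintheta$, its level sets form a partition; the index $\b'$ parametrizes the quotient via a transversal slice in the spirit of \eqref{E_mathfrak_A_h_def} and Remark \ref{R_choice_t=1_loc_aff}.

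For Point \eqref{item1TpartE-}, I would fix $w\in Z^{\ell',-}_{\a,\b'}\cap R^{-,\ell'}_\mintheta(\a)$ and produce two witnesses. By the equivalence \eqref{a_item3Pdire_minus}$\Leftrightarrow$\eqref{c_item3Pdire_minus} of Proposition \ref{Pdirectionfaces1_minus}(\ref{item3Pdire_minus}) there is a backward inner point $w_-\in w-\interr C^-_\mintheta(\a,w)$ lying in $\partial^-\mintheta(\a,w)$. By condition $(iii)$ in the definition of $R^{-,\ell'}_\mintheta$ there is a forward point $w_+\in w+\interr C^-_\mintheta(\a,w)$ with $w_+\in\partial^+\mintheta(\a,w)$ at which the direction multifunction has the same affine hull. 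Applying Lemma \ref{L_level_sets_contains_parallelogram} to $(w_-,w_+)$, the parallelogram $Q_{\mintheta,\a}(w_-,w_+)=(w_-+\tilde C_\a)\cap(w_+-\tilde C_\a)$ lies in $\{\mintheta(\a,\cdot)=\mintheta(\a,w)\}$, contains $w$ in its relative interior, and has affine span parallel to the minimal extremal face of $\tilde C_\a$ containing $w_+-w_-$, of linear dimension $\ell'+1$. This supplies a relatively open neighborhood of $w$ inside $Z^{\ell',-}_{\a,\b'}$.

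For Point \eqref{item2TpartE-}, set $O^{\ell',-}_{\a,\b'}:=C^-_\mintheta(\a,w)$ for any $w\in Z^{\ell',-}_{\a,\b'}$: this is the maximal backward extremal face furnished by Proposition \ref{Pdirectionfaces1_minus}(\ref{item3Pdire_minus})\eqref{a_item3Pdire_minus}, and $\mathcal D^-\mintheta(\a,w)=O^{\ell',-}_{\a,\b'}\cap\{t=1\}$ by the same equivalence; the identity $\aff\,Z^{\ell',-}_{\a,\b'}=\aff(w+O^{\ell',-}_{\a,\b'})$ is then a consequence of Point \eqref{item1TpartE-}. For Point \eqref{item3TpartE-}, given $w\in T^-_\mintheta(\a)$, I would pick $w'\in\partial^-\mintheta(\a,w)\setminus\{w\}$ so that $Q_{\mintheta,\a}(w',w)$ is maximal among the family $\{Q_{\mintheta,\a}(w'',w):w''\in\partial^-\mintheta(\a,w)\}$ (such a maximum exists because the indexing extremal faces of $\tilde C_\a$ form a finite poset ordered by linear dimension); the second half of Lemma \ref{L_level_sets_contains_parallelogram} identifies $O^{\ell',-}_{\a,\b'}$ as the direction of this parallelogram, and a small ball intersected with $w-\interr O^{\ell',-}_{\a,\b'}$ sits inside $Q_{\mintheta,\a}(w',w)$, hence inside the matching stratum $Z^{\ell',-}_{\a,\b'}$.

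The main technical point I expect to be subtle is the well-definedness of $O^{\ell',-}_{\a,\b'}$ as a function of the stratum rather than of a chosen representative: I would argue it from the transitivity $w''\in\partial^-\mintheta(\a,w)\Rightarrow\partial^-\mintheta(\a,w'')\subset\partial^-\mintheta(\a,w)$, which is immediate from Lemma \ref{L_level_sets_contains_parallelogram} and the cone addition property \eqref{E_sum_of_cones_inside_cone}, combined with Proposition \ref{Pdirectionfaces1_minus}(\ref{item2Pdire_minus}), which propagates the maximal backward face along inner rays and thereby makes $w\mapsto C^-_\mintheta(\a,w)$ constant on the relative interior of each parallelogram, hence on each $Z^{\ell',-}_{\a,\b'}$.
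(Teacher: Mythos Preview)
Your proposal is correct and follows exactly the approach the paper has in mind: the paper's own proof simply cites \cite[Theorem 4.18]{biadan}, and your argument is the natural fiberwise translation of the proof of Theorem \ref{T_decomp_phi_R-}, invoking Lemma \ref{L_level_sets_contains_parallelogram} and Proposition \ref{Pdirectionfaces1_minus} in place of Lemma \ref{definition_of_Q} and Proposition \ref{Pdirectionfaces1_phi}. Note that in reading condition (iii) of $R^{-,\ell}_\mintheta$ as producing $w_+\in\partial^+\mintheta(\a,w)$ you are (correctly) supplying the clause $\mintheta(\a,w)=\mintheta(\a,w_+)$ that Definition \ref{def_regular_points} makes explicit in the potential case but which is tacit here.
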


The choice $\b \in \R^{h-\ell}$ is in the spirit of Proposition \ref{P_countable_partition_in_reference_directed_planes}.

\begin{proof}
	For the proof see \cite[Theorem 4.18]{biadan}.
\end{proof}

A completely similar statement holds for $R^+_{\bar \theta}$, by means of $\sigma$-continuous map 
\begin{equation*}
\begin{array}{ccccc}
\mathtt v_\mintheta^+ &:& R^+_{\bar \theta} &\to& \R^{d-h} \times \cup_{\ell = 0}^h \mathcal A(\ell,\RR^h) \\ [.5em]
&& (\a,w) &\mapsto& \mathtt v_\mintheta^+(\a,w) := \big( \a,\aff\, \partial^+ \mintheta(\a,w) \big)
\end{array}
\end{equation*}

\begin{theorem}
\label{TpartitionE+}
The map $\mathtt v_\mintheta^+$ induces a partition
\[
\bigcup_{\ell=0}^h \Big\{ Z^{\ell,+}_{\a,\b} \subset [0,+\infty) \times \R^h, \a \in \R^{d-h}, \b \in \R^{h-\ell} \Big\}
\]
on $R^+_{\bar \theta}$ such that the following holds:
\begin{enumerate}
\item \label{item1TpartE+} each set $Z^{\ell,+}_{\a,\b}$ is locally affine;

\item \label{item2TpartE+} there exists an extremal face $O^{\ell,+}_{\a,\b}$ with dimension $\ell$ of the cone $\tilde C_\a$ such that
\[
\forall w \in Z^{\ell,+}_{\a,\b} \ \Big( \aff\,Z^{\ell,+}_{\a,\b} = \aff ( z + O^{\ell,+}_{\a,\b}) \quad \wedge \quad \mathcal D^+ \mintheta(\a,w) =  O^{\ell,+}_{\a,\b} \cap \{ t=1 \}\Big);
\]

\item \label{item3TpartE+} for all $w \in T^+_{\bar \theta}(\a)$ there exists $r > 0$, $O^{\ell,+}_{\a,\b}$ such that
\[
B(w,r) \cap \big( w + \inter_\mathrm{rel}\, O^{\ell,+}_{\a,\b} \big) \subset Z^{\ell,+}_{\a,\b}.
\]
\end{enumerate}
\end{theorem}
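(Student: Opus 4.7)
The plan is to argue in complete symmetry with the proof of Theorem \ref{TpartitionE-}, simply swapping the roles of backward and forward objects and invoking Proposition \ref{Pdirectionfaces1} in place of Proposition \ref{Pdirectionfaces1_minus}. Since $\mathtt v_\mintheta^+$ is a $\sigma$-continuous map by the forward analog of the lemma preceding Theorem \ref{TpartitionE-}, its fibres induce a Borel partition of $R^+_{\bar\theta}$. As in the backward case, the fibres sharing the same affine span may further split into connected components; we parametrise these by $\b \in \R^{h-\ell}$ by choosing affine coordinates transverse to each $(\ell+1)$-dimensional plane in the spirit of \eqref{E_mathfrak_A_h_def}, thereby fixing the indexing $\{Z^{\ell,+}_{\a,\b}\}$.

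To prove Points \eqref{item1TpartE+} and \eqref{item2TpartE+}, fix $(\a,w) \in R^{+,\ell}_{\bar\theta}$. By the definition of $R^{+,\ell}_{\bar\theta}$ (see Section \ref{Ss_definition_transport_initial_etc}) we have $\mathcal D^+\mintheta(\a,w) = \conv\,\mathcal D^+\mintheta(\a,w)$, $\dim\,\conv\,\mathcal D^+\mintheta(\a,w) = \ell$, and there exists $w' \in T^+_\mintheta(\a) \cap (w - \inter_\mathrm{rel}\,C^+_\mintheta(\a,w))$ for which the same two properties hold. By equivalence \eqref{a_item3Pdire}$\Leftrightarrow$\eqref{d_item3Pdire} of Proposition \ref{Pdirectionfaces1}, the unique maximal forward extremal face at $w$ is precisely $O^{\ell,+}_{\a,\b} := C^+_\mintheta(\a,w) \cap \tilde C_\a$, and by \eqref{c_item3Pdire} there is $w'' \in \partial^+\mintheta(\a,w) \cap \inter_\mathrm{rel}(w + C^+_\mintheta(\a,w))$. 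Applying Lemma \ref{L_level_sets_contains_parallelogram} first to the pair $(w',w)$ and then to $(w,w'')$, we get that $Q_{\mintheta,\a}(w',w'')$ is a relatively open neighborhood of $w$ inside $\aff(w + O^{\ell,+}_{\a,\b})$, contained in $\partial^+\mintheta(\a,w') \cap \partial^-\mintheta(\a,w'')$, hence in the level set of $\mintheta(\a,\cdot)$ through $w$. By Point \eqref{item2Pdire} of Proposition \ref{Pdirectionfaces1}, on this neighborhood the multifunction $\mathcal D^+\mintheta(\a,\cdot)$ is constant and equal to $O^{\ell,+}_{\a,\b} \cap \{t=1\}$, so $\mathtt v_\mintheta^+$ is constant there, and the neighborhood lies in a single element $Z^{\ell,+}_{\a,\b}$. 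This proves both local affineness and the identity $\aff\,Z^{\ell,+}_{\a,\b} = \aff(w + O^{\ell,+}_{\a,\b})$, together with $\mathcal D^+\mintheta(\a,w) = O^{\ell,+}_{\a,\b} \cap \{t=1\}$.

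For Point \eqref{item3TpartE+}, take any $w \in T^+_\mintheta(\a)$, so that $\partial^+\mintheta(\a,w) \supsetneq \{w\}$. The family of cones
\[
\bigl\{ \R^+ \cdot \bigl(Q_{\mintheta,\a}(w,w') - w\bigr) : w' \in \partial^+\mintheta(\a,w) \bigr\}
\]
admits a maximal element by set inclusion, since extremal faces of $\tilde C_\a$ have dimension at most $h+1$; let $w'$ realise this maximum and let $O^{\ell,+}_{\a,\b}$ be the corresponding extremal face, so that $\R^+ \cdot (Q_{\mintheta,\a}(w,w') - w) = O^{\ell,+}_{\a,\b}$ by \eqref{E_Q_cone_minus_cone_intersection_z_z_prime}. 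Picking any $\bar w \in \inter_\mathrm{rel}\,Q_{\mintheta,\a}(w,w')$, the same argument as above (now using both maximality and Point \eqref{item2Pdire} of Proposition \ref{Pdirectionfaces1}) shows that $\bar w \in R^{+,\ell}_{\bar\theta}(\a)$ and $\mathtt v_\mintheta^+(\a,\bar w)$ equals the common value on the whole relative interior of $Q_{\mintheta,\a}(w,w')$. Since by \eqref{E_Q_cone_minus_cone_intersection_z_z_prime} this relative interior contains a set of the form $B(w,r) \cap (w + \inter_\mathrm{rel}\,O^{\ell,+}_{\a,\b})$ for some $r>0$, we obtain the claimed inclusion into $Z^{\ell,+}_{\a,\b}$.

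The main (and essentially only) obstacle is bookkeeping: all analytic content is already encoded in Lemma \ref{L_level_sets_contains_parallelogram} and Propositions \ref{Pdirectionfaces1_minus}-\ref{Pdirectionfaces1}. One has to be careful with the sign convention, since the backward/forward flip exchanges $w - O$ with $w + O$ in the representation of neighborhoods and swaps the roles of $w'$ and $w''$ in the parallelogram construction; everything else is a verbatim transcription of the argument of \cite[Theorem 4.18]{biadan} recalled for Theorem \ref{TpartitionE-}.
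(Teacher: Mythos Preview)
Your proof is correct and follows precisely the approach the paper intends: the paper gives no proof for Theorem \ref{TpartitionE+}, simply stating it as the forward analog of Theorem \ref{TpartitionE-}, whose proof is in turn deferred to \cite[Theorem 4.18]{biadan}. Your argument unpacks that reference, mirroring the potential-case proof of Theorem \ref{T_decomp_phi_R-} with Proposition \ref{Pdirectionfaces1} and Lemma \ref{L_level_sets_contains_parallelogram} in place of their $\bar\phi$-counterparts, which is exactly what is meant.
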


In general $\ell \not= \ell'$, but on $R_{\bar \theta}$ the two dimensions (and hence the affine spaces $\mathtt p_2\, \mathtt v_\mintheta^\pm$) coincide.

\begin{proposition}
On the set $R_{\bar \theta}$ one has
\[
\mathtt v_\mintheta^-(\a,w) = \mathtt v_\mintheta^+(\a,w).
\]
\end{proposition}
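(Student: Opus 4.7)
The plan is to reduce the statement to the equality of the linear spans of the two extremal cones, and then derive this from the parallelogram lemma together with items 1--2 of Propositions \ref{Pdirectionfaces1_minus} and \ref{Pdirectionfaces1}, in complete analogy with Proposition \ref{P_equal_R-R+}. First I would note that by item 3(a) of Propositions \ref{Pdirectionfaces1_minus} and \ref{Pdirectionfaces1}, at any $w \in R^\pm_{\bar\theta}$ one has
\begin{equation*}
\aff\,\partial^-\bar\theta(\a,w) = w + \mathrm{span}\,C^-_{\bar\theta}(\a,w), \qquad \aff\,\partial^+\bar\theta(\a,w) = w + \mathrm{span}\,C^+_{\bar\theta}(\a,w),
\end{equation*}
of linear dimensions $\ell'+1$ and $\ell+1$ respectively. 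Hence the claim reduces to showing $\mathrm{span}\,C^-_{\bar\theta}(\a,w) = \mathrm{span}\,C^+_{\bar\theta}(\a,w)$, which will incidentally also force $\ell = \ell'$.

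Next I would use the inner directions guaranteed by membership in $R_{\bar\theta}$: pick $w_-\in \partial^-\bar\theta(\a,w)\cap(w-\inter_{\rm rel}\,C^-_{\bar\theta}(\a,w))$ and $w_+\in \partial^+\bar\theta(\a,w)\cap(w+\inter_{\rm rel}\,C^+_{\bar\theta}(\a,w))$. Since $\bar\theta(w_-)=\bar\theta(w)=\bar\theta(w_+)$ and $w_+-w_-=(w_+-w)+(w-w_-)\in\tilde C_\a+\tilde C_\a\subset\tilde C_\a$, we deduce $w_-\in\partial^-\bar\theta(\a,w_+)$, so by Lemma \ref{L_level_sets_contains_parallelogram}
\begin{equation*}
Q(w_-,w_+)\subset\partial^+\bar\theta(\a,w_-)\cap\partial^-\bar\theta(\a,w_+),
\end{equation*}
and $\bar\theta\equiv\bar\theta(w)$ on this parallelogram, which contains $w$. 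By the face--decomposition property of convex cones, the minimal extremal face $O_\a(w_-,w_+)$ containing $w_+-w_-$ is precisely the smallest face containing both $C^-_{\bar\theta}(\a,w)$ and $C^+_{\bar\theta}(\a,w)$.

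Then I would propagate regularity along the two inner rays: Proposition \ref{Pdirectionfaces1_minus} item 2 applied at $w$ with the maximal backward face yields $C^-_{\bar\theta}(\a,w_-)=C^-_{\bar\theta}(\a,w)$, and Proposition \ref{Pdirectionfaces1} item 1 applied at $w_-$ (using that small perturbations of $w-w_-$ by a vector in $C^-_{\bar\theta}(\a,w)$ remain in $\inter_{\rm rel}\,C^-_{\bar\theta}(\a,w)$, so the resulting points belong to $w-\inter_{\rm rel}\,C^-_{\bar\theta}(\a,w)\subset\partial^-\bar\theta(\a,w)$) gives $C^-_{\bar\theta}(\a,w)\subset C^+_{\bar\theta}(\a,w_-)$. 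Symmetrically, $C^+_{\bar\theta}(\a,w_+)=C^+_{\bar\theta}(\a,w)$ and $C^+_{\bar\theta}(\a,w)\subset C^-_{\bar\theta}(\a,w_+)$. Combined with the constancy of $\bar\theta$ on $Q(w_-,w_+)$, these inclusions force both $\mathrm{span}\,C^-_{\bar\theta}(\a,w)$ and $\mathrm{span}\,C^+_{\bar\theta}(\a,w)$ to coincide with $\mathrm{span}\,O_\a(w_-,w_+)$.

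The main obstacle will be exactly this last geometric step: showing that the forward and backward maximal faces have the same linear span despite potentially being distinct subfaces of $O_\a(w_-,w_+)$. The cleaner route, mirroring the one--line proof of Proposition \ref{P_equal_R-R+}, is to observe that by item 3 of Theorems \ref{TpartitionE-} and \ref{TpartitionE+} the point $w$ lies in the relatively open set $\interr(Z^{\ell',-}_{\a,\b'}\cap Z^{\ell,+}_{\a,\b})$; since both locally affine sets contain $w$ as a relatively interior point and their intersection is relatively open in both, their affine spans must coincide, yielding at once $\ell = \ell'$ and the desired equality $\mathtt v^-_\mintheta(\a,w)=\mathtt v^+_\mintheta(\a,w)$.
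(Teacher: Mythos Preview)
Your ``cleaner route'' at the end is indeed the argument the paper intends (it is exactly the one-line proof of the analogous Proposition \ref{P_equal_R-R+}, and for the $\bar\theta$ version the paper simply refers to \cite{biadan}). However, your justification of that route has a gap: the assertion that $Z^{\ell',-}_{\a,\b'}\cap Z^{\ell,+}_{\a,\b}$ is relatively open \emph{in both} sets is not a general fact about locally affine sets (think of an open disk and an open segment through its center). Knowing that $w$ is a relatively interior point of each of the two sets separately does not force their affine hulls to agree.

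What actually forces the equality is condition (iii) in the definitions of $R^{\pm}_{\bar\theta}$, which you never invoke. Condition (iii) of $R^{+,\ell}_{\bar\theta}$ supplies a point $w'\in w-\interr C^+_{\bar\theta}(\a,w)$ lying in $\partial^-\bar\theta(\a,w)$ (the analogue of the clause $\extphi(z)=\extphi(z')+\bar{\mathtt c}(z'-z)$ in Definition \ref{def_regular_points}); Proposition \ref{Pdirectionfaces1_minus}(1) then yields $C^+_{\bar\theta}(\a,w)\cap\{t=1\}\subset\mathcal D^-\bar\theta(\a,w)=C^-_{\bar\theta}(\a,w)\cap\{t=1\}$. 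Symmetrically, condition (iii) of $R^{-,\ell'}_{\bar\theta}$ gives the reverse inclusion, so $C^-_{\bar\theta}(\a,w)=C^+_{\bar\theta}(\a,w)$ and the affine spans coincide immediately. This is precisely the content of the phrase ``we have inner directions both forward and backward'' in the proof of Proposition \ref{P_equal_R-R+}. Your first, more elaborate approach stalls for the same underlying reason: the points $w_\pm$ you pick come only from item 3(c) of Propositions \ref{Pdirectionfaces1_minus} and \ref{Pdirectionfaces1}, not from condition (iii), so you obtain inclusions such as $C^-_{\bar\theta}(\a,w)\subset C^+_{\bar\theta}(\a,w_-)$ at the \emph{shifted} point $w_-$, and these do not transfer back to an inclusion at $w$.
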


\begin{proof}
	For the proof see \cite[Corollary 4.19]{biadan}.
\end{proof}
%
%
Define thus on $R_{\bar \theta}$
\begin{equation*}
\mathtt v_\mintheta := \mathtt v_\mintheta^+ \llcorner_{R_{\bar \theta}} = \mathtt v_\mintheta^- \llcorner_{R_{\bar \theta}},
\end{equation*}
and let
\begin{equation*}
\Big\{ Z^{\ell}_{\a,\b}, (\a,\b) \in \R^{d-h} \times \R^{h-\ell} \Big\}
\end{equation*}
be the partition induced by $\mathtt v_\mintheta$: since $R_{\bar \theta} = \cup_\ell R^{-,\ell}_{\bar \theta} \cap R^{+,\ell}_{\bar \theta}$, it follows that
\[
Z^{\ell}_{\a,\b} = Z^{\ell,-}_{\a,\b} \cap Z^{\ell,+}_{\a,\b},
\]
once the parametrization of $\mathcal A(\ell',\aff\,Z^\ell_\a)$ is fixed accordingly.

Finally, define the set $\tilde{\mathbf D}'$ by
\begin{align*}
	\tilde{\mathbf D}' :=&~ \Big\{ \big( \ell,\a,\b,w,C \big) : C = \mathtt p_2 \mathtt v_\mintheta(\a,w) \cap \tilde C_\a, w \in Z^\ell_{\a,\b} \Big\} \crcr
				 \subset&~ \bigcup_0^h \big\{ \{\ell\} \times \R^{d-h} \times \R^{h-\ell} \times ([0,\infty) \times \R^{\ell}) \times \mathcal C(\ell,\R^h) \big\}.
\end{align*}

\begin{lemma}
\label{L_tilde_mathbf_D_prime_sigma_compact}
The set $\tilde{\mathbf D}'$ is $\sigma$-compact.
\end{lemma}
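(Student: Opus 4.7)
The plan is to observe that the entire tuple $(\ell,\a,\b,w,C)$ in $\tilde{\mathbf D}'$ is determined by $(\a,w) \in R_{\bar\theta}$, since $\ell$ is the linear dimension of $\partial^-\bar\theta(\a,w)$ at $t=1$, the label $\b$ parametrizes the affine class of $Z^\ell_{\a,\b}$ (chosen via a fixed reference intersection as in Remark \ref{R_choice_t=1_loc_aff} and formula \eqref{E_mathfrak_A_h_def}), and the cone $C$ is simply $\mathtt v_{\bar\theta}(\a,w)\cap\tilde C_\a$ (or rather its translate to the origin). Thus $\tilde{\mathbf D}'$ is the graph of a map $(\a,w)\mapsto(\ell,\a,\b,w,C)$ defined on the $\sigma$-compact set $R_{\bar\theta}$, and it suffices to prove that this map is $\sigma$-continuous.

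I would proceed in three steps. First, by Proposition \ref{P_borel_regu_transport_sets}, the set $R_{\bar\theta}=\bigcup_\ell R^{-,\ell}_{\bar\theta}\cap R^{+,\ell}_{\bar\theta}$ is $\sigma$-compact, so I can write it as an increasing countable union of compacts $K_m$, and by splitting also according to the integer $\ell\in\{0,\dots,h\}$ I may assume that $\dim\partial^-\bar\theta(\a,w)=\ell$ is constant on each $K_m$. Second, on $K_m$ the maps $(\a,w)\mapsto\partial^\pm\bar\theta(\a,w)$ and hence $(\a,w)\mapsto\aff\,\partial^-\bar\theta(\a,w)=\mathtt v_{\bar\theta}(\a,w)$ are $\sigma$-continuous (this was noted when $\mathtt v_{\bar\theta}^\pm$ were introduced). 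Composing with the continuous operation $(V,\tilde C_\a)\mapsto V\cap\tilde C_\a$ in Hausdorff topology on $\mathcal A(\ell,\R^h)\times\mathcal C(\ell,\R^h)$ — which is well defined on $K_m$ since the intersection is $\ell$-dimensional and stays inside the uniform opening cone $\tilde C'$ of Definition \ref{D_fibration} — yields $\sigma$-continuity of $(\a,w)\mapsto C$.

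Third, for the parameter $\b$ I use Proposition \ref{P_countable_partition_in_reference_directed_planes} applied inside the fibration: after a further countable decomposition of $K_m$ into sheaf pieces whose elements $Z^\ell_{\a,\b}$ are almost parallel to a fixed reference plane $V^\ell_j$ containing a common small cube, one sets
\begin{equation*}
\b(\a,w) := \aff\,Z^\ell_{\a,\b}\cap(\tpt_{V^\ell_j})^{-1}(z^\ell_j),
\end{equation*}
as in \eqref{E_mathfrak_A_h_def}, which is a $\sigma$-continuous function of $(\a,w)$ because $\aff\,Z^\ell_{\a,\b}=\aff(w+C)$ depends $\sigma$-continuously on $(\a,w)$ by the previous step. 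Assembling these pieces, the graph of $(\a,w)\mapsto(\ell,\a,\b(\a,w),w,C(\a,w))$ on each $K_m$ is compact, so $\tilde{\mathbf D}'$ is a countable union of compact sets, i.e. $\sigma$-compact.

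The only mildly delicate point is the continuity of the intersection operation $V\cap\tilde C_\a$ in Hausdorff topology at the fixed dimension $\ell$: this is standard but requires the uniform transversality provided by the two reference cones in \eqref{E_uniform_opening_on_fibration}, which prevents the intersection from collapsing along limits. Once this is in hand, the argument is essentially bookkeeping: decompose by $\ell$, then by sheaf pieces, then by the compacts on which $\mathtt v_{\bar\theta}$ is continuous, and observe $\sigma$-compactness is stable under countable unions of such pieces.
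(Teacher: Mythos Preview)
Your proof is correct and follows the same approach as the paper, which simply states that since $\mathtt v_{\bar\theta}$ and $\a\mapsto\tilde C_\a$ are $\sigma$-continuous, the conclusion follows. You have spelled out in detail what the paper leaves implicit --- in particular the handling of the index $\b$ via a sheaf decomposition and the continuity of the intersection $V\cap\tilde C_\a$ under the uniform opening hypothesis --- but the underlying argument is the same.
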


\begin{proof}
Since $\mathtt v_\mintheta$, $\a \mapsto \tilde C_\a$ are $\sigma$-continuous, the conclusion follows.
\end{proof}

We thus conclude that $\tilde{\mathbf D}'$ corresponds the following directed locally affine partition of $\R^d$:
\begin{align}
\label{E_hat_mathbf_D_prime_def}
\hat{\mathbf D}' := \Big\{ (\ell,\c,z,C), \c = (\a,\b), z = (\a,w): (\ell,\a,\b,w,C) \in \tilde{\mathbf D}' \Big\}.
\end{align}
We will use the notation $\c = (\a,\b)$ and
\begin{equation*}
Z^\ell_\c := Z^\ell_{\a,\b} = \mathtt p_z \tilde{\mathbf D}'(\ell,\c), \quad C^\ell_\c := O^\ell_{\a,\b} = \mathtt p_C \tilde{\mathbf D}'(\ell,\c), \quad \tilde{\mathbf Z}'^\ell := \mathtt p_{\a,w} \tilde{\mathbf D}'(\ell),
\end{equation*}
where $O^\ell_{\a,\b}$ is the extremal fact of $\tilde C_\a$ corresponding to the space $\mathrm{span}\, \mathcal D \mintheta(\a,w)$.

\section{Disintegration on directed locally affine partitions}
\label{S_disintegration_on_affine}

In this section we show how to use the function $\bar \vartheta$ in order to prove that the directed locally affine partition $\tilde{\mathbf D}$ is regular w.r.t. the measure $\mathcal H^d \llcorner_{\{t=1\}}$. This is the main difference w.r.t. the analysis of \cite{biadan}, where the regularity is proved w.r.t. the measure $\mathcal L^{d+1}$. 


Let thus $\minvartheta$ be the upper semi continuous envelope of $\mintheta$.


\begin{lemma}
	\label{L_Lax_forlu_precise}
	For all $s\geq 0$, the following holds: if $t\geq s$, then
	\begin{equation*}
		{\minvartheta}(\a, t,x) = \max \Big\{ {\minvartheta}(\a, s,y) - \ind_{\tilde C_\a}(t-s,x-y), y \in \R^h \Big\}.
	\end{equation*}
\end{lemma}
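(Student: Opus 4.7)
The plan is to prove both inequalities of the claimed Lax formula, reducing to the case $s=0$ recorded in Remark \ref{R_HJ_prop}, where we already know
\begin{equation*}
\bar\vartheta(\a,w) = \max\bigl\{\bar\vartheta(\a,w') - \ind_{\tilde C_\a}(w-w'), \, w' \in \{t=0\}\bigr\},
\end{equation*}
with the maximum attained.

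The first step is to show that $\bar\vartheta$ inherits the cone monotonicity of $\bar\theta$: whenever $w-w' \in \tilde C_\a$, one has $\bar\vartheta(\a,w) \geq \bar\vartheta(\a,w')$. For $\bar\theta$ this is exactly \eqref{E_cone_add_theta}. To pass to the upper semicontinuous envelope, given any sequence $w'_n \to w'$ attaining the $\limsup$ that defines $\bar\vartheta(\a,w')$, set $w_n := w'_n + (w-w') \to w$; since $w_n - w'_n = w-w' \in \tilde C_\a$, we have $\bar\theta(\a,w_n) \geq \bar\theta(\a,w'_n)$ for every $n$, whence $\bar\vartheta(\a,w) \geq \limsup_n \bar\theta(\a,w_n) \geq \bar\vartheta(\a,w')$. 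Applied to $w = (t,x)$ and $w' = (s,y)$ with $(t-s,x-y) \in \tilde C_\a$, this immediately gives the $\geq$ inequality of the Lemma; when $(t-s,x-y) \notin \tilde C_\a$ the right-hand side is $-\infty$ and the inequality is trivial.

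For the reverse inequality, assume first $t>0$. Apply the $s=0$ formula to pick $y_0 \in \R^h$ with
\begin{equation*}
\bar\vartheta(\a,t,x) = \bar\vartheta(\a,0,y_0), \qquad (t,x-y_0) \in \tilde C_\a.
\end{equation*}
Define the interpolated point $y_s := (1-s/t)\,y_0 + (s/t)\,x$ on the segment from $(0,y_0)$ to $(t,x)$. Since $\tilde C_\a$ is a convex cone with apex at $0$, it contains every positive multiple of $(t,x-y_0)$, and the identities
\begin{equation*}
(s,y_s-y_0) = \tfrac{s}{t}(t,x-y_0), \qquad (t-s,x-y_s) = \tfrac{t-s}{t}(t,x-y_0)
\end{equation*}
show both displacements lie in $\tilde C_\a$. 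By the cone monotonicity established in the first step,
\begin{equation*}
\bar\vartheta(\a,t,x) \geq \bar\vartheta(\a,s,y_s) \geq \bar\vartheta(\a,0,y_0) = \bar\vartheta(\a,t,x),
\end{equation*}
so all three values coincide and $y_s$ attains the maximum on the right-hand side. The degenerate case $t=s=0$ follows from the non-degeneracy of $\tilde C_\a$: its intersection with $\{t=0\}$ is $\{0\}$, so $\ind_{\tilde C_\a}(0,x-y) = 0$ forces $y = x$ and the formula reduces to a tautology.

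The main technical point is the extension of cone monotonicity from $\bar\theta$ to $\bar\vartheta$; the translation argument above isolates it cleanly, and the rest of the proof is a convexity computation along the optimal ray selected at level $s=0$. No additional regularity of $\bar\theta$ is needed, which is why the Lax structure survives after taking the semicontinuous envelope.
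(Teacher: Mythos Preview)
Your proof is correct. The route differs slightly from the paper's: the paper first asserts that $\bar\theta$ itself satisfies the Lax formula at every level $s$ (this follows from the level-set definition \eqref{E_level_theta'} together with $\tilde C_\a + \tilde C_\a \subset \tilde C_\a$), and then passes to the upper semicontinuous envelope by taking a sequence of near-maximizers $y_n$ at level $s$ and using compactness of $\tilde C_\a \cap \{t=\bar t\}$. You instead take the $s=0$ formula for $\bar\vartheta$ from Remark~\ref{R_HJ_prop} as a starting point, establish cone monotonicity for $\bar\vartheta$ by the clean translation argument, and interpolate along the optimal ray produced at level $0$. The ingredients are the same (cone monotonicity plus the survival of the Lax structure under the u.s.c.\ envelope), but your presentation has the advantage of isolating the passage from $\bar\theta$ to $\bar\vartheta$ as a single monotonicity step rather than redoing the envelope argument at each level~$s$; the paper's route, on the other hand, does not need to invoke Remark~\ref{R_HJ_prop} separately.
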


\begin{proof}
Recalling that $\bar \theta$ is defined by
\[
\mintheta(\a,t,x) = \sup \Big\{ \mintheta(\a,s,y) - \ind_{\tilde C_\a} (t-s,x-y): y \in \R^h \Big\},
\]
the proof follows immediately by considering a sequence of maximizers $y_n$ for $\bar \theta(\a,t,x)$.
\end{proof}

Since $\bar \vartheta$ satisfies
\begin{equation}
\label{E_tilde_c_comp_varth}
\bar \vartheta(\a,w + \tilde C_\a) \geq \bar \vartheta(\a,w)
\end{equation}
i.e. it is a \emph{complete $\tilde c$-Lipschitz foliation} according to \cite{biadan}, the same completeness property \eqref{E_Q_cone_minus_cone_intersection_z_z_prime} holds: if $(w,w') \in \partial^+ \minvartheta(\a)$, then
	\begin{equation*}
		Q_{\minvartheta,\a}(w,w') := (w + \tilde C_\a) \cap (w'- \tilde C_\a) \subset \partial^+ \minvartheta(\a,w).
	\end{equation*}

Recalling for the notations Remark \ref{R_vart_to_bar_vart}, a first connection between $\bar \theta$ and $\bar \vartheta$ is shown in the following lemma.

\begin{lemma}
\label{L_ray_vartheta_theta}
If $\minvartheta(\a, t,x) = \mintheta(\a, t,x)$, then $\partial^- \mintheta(\a, t,x ) \subset \partial^- \minvartheta(\a, t,x)$.
\end{lemma}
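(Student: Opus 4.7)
The claim is essentially a direct chase through the definitions, using two facts: $\minvartheta \geq \mintheta$ pointwise (since $\minvartheta$ is the upper semicontinuous envelope of $\mintheta$), and $\minvartheta$ is monotone along the cone $\tilde C_\a$ (property \eqref{E_tilde_c_comp_varth}, equivalently the Lax formula of Lemma \ref{L_Lax_forlu_precise}). So the plan is just to combine these two inequalities.

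Fix $(w,w') \in \partial^- \mintheta(\a)$ with $w=(t,x)$ satisfying $\minvartheta(\a,w)=\mintheta(\a,w)$. By the definition \eqref{E_sub_diff_bar_theta} of $\partial^-\mintheta$, I have $w-w'\in\tilde C_\a$ (equivalently $\tilde{\mathtt c}_\a(w,w')<\infty$) and $\mintheta(\a,w)=\mintheta(\a,w')$. To show $(w,w')\in\partial^-\minvartheta(\a)$, the only nontrivial thing left is the equality $\minvartheta(\a,w)=\minvartheta(\a,w')$, since the finite-cost condition is identical.

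For the inequality $\minvartheta(\a,w') \geq \minvartheta(\a,w)$: since $\minvartheta$ dominates $\mintheta$, I get
\[
\minvartheta(\a,w') \geq \mintheta(\a,w') = \mintheta(\a,w) = \minvartheta(\a,w),
\]
where the last equality is the hypothesis. For the reverse inequality, write $w=(t,x)$, $w'=(t',x')$; from $w-w'\in\tilde C_\a\subset[0,\infty)\times\R^h$ I get $t\geq t'$, so Lemma \ref{L_Lax_forlu_precise} applied with $s=t'$ gives
\[
\minvartheta(\a,t,x) \geq \minvartheta(\a,t',x') - \ind_{\tilde C_\a}(t-t',x-x') = \minvartheta(\a,w'),
\]
the indicator vanishing because $w-w'\in\tilde C_\a$. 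Combining the two inequalities yields $\minvartheta(\a,w)=\minvartheta(\a,w')$, hence $(w,w')\in\partial^-\minvartheta(\a)$.

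There is essentially no obstacle here; the lemma is purely a bookkeeping statement ensuring that at points where $\mintheta$ and $\minvartheta$ coincide, the (backward) subdifferential of $\mintheta$ is inherited by $\minvartheta$. This is exactly what will be needed in the next section to transfer the area-estimate / regularity arguments obtained for $\minvartheta$ (which satisfies the Lax formula and has optimal rays defined at every point) back to $\mintheta$ on the $\mathcal H^h\llcorner_{\{t=\bar t\}}$-conegligible set where the two functions agree.
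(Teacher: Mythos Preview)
Your proof is correct and follows essentially the same approach as the paper: both combine the pointwise inequality $\mintheta\le\minvartheta$ with the cone-monotonicity of $\minvartheta$ (the Lax formula) to squeeze $\minvartheta(\a,w')$ between $\minvartheta(\a,w)$ and itself. The paper writes this as a single chain $\mintheta(\a,w')\le\minvartheta(\a,w')\le\minvartheta(\a,w)=\mintheta(\a,w)=\mintheta(\a,w')$, while you split it into two separate inequalities, but the content is identical.
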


\begin{proof}
Let $(s,y) \in \partial^- \mintheta(\a, t,x)$, so that $\mintheta(\a, s,y) =  \mintheta(\a, t,x)$. The inclusion $\partial^- \mintheta(\a,t,x) \subset \partial^- \minvartheta(\a,t,x)$ follows from the estimate:
\begin{equation*}
\mintheta(\a, s,y) \leq \minvartheta(\a, s,y) \leq \minvartheta(\a, t,x) = \mintheta(\a, t,x).
\end{equation*}
This concludes the proof. 
\end{proof}

\subsection{\texorpdfstring{Regularity of the partition $\tilde{\mathbf D}'$}{Regularity of the locally affine partition}}
\label{Ss_back_forw_regul}


The proof to show the regularity of $\mathcal H^d \llcorner_{\{t = 1\}}$-a.e. point for $\bar \vartheta$ is very similar to the analysis done in Section \ref{Ss_back_forw_regul_phi}: the two proofs differ because we have now to consider a family of HJ equations (one for each $\a \in \tilde \A$), and that the Lagrangian is the indicator function of a cone $\tilde C_\a$.

Once we have the regularity for $\bar \vartheta$, we use the fact that $\bar \theta(\bar t) = \bar \vartheta(\bar t)$ for $\mathcal H^d \llcorner_{\{t = \bar t\}}$-a.e. point in order to deduce that the same regularity holds for $\bar \theta$.

%
%

Consider a Borel bounded set $S \subset \{ t=\bar t \}$ made of backward regular points for $\minvartheta$. Since by the definition of $\bar \vartheta$ each point has an optimal ray reaching $t=0$, for all $s>0$ we can find \emph{inner} optimal rays, i.e. with directions belonging to the interior of $C^-_{\bar \vartheta}$.

\begin{lemma}
\label{L_inner_area_estimate}
Let $\bar t> s > \varepsilon>0$. Then for every $(\bar t, x)\in S$ there exists a point
\begin{equation*}
\sigma_s(\bar t,x) \in \interr \big( \partial^-\minvartheta(\a,\bar t, x)\cap\{t=s\} \big)
\end{equation*}
such that
\[
\mathcal H^h (\sigma_s (S)) \geq \bigg( \frac{s-\varepsilon}{\bar t-\varepsilon} \bigg)^h \mathcal H^h(S).
\]
\end{lemma}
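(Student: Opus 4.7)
The plan is to mirror the strategy used for Lemma \ref{L_inner_area_estimate_phi} in the potential case, replacing the potential $\bar\phi$ by $\bar\vartheta$ and the cost $\bar{\mathtt c}$ by the indicator $\ind_{\tilde C_\a}$. Two ingredients justify this transposition: Lemma \ref{L_Lax_forlu_precise} provides a Lax-type formula for $\bar\vartheta$, so that every $(\bar t,x)$ admits at least one optimal ray reaching $\{t=0\}$ and realizing the maximum; and the completeness property \eqref{E_tilde_c_comp_varth}, together with Proposition \ref{Pdirectionfaces1_minus} applied to $\bar\vartheta$, guarantees that at a backward regular point the superdifferential contains a full-dimensional cone of inner rays.

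The first step is to use Lemma \ref{L_Lax_forlu_precise} together with transitivity (the analog of Lemma \ref{L_level_sets_contains_parallelogram} for $\bar\vartheta$) to see that, for each $(\bar t,x)\in S\cap R^{-,\ell}_{\bar\vartheta}(\a)$, the slice $\partial^-\bar\vartheta(\a,\bar t,x)\cap\{t=\varepsilon\}$ has dimension $\ell$ and contains a translate $x-K$ of a cone $K\subset C^-_{\bar\vartheta}(\a,\bar t,x)$ made entirely of \emph{inner} backward rays. Splitting $S$ according to $\ell$, it suffices to treat each dimension separately.

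Next, by separability of $\mathcal C(\ell,[0,+\infty)\times\R^h)$, I would partition $S$ into countably many Borel pieces on which a fixed cone $K'$ satisfies $K'\subset\interr C^-_{\bar\vartheta}(\a,\bar t,x)$ uniformly, in the same spirit as Proposition \ref{P_countable_partition_in_reference_directed_planes}. On each such piece, slice $\{t=\bar t\}$ by a Borel family of parallel $(h-\ell)$-dimensional affine planes transversal to $\mathrm{span}\,K'$; each slice of $\partial^-\bar\vartheta(\a,\bar t,x)\cap\{t=\varepsilon\}$ meets (a suitable translate of) $K'$ along a single inner ray. This reduces the problem, in every slice, to an $\ell$-dimensional transport problem in which each point of $S$ admits a \emph{unique} inner optimal ray through $\{t=\varepsilon\}$, providing a Borel selection $\sigma_s(\bar t,x)$ defined as the intersection of this ray with $\{t=s\}$.

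Finally, on every transversal slice I would apply the classical area estimate of \cite[Lemma 2.13]{Car:strictly}: since the family of inner rays selected converges to a common point at $t=\varepsilon$, the linear perspective map from $\{t=\bar t\}$ to $\{t=s\}$ scales $\ell$-dimensional Hausdorff measure by exactly $\bigl((s-\varepsilon)/(\bar t-\varepsilon)\bigr)^\ell$ up to the Jacobian estimate given there. Fubini over the $(h-\ell)$-family of transversal planes yields
\[
\mathcal H^h(\sigma_s(S))\geq\Bigl(\frac{s-\varepsilon}{\bar t-\varepsilon}\Bigr)^h\mathcal H^h(S),
\]
summing over the countable decomposition concludes. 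The main obstacle I expect is the Borel measurability of the choice of transversal slicing and of the selection $\sigma_s$ when $\ell=\dim C^-_{\bar\vartheta}(\a,\cdot)$ varies over $S$; this is handled by performing the partition first with respect to $\ell$ and then by the cone $K'$, as in \cite{biadan}, and by checking that the graph of $\sigma_s$ inherits $\sigma$-compactness from that of $\partial^-\bar\vartheta$ via Proposition \ref{P_borel_regu_transport_sets}.
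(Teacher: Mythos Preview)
Your setup matches the paper's closely: the paper also says that ``for each fixed $\a$ the proof is the same as the one of $\bar\phi$,'' uses the Lax formula to produce at every $z\in S$ a cone $K_z\subset C^-_{\bar\vartheta}(z)$ of inner backward rays reaching $\{t=\varepsilon\}$, partitions $S$ by dimension $\ell$ and by a reference cone $K'$ in a reference $(\ell+1)$-plane $V'$, and slices by transversal planes $V''$ so that $\sigma_s(z):=\partial^-\bar\vartheta(z)\cap V''\cap\{t=s\}$ is a singleton.

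The difference is at the last step. You invoke \cite[Lemma~2.13]{Car:strictly} directly on each transversal slice. The paper does \emph{not} do this. Instead it sets $\hat\mu:=\mathcal H^d\llcorner_S$, $\hat\nu:=(\sigma_s)_\sharp\hat\mu$, observes via \eqref{E_tilde_c_comp_varth} and Proposition~\ref{P_equiv_all_plan} (applied to $\bar\vartheta$) that every $\hat\pi\in\Pi^f_{\tilde{\mathtt c}}(\hat\mu,\hat\nu)$ is concentrated on the level sets of $\bar\vartheta$, hence is unique on each plane $V''$. It then appeals to \cite[Lemma~8.4]{biadan} to produce a sequence of \emph{cone vector fields} $\sigma_s^n\to\sigma_s$ pointwise $\mathcal H^d$-a.e., for which the area inequality holds by construction, and concludes by the upper semicontinuity of the area estimate under pointwise convergence \cite[Lemma~5.6]{biadan}.

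The reason the paper takes this detour is that \cite[Lemma~2.13]{Car:strictly} is proved for rays arising from a (strictly convex) norm potential, where the ray map inherits enough monotonicity/regularity for the area formula to apply. Here $\bar\vartheta$ is merely u.s.c.\ with Cantor-type range, and the selection $z\mapsto\sigma_s(z)$ is only Borel; justifying the hypotheses of \cite[Lemma~2.13]{Car:strictly} directly would require additional work you have not indicated. The approximation scheme of \cite{biadan} sidesteps this by reducing to explicit cone fields for which the estimate is elementary. Your approach is not wrong in spirit, but the final appeal to \cite{Car:strictly} is a gap unless you supply the missing regularity; the paper's route avoids that obligation. (Also, your phrase ``converges to a common point at $t=\varepsilon$'' is not what happens: the rays from distinct points of $S$ land at distinct points of $\{t=\varepsilon\}$; the ratio $(s-\varepsilon)/(\bar t-\varepsilon)$ comes from the linear parametrization along each individual ray together with non-crossing, not from focusing.)
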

\begin{proof}
For each fixed $\a$ the proof is the same as the one of $\bar \phi$, just replacing it with $\bar \vartheta$. In particular we obtain that for each fixed $\varepsilon > 0$ every point $z \in S$ has a cone of optimal backward directions $K_z$ such that
\begin{equation*}
K_z \in \mathcal C(\ell,\RR^h) \qquad \text{and} \qquad (z - K_z) \cap \{t=\varepsilon\} \subset \partial^- \bar \vartheta(z) \cap \{t=\varepsilon\},
\end{equation*}
where $\ell = \dim\,\mathcal D^- \bar \vartheta(z)$.

As in the proof of Lemma \ref{L_inner_area_estimate_phi}, we can thus partition the set $S$ according to the requirement that the projection of $K$ on a $(\ell+1)$-dimensional reference plane $V'$ contains a reference cone $K' \in \mathcal C(\ell,\RR^d)$.

Slicing the problem on $(d-\ell)$-dimensional planes $V''$ transversal to $V'$, it follows that
\begin{equation*}
\sigma_s(z) := \bar \vartheta(z) \cap V'' \cap \{t=s\}
\end{equation*}
is singleton for all $z \in S$. We can then use the same approach used in \cite[Section 8]{biadan}.

Consider the two measures $\hat \mu := \mathcal H^d \llcorner_S$ and its image measure $\hat \nu := (\sigma_s)_\sharp \hat \mu$. By \eqref{E_tilde_c_comp_varth} and Proposition \ref{P_equiv_all_plan} applied to $\bar \vartheta$, every transport $\hat \pi \in \Pi^f_{\tilde{\mathtt c}}(\hat \mu,\hat \nu)$ with finite cost w.r.t. $\tilde{\mathtt c}$ occurs on the level sets of $\bar \vartheta$: in particular, in each plane $V''$ there exists a unique transference plan with finite cost.

We can then use \cite[Lemma 8.4]{biadan} in order to obtain a family of cone vector fields converging to $\sigma_s$ for $\mathcal H^d$-a.e. point. Being the area estimate
\begin{equation*}
\mathcal H^h (\sigma^n_s (S)) \geq \bigg( \frac{s-\varepsilon}{\bar t-\varepsilon} \bigg)^h \mathcal H^h(S)
\end{equation*}
u.s.c. w.r.t. pointwise convergence $\sigma^n_s(z) \to \sigma_s(z)$ \cite[Lemma 5.6]{biadan}, we obtain the statement.
%
%
%
\end{proof}
%
%

We can now repeat the same proof of Proposition \ref{P_back_regul_phi} in order to obtain the regularity of $\mathcal H_{\{t=\bar t\}}$-a.e. point. The only variation w.r.t. the proof of Proposition \ref{P_back_regul_phi} is that we have to use the regularity of the disintegration of $\mathcal L^{d+1}$ on the directed locally affine partition $\tilde{\mathbf D}_{\bar \vartheta} = \{Z^\ell_{\a,\b}(\bar \vartheta),C^\ell_{\a,\b}(\bar \vartheta)\}_{\ell,\a,\b}$ induced by $\bar \vartheta$ through the map
\begin{equation*}
\begin{array}{ccccc}
\mathtt v_{\bar \vartheta} &:& R_{\bar \vartheta} &\to& \R^{d-h} \times \cup_{\ell = 0}^h \mathcal A(\ell,\R^h) \\ [.5em]
&& (\a,w) &\mapsto& \mathtt v_{\bar \vartheta}(\a,w) := \big( \a,\aff\, \partial^- \bar \vartheta(\a,w) \big)
\end{array}
\end{equation*}
(The fact that $\mathtt v_{\bar \vartheta}$ induces a locally affine directed partition is the same statement of Theorem \ref{T_decomp_phi_R-} or Theorem \ref{T_decomp_phi_R+}, see Remark \ref{R_vart_to_bar_vart}.) \\
The regularity of $\tilde{\mathbf D}_{\vartheta}$ is one of the fundamental results of \cite{biadan}, Theorem 8.1 and Corollary 8.2:

\begin{theorem}[Theorem 8.1 and Corollary 8.2 of \cite{biadan}]
\label{T_regula}
If $\{Z^\ell_{\a,\b}(\bar \vartheta),C^\ell_{\a,\b}(\bar \vartheta)\}_{\ell,\a,b}$ is the locally affine partition induced by the function $\bar \vartheta$, then $\mathcal L^{d+1}$-a.e. point is regular and the disintegration of $\mathcal L^{d+1}$ w.r.t. $\{Z^\ell_{\a,\b}(\bar \vartheta)\}_{\ell,\a,\b}$ is regular. \\
A similar statement holds for the directed locally affine partition $\tilde{\mathbf D}' = \{Z^\ell_{\a,\b},C^\ell_{\a,\b}\}_{\ell,\a,\b}$ obtained through the function $\bar \theta$.
\end{theorem}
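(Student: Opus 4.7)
\smallskip

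\textbf{Proof plan.} My plan is to reduce both assertions of Theorem \ref{T_regula} to a direct application of Theorems 8.1 and Corollary 8.2 of \cite{biadan} by verifying that the functions $\bar\vartheta$ and $\bar\theta$ fit the abstract framework of that paper, and then transferring the statement for $\bar\vartheta$ to $\bar\theta$ through the fact that they coincide outside a set of zero $\mathcal L^{d+1}$-measure.

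First, I would handle the $\bar\vartheta$ part. The main input for the \cite{biadan} machinery is that $\bar\vartheta$ is a \emph{complete $\tilde{\mathtt c}$-Lipschitz foliation} on the fibration, which was already observed in the text (see equation \eqref{E_tilde_c_comp_varth} and Lemma \ref{L_Lax_forlu_precise}): the level sets are stable under translation by $\tilde C_\a$, and the Lax representation holds pointwise thanks to the u.s.c. envelope. Next I would verify that the cone family $\a\mapsto \tilde C_\a$ enjoys the uniform non-degeneracy property coming from \eqref{E_uniform_opening_on_fibration}, i.e.\ there are fixed cones $\tilde C \subset \tilde C_\a \subset \tilde C'$; this is exactly the transversality assumption needed in \cite{biadan}. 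Finally, I would check that the $\sigma$-compactness of the partition $\tilde{\mathbf D}_{\bar\vartheta}$ (which follows from the $\sigma$-continuity of $\mathtt v_{\bar\vartheta}$ exactly as in Lemma \ref{L_tilde_mathbf_D_prime_sigma_compact}) places us in the hypotheses of Theorem 8.1 and Corollary 8.2 of \cite{biadan}. Invoking these results yields both the regularity of $\mathcal L^{d+1}$-a.e.\ point and the absolute continuity of the conditional probabilities in the disintegration of $\mathcal L^{d+1}$ w.r.t.\ $\{Z^\ell_{\a,\b}(\bar\vartheta)\}$.

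For the second assertion, regarding $\bar\theta$, the key observation is that $\bar\theta$ and $\bar\vartheta$ differ only on the boundaries of the level sets $\{\bar\theta\geq \underline\theta\}$. By Lemma \ref{L_prop_theta_prim}, $\bar\theta(\a,\cdot)$ is locally SBV on each slice $\{t=\bar t\}$, so these boundaries are locally $\mathcal H^{h-1}$-rectifiable on each slice; by Fubini (integrating the $\mathcal H^h\llcorner_{\{t=\bar t\}}$-negligibility of $\{\bar\theta<\bar\vartheta\}$ against $\mathcal L^1$ in $t$, and then integrating over $\a$ using the fibration structure) the set $\{\bar\theta \neq \bar\vartheta\}$ has zero $\mathcal L^{d+1}$-measure. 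Since the partition is built from the level sets (and the regular sets $R_{\bar\theta}^\pm$, $R_{\bar\vartheta}^\pm$ depend only on the level sets up to such boundaries), the two partitions agree $\mathcal L^{d+1}$-a.e.\ on $\R^{d-h}\times([0,\infty)\times\R^h)$. Therefore regularity and the absolute continuity of the disintegration transfer directly from $\bar\vartheta$ to $\bar\theta$.

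The main obstacle is the compatibility check between the partition $\tilde{\mathbf D}_{\bar\theta}$ (defined through the sub/super-differentials $\partial^\pm \bar\theta$ of \eqref{E_sub_diff_bar_theta}) and the partition $\tilde{\mathbf D}_{\bar\vartheta}$: one must be careful that even though $\{\bar\theta\neq\bar\vartheta\}$ is $\mathcal L^{d+1}$-negligible, the regular sets $R_{\bar\theta}$ and $R_{\bar\vartheta}$ could a priori differ on a larger set, because regularity at a point $w$ involves the existence of an \emph{inner} optimal ray emanating from $w$, a condition sensitive to pointwise values. The way around this is to exploit the semicontinuity: if $w\notin\{\bar\theta<\bar\vartheta\}$ and $\bar\vartheta$ admits an inner ray from $w$ landing in $R_{\bar\vartheta}$, then by the Lax formula (Lemma \ref{L_Lax_forlu_precise}) and $\bar\theta\leq\bar\vartheta$, the same ray is optimal for $\bar\theta$; since $\mathcal L^{d+1}$-a.e.\ point satisfies both conditions (the first by Fubini, the second by the \cite{biadan} result applied to $\bar\vartheta$), regularity of $\bar\theta$ follows $\mathcal L^{d+1}$-a.e. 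Once the two partitions agree a.e., the uniqueness of the disintegration delivers the regularity of the disintegration for $\bar\theta$ as well.
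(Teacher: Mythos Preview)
Your approach for the $\bar\vartheta$ part is exactly right and matches the paper: this theorem is not proved here but simply \emph{cited} from \cite{biadan}, and the only work is to observe that $\bar\vartheta$ is a complete $\tilde{\mathtt c}$-Lipschitz foliation (property \eqref{E_tilde_c_comp_varth}) with the uniform non-degeneracy \eqref{E_uniform_opening_on_fibration}, so that Theorem 8.1 and Corollary 8.2 of \cite{biadan} apply verbatim.

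For the $\bar\theta$ part, however, you take an unnecessarily indirect route, and it has a gap. The paper's approach is again a direct citation: $\bar\theta$ \emph{itself} satisfies the completeness property \eqref{E_cone_add_theta}, so the results of \cite{biadan} apply to $\bar\theta$ just as they do to $\bar\vartheta$ (this is the meaning of the remark at the start of Section \ref{S_decomposition_of_fibration} that the decomposition ``can be performed using any $\sigma$-continuous function with the property \eqref{E_cone_add_theta}''). No transfer from $\bar\vartheta$ to $\bar\theta$ is needed at this stage.

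Your transfer argument, by contrast, asserts that ``the two partitions agree $\mathcal L^{d+1}$-a.e.'', but this does not follow from $\{\bar\theta\neq\bar\vartheta\}$ being $\mathcal L^{d+1}$-negligible. The partition element through $w$ is determined by $\partial^\pm\bar\theta(\a,w)$, which depends on the values of $\bar\theta$ at \emph{other} points $w'$, not only at $w$. Lemma \ref{L_ray_vartheta_theta} gives only the inclusion $\partial^-\bar\theta(\a,w)\subset\partial^-\bar\vartheta(\a,w)$ when $\bar\theta(\a,w)=\bar\vartheta(\a,w)$; the reverse inclusion would require $\bar\theta(\a,w')=\bar\vartheta(\a,w')$ for the relevant $w'\in\partial^-\bar\vartheta(\a,w)$, and you do not establish this. (The paper does eventually compare the cones $C_{\bar\theta}$ and $C_{\bar\vartheta}$, in the proof of Proposition \ref{P_back_regul_theta}, but that comes \emph{after} Theorem \ref{T_regula} and uses additional structure.) So the correct fix is simply to invoke \cite{biadan} directly for $\bar\theta$, as the paper does.
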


Once we are given that $\mathcal L^{d+1}$-a.e. point is regular for $\bar \vartheta$, replacing the area estimate Lemma \ref{L_inner_area_estimate_phi} with Lemma \ref{L_inner_area_estimate} in the proof of Proposition \ref{P_back_regul_phi} yields the following result. 

\begin{proposition}
\label{P_back_regul_vartheta}
For all $\bar t>0$, the set of regular points for $\bar \vartheta$ in $\{t = \bar t\}$ is $\mathcal H^d \llcorner_{\{t=\bar t\}}$-conegligible.	
\end{proposition}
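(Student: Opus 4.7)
The plan is to mimic, essentially verbatim, the proof of Proposition \ref{P_back_regul_phi}, replacing the potential $\bar \phi$ by $\bar \vartheta$, the area estimate Lemma \ref{L_inner_area_estimate_phi} by Lemma \ref{L_inner_area_estimate}, and the starting regularity input Proposition \ref{Theorem521} by Theorem \ref{T_regula}. The hypothesis $\bar t > 0$ is needed only to have $\bar{t} \pm \varepsilon > 0$ for some small $\varepsilon$, exactly as in the potential case where the special time $t=1$ was singled out.

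First I would invoke Theorem \ref{T_regula} to obtain that the set of regular points of $\bar\vartheta$ is $\mathcal L^{d+1}$-conegligible in $\tilde{\mathfrak A} \times \tRd$. By Fubini's theorem applied to the product of the transverse coordinate $\a$ and the time $t$, there exists $\varepsilon > 0$ arbitrarily small such that $\mathcal H^d$-almost every point of $\{t = \bar t+\varepsilon\}$ and of $\{t = \bar t - \varepsilon\}$ is regular for $\bar\vartheta$. Let $S$ be a bounded Borel subset of regular points in $\{t = \bar t + \varepsilon\}$.

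Next I would apply Lemma \ref{L_inner_area_estimate} with source time $\bar t + \varepsilon$, target time $\bar t$ and auxiliary parameter $\varepsilon'\in(0,\bar t-\varepsilon)$, obtaining a selection $\sigma_{\bar t}(\a,z) \in \interr(\partial^-\bar\vartheta(\a,z)\cap\{t=\bar t\})$ satisfying
\[
\mathcal H^d \big(\sigma_{\bar t}(S)\big) \;\geq\; \bigg(\frac{\bar t - \varepsilon'}{\bar t + \varepsilon -\varepsilon'}\bigg)^d \mathcal H^d(S).
\]
By choosing the carriage so that the selected inner rays reach $\{t=\bar t-\varepsilon\}$ (which can be done since the cones $\tilde C_\a$ are uniformly transversal by \eqref{E_uniform_opening_on_fibration}), the endpoints of these inner rays lie in the backward-regular set of $\{t=\bar t-\varepsilon\}$ up to a $\mathcal H^d$-negligible subset. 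The key geometric fact, exactly as in Proposition \ref{P_back_regul_phi} and illustrated in Figure \ref{regular_propo_figure}, is that an inner optimal ray of $\bar\vartheta$ joining two regular points is itself made of regular points: this is because along an inner direction Propositions \ref{Pdirectionfaces1_minus}(\ref{item2Pdire_minus}) and \ref{Pdirectionfaces1}(\ref{item2Pdire}) propagate the identities $\mathcal D^\pm\bar\vartheta = \conv\,\mathcal D^\pm\bar\vartheta$ and the dimension of $C^\pm_{\bar\vartheta}$ from the endpoints to interior points.

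Letting $\varepsilon,\varepsilon'\downarrow 0$ the ratio on the right-hand side tends to $1$, so the $\mathcal H^d$-measure of regular points of $\{t=\bar t\}$ produced by this construction is arbitrarily close to $\mathcal H^d(S)$; varying $S$ over a sequence exhausting the regular points of $\{t=\bar t+\varepsilon\}$ then yields that $\mathcal H^d\llcorner_{\{t=\bar t\}}$-a.e. point is regular for $\bar\vartheta$. I expect the only technical nuisance to be verifying that the selection $\sigma_{\bar t}$ can be chosen with the prolongation property down to $\{t=\bar t-\varepsilon\}$ in a Borel fashion; but this is precisely the content of Lemma \ref{L_inner_area_estimate} together with the $\sigma$-compactness of $R^\pm_{\bar\vartheta}$ (the analog of Proposition \ref{P_borel_regu_transport_sets}), so no serious obstacle arises beyond what was already handled in Section \ref{Ss_back_forw_regul_phi}.
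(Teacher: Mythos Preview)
Your proposal is correct and follows essentially the same approach as the paper: the paper's proof is literally the one-line remark that, given the $\mathcal L^{d+1}$-a.e.\ regularity from Theorem \ref{T_regula}, one replaces Lemma \ref{L_inner_area_estimate_phi} by Lemma \ref{L_inner_area_estimate} in the proof of Proposition \ref{P_back_regul_phi}. Your write-up is in fact more detailed than the paper's, spelling out the Fubini step, the inner-ray propagation via Propositions \ref{Pdirectionfaces1_minus}\eqref{item2Pdire_minus} and \ref{Pdirectionfaces1}\eqref{item2Pdire}, and the limit $\varepsilon,\varepsilon'\downarrow 0$; the only cosmetic slip is the exponent $d$ in the area estimate, which in the fiberwise Lemma \ref{L_inner_area_estimate} is $h$, but this is immaterial since the ratio tends to $1$ in either case.
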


%
%
%
%
%
%

We now transfer the regularity w.r.t. $\bar \vartheta$ to the regularity w.r.t. $\bar \theta$. The sketch of the proof is as follows: since by Fubini theorem, for $\mathcal H^1$-a.e. $\bar t$ it holds that $\mathcal H^d \llcorner_{\{t=\bar t\}}$-a.e. point is regular for $\bar \theta$, and the same occurs for $\bar \vartheta$, we can use the fact that $\bar \theta(t,x) = \bar \vartheta(t,x)$ for $\mathcal H \llcorner_{\{t=\bar t\}}$-a.e. $x$ and every $\bar t>0$ in order to obtain that the points $z,\sigma_s(z)$ used in Lemma \ref{L_inner_area_estimate} are regular points for $\bar \theta$. The key observation is that the inner rays for $\bar \vartheta$ will be also inner rays for $\bar \theta$. 

\begin{proposition}
	\label{P_back_regul_theta}
	If $\bar t>0$ then $\mathcal H^h \llcorner_{\{t=\bar t\}}$-a.e. point is regular for $\mintheta$. 
\end{proposition}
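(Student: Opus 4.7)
The strategy parallels the proof of Proposition~\ref{P_back_regul_phi}, now mixing three regularity inputs in the fiber $\{\a\}\times[0,+\infty)\times\R^h$: the $\mathcal L^{h+1}$-a.e.\ regularity of $\bar\theta$ from Theorem~\ref{T_regula}, the slice-wise regularity of $\bar\vartheta$ from Proposition~\ref{P_back_regul_vartheta}, and the slice-wise identity $\bar\theta=\bar\vartheta$ (a consequence of the rectifiability of the boundaries of the level sets of $\theta$, already noted after the definition of $\vartheta$). Fix $\bar t>0$. By Fubini applied to Theorem~\ref{T_regula}, choose $\tau_1\in(\bar t,\bar t+\varepsilon)$ and $\tau_2\in(\bar t-\varepsilon,\bar t)$ with the property that $\mathcal H^h\llcorner_{\{t=\tau_i\}}$-a.e.\ point is simultaneously regular for $\bar\theta$, regular for $\bar\vartheta$, and lies in $\{\bar\theta=\bar\vartheta\}$; call this conegligible set at $\{t=\tau_1\}$ by $S$. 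By Lemma~\ref{L_inner_area_estimate} applied to $\bar\vartheta$, every $z\in S$ admits an inner backward ray of $\bar\vartheta$ passing through $w:=\sigma_{\bar t}(z)\in\{t=\bar t\}$ and terminating at $z'':=\sigma_{\tau_2}(z)\in\{t=\tau_2\}$, with the area estimate $\mathcal H^h(\sigma_s(S))\ge c(\varepsilon)\mathcal H^h(S)$ for $s\in\{\bar t,\tau_2\}$.

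The first step is to upgrade the inner ray of $\bar\vartheta$ into a ray of $\bar\theta$. Granted the absolute continuity of the pushforwards $(\sigma_s)_\sharp(\mathcal H^h\llcorner_S)$ with respect to $\mathcal H^h$---which I expect to follow from a Jacobian refinement of the area estimate in the spirit of \cite{biadan,Car:strictly}---for $\mathcal H^h$-a.e.\ $z\in S$ both $z''$ and $w$ inherit the good-point properties, in particular $\bar\theta=\bar\vartheta$ there. The chain
\[
\bar\theta(z'')\le\bar\vartheta(z'')=\bar\vartheta(z)=\bar\theta(z),\qquad \bar\theta(z)\ge\bar\theta(z''),
\]
where the last inequality comes from monotonicity of $\bar\theta$ in $\tilde C_\a$ (since $z-z''\in\tilde C_\a$), forces equality, and applying the same sandwich to any intermediate point shows $\bar\theta$ is constant on the whole segment $[z'',z]$; in particular $\bar\theta(w)=\bar\theta(z)=\bar\theta(z'')$.

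The second step identifies the direction structure at $w$. Lemma~\ref{L_level_sets_contains_parallelogram} applied to $\bar\theta$ at $(z'',z)\in\partial^+\bar\theta$ produces the parallelogram $Q=(z''+\tilde C_\a)\cap(z-\tilde C_\a)\subset\partial^+\bar\theta(z'')$, whose relative interior in $\aff(z''+O)$ is non-empty, with $O$ the minimal extremal face of $\tilde C_\a$ carrying $z-z''$; interior points $u$ of $Q$ near $w$ satisfy $w-u\in\interr O$ and $\bar\theta(u)=\bar\theta(w)$, so $\interr O\cap\{t=1\}\subset\mathcal D^-\bar\theta(w)$. Conversely, since $w$ lies on an inner ray of $\bar\vartheta$ from the regular point $z$, Proposition~\ref{Pdirectionfaces1_minus}(2) applied to $\bar\vartheta$ yields $\mathcal D^-\bar\vartheta(w)=O\cap\{t=1\}$, and Lemma~\ref{L_ray_vartheta_theta} (using $\bar\theta(w)=\bar\vartheta(w)$) yields $\mathcal D^-\bar\theta(w)\subset O\cap\{t=1\}$. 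Combining gives $\mathcal D^-\bar\theta(w)=O\cap\{t=1\}$, closed convex of dimension $\ell:=\dim O-1$, so (i)-(ii) of $R^{-,\ell}_{\bar\theta}$ hold and (iii) is witnessed by $z$. A symmetric argument placing the parallelogram $(w+\tilde C_\a)\cap(z-\tilde C_\a)\subset\partial^+\bar\theta(w)$ with $z''$ as the downstream witness---and using the forward-side analogue of Lemma~\ref{L_ray_vartheta_theta} that follows from the same sandwich combined with cone-monotonicity---places $w$ in $R^{+,\ell}_{\bar\theta}$. Hence $w\in R_{\bar\theta}$. Letting $\varepsilon\to 0$ exhausts $\mathcal H^h\llcorner_{\{t=\bar t\}}$-a.e.\ $w$. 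The principal obstacle is the absolute continuity of $(\sigma_s)_\sharp$, needed to force the ray endpoints into the good sets; once this is in hand, all other ingredients reduce to manipulations of the parallelogram structure that are already codified in Lemma~\ref{L_level_sets_contains_parallelogram} and Propositions~\ref{Pdirectionfaces1_minus}--\ref{Pdirectionfaces1}.
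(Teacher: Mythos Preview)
Your approach is the same as the paper's in outline (Fubini at nearby times, the area estimate for $\minvartheta$, and the slice-wise identity $\mintheta=\minvartheta$), but there are two points to correct.

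\medskip

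\textbf{The ``principal obstacle'' is already resolved.} The absolute continuity of $(\sigma_s)_\sharp(\mathcal H^h\llcorner_S)$ that you flag is an immediate consequence of the \emph{lower} bound in Lemma~\ref{L_inner_area_estimate}: if $N\subset\{t=s\}$ is $\mathcal H^h$-null, apply the estimate to $S':=\sigma_s^{-1}(N)\cap S$ to get $0=\mathcal H^h(N)\ge\mathcal H^h(\sigma_s(S'))\ge c\,\mathcal H^h(S')$, hence $S'$ is null. This is exactly how the paper arranges that $\sigma_{\bar t-\varepsilon'}(z)$ lands in the conegligible set $R_{\mintheta}\cap R_{\minvartheta}\cap\{\mintheta=\minvartheta\}$ at $t=\bar t-\varepsilon'$; no Jacobian refinement is needed.

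\medskip

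\textbf{The forward analogue of Lemma~\ref{L_ray_vartheta_theta} is false.} From $\mintheta(w)=\minvartheta(w)$ alone you cannot conclude $\partial^+\mintheta(w)\subset\partial^+\minvartheta(w)$: if $w'\in\partial^+\mintheta(w)$ then $\minvartheta(w')\ge\mintheta(w')=\mintheta(w)=\minvartheta(w)$ and monotonicity gives the same inequality again, never the reverse. So your forward step for $\mathcal D^+\mintheta(w)\subset O\cap\{t=1\}$ does not go through as written. The paper avoids this by working at the endpoint $z$ (where one has full regularity for both functions) rather than at $w$. Since $\mintheta(z)=\minvartheta(z)$, Lemma~\ref{L_ray_vartheta_theta} gives $C_{\mintheta}^-(z)\subset C_{\minvartheta}^-(z)=O$; both are extremal faces of $\tilde C_\a$, and the direction $z-\sigma_s(z)$ lies in $\interr O$ but also in $C_{\mintheta}^-(z)$ (because $\sigma_s(z)\in\partial^-\mintheta(z)$ by the sandwich), which is impossible if $C_{\mintheta}^-(z)\subsetneq O$. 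Hence $C_{\mintheta}(z)=O$, the selected ray is \emph{inner for $\mintheta$}, and the observation from Proposition~\ref{P_back_regul_phi} (an inner $\mintheta$-ray joining two $\mintheta$-regular points consists of $\mintheta$-regular points) finishes the proof without ever invoking a forward analogue. Your argument is easily repaired by inserting this extremal-face dichotomy in place of the claimed forward lemma; note also that once $C_{\mintheta}(z)=O$ is known, your use of $\mintheta(w)=\minvartheta(w)$ at the intermediate point becomes unnecessary.
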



\begin{proof}
By Theorem \ref{T_regula} we can fix $\varepsilon' > 0$ such that $\mathcal H^d_{t = \bar t \pm \varepsilon'}$-a.e. point is regular for both $\bar \theta$ and $\bar \vartheta$. By the area estimate of Lemma \ref{L_inner_area_estimate}, we can also assume that
\begin{equation*}
S \subset R_{\bar \theta} \cap R_{\bar \vartheta} \cap \{t = \bar t+ \epsilon\} \cap \{\bar \theta = \bar \vartheta\}
\end{equation*}
and for all $z \in S$
\begin{equation*}
\sigma_{\bar t - \varepsilon'}(z) \in R_{\bar \theta} \cap R_{\bar \vartheta} \cap \{t = \bar t - \varepsilon'\} \cap \{\bar \theta = \bar \vartheta\}.
\end{equation*}
In particular we deduce that
\begin{equation}
\label{E_same_val_varth_th}
\sigma_{\bar t- \varepsilon'}(z) \in \partial^- \bar \theta(z).
\end{equation}

If $z - \sigma_s(z)$ belongs to an inner direction of
\begin{equation*}
C_{\bar \theta}(z) = C_{\bar \theta}^+(z) = C^-_{\bar{\theta}}(z),
\end{equation*}
then the same observation at the end of the proof of Proposition \ref{P_back_regul_phi} will give immediately the statement: the arbitrariness of $\varepsilon'$ is used as in the proof of the proposition in order to obtain that $\mathcal H^d \llcorner_{\{t=1\}}$-a.e. point is regular.

We thus left with proving this last property of $\sigma_s(z)$, i.e. $z - \sigma_s(z) \in C_{\bar \theta}$.

Being $\sigma_s(z)- z$ an inner ray of $- C_{\bar \vartheta}(z)$ and $C_{\bar \theta}(z)$, $C_{\bar{\vartheta}}$ extremal cones of $\tilde C_{\mathtt p_\a z}$, it follows that if $C_{\bar \theta}(z) \subsetneq C_{\bar{\vartheta}}$, by the extremality property then for $s < \bar t + \varepsilon'$
\begin{equation*}
(z - \sigma_s(z)) \cap C_{\bar \theta}(z) = \emptyset,
\end{equation*}
contradicting \eqref{E_same_val_varth_th}.
%
%
%
%
%
%
%
\end{proof}

The proof of the regularity of the disintegration of $\mathcal H^d \llcorner_{\{t=1\}}$ w.r.t. the partition $\{Z^\ell_{\a,\b}\}_{\ell,\a,\b}$ follows the same line of Section \ref{Ss_regul_disi_phi}: just replace the use of Lemma \ref{L_inner_area_estimate_phi} with Lemma \ref{L_inner_area_estimate}.

The statement is analogous.

\begin{proposition}
\label{P_ac_disi_bar_theta}
 The disintegration 
		 \begin{equation*}
		 \mathcal H^d \llcorner_{\cup_{\ell,\a,\b} Z^\ell_{\a,\b} \cap \{t=1\}} = \sum_\ell \int v^\ell_{\a,\b} \eta^\ell(d\a d\b)
		 \end{equation*}
		 w.r.t. the partition $\{Z^h_{\a,\b} \cap\{t=1\} \}_{h,\a,\b}$ is regular:
		\[v^h_{\a,\b} \ll \mathcal H^\ell \llcorner_{Z^\ell_{\a,\b}}.\]
\end{proposition}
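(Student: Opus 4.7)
The plan is to closely follow the scheme of Section \ref{Ss_regul_disi_phi}, replacing the area estimate Lemma \ref{L_inner_area_estimate_phi} by its analog Lemma \ref{L_inner_area_estimate} for the foliation induced by $\mintheta$, and transferring the resulting bounds from $\minvartheta$ to $\mintheta$ via the identity $\mintheta=\minvartheta$ holding $\mathcal H^h\llcorner_{\{t=\bar t\}}$-a.e.\ on each slice. The starting point is Theorem \ref{T_regula} applied to $\mintheta$, which gives
\[
\mathcal L^{d+1}\llcorner_{\cup_{\ell,\a,\b}Z^\ell_{\a,\b}} \;=\; \sum_\ell \int f(\ell,\a,\b,w)\,\mathcal H^{\ell+1}\llcorner_{Z^\ell_{\a,\b}}(dw)\,\eta^\ell(d\a\,d\b).
\]
Since the transversality of the directed partition ensures that $Z^\ell_{\a,\b}\cap\{t=\bar t\}$ has dimension $\ell$ for every $\bar t>0$, Fubini yields, for $\mathcal L^1$-a.e.\ $\varepsilon>0$, the analogous identity with $\mathcal H^d\llcorner_{\{t=1+\varepsilon\}}$ on the left and $\mathcal H^\ell\llcorner_{\{t=1+\varepsilon\}\cap Z^\ell_{\a,\b}}(dw)$ on the right.

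To push this regularity back from $\{t=1+\varepsilon\}$ to $\{t=1\}$, I would fix a reference extremal cone $K\in\mathcal C(\ell,[0,+\infty)\times\R^h)$ and restrict attention to the set of indices
\[
\A^\ell_{\varepsilon,K} := \Big\{(\a,\b): Z^\ell_{\a,\b}\cap\{t=1+\varepsilon\}\ne\emptyset,\ K\subset\mathtt p_{\aff K}\, C^\ell_{\a,\b}\Big\},
\]
so that $K$ points inside every cone of the family. Applying Lemma \ref{L_inner_area_estimate} once forward and once backward (with intermediate time $t=1+\varepsilon/2$) produces a Jacobian $c(\ell,\a,\b,w)\in((1-\varepsilon/2)^\ell,2^\ell)$ along the selected inner rays. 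As in Section \ref{Ss_regul_disi_phi}, these inner rays through $K$ are parallel inside each element of the sheaf, so the associated ray map
\[
\mathtt t_{V_K}: Z^\ell_{\a,\b}\cap\{t=1+\varepsilon/2\}\to Z^\ell_{\a,\b}\cap\{t=1\}
\]
is a genuine translation. Consequently $c$ depends only on the index $(\ell,\a,\b)$, the push-forward $(\mathtt t_{V_K})_\sharp\mathcal H^\ell$ coincides with $\mathcal H^\ell$ on the target slice, and setting $f'(\ell,\a,\b,w):=c(\ell,\a,\b)\,f(\ell,\a,\b,\mathtt t_{V_K}^{-1}(w))$ yields
\[
\mathcal H^d\llcorner_{\{t=1\}\cap\bigcup_{\A^\ell_{\varepsilon,K}}Z^\ell_{\a,\b}} \;=\; \int_{\A^\ell_{\varepsilon,K}} f'(\ell,\a,\b,w)\,\mathcal H^\ell\llcorner_{\{t=1\}\cap Z^\ell_{\a,\b}}(dw)\,\eta^\ell(d\a\,d\b).
\]

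To conclude, I would invoke the sheaf decomposition of Proposition \ref{P_countable_partition_in_reference_directed_planes} applied to the $\sigma$-compact partition $\tilde{\mathbf D}'$ (Lemma \ref{L_tilde_mathbf_D_prime_sigma_compact}) to cover $\cup_{\ell,\a,\b}Z^\ell_{\a,\b}$ by countably many sets of the form $\A^\ell_{\varepsilon,K}$, with $K$ ranging over a prebase of $\mathcal C(\ell,[0,+\infty)\times\R^h)$ and $\varepsilon\downarrow 0$, and then appeal to the uniqueness of the disintegration to conclude $v^\ell_{\a,\b}\ll\mathcal H^\ell\llcorner_{Z^\ell_{\a,\b}}$. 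The only non-routine step is that the area estimate in Lemma \ref{L_inner_area_estimate} is stated for $\minvartheta$ rather than for $\mintheta$: I expect this to be handled exactly as in the proof of Proposition \ref{P_back_regul_theta}, where the inner-ray foliations of $\mintheta$ and $\minvartheta$ are shown to coincide on the $\mathcal H^h\llcorner_{\{t=\bar t\}}$-conegligible set $\{\mintheta=\minvartheta\}$, so that the area formula transfers along inner rays of $\mintheta$ without loss.
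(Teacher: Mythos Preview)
Your proposal is correct and takes essentially the same approach as the paper: the paper's proof of this proposition is literally a one-line reference stating that one follows Section \ref{Ss_regul_disi_phi} verbatim, replacing Lemma \ref{L_inner_area_estimate_phi} by Lemma \ref{L_inner_area_estimate}, and you have simply spelled out those details. Your remark about transferring the area estimate from $\minvartheta$ to $\mintheta$ via Proposition \ref{P_back_regul_theta} is exactly the point the paper has in mind when it invokes Lemma \ref{L_inner_area_estimate} for the $\mintheta$-partition.
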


\subsection{Proof of Theorem \ref{T_step_theorem}}
\label{Ss_proof_th_step}

Let $\hat{\mathbf D}' = \{Z^\ell_\c,C^\ell_\c\}_{\ell,\c}$ be the directed locally affine given by \eqref{E_hat_mathbf_D_prime_def}. By Corollary \ref{P_back_regul_theta} we have that the complement of $\cup_{\ell,\c} Z^\ell_\c$ is $\mathcal H^d \llcorner_{\{t=1\}}$-negligible; Proposition \ref{P_ac_disi} yields that the disintegration is regular.

Consider now the map $\check{\mathtt r}$ defined in \eqref{E_check_mathbf_r_def}: $\check{\mathtt r}$ is invertible on $\hat{\mathbf D}'$ and, as observed at the end of Section \ref{Ss_mapping_sheaf_to_fibration}, the two measures $\check{\mathtt r}^{-1} \mathcal H^\ell \llcorner_{Z^\ell_\c}$ and $\mathcal H^{\ell} \llcorner_{\check{\mathtt r}^{-1}(Z^\ell_\c)}$ are equivalent. Hence the first three points of Theorem \ref{T_step_theorem} follows: namely
\begin{itemize}
\item the sets
\begin{equation*}
Z^{h,\ell}_{\a,\b} := \check{\mathtt r}^{-1}(Z^{\ell}_{\c}) \subset Z^h_\a
\end{equation*}
has affine dimension $\ell+1$ and
\begin{equation*}
C^{h,\ell}_{\a,\b} := \check{\mathtt r}^{-1}(C^{\ell}_{\c})
\end{equation*}
is an $(\ell+1)$-dimensional extremal cone of $C^h_\a$, $\ell \leq h$ and $\c = (\a,\b)$;
\item $\bar \mu(\cup_{h,\ell,\a,\b} Z^{h,\ell}_{\a,\b}) = 1$;
\item the disintegration of $\mathcal H^d \llcorner_{\{t = 1\}}$ w.r.t. the partition $\{Z^{h,\ell}_{\a,\b}\}_{h,\ell,\a,\b}$ is regular; 
\end{itemize}

Since the preorder $\bar \preccurlyeq$ induced by $\bar \theta$ is Borel and $\tilde \pi'(\bar \preccurlyeq) = 1$, then by Theorem \ref{T_uniqueness} the transference plan is concentrated on the diagonal $\bar E = \bar \preccurlyeq \cap \bar \preccurlyeq^{-1}$. Hence by construction the transference of mass occurs along the equivalence classes: these directions are exactly the optimal rays defined by \eqref{E_sub_diff_bar_theta}. On the regular set by definition these directions are the extremal cone $C^\ell_\c$ in $Z^\ell_\c$:
\begin{itemize}
\item if $\bar \pi \in \Pi(\bar \mu,\{\underline{\bar \nu}^h_\a\})$ with $\underline{\bar \nu}^h_\a = \mathtt p_2 \underline{\bar \pi}^h_\a$, then $\bar \pi$ satisfies \eqref{E_finite_cone_cost} iff
\begin{equation*}
\bar \pi  = \sum_{h,\ell} \int \bar \pi^{h,\ell}_{\a,\b} m^{h,\ell}(d\a d\b), \qquad \int \ind_{C^{h,\ell}_{\a,\b}}(x-x') \bar \pi^{h,\ell}_{\a,\b} < \infty;
\end{equation*}
\end{itemize}

The indecomposability of the sets $Z^{h,\ell}_{\a,\b}$ with $\ell = h$ is a consequence of Proposition \ref{P_minimality_on_class} and Lemma \ref{L_indecompo_mini}, stating that the function $\theta$ constructed by a given $\Gamma$ is constant on $Z^{h,h}_{\a,\b}$ and the sets $Z^{h,h}_{\a,\b}$ are indecomposable. 
This proves Point \eqref{Point_step_th_indeco} of Theorem \ref{T_step_theorem}:
\begin{itemize}
\item \label{Point_step_th_indeco_pr} if $\ell = h$, then for every carriage $\Gamma$ of $\bar \pi \in \Pi(\bar \mu,\{\underline{\bar \nu}^h_\a\})$ there exists a $\bar \mu$-negligible set $N$ such that each $Z^{h,h}_{\a,\b} \setminus N$ is $\ind_{C^{h,h}_{\a,\b}}$-cyclically connected.
\end{itemize}

In the next section we will use this theorem in order to prove Theorem \ref{T_main_theor}.

\section{Proof of Theorem \ref{T_main_theor}}
\label{S_transl_orig}

By Theorem \ref{T_potential_deco} we have a first directed locally affine decomposition $\mathbf D_{\bar \phi}$, and by Theorem \ref{T_step_theorem} a method of refining a given locally affine partition in order to obtain indecomposable sets or lower the dimension of the sets by at least $1$. It is thus clear that after at most $d$ steps we obtain a locally affine decomposition $\{Z^h_\a,C^h_\a\}$ with the properties stated in Point \eqref{Point_step_th_indeco} of Theorem \ref{T_step_theorem}. 

\begin{theorem}
\label{T_cone_ theorem}
Given a transference plan $\underline{\bar \pi} \in \Pi(\bar \mu,\bar \nu)$ optimal w.r.t. the cost $\tilde{\mathtt c}$, then there is a directed locally affine partition $\bar{\mathbf D} = \{Z^h_\a,C^h_\a\}_{h,\a}$ such that 
\begin{enumerate}
\item $Z^h_\a$ has affine dimension $h+1$ and $C^{h}_{\a}$ is an $(h+1)$-dimensional proper extremal cone of $\epi\,\bar{\mathtt c}$; moreover $\aff\,Z^h_a = \aff(z + C^h_\a)$ for all $z \in Z^h_\a$;
\item $\mathcal H^d(\{t=1\} \setminus \cup_{h,\a} Z^{h}_{\a}) = 0$;
\item the disintegration of $\mathcal H^d \llcorner_{\{t = 1\}}$ w.r.t. the partition $\{Z^h_\a\}_{h,\a}$ is regular, i.e.
\begin{equation*}
\mathcal H^d \llcorner_{\{t=1\}} = \sum_h \int \xi^h_\a \eta^h(d\a), \qquad \xi^h_\a \ll \mathcal H^h \llcorner_{Z^h_\a \cap \{t=1\}};
\end{equation*}
\item if $\bar \pi \in \Pi(\bar \mu,\{\underline{\bar \nu}^h_\a\})$ with $\underline{\bar \nu}^h_\a = \mathtt p_2 \underline{\bar \pi}^h_\a$, then $\bar \pi$ is optimal iff
\begin{equation*}
\bar \pi  = \sum_{h} \int \bar \pi^{h}_{\a} m^{h}(d\a), \qquad \int \ind_{C^{h}_{\a}}(z-z') \bar \pi^h_\a < \infty;
\end{equation*}
\item \label{Point_5_tt_c_indec} for every carriage $\Gamma$ of any $\bar \pi \in \Pi(\bar \mu,\{\underline{\bar \nu}^h_\a\})$ there exists a $\bar \mu$-negligible set $N$ such that each $Z^{h}_{\a,\b} \setminus N$ is $\ind_{C^{h}_{\a}}$-cyclically connected.
\end{enumerate}
\end{theorem}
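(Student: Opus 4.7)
The strategy is to combine Theorem \ref{T_potential_deco} with an iterated application of Theorem \ref{T_step_theorem}, exhausting the dimension in at most $d$ steps.

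First I would take the directed locally affine partition $\mathbf D_{\bar\phi}=\{Z^h_\a,C^h_\a\}_{h,\a}$ produced by Theorem \ref{T_potential_deco} as the starting point. Because $\underline{\bar\pi}$ is optimal for $\bar{\mathtt c}$, Point \eqref{Point_3_pote_deco} of Theorem \ref{T_potential_deco} gives $\int \ind_{C^h_\a}(z-z')\,\underline{\bar\pi}^h_\a(dzdz')<\infty$ for $m^h$-a.e.\ $\a$, so the hypothesis \eqref{E_finite_cone_cost} of Theorem \ref{T_step_theorem} is met. Applying Theorem \ref{T_step_theorem} to this initial partition produces a refinement $\{Z^{h,\ell}_{\a,\b},C^{h,\ell}_{\a,\b}\}_{h,\a,\ell,\b}$ where the pieces with $\ell=h$ are already indecomposable in the sense of Point \eqref{Point_step_th_indeco}, while the pieces with $\ell<h$ form a directed locally affine partition of strictly lower pointwise dimension which still has regular disintegration w.r.t.\ $\mathcal H^d\llcorner_{\{t=1\}}$.

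The inductive step is then transparent: set aside the indecomposable pieces produced so far, regroup the remaining pieces (whose affine dimensions are all at most $d$) into a directed locally affine partition $\mathbf D^{(1)}$, and re-apply Theorem \ref{T_step_theorem} with the conditional plans of $\underline{\bar\pi}$ on those pieces as input. The only nontrivial check is that the cones produced at each level are still projections of proper extremal faces of $\epi\,\bar{\mathtt c}$: this is precisely the transitivity of extremality — an extremal face of an extremal cone of $\bar{\mathtt c}$ is an extremal cone of $\bar{\mathtt c}$ — which, together with the conclusion of Theorem \ref{T_step_theorem} that $C^{h,\ell}_{\a,\b}$ is extremal in $C^h_\a$, preserves property (1) along the induction. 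Regularity of the disintegration of $\mathcal H^d\llcorner_{\{t=1\}}$ is preserved by Point (3) of Theorem \ref{T_step_theorem} at each iteration, and $\bar\mu$-conegligibility of the union is preserved because $\bar\mu\ll\mathcal H^d\llcorner_{\{t=1\}}$.

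Since the maximal affine dimension among the non-indecomposable pieces drops by at least one at every iteration, the procedure terminates after at most $d$ steps, yielding a single directed locally affine partition $\bar{\mathbf D}=\{Z^h_\a,C^h_\a\}_{h,\a}$ in which \emph{every} piece is indecomposable. Points (1)--(4) of the statement then follow directly from the corresponding points of Theorem \ref{T_step_theorem} applied at each stage, together with the equivalence between $\bar{\mathtt c}$-optimality of $\bar\pi$ and finite $\ind_{C^h_\a}$-cost from Theorem \ref{T_potential_deco}. Point \eqref{Point_5_tt_c_indec} is the accumulation of Point \eqref{Point_step_th_indeco} applied at the stage where each set reached its indecomposable status.

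The main technical obstacle is the bookkeeping across iterations: one must ensure that the fixed transference plan $\underline{\bar\pi}$, once disintegrated on the partition obtained after $k$ iterations, still yields (in each fiber) a plan whose marginals are exactly the $\underline{\bar\nu}^h_\a$ that Theorem \ref{T_step_theorem} requires, and that indecomposability proved w.r.t.\ $\Pi(\bar\mu,\{\underline{\bar\nu}^h_\a\})$ at stage $k$ is not lost when one later refines in the complementary family of pieces of lower dimension. Both points follow from the fact that the refinement at each step only acts within the fibers $Z^h_\a$ of the previous partition, so disintegration is compatible and the class $\Pi(\bar\mu,\{\underline{\bar\nu}^h_\a\})$ is stable under further fiberwise refinement. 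With that compatibility in hand, the Borel-regularity of the construction transfers from each step (guaranteed by Theorem \ref{T_step_theorem}) to the finite iteration, concluding the proof.
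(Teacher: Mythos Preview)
Your proposal is correct and follows essentially the same route as the paper: start from the potential partition $\mathbf D_{\bar\phi}$ of Theorem~\ref{T_potential_deco}, then iterate Theorem~\ref{T_step_theorem} at most $d$ times, setting aside the indecomposable top-dimensional pieces at each stage and using transitivity of extremality to keep the cones extremal in $\epi\,\bar{\mathtt c}$. Your treatment of the bookkeeping (stability of $\Pi(\bar\mu,\{\underline{\bar\nu}^h_\a\})$ under fiberwise refinement and preservation of disintegration regularity) is more explicit than the paper's, which dispatches the whole argument in a single sentence at the start of Section~\ref{S_transl_orig}.
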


The last step is to project back the decomposition for $\{t=1\} \times \R^d$ to $\R^d$, and cut the cones $C^h_\a$ at $\{t=1\}$.
\begin{itemize}
\item Take $S^h_\a := Z^h_\a \cap \{t=1\}$ and $O^h_\a = - C^h_\a \cap \{t=1\}$: the minus sign is because of the definition of $\bar{\mathtt c}$ in formula \eqref{E_def_bar_tt_c}. By the transversality of $C^h_\a$ if follows that $\dim\,O^h_\a = \dim\,O^h_\a = h$. Since $Z^h_\a$ is parallel to $C^h_\a$, then $O^h_\a$ is parallel to $S^h_\a$. Being $C^h_\a$ an extremal cone of $\bar{\mathtt c}$ and the latter $1$-homogeneous, it follows that $O^h_\a$ is an extremal face of $\mathtt c$.
\item The fact that the partition cover $\mathcal L^d$-a.e. point and that the disintegration is regular are straightforward.
\item Being $\bar \mu(\{t=1\}) = \bar \nu(\{t=0\}) = 1$, then it is clear that we can assume that every carriage $\Gamma$ is a subset of $\{t=1\} \times \{t=0\}$. This implies that when computing the cyclical indecomposability we use only vectors in $O^h_\a$, and thus the last point of Theorem \ref{T_main_theor} follows from Point \eqref{Point_5_tt_c_indec} of Theorem \ref{T_cone_ theorem}.
\end{itemize}
This concludes the proof of Theorem \ref{T_main_theor}.

\subsection{\texorpdfstring{The case of $\nu \ll \mathcal L^d$}{The case of a.c. second marginal}}
\label{Ss_further_remk}

In general the end points of optimal rays are in $Z^h_\a + C^h_\a \cap \{t=1\}$, which in general is larger that $Z^h_\a$. As an example, one can consider the case $\nu = \delta_{x=0}$, and verify that $Z^h_a = C^h_\a \setminus \{0\}$. However in the case $\nu \ll \mathcal L^d$, the partition is independent of $\pi$, i.e. following \cite{biadan} we call it \emph{universal}.

The key observation is that we can replace the roles of the measures $\bar \mu$, $\bar{\nu}$, obtaining then a decomposition $\{W^{h'}_{\a'},C^{h'}_{\a'}\}_{h',\a'}$ for $\{t=0\}$. Now recall that along optimal rays $\bar \theta$ is constant: being inner ray of the cones $C^h_\a$, $C^{h'}_{\a'}$, then it follows that $C^h_\a = C^{h'}_{\a'}$, and from this it is fairly easy to see that $Z^h_\a = W^{h'}_{\a'}$. In particular, for each optimal transference plan $\bar \pi$ it follows that its second marginals are given by the disintegration of $\bar \nu$ on $Z^h_\a$, i.e. they are independent of $\bar \pi$.

Translating this decomposition into the original setting, we can thus strengthen Theorem \ref{T_main_theor} as follows.

\begin{theorem}
\label{T_main_theor_ac}
Let $\mu,\nu \ll \mathcal L^d$. Then there exists a family of sets $\{S^h_\a,O^h_\a\}_{\nfrac{h = 0,\dots,d}{\a \in \A^h}}$ in $\R^d$ such that the following holds:
\begin{enumerate}
\item $S^h_\a$ is a locally affine set of dimension $h$;
\item $O^h_\a$ is a $h$-dimensional convex set contained in an affine subspace parallel to $\aff\,S^h_\a$ and given by the projection on $\R^d$ of a proper extremal face of $\epi\,\mathtt c$; 
\item $\mathcal L^d(\R^d \setminus \cup_{h,\a} S^h_\a) = 0$;
\item \label{Point_disint_lebe_main_ac} the partition is Lebesgue regular;
\item if $\pi \in \Pi(\mu,\nu)$ then optimality in \eqref{E_original_transp} is equivalent to 
\begin{equation*}
\sum_h \int \bigg[ \int \ind_{O^h_\a}(x'-x) \pi^h_\a(dxdx') \bigg] m^h(d\a) < \infty,
\end{equation*}
where $\pi = \sum_h \int_{\A^h} \pi^h_\a m^h(d\a)$ is the disintegration of $\pi$ w.r.t. the partition $\{S^h_\a \times \R^d\}_{h,\a}$;
\item \label{Point_indecomp_main_th_ac} for every carriage $\Gamma$ of $\pi \in \Pi(\mu,\nu)$ there exists a $\mu$-negligible set $N$ such that each $S^h_\a \setminus N$ is $\ind_{O^h_\a}$-cyclically connected.
\end{enumerate}
\end{theorem}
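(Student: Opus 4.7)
The plan is to first invoke Theorem \ref{T_main_theor} with an arbitrary choice of optimal plan $\underline{\pi}$, producing a partition $\{S^h_\a,O^h_\a\}_{h,\a}$ that immediately satisfies properties (1)--(4) and gives Point (5) and Point (6) in the weaker form relative to $\Pi(\mu,\{\underline{\nu}^h_\a\})$ in place of $\Pi(\mu,\nu)$. The strengthening required here is to show that the partition, and in particular the family of conditional second marginals $\underline{\nu}^h_\a$, is independent of $\underline{\pi}$, so that $\Pi(\mu,\{\underline{\nu}^h_\a\}) = \Pi(\mu,\nu)$.

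To prove this universality I would repeat the construction of Sections \ref{S_fdlap}--\ref{S_disintegration_on_affine} with the roles of $\bar\mu$ and $\bar\nu$ exchanged. Concretely, since $\nu \ll \mathcal L^d$, one may introduce the mirrored potential $\check\phi(t,x) := -\extphi(1-t,-x)$, which is a forward/backward solution of the Hamilton--Jacobi equation associated to the reversed cost $\check{\mathtt c}(t,x) := \bar{\mathtt c}(-t,-x)$, and run the same iterative refinement to obtain a second directed locally affine partition $\{W^{h'}_{\a'},D^{h'}_{\a'}\}_{h',\a'}$ of $\R^+\times\R^d$. By the analogues of Theorems \ref{T_potential_deco} and \ref{T_step_theorem}, this second partition covers $\{t=0\}$ up to an $\mathcal H^d$-null set, the disintegration of $\mathcal H^d\llcorner_{\{t=0\}}$ on $\{W^{h'}_{\a'}\cap\{t=0\}\}$ is regular, and each $D^{h'}_{\a'}$ is the projection of a proper extremal cone of $\epi\,\bar{\mathtt c}$.

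The central step is to identify the two partitions. For $\underline{\bar\pi}$-a.e.\ $(z,z')$, the displacement $z-z'$ lies in the relative interior of the cone $C^h_\a$ attached to $z$, by Point (6) of Theorem \ref{T_main_theor} combined with Proposition \ref{Pdirectionfaces1}; by the mirror argument it also lies in the relative interior of $D^{h'}_{\a'}$ attached to $z'$. Since a vector in the relative interior of a proper extremal face of $\epi\,\bar{\mathtt c}$ determines that face uniquely as the minimal extremal face containing it (cf.\ Lemma \ref{definition_of_Q}), we deduce $C^h_\a = D^{h'}_{\a'}$ and in particular $h=h'$. Combined with the transversality of both $Z^h_\a$ and $W^{h}_{\a'}$ to $\{t=\text{const}\}$ and with the indecomposability of each family, this forces the relatively open sets $Z^h_\a$ and $W^{h}_{\a'}$ to agree up to $\mathcal L^{d+1}$-null sets. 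Consequently $\underline{\nu}^h_\a$ coincides with the conditional probability of $\nu$ on $W^{h}_{\a'}\cap\{t=0\}$, which is manifestly independent of $\underline{\pi}$.

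Once universality is proved, every $\pi\in\Pi(\mu,\nu)$ automatically belongs to $\Pi(\mu,\{\underline{\nu}^h_\a\})$, so both the optimality criterion (5) and the cyclical connectedness (6) extend verbatim to all of $\Pi(\mu,\nu)$, while Points (1)--(4) are unchanged upon projecting at $t=1$ via $S^h_\a := Z^h_\a\cap\{t=1\}$, $O^h_\a := -C^h_\a \cap\{t=1\}$, exactly as at the end of Section \ref{S_transl_orig}. The main obstacle is the identification $C^h_\a = D^{h'}_{\a'}$ along $\underline{\bar\pi}$-a.e.\ optimal ray: the inner-ray property feeding this argument must be carried through in both time directions simultaneously, which is precisely what the assumption $\nu \ll \mathcal L^d$ permits via the area estimate of Lemma \ref{L_inner_area_estimate} applied symmetrically from $t=0$ and $t=1$.
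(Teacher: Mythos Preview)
Your proposal is correct and follows essentially the same route as the paper: exchange the roles of $\bar\mu$ and $\bar\nu$ (which the hypothesis $\nu\ll\mathcal L^d$ permits), obtain a second directed locally affine partition $\{W^{h'}_{\a'},D^{h'}_{\a'}\}$ for $\{t=0\}$, identify the cones along optimal rays via the uniqueness of the minimal extremal face containing an interior direction, conclude $Z^h_\a=W^{h'}_{\a'}$, and deduce that the second marginals $\underline{\bar\nu}^h_\a$ are simply the disintegration of $\bar\nu$ on the common partition and hence independent of $\underline{\bar\pi}$. The paper's own argument (the paragraph in Section~\ref{Ss_further_remk}) is much terser but structurally identical.

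One small overstatement to correct: it is not true that \emph{every} $\pi\in\Pi(\mu,\nu)$ automatically lies in $\Pi(\mu,\{\underline\nu^h_\a\})$; what universality gives you is that every \emph{optimal} $\pi$ does, and that every $\pi$ satisfying the finiteness condition in Point~(5) does (since $x'-x\in O^h_\a$ forces $x'$ into the $\nu$-piece matched with $S^h_\a$ once $Z^h_\a=W^{h'}_{\a'}$). That is exactly what is needed for both directions of Point~(5) and for Point~(6).
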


\newpage

\newpage

\appendix

\section{Equivalence relations, Disintegration and Uniqueness}
\label{A_appendix_1}

The following theorems have been proved in Section 4 of \cite{BiaCar}. For a more comprehensive analysis, see \cite{MR2462372}. 

\subsection{Disintegration of measures}
\label{Aa_disi_measure}

Let $E$ be an equivalence relation on $X$, and let $\mathtt h : X \mapsto X/E$ be the quotient map. The set $\mathfrak A := X/E$ can be equipped with the $\sigma$-algebra
\[
\mathtt A := \Big\{ A \subset \mathfrak A : \mathtt h^{-1}(A) \in \mathcal B(X) \Big\}.
\]

Let $\mu \in \mathcal P(X)$, and define $\xi := \mathtt h_\sharp \mu$.

A \emph{disintegration of $\mu$ consistent with $E$} is a map $\mathfrak A \ni \mathfrak a \mapsto \mu_\mathfrak a \in \mathcal P(X)$ such that
\begin{enumerate}
\item for all $B \in \mathcal B(X)$ the function $\mathfrak a \mapsto \mu_\mathfrak a(B)$ is $\xi$-measurable,

\item for all $B \in \mathcal B(X)$, $A \in \mathtt A$
\[
\mu(B \cap \mathtt h^{-1}(A)) = \int_A \mu_\mathfrak a(B) \xi(d\mathfrak a).
\]
\end{enumerate}

The disintegration is \emph{unique} if the \emph{conditional probabilities} $\mu_\mathfrak a$ are uniquely defined $\xi$-a.e.. It is \emph{strongly consistent} if $\mu_\mathfrak a(E_\mathfrak a) = 1$.

\begin{theorem}
\label{T_disint_gener}
Under the previous assumptions, there exists a unique consistent disintegration.

If the image space is a Polish space and $\mathtt h$ is Borel, then the disintegration is strongly consistent.
\end{theorem}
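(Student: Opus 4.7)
My plan is to handle existence, uniqueness, and strong consistency in three separate steps, each relying on the Radon--Nikodym theorem together with the countable generation of the Borel $\sigma$-algebra $\mathcal B(X)$.

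For existence, I would first observe that for every fixed $B \in \mathcal B(X)$ the set function $A \mapsto \mu(B \cap \mathtt h^{-1}(A))$ is a finite Borel measure on $(\mathfrak A,\mathtt A)$ which is absolutely continuous with respect to $\xi := \mathtt h_\sharp \mu$. The Radon--Nikodym theorem then furnishes a $\xi$-measurable density $f_B : \mathfrak A \to [0,1]$, unique up to $\xi$-null sets. The plan is to fix a countable algebra $\mathcal A_0 \subset \mathcal B(X)$ generating the Borel $\sigma$-algebra of $X$ and pick one version of $f_B$ for each $B \in \mathcal A_0$. Outside a single $\xi$-negligible exceptional set, the map $B \mapsto f_B(\a)$ is then finitely additive on $\mathcal A_0$, takes values in $[0,1]$, and satisfies $f_X(\a)=1$. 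To upgrade finite to countable additivity I would enumerate the countably many monotone sequences $B_n^{(k)} \searrow \emptyset$ in $\mathcal A_0$: since $\int f_{B_n^{(k)}} d\xi = \mu(B_n^{(k)}) \to 0$, a diagonal extraction gives a conegligible set on which $f_{B_n^{(k)}}(\a) \to 0$ for every $k$. Carath\'eodory extension then produces the probability measure $\mu_\a$ on $\mathcal B(X)$, and the consistency relation holds first on $\mathcal A_0$ and then on $\mathcal B(X)$ by monotone class.

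For uniqueness, if $\{\mu_\a\}$ and $\{\mu'_\a\}$ are two consistent disintegrations, then for each fixed $B \in \mathcal B(X)$ the functions $\a \mapsto \mu_\a(B)$ and $\a \mapsto \mu'_\a(B)$ are both Radon--Nikodym densities of the same measure, hence agree $\xi$-a.e. Taking $B$ ranging over the countable algebra $\mathcal A_0$ and intersecting the associated conegligible sets yields a single conegligible set on which $\mu_\a = \mu'_\a$ on $\mathcal A_0$; a monotone class argument then extends the equality to all of $\mathcal B(X)$.

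For strong consistency, I would exploit the Polish structure of $\mathfrak A$ and the Borelness of $\mathtt h$ to pick a countable family $\{A_n\}_{n \in \N} \subset \mathtt A$ separating the points of $\mathfrak A$, in the sense that $\{\a\} = \bigcap_{\a \in A_n} A_n \cap \bigcap_{\a \notin A_n} A_n^c$ for every $\a$. Applying the consistency relation with $B = \mathtt h^{-1}(A_n)$ and $A = A_n$ (respectively $A = A_n^c$) gives $\mu_\a(\mathtt h^{-1}(A_n)) = \chi_{A_n}(\a)$ for $\xi$-a.e.\ $\a$, and, discarding the countable union of the associated $\xi$-null sets, one obtains on a conegligible set
\[
\mu_\a \big( \mathtt h^{-1}(\{\a\}) \big) = \mu_\a(E_\a) = 1.
\]

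The hard part will be the first step. The delicate issue is to pass from the a.e.-defined densities $f_B$ to a bona fide probability kernel $\a \mapsto \mu_\a$: since the $f_B$ are defined only up to $\xi$-null sets, one must make only countably many a.e.\ choices and then verify countable additivity on a single conegligible set, which is exactly why countable generation of $\mathcal B(X)$ is used twice---once to define the family $\{f_B\}_{B \in \mathcal A_0}$ and once to control countably many decreasing sequences. By contrast, uniqueness and strong consistency reduce to clean countable separation arguments once the kernel is in hand.
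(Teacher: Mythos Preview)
The paper does not supply a proof of this theorem: it is stated in the appendix with the remark that it ``has been proved in Section 4 of \cite{BiaCar}'', and a reference to \cite{MR2462372} for a fuller treatment. So there is no in-paper argument to compare against; I can only assess your proposal on its own merits.

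Your arguments for uniqueness and for strong consistency are the standard ones and are fine as sketched. The gap is in the existence step, precisely at the passage from finite to countable additivity. You write that you will ``enumerate the countably many monotone sequences $B_n^{(k)}\searrow\emptyset$ in $\mathcal A_0$'' and handle them one by one. But a countable algebra $\mathcal A_0$ admits uncountably many decreasing sequences with empty intersection, so you cannot list them and remove a single $\xi$-null exceptional set covering all of them. For a \emph{fixed} sequence $B_n\searrow\emptyset$ your monotonicity-plus-$L^1$ argument does give $f_{B_n}(\a)\to 0$ for $\xi$-a.e.\ $\a$, but the exceptional set depends on the sequence, and an uncountable union of null sets need not be null. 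This is exactly why a naked Radon--Nikodym argument does not produce a regular conditional probability on an arbitrary countably generated space.

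The standard repairs all exploit the Polish structure of $X$ (not merely countable generation of $\mathcal B(X)$). One route is to use a Borel isomorphism of $X$ with a Borel subset of $[0,1]$, take $\mathcal A_0$ to be the algebra generated by dyadic intervals, and build $\mu_\a$ from the monotone function $q\mapsto f_{(-\infty,q]}(\a)$ via the usual CDF construction; monotonicity replaces the need to control all decreasing sequences. Another route is to choose $\mathcal A_0$ so that it contains a \emph{compact approximating class} (closed under finite intersections, with the finite intersection property forcing nonempty intersection), which makes every $[0,1]$-valued finitely additive set function on $\mathcal A_0$ automatically $\sigma$-additive. Either way, the hard work is topological, not purely measure-theoretic, and your sketch should be amended to reflect this.
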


\subsection{Linear preorders and uniqueness of transference plans}
\label{Aa_linear_pre_unique_transf}

We now recall some results about uniqueness of transference plans. Let $A \subset X \times X$ be a Borel set such that
\begin{enumerate}
\item \label{Cond_1_linear_preorder} $A \cup A^{-1} = X$, where
\[
A^{-1} = \big\{ (x,x') : (x',x) \in A \big\};
\]
\item \label{Cond_2_transit_preorder} $(x,x'),(x',x'') \in A \ \Rightarrow \ (x,x'') \in A$.
\end{enumerate}
We will say that $A$ is (the graph of) a \emph{preorder} if Condition \eqref{Cond_2_transit_preorder} holds, and a \emph{linear preorder} if all points are comparable (Condition \ref{Cond_1_linear_preorder}). It is easy to see that
\[
E := A \cap A^{-1}
\]
is an equivalence relation. Let $\mathtt h : X \mapsto X/E$ be a quotient map.

\begin{theorem}
If $\mu \in \mathcal P(X)$, then the disintegration of $\mu$ w.r.t. $E$ is strongly consistent:
\[
\mu = \int \mu_\mathfrak a \xi(d\mathfrak a), \quad \xi := \mathtt h_\sharp \mu, \ \mu_\mathfrak a(E_\mathfrak a) = 1.
\]
\end{theorem}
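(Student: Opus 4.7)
The plan is to avoid working directly with the a priori non-Borel quotient map $\mathtt h : X \to X/E$ by constructing a concrete Borel function $\phi : X \to [0,1]$ whose level sets agree with the equivalence classes of $E$ up to $\mu$-null sets, and then to conclude via Theorem \ref{T_disint_gener} applied to $\phi$ rather than to $\mathtt h$.

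\textbf{Construction of $\phi$.} I would set
\[
\phi(x) := \mu\big( A^{-1}(x) \big), \qquad A^{-1}(x) := \{ y \in X : (y,x) \in A \}.
\]
Since $A \subset X \times X$ is Borel, its $x$-sections are Borel and $\phi$ is Borel by Fubini. The preorder axiom gives $(x,x') \in A \Rightarrow A^{-1}(x) \subseteq A^{-1}(x')$, so $\phi$ is monotone with respect to $A$; in particular $\phi$ is constant on each equivalence class $[x]_E := \mathtt h^{-1}(\mathtt h(x))$.

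\textbf{Essentially single-valued fibers.} Next I would show that for $(\phi_\sharp\mu)$-a.e.\ $s \in [0,1]$ the fiber $\phi^{-1}(s)$ contains exactly one $E$-class of positive $\mu$-measure. Suppose $\phi(x) = \phi(x') = s$ with $(x,x') \in A$ and $(x',x) \notin A$. Then $A^{-1}(x) \subseteq A^{-1}(x')$ with equal $\mu$-measure, so $\mu\bigl(A^{-1}(x') \setminus A^{-1}(x)\bigr) = 0$. Transitivity of $A$ forces $[x']_E \subseteq A^{-1}(x') \setminus A^{-1}(x)$: if $y \in [x']_E$ and $(y,x) \in A$, then $(x',y),(y,x) \in A$ would yield $(x',x) \in A$, contradicting the choice of $x,x'$. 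Hence $\mu([x']_E) = 0$, and linearity of the preorder rules out any further positive-measure class in $\phi^{-1}(s)$ strictly above $[x]_E$.

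\textbf{Conclusion and main obstacle.} Since $\phi : X \to [0,1]$ is Borel into a Polish space, the second part of Theorem \ref{T_disint_gener} will yield a strongly consistent family $\{\tilde\mu_s\}_{s}$ with $\mu = \int \tilde\mu_s\,(\phi_\sharp\mu)(ds)$ and $\tilde\mu_s(\phi^{-1}(s)) = 1$. By the previous paragraph, for $(\phi_\sharp\mu)$-a.e.\ $s$ there is a unique $E$-class $E_{\mathfrak a(s)} \subset \phi^{-1}(s)$ of full $\tilde\mu_s$-measure; setting $\mu_{\mathfrak a(s)} := \tilde\mu_s$ and pushing $\phi_\sharp\mu$ forward along $s \mapsto \mathfrak a(s)$ will produce a disintegration consistent with $E$. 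By uniqueness (first part of Theorem \ref{T_disint_gener}) it must coincide with the abstract disintegration along $\mathtt h$, and the strong consistency $\mu_\mathfrak a(E_\mathfrak a) = 1$ will follow, together with $\xi = \mathtt h_\sharp\mu$. The hard part will be the measurable selection $s \mapsto \mathfrak a(s)$ of the full-measure class and the verification that the induced map into $X/E$ is $\mathtt A$-measurable in the sense of Section \ref{Aa_disi_measure}; linearity of the preorder is crucial here, since it forces the positive-$\mu$-measure classes to sit in a totally ordered chain so that at most countably many are collapsed by $\phi$.
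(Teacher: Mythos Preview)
The paper does not actually prove this statement: the appendix opens by saying these results ``have been proved in Section 4 of \cite{BiaCar}'', and the theorem is stated without argument. So there is no in-paper proof to compare against. Your approach is, in fact, the starting point of the argument in that reference, but as written it contains a real gap.

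The function $\phi(x)=\mu(A^{-1}(x))$ is the right object, and your observation that each fiber $\phi^{-1}(s)$ contains at most one $E$-class of positive $\mu$-measure is correct. What does \emph{not} follow is that the conditional probability $\tilde\mu_s$ from the disintegration along $\phi$ is carried by a single $E$-class. Take $X=[0,1]\times\{0,1\}$, $\mu=\mathcal L^1\otimes\tfrac12(\delta_0+\delta_1)$, and the Borel linear order $(s,i)\preccurlyeq(t,j)\Leftrightarrow s<t$ or $(s=t$ and $i\le j)$. Then $E$ is the identity relation, every class is a $\mu$-null singleton (so your ``at most one positive-measure class'' criterion is vacuous), $\phi(s,i)=s$, and $\tilde\mu_s=\tfrac12\delta_{(s,0)}+\tfrac12\delta_{(s,1)}$ is split between two distinct classes for every $s$. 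Thus your claim ``for $(\phi_\sharp\mu)$-a.e.\ $s$ there is a unique $E$-class $E_{\mathfrak a(s)}\subset\phi^{-1}(s)$ of full $\tilde\mu_s$-measure'' is false in general; the difficulty is not the measurable selection you flag at the end but the fact that $\phi$ can genuinely fail to separate classes on a set of full measure.

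The remedy in \cite{BiaCar} is to iterate: on each fiber $\phi^{-1}(s)$ the preorder is again Borel and linear and $\tilde\mu_s$ is a probability, so one repeats the construction; a transfinite (in fact countable-ordinal) recursion then produces a Borel map into a Polish space whose level sets coincide with the $E$-classes up to a $\mu$-null set, after which Theorem~\ref{T_disint_gener} applies directly.
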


Let $\bar \pi \in \mathcal P(X \times X)$ such that $\bar \pi(E) = 1$, and let $\bar \mu := (\mathtt p_1)_\sharp \bar \pi$, $\bar \nu := (\mathtt p_2)_\sharp \bar \pi$ be its marginals. Consider the disintegration
\[
\bar \pi = \int \bar \pi_\mathfrak a \bar \xi(d\mathfrak a), \quad \bar \xi = (\mathtt h \circ \mathtt p_1)_\sharp \bar \pi.
\]
Let $\bar \mu_\mathfrak a$, $\bar \nu_\mathfrak a$ be the conditional probabilities of $\bar \mu$, $\bar \nu$ w.r.t. $E$:
\[
\bar \mu = \int \bar \mu_\mathfrak a \bar \xi(d\mathfrak a) = \int (\mathtt p_1)_\sharp \bar \pi_\mathfrak a \bar \xi(d\mathfrak a), \quad \bar \nu = \int \bar \nu_\mathfrak a \bar \xi(d\mathfrak a) = \int (\mathtt p_2)_\sharp \bar \pi_\mathfrak a \bar \xi(d\mathfrak a),
\]

\begin{theorem}
\label{T_uniqueness}
If $\pi \in \Pi(\bar \mu,\bar \nu)$ satisfies
\[
\int \ind_A \pi < +\infty,
\]
then $\pi(E) = 1$, and moreover the disintegration of $\pi$ on $E$ satisfies
\[
\pi = \int \pi_\mathfrak a \bar \xi(d\mathfrak a), \quad \pi_\mathfrak a \in \Pi(\bar \mu_\mathfrak a,\bar \nu_\mathfrak a).
\]
\end{theorem}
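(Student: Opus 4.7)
First observe that the hypothesis $\int \ind_A \pi < +\infty$ is equivalent to $\pi(A) = 1$, since $\ind_A$ takes only the values $0$ and $+\infty$. The plan is to first show $\pi(E) = 1$, and then to obtain the fiberwise identification of the conditional probabilities from the uniqueness of the disintegration (Theorem \ref{T_disint_gener}).

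The first step is to pass to the quotient. Let $\mathtt h : X \to X/E$ be the quotient map and set $\mathtt h_i := \mathtt h \circ \mathtt p_i$ on $X \times X$, $i=1,2$. Since $A$ is a preorder whose symmetric part is $E$, the quotient set $\mathfrak A := X/E$ carries a \emph{linear order} $\leq$ induced by $A$; the hypothesis $\pi(A) = 1$ means $\mathtt h_1(x,x') \leq \mathtt h_2(x,x')$ for $\pi$-a.e.\ $(x,x')$. Next I will verify that both marginals of $(\mathtt h_1,\mathtt h_2)_\sharp \pi$ equal $\bar\xi$: since the disintegration $\bar\mu = \int \bar\mu_\mathfrak a \bar\xi(d\mathfrak a)$ is consistent with $E$ (as is $\bar\nu$), we have $\mathtt h_\sharp \bar\mu = \mathtt h_\sharp \bar\nu = \bar\xi$, and the marginals of $\pi$ being $\bar\mu, \bar\nu$ give $(\mathtt h_i)_\sharp \pi = \bar\xi$ for $i=1,2$.

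The key step is now to prove that any probability measure $\sigma$ on $\mathfrak A \times \mathfrak A$ with both marginals equal to $\bar\xi$ and concentrated on $\{\mathfrak a \leq \mathfrak b\}$ must be concentrated on the diagonal. For every measurable upper set $U \subset \mathfrak A$ (i.e. $\mathfrak a \in U$ and $\mathfrak a \leq \mathfrak b$ imply $\mathfrak b \in U$) the inclusion
\begin{equation*}
\{ \mathfrak a \in U\} \cap \{\mathfrak a \leq \mathfrak b\} \subset \{\mathfrak b \in U\}
\end{equation*}
together with $\sigma(\{\mathfrak a \leq \mathfrak b\})=1$ yields $\bar\xi(U) = \sigma(\mathfrak a \in U) \leq \sigma(\mathfrak b \in U) = \bar\xi(U)$, so equality holds, hence $\sigma(\mathfrak a \notin U, \mathfrak b \in U) = 0$. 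Applied to a countable separating family of upper sets (which exists because $\mathfrak A$ is analytic/standard after identification, and the order is Borel by Condition (1) of the appendix), this forces $\sigma(\{\mathfrak a < \mathfrak b\}) = 0$, hence $\sigma$ is concentrated on the diagonal. Back in $X\times X$ this means $\pi\big(\{\mathtt h_1 = \mathtt h_2\}\big) = 1$, i.e.\ $\pi(E) = 1$.

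Finally, having $\pi(E)=1$, apply Theorem \ref{T_disint_gener} to the map $\mathtt h_1 = \mathtt h_2$ ($\pi$-a.e.) from $X\times X$ to $\mathfrak A$: since $(\mathtt h_1)_\sharp \pi = \bar\xi$, we obtain the strongly consistent disintegration
\begin{equation*}
\pi = \int \pi_\mathfrak a\, \bar\xi(d\mathfrak a), \qquad \pi_\mathfrak a(E_\mathfrak a \times E_\mathfrak a)=1.
\end{equation*}
Projecting by $\mathtt p_1$ and $\mathtt p_2$ gives two disintegrations of $\bar\mu$ and $\bar\nu$ consistent with $E$; by the uniqueness part of Theorem \ref{T_disint_gener} these coincide $\bar\xi$-a.e.\ with $\bar\mu_\mathfrak a$ and $\bar\nu_\mathfrak a$ respectively, so $\pi_\mathfrak a \in \Pi(\bar\mu_\mathfrak a, \bar\nu_\mathfrak a)$ for $\bar\xi$-a.e.\ $\mathfrak a$. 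The main obstacle is the order-theoretic step that forces concentration on the diagonal: it requires the linear order on the (analytic) quotient to be regular enough to admit a countable separating family of Borel upper sets, which is where the Polish/Borel structure implicit in the setting is used.
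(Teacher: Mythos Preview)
The paper does not prove this theorem; it merely cites \cite{BiaCar} (Section 4) for the result, so there is no ``paper's own proof'' to compare against. Your argument is essentially the standard one and is correct in outline: reduce to the quotient, observe that the pushforward $\sigma := (\mathtt h_1,\mathtt h_2)_\sharp \pi$ has both marginals equal to $\bar\xi$ and is supported on the graph of the induced linear order, then force $\sigma$ onto the diagonal and disintegrate.

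The one genuine soft spot is exactly the step you flag: the existence of a \emph{countable} separating family of measurable upper sets on the quotient is not automatic from ``$\mathfrak A$ is analytic/standard''. The clean way to close this (and the way it is done in \cite{BiaCar}) is to bypass the quotient and build a Borel order-preserving map into $[0,1]$ directly: set $F(x) := \bar\xi\big(\{[y] : [y] \leq [x]\}\big)$, which is Borel on $X$ because $A$ is Borel and sections depend measurably on $x$. Then $F$ is monotone for $\preccurlyeq$, the pushforward $(F,F)_\sharp \sigma$ lives on $\{s\leq t\}\subset [0,1]^2$ with equal marginals, and the countable family $\{F > q\}_{q\in\mathbb Q}$ does the separation; the level sets $\{F = c\}$ on which this fails are order-intervals of $\bar\xi$-measure zero except for at most countably many atoms, which are handled separately. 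With this fix your proof is complete, and the disintegration step via Theorem~\ref{T_disint_gener} and uniqueness of conditional probabilities is exactly right.
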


\subsection{Minimality of equivalence relations}
\label{Aa_minimal_equivalence}

Consider a family of equivalence relations on $X$,
\[
\mathcal E = \Big\{ E_\mathfrak e \subset X \times X, \mathfrak e \in \mathfrak E \Big\}.
\]
Assume that $\mathcal E$ is closed under countable intersection
\[
\{E_{\mathfrak e_i}\}_{i \in \N} \subset \mathcal E \quad \Rightarrow \quad \bigcap_{i \in \N} E_{\mathfrak e_i} \in \mathcal E,
\]
and let $\mu \in \mathcal P(X)$.

By Theorem \ref{T_disint_gener}, we can construct the family of disintegrations
\[
\mu = \int_{\mathfrak A_\mathfrak e} \mu_\mathfrak a \xi_\mathfrak e(d\mathfrak a), \quad \mathfrak e \in \mathfrak E.
\]

\begin{theorem}
\label{T_minimal_equival}
There exists $E_{\bar{\mathfrak e}} \in \mathfrak{E}$ such that for all $E_\mathfrak e$, $\mathfrak e \in \mathcal{E}$, the following holds:
\begin{enumerate}
\item
\label{item:poifvojfivfi}
if $\mathtt A_\mathfrak e$, $\mathtt A_{\bar{\mathfrak e}}$ are the $\sigma$-subalgebras of the Borel sets of $X$ made of the saturated sets for $E_\mathfrak e$, $E_{\bar{\mathfrak e}}$ respectively, then for all $A \in \mathtt A_\mathfrak e$ there is $A' \in \mathtt A_{\bar{\mathfrak e}}$ s.t.~ $\mu(A \vartriangle A') = 0$;
\item 
\label{item:gfoaifvfffv}
if $\xi_\mathfrak e$, $\xi_{\bar{\mathfrak e}}$ are the restrictions of $\mu$ to $\mathtt A_\mathfrak e$, $\mathtt A_{\bar{\mathfrak e}}$ respectively, then $\mathtt A_\mathfrak e$ can be embedded (as measure algebra) in $\mathtt A_{\bar{\mathfrak e}}$ by Point \eqref{item:poifvojfivfi}: let
\[
\xi_{\bar{\mathfrak e}}  = \int \xi_{\bar{\mathfrak e},\mathfrak a} \xi_\mathfrak e(d\mathfrak a)
\]
be the disintegration of $\xi_{\bar{\mathfrak e}}$ consistent with the equivalence classes of $\mathtt A_\mathfrak e$ in $\mathtt A_{\bar{\mathfrak e}}$.
\item
\label{item:gfoaivdasvds}
If
\[
\mu = \int \mu_{\mathfrak e,\mathfrak a} \xi_\mathfrak e(d\mathfrak a), \quad \mu = \int \mu_{\bar{\mathfrak e},\beta} \xi_{\bar{\mathfrak e}}(d\beta)
\]
are the disintegration consistent with $E_\mathfrak e$, $E_{\bar{\mathfrak e}}$ respectively, then
\begin{equation*}
\mu_{\mathfrak e,\mathfrak a} = \int \mu_{\bar{\mathfrak e},\mathfrak b} \xi_{\bar{\mathfrak e},\mathfrak a}(d\mathfrak b).
\end{equation*}
for $\xi_\mathfrak e$-.a.e.~$\mathfrak a$.
\end{enumerate}
\end{theorem}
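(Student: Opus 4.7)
The natural strategy is to translate the problem into a question about closed subspaces of $L^2(\mu)$ and exploit separability. For each $\mathfrak e\in\mathfrak E$ let $\mathtt A_\mathfrak e$ be the $\sigma$-algebra of $\mu$-measurable $E_\mathfrak e$-saturated sets, and let
\[
H_\mathfrak e := L^2(X,\mathtt A_\mathfrak e,\mu) \subset L^2(X,\mu),
\]
with $P_\mathfrak e$ the orthogonal projection onto $H_\mathfrak e$. Since refining an equivalence relation enlarges the saturated $\sigma$-algebra, closure of $\mathcal E$ under countable intersection is equivalent to closure of the family $\{H_\mathfrak e\}_\mathfrak e$ under countable suprema of closed subspaces in $L^2(\mu)$.

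Assuming $X$ Polish and $\mu$ Borel (so that $L^2(\mu)$ is separable), fix a countable dense set $\{f_n\}_n \subset L^2(\mu)$ and set $\sigma_n := \sup_\mathfrak e \|P_\mathfrak e f_n\|^2$. Pick $\mathfrak e_{n,k}\in\mathfrak E$ with $\|P_{\mathfrak e_{n,k}}f_n\|^2\to\sigma_n$ and define $E_{\bar{\mathfrak e}} := \bigcap_{n,k}E_{\mathfrak e_{n,k}}\in\mathcal E$. Since $E_{\bar{\mathfrak e}}$ refines every $E_{\mathfrak e_{n,k}}$, one has $H_{\bar{\mathfrak e}}\supset H_{\mathfrak e_{n,k}}$, hence $\|P_{\bar{\mathfrak e}}f_n\|^2=\sigma_n$ for every $n$. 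Now given any $\mathfrak e$, let $\mathfrak e'\in\mathcal E$ satisfy $E_{\mathfrak e'}=E_\mathfrak e\cap E_{\bar{\mathfrak e}}$; then $H_{\mathfrak e'}\supset H_{\bar{\mathfrak e}}$ and in particular $\sigma_n\le\|P_{\mathfrak e'}f_n\|^2\le\sigma_n$, so by Pythagoras $P_{\mathfrak e'}f_n=P_{\bar{\mathfrak e}}f_n$ for every $n$. Density of $\{f_n\}$ forces $P_{\mathfrak e'}=P_{\bar{\mathfrak e}}$ on all of $L^2(\mu)$, hence $H_\mathfrak e\subset H_{\mathfrak e'}=H_{\bar{\mathfrak e}}$, which is precisely Point~\eqref{item:poifvojfivfi}: every $A\in\mathtt A_\mathfrak e$ agrees $\mu$-a.e.\ with some $A'\in\mathtt A_{\bar{\mathfrak e}}$.

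For Points~\eqref{item:gfoaifvfffv} and \eqref{item:gfoaivdasvds} I invoke the general disintegration result (Theorem~\ref{T_disint_gener}) in a tower. The inclusion $\mathtt A_\mathfrak e\subset\mathtt A_{\bar{\mathfrak e}}$ modulo $\mu$-null sets induces a measurable quotient map $\mathfrak A_{\bar{\mathfrak e}}\to\mathfrak A_\mathfrak e$ such that the composition with the canonical projection $X\to\mathfrak A_{\bar{\mathfrak e}}$ coincides $\mu$-a.e.\ with $X\to\mathfrak A_\mathfrak e$. Disintegrating $\xi_{\bar{\mathfrak e}}$ along this quotient map yields the family $\{\xi_{\bar{\mathfrak e},\mathfrak a}\}_{\mathfrak a}$ with $\xi_{\bar{\mathfrak e}}=\int\xi_{\bar{\mathfrak e},\mathfrak a}\,\xi_\mathfrak e(d\mathfrak a)$. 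Plugging this into $\mu=\int\mu_{\bar{\mathfrak e},\mathfrak b}\,\xi_{\bar{\mathfrak e}}(d\mathfrak b)$ and using Fubini gives
\[
\mu = \int\!\!\int \mu_{\bar{\mathfrak e},\mathfrak b}\,\xi_{\bar{\mathfrak e},\mathfrak a}(d\mathfrak b)\,\xi_\mathfrak e(d\mathfrak a),
\]
and the inner integral defines a family of probabilities on $X$ carried by the $E_\mathfrak e$-class $E_{\mathfrak e,\mathfrak a}$ (because each $\mu_{\bar{\mathfrak e},\mathfrak b}$ is carried by a single $E_{\bar{\mathfrak e}}$-class, all of which sit inside the same $E_\mathfrak e$-class when $\mathfrak b$ ranges over $\xi_{\bar{\mathfrak e},\mathfrak a}$-a.e.\ point). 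By uniqueness in Theorem~\ref{T_disint_gener}, this family must coincide $\xi_\mathfrak e$-a.e.\ with $\mu_{\mathfrak e,\mathfrak a}$, proving Point~\eqref{item:gfoaivdasvds}.

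\textbf{Main obstacle.} The analytic core (the maximality argument via projections) is short; the delicate point is the measure-algebraic one, namely showing that the $\mu$-a.e.\ inclusion $\mathtt A_\mathfrak e\subset\mathtt A_{\bar{\mathfrak e}}$ actually induces a Borel quotient $\mathfrak A_{\bar{\mathfrak e}}\to\mathfrak A_\mathfrak e$ to which the standard disintegration applies, and that the iterated disintegrations truly recompose via Fubini. This requires that one works throughout modulo $\mu$-null sets and invokes the strong consistency part of Theorem~\ref{T_disint_gener}, which in turn relies on the Polish structure of $X$.
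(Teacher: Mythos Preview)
The paper does not actually prove Theorem~\ref{T_minimal_equival}: it is quoted in Appendix~\ref{A_appendix_1} with the remark that ``the following theorems have been proved in Section~4 of \cite{BiaCar}'', and no argument is given. So there is no in-paper proof to compare your proposal against.

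That said, your $L^2$-projection argument is correct and is a clean way to obtain Point~\eqref{item:poifvojfivfi}. One small remark: your claim that closure of $\mathcal E$ under countable intersection is ``equivalent'' to closure of $\{H_\mathfrak e\}$ under countable suprema is stronger than what you need and not quite literally true at the level of $\sigma$-algebras (the $\sigma$-algebra of $\bigcap_n E_n$-saturated sets can be strictly larger than $\sigma(\bigcup_n \mathtt A_{E_n})$). However, your proof never actually uses this identification; you only use that $E_{\bar{\mathfrak e}} = \bigcap_{n,k} E_{\mathfrak e_{n,k}} \in \mathcal E$ and that $E_{\bar{\mathfrak e}} \subset E_{\mathfrak e_{n,k}}$ implies $H_{\bar{\mathfrak e}} \supset H_{\mathfrak e_{n,k}}$, which is immediate. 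The Pythagoras step and the density argument are fine.

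For Points~\eqref{item:gfoaifvfffv} and \eqref{item:gfoaivdasvds} your tower-of-disintegrations argument is the standard one and is correct; the ``main obstacle'' you flag (that the $\mu$-a.e.\ inclusion of $\sigma$-algebras induces a measurable quotient map between the quotient spaces) is handled by passing to the measure algebras and invoking the countably-generated/Polish hypotheses exactly as you indicate. This is the same mechanism used in the proof of Corollary~\ref{C_constant_for_minimal_equivalence}, which the paper \emph{does} prove in full and which you may consult for the style of argument expected.
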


In particular, assume that each $E_\mathfrak e$ is given by
\[
E_\mathfrak e = \{ \theta_\mathfrak e(x) = \theta_\mathfrak e(x') \}, \quad \theta_\mathfrak e : X \to X', \ X' \ \text{Polish}, \ \theta_\mathfrak e \ \text{Borel}.
\]

\begin{corollary}
\label{C_constant_for_minimal_equivalence}
There exists a $\mu$-conegligible set $F \subset X$ such that $\theta_\mathfrak e$ is constant on $F \cap \theta_{\bar{\mathfrak e}}^{-1}(x')$, for all $x' \in X'$.
\end{corollary}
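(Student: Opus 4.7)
The plan is to exploit Point \eqref{item:poifvojfivfi} of Theorem \ref{T_minimal_equival}, which allows us to approximate $E_\mathfrak{e}$-saturated Borel sets by $E_{\bar{\mathfrak e}}$-saturated ones modulo $\mu$-null sets, together with the Polish structure of $X'$ to reduce from uncountably many conditions to countably many.

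First, since $X'$ is Polish, fix a countable base $\{B_n\}_{n \in \mathbb N}$ of open sets, and observe that each preimage $\theta_\mathfrak{e}^{-1}(B_n)$ is a Borel set which is $E_\mathfrak{e}$-saturated by the very definition of $E_\mathfrak e = \{\theta_\mathfrak e(x) = \theta_\mathfrak e(x')\}$. Hence each $\theta_\mathfrak{e}^{-1}(B_n)$ belongs to the sub-$\sigma$-algebra $\mathtt A_\mathfrak{e}$. By Point \eqref{item:poifvojfivfi} of Theorem \ref{T_minimal_equival}, for every $n \in \mathbb N$ there exists an $E_{\bar{\mathfrak e}}$-saturated Borel set $A_n \in \mathtt A_{\bar{\mathfrak e}}$ such that $\mu \big( \theta_\mathfrak e^{-1}(B_n) \vartriangle A_n \big) = 0$.

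Next, define the $\mu$-null set $N := \bigcup_{n \in \mathbb N} \big( \theta_\mathfrak e^{-1}(B_n) \vartriangle A_n \big)$ and let $F := X \setminus N$, which is $\mu$-conegligible. On $F$ the identities
\begin{equation*}
\theta_\mathfrak e^{-1}(B_n) \cap F = A_n \cap F, \qquad n \in \mathbb N,
\end{equation*}
hold, and since each $A_n$ is $E_{\bar{\mathfrak e}}$-saturated, the relative set $\theta_\mathfrak e^{-1}(B_n) \cap F$ is a union of traces on $F$ of $E_{\bar{\mathfrak e}}$-equivalence classes. Consequently, whenever two points $x,y \in F$ satisfy $\theta_{\bar{\mathfrak e}}(x) = \theta_{\bar{\mathfrak e}}(y)$, we obtain $\theta_\mathfrak e(x) \in B_n \Leftrightarrow \theta_\mathfrak e(y) \in B_n$ for every $n$, and since $\{B_n\}$ separates the points of $X'$, this forces $\theta_\mathfrak e(x) = \theta_\mathfrak e(y)$, which is precisely the claim of the corollary.

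The only subtlety — and the one place where care is needed — is the passage from ``almost everywhere $E_{\bar{\mathfrak e}}$-saturated'' for a single set to a genuine pointwise statement valid simultaneously for all $B_n$. This is handled uniformly by taking the countable union $N$ once and for all, which is possible precisely because a countable basis of $X'$ suffices to determine the value of $\theta_\mathfrak e$; had $X'$ not been second countable, this reduction would fail, so the Polish hypothesis is essential at exactly this step.
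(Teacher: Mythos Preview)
Your proof is correct and takes a genuinely different route from the paper's argument.

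The paper proceeds via disintegration: it introduces the product map $\vartheta := (\theta_\mathfrak e,\theta_{\bar{\mathfrak e}})$, disintegrates $\xi_{\bar{\mathfrak e}}$ with respect to the (finer) equivalence relation induced by $\vartheta$, and uses the minimality of $E_{\bar{\mathfrak e}}$ to conclude that the pushforward $\xi_\vartheta = \vartheta_\sharp \mu$ is concentrated on a Borel graph; this yields a Borel map $\mathtt s$ with $\theta_\mathfrak e = \mathtt s \circ \theta_{\bar{\mathfrak e}}$ on a conegligible set. Your argument instead uses only Point~\eqref{item:poifvojfivfi} of Theorem~\ref{T_minimal_equival} (the measure-algebra embedding) together with second countability of $X'$, bypassing disintegration entirely. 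Your route is more elementary and transparent. On the other hand, the paper's argument delivers slightly more than the corollary states: it produces the Borel factoring map $\mathtt s$ explicitly, and this Borel regularity is invoked verbatim in Section~\ref{Ss_properties_minimal_equivalence_relation} just before Proposition~\ref{P_minimality_on_class}. Your proof certainly gives a well-defined $\mathtt s$ on $\theta_{\bar{\mathfrak e}}(F)$, but its Borel measurability would require an additional (standard) descriptive-set-theoretic step.
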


\begin{proof}
Consider the function $\vartheta := (\theta_\mathfrak e,\theta_{\bar{\mathfrak e}})$: by the minimality of $\theta_{\bar{\mathfrak e}}$, it follows that
\[
\xi_{\bar{\mathfrak e}} = \int \xi_{\bar{\mathfrak e},(x',x'')} \xi_\vartheta(dx'dx''), \quad \xi_\vartheta := \vartheta_\sharp \mu.
\]
Since $(\mathtt p_2)_\sharp \xi_\vartheta = \xi_{\bar{\mathfrak e}}$, then also
\[
\xi_\vartheta = \int \xi_{\vartheta,x'} \xi_{\bar{\mathfrak e}}(dx'),
\]
and thus
\[
\xi_{\bar{\mathfrak e}} = \int \bigg[ \int \xi_{\bar{\mathfrak e},(x',x'')} \xi_{\vartheta,x''}(dx'dx'') \bigg] \xi_{\bar{\mathfrak e}}(dx').
\]
This implies that $\xi_{\bar{\mathfrak e}}$-a.e. $x'$
\[
\int \xi_{\bar{\mathfrak e},(x',x'')} \xi_{\vartheta,x'}(dtds) = \delta_{x'},
\]
or equivalently that
\[
\xi_{\vartheta,x'''} = \delta_{\mathtt x(x'''),\mathtt x''(x''')}, \quad \xi_{\bar{\mathfrak e},(\mathtt x'(x'''),\mathtt x''(x'''))} = \delta_{x'''}.
\]
Hence $\xi_\vartheta$ is concentrated on a graph: by choosing $x'=x'''$, there exists $\mathtt s = \mathtt s(x')$ Borel such that $\xi_\vartheta = (\Id,\mathtt s)_\sharp \xi_{\bar{\mathfrak e}}$. This is equivalent to say that there exists a $\mu$-conegligible set $F$ such that $\theta_\mathfrak e = \mathtt s \circ \theta_{\bar{\mathfrak e}}$ on $F$.
\end{proof}



\bibliography{./biblio}

\end{document}